\tikzset{
    >=stealth',
    pil/.style={
           ->,
           thick,
           shorten <=2pt,
           shorten >=2pt,}
}
\numberwithin{equation}{section}
\def \be{\begin{equs}}
\def \ee{\end{equs}}
\def \P{\mathbb{P}}
\def \E{\mathbb{E}}
\def \TV{\mathrm{TV}}
\def \d{\mathrm{d}}
\newtheorem{theorem}{Theorem}[section]
\newtheorem{lemma}[theorem]{Lemma}
\newtheorem{assumptions}[theorem]{Assumptions}
\newtheorem{assumption}[theorem]{Assumption}
\newtheorem{cond}[theorem]{Condition}
\theoremstyle{plain}
\newtheorem{thm}{Theorem}
\newtheorem*{thm-non}{Theorem}
\theoremstyle{definition}
\newtheorem{defn}[theorem]{Definition}
\newtheorem{remark}[theorem]{Remark}
\begin{document}

\title[Mixing of HMC]{Rapid Mixing of Hamiltonian Monte Carlo on Strongly Log-Concave Distributions}


\author{Oren Mangoubi$^{\ddag}$}
\thanks{$^{\ddag}$omangoubi@gmail.com, 
   \'{E}cole Polytechnique F\'{e}d\'{e}rale de Lausanne (EPFL),
 IC IINFCOM THL3, Station 14, 1015 Lausanne, Switzerland}

\author{Aaron Smith$^{\sharp}$}
\thanks{$^{\sharp}$smith.aaron.matthew@gmail.com, 
   Department of Mathematics and Statistics,
University of Ottawa, 585 King Edward Avenue, Ottawa
ON K1N 7N5, Canada}

\maketitle






\begin{abstract}
We obtain several quantitative bounds on the mixing properties of the Hamiltonian Monte Carlo (HMC) algorithm for a strongly log-concave target distribution $\pi$ on $\mathbb{R}^{d}$, showing that HMC mixes quickly in this setting. One of our main results is a dimension-free bound on the mixing of an ``ideal" HMC chain, which is used to show that the usual leapfrog  implementation of HMC can sample from $\pi$ using only $\mathcal{O}(d^{\frac{1}{4}})$ gradient evaluations. This dependence on dimension is sharp, and our results significantly extend and improve previous quantitative bounds on the mixing of HMC.
\end{abstract}

\section{Introduction} \label{sec:intro}

Markov chain Monte Carlo (MCMC) algorithms are ubiquitous in Bayesian statistics and other areas, and Hamiltonian Monte Carlo (HMC) algorithms are some the most widely-used MCMC algorithms \cite{girolami2011riemann, cheung2009bayesian, mehlig1992hybrid} . Despite the popularity of HMC and the widespread belief that HMC outperforms other algorithms in high-dimensional statistical problems (see \textit{e.g.} \cite{HMC_optimal_tuning}), its theoretical properties are not as well-understood as some of its older cousins, such as the Metropolis-Hastings algorithm (MH) or Metropolis-Adjusted Langevin Algorithm (MALA). This lack of theoretical results can make it harder to optimize HMC algorithms, and it means we do not have a good understanding of when HMC is better than other popular algorithms.

Several recent papers have begun to bridge this gap, most noteably by proving geometric ergodicity of HMC under general conditions \cite{livingstone2016geometric} and establishing some quantitative bounds on the rate of convergence for Gaussian target distributions \cite{seiler2014positive}.  
In this paper, we extend this work by obtaining rapid mixing bounds for HMC in an important general class of target distributions: those which are strongly log-concave. In this regime, we show upper bounds on the mixing rate of HMC that are better than those of many competitor algorithms, including the Langevin algorithm\cite{durmus2016sampling2, durmus2016sampling}. Our work is particularly close to that of \cite{seiler2014positive}, which is to our knowledge the only other paper giving quantitative non-asymptotic bounds on the mixing of HMC. We improve on their conclusions by greatly improving the dependence of their bounds on the dimension of the target distribution, extending their analysis from Gaussian to general strongly log-concave targets, proving convergence in stronger norms, and providing rates for numerical implementations of HMC algorithms rather than merely ``ideal" versions of HMC. \\

Although our results are far from providing a complete understanding of HMC, the strongly log-concave distributions are an important special case. Recall that a distribution $\pi$ is strongly log-concave if the Hessian matrix of $-\log(\pi)$ has eigenvalues bounded above and below by positive numbers $M_2$ and $m_2$, respectively. Many important posterior distributions in statistics, including the ``ridge regression" posterior associated with Gaussian priors for logistic regression, are strongly log-concave  \cite{durmus2016sampling2, durmus2016sampling}. In addition to this, we expect most MCMC algorithms to perform well for strongly log-concave targets. For these reasons, the performance of many Monte Carlo algorithms has been studied extensively in the strongly log-concave setting \cite{durmus2016sampling} . This has the added advantage of allowing us to give a sensible comparison of the performance of HMC to its competitors, such as the Langevin algorithm and the ball walk.

Following the work of \cite{seiler2014positive}, our paper begins by studying an `ideal' HMC algorithm - one that has no numerical error.  We show that the HMC Markov chain has mixing time $\mathcal{O}^*((\frac{M_2}{m_2})^2)$ on strongly log-concave $\pi$ (see Theorem \ref{ThmMainConcave}).  In particular, our mixing time bound does not depend explicitly on the dimension, improving on the best-previous bound of $\mathcal{O}^*(d^{2} \, C(m_{2},M_{2}))$ obtained in \cite{seiler2014positive}, which has quadratic dependence on dimension and no quantitative dependence on $m_{2},M_{2}$.
 
Using this result, we bound the computational costs of various numerical implementations of HMC, showing that HMC can be much faster than competing algorithms (see Theorems \ref{ThmMainApprox} through \ref{ThmMainApprox_leapfrog2}). Our first result shows that a simple numerical implementation of HMC can approximately sample from the stationary distribution with a number of gradient evaluations that grows at rate $\mathcal{O}_{d}(d^{\frac{1}{2}})$ in the dimension (see Theorem \ref{ThmMainApprox}). Under an additional seperability assumption, we show that an unadjusted numerical implementation of HMC based on a $k$'th-order numerical integrator allows one to approximately sample the stationary distribution in $\mathcal{O}_{d}(d^{\frac{1}{2k}})$ steps (see Theorem \ref{ThmMainApprox_leapfrog_unadjusted}).  Most implementations of HMC use second-order ($k=2$) integrators, giving a dimension dependence of $d^{\frac{1}{4}}$.  For comparison, the best available mixing time bound for the unadjusted Langevin algorithm on strongly logconcave $\pi$ grows like $d^{\frac{1}{2}}$, a much larger dependence on dimension than our bounds for unadjusted HMC implementations with integrators of order $k\geq 2$ \cite{durmus2016sampling2, durmus2016sampling}. These comparisons are summarized in Table \ref{table:cost}.  Our bounds also compare favorably to the ball walk, whose best available mixing time bound is roughly $\mathcal{O}(d^{2} \frac{M_{2}^{2}}{m_{2}^{2}})$ \cite{vempala2005geometric} (see Table \ref{table:cost_ball} for further comparisons; note that the particular assumptions made in different papers are slightly different).

\begin {table}[t]
\begin{center}
\begin{tabular}{ | m{3.5em} | m{1.8cm}| m{2.0cm}| m{3cm} || m{3cm} | }
\hline
 & Stochastic limit & Running Time (Ideal Integrator)
 & Cost (Unadjusted) &  Conjectured cost (Metropolis-Adjusted)  \\ 
\hline
\vspace{2mm} HMC \vspace{2mm} & \vspace{-10mm} \begin{flushright} $^\dagger$ \end{flushright} \vspace{-0mm}& \vspace{-6mm} \begin{flushright} $^\bigstar$ \end{flushright} \vspace{-0mm} \center $\frac{M_2^2}{m_2^2}$ \vspace{2mm} &\vspace{1mm} \begin{flushright} $^\bigstar$ \end{flushright} \vspace{-1mm} \vspace{2mm} \center$d^{\frac{1}{2k}}\times C'_k(m_2, M_2)$ \vspace{2mm} &\vspace{2mm}  $d^{\frac{1}{2k}} \times \frac{M_2}{m_2} \times M_{2}^{-\frac{1}{k}}$  \vspace{2mm}  \\ 
\cline{1-1} \cline{3-5}
\vspace{8mm} Langevin \vspace{2mm}& \vspace{-20mm} \center $\frac{M_2}{m_2}$  & \vspace{-8mm}  \begin{flushright} $^\ddagger$ \end{flushright} $d^{\frac{1}{2}} \times \frac{M_2}{m_2}  $ & \vspace{-8mm} \begin{flushright} $^\ddagger$ \end{flushright} \vspace{-4mm}   \center $d^{\frac{1}{2}} \times \frac{M_2}{m_2}  $ &\vspace{2mm}  $d^{\frac{1}{3}} \times \frac{M_2}{m_2}$ \vspace{2mm}  \\ 
\hline

\end{tabular}
\\
$\bigstar=$ \emph{This paper},
$\dagger=$  \cite{eberle2015reflection},
$\ddagger=$ \cite{durmus2016sampling2, durmus2016sampling},
\end{center}
\caption{Best available upper bounds on the running time and computational cost of various HMC and Langevin Markov chains on strongly logconcave targets, in terms of dimension $d$ and strong convexity bounds $M_2$ and $m_2$. The dimension dependence in the conjectures are widely-accepted from numerical evidence and the theoretical diffusion limit results of \cite{HMC_optimal_tuning, pillai2012optimal, breyer2004optimal}.  We propose the conjectured dependences on $m_2$ and $M_2$ here based on the stochastic limit \cite{eberle2015reflection} and the closely-related geodesic walk  \cite{mangoubi2016rapid}. Note that  while we prove the cost bound for numerical integrators of order $k=1$ for general strongly logconcave targets, the cost bounds we proved for $k\geq 2$ only apply under the additional assumption that the target is separable.} \label{table:cost} 
\end{table}

\noindent

\begin {table}[t]
\begin{center}
\begin{tabular}{ | m{5em} | m{4cm}  | m{4cm}  || m{2.2cm} | m{1cm} m{0cm} | }
\hline
 & \center Cost \\ (ignoring gradient)&  \center Cost \\ (gradient costs $d$ evaluations) & momentum term? & force term?& \\ 
\hline
\center  & \begin{flushright} $^\bigstar$ \end{flushright} & \begin{flushright} $^\bigstar$ \end{flushright} &  &  &\\
\vspace{-10mm} \center HMC (unadjusted) & \vspace{-10mm} \center$d^{\frac{1}{2k}}$  & \vspace{-10mm} \center $d^{1+\frac{1}{2k}}$  &\vspace{-10mm}\center yes &\vspace{-10mm} \center yes &\\ 
\hline
\center  & \begin{flushright} $^\ddagger$ \end{flushright} & \begin{flushright} $^\ddagger$ \end{flushright} & &  &\\
\vspace{-10mm} \center Langevin (unadjusted) & \vspace{-10mm} \center$d^{\frac{1}{2}}$  & \vspace{-10mm} \center $d^{1+\frac{1}{2}}$ &\vspace{-10mm} \center no &\vspace{-10mm}\center  yes &\\ 
\hline
\center  & \begin{flushright} $^\S$ \end{flushright} & \begin{flushright} $^\S$ \end{flushright} & &  &\\
\vspace{-10mm} \center Ball Walk & \vspace{-10mm} \center$d^{2}$  & \vspace{-10mm} \center $d^{2}$  &\vspace{-10mm}\center no &\vspace{-10mm}\center  no &\\ 

\hline

\end{tabular}
\\
$\bigstar=$ \emph{This paper},
$\ddagger=$ \cite{durmus2016sampling2, durmus2016sampling},
$\S=$ \cite{vempala2005geometric}
\end{center}
\caption{The best available bounds on the computational complexity for the ball-walk and unadjusted implementations of HMC and Langevin on strongly logconcave targets, in terms of the dimension $d$.  
}\label{table:cost_ball}
\end{table}

Our main techniques in this paper are explicit comparisons of ODEs and probabilistic coupling bounds. Recall that HMC is thought to explore many target distributions faster than the ball walk or the Langevin algorithm because the momentum and force terms in Hamiltonian dynamics allow HMC to take longer steps while remaining within a high-density region of the target distribution \cite{betancourt2017geometric} (Langevin has a force term but no momentum, and the ball walk has neither term; see Table \ref{table:cost_ball}).  Most existing methods for analyzing ``geometric" MCMC algorithms such as hit-and-run use conductance bounds \cite{vempala2005geometric}. However, conductance bounds generally cannot capture improvements obtained from momentum, force, or curvature (see \textit{e.g.} \cite{chen1999lifting,ramanan2017bounds} for quantitative theorems related to this heuristic and \cite{mangoubi2017bottleneck} for related results on HMC).  To take advantage of positive curvature, momentum, and force, our analysis of HMC instead uses probabilistic coupling bounds, obtained via comparison theorems for ODEs.  Our analysis suggests that the long-term momentum of Hamiltonian trajectories allows HMC to take much larger steps than the ball walk or Langevin algorithms, allowing HMC to explore the target distribution with a much faster mixing time than either the ball walk or Langevin.

\subsection{Preliminary Notation}

For any function $f: \mathbb{R}^a \rightarrow \mathbb{R}$, we use the shorthand $f' := \nabla f$, and denote by  $D_v f := \langle v, \nabla f \rangle$ the directional derivative in the direction $v$.  For a vector-valued function $g=(g_1,\ldots, g_b)^\top$, we define the coordinate-wise directional derivative $D_v g:= (D_v g_1, \dots, D_v g_b)$.

Throughout the paper, our goal is to sample from a stationary distribution $\pi(q)$ on $\mathbb{R}^{d}$, which we will write as $\pi(q) \propto e^{-U(q)}$ for  some potential function $U \, : \, \mathbb{R}^{d} \mapsto \mathbb{R}^{+}$.  We always assume that $U$ is \textit{strongly convex:}

\begin{defn} [Strong Convexity]
Say that a differentiable function $U \, : \, \mathbb{R}^{d} \mapsto \mathbb{R}$ is \textit{strongly convex with parameter} $m > 0$ if it satisfies the inequality 
\be 
\langle U'(x) - U'(y), x-y \rangle \geq m \|x - y \|^{2}
\ee  
for all $x, y \in \mathbb{R}^{d}$.
\end{defn}

Recall that any strongly convex function has a unique minimizer. Throughout this paper, we assume WLOG that this minimum occurs at 0 in order to simplify notation.

Throughout the paper, we make a few small abuses of notation. For any function $f \, : \, X \mapsto Y$ between two sets, and any $S \subset X$, we  define 
\be 
f(S) = \{ f(x) \, : \, x \in S\}.
\ee 
In addition, we will generally write $x$ for the single-element set $\{x \}$ when this does not result in any ambiguity.

\subsubsection{Distributions and Mixing}

For measures $\nu_1, \nu_2$ on a measure space $(\Omega, \mathcal{F})$, the \textit{total variation distance} between $\nu_1, \nu_2$ is given by
\be 
\| \nu_1 - \nu_2 \|_{\TV} = \sup_{A \in \mathcal{F}} (\nu_1(A) - \nu_2(A)).
\ee 
We denote the distribution of a random variable $X$ by $\mathcal{L}(X)$ and write $X \sim \nu$ as a shorthand for $\mathcal{L}(X) = \nu$. 

For two probability measures $\nu_1, \nu_2$ on $\mathbb{R}^d$, define the \textit{Wasserstein-$k$ distance}
\be
W_{k}(\nu_1,\nu_2)^{k} = \inf_{(X,Y) \in \mathcal{C}( \nu_1, 
 \nu_2)} \E[ \|X-Y\|^{k}],
\ee
where $\mathcal{C} (\nu_{1},\nu_{2})$ is the set of all random variables on $\mathbb{R}^d\times \mathbb{R}^d$ with marginal distributions $\nu_1$ and $\nu_2$. Similarly, define the \textit{Prokhorov distance} 
\be 
\mathsf{Prok}(\nu_{1},\nu_{2}) = \inf \{ \epsilon > 0 \, : \, \forall \text{ measurable } A \subset \mathbb{R}^{d}, \,  \nu_{1}(A) \leq \nu_{2}(A_{\epsilon}) + \epsilon \text{ and }  \nu_{2}(A) \leq \nu_{1}(A_{\epsilon}) + \epsilon  \},
\ee 

where  $A_{\epsilon} = \{ x \in \mathbb{R}^{d} \, : \, \inf_{y \in A} \, \| x-y\| < \epsilon \}$.

Recall that the \textit{spectral gap} of a transition kernel $K$ is given by 
\be 
1 - \sup \{ |\lambda| \, : \, \lambda \in (-1,1), \,  (K - \lambda \, \mathrm{Id}) \, \text{ is not invertible.} \}.
\ee 

We define the \textit{relaxation time} $\tau_{\mathrm{rel}}(K)$ of a transition kernel $K$ to be the reciprocal of its spectral gap.

\subsubsection{Big-O Notation}
For two nonnegative functions or sequences $f,g$, we write $f = O(g)$ as shorthand for the statement: there exists a constant $0 < C < \infty$ so that for all $x_{1},\ldots,x_{n}$, we have $f(x_{1},\ldots,x_{n}) \leq C \, g(x_{1},\ldots,x_{n})$. We write $f = \Omega(g)$ for $g = O(f)$, and we write $f = \Theta(g)$ if both $f= O(g)$ and $g=O(f)$. Relatedly, we write $f = o(g)$ as shorthand for the statement: $\lim_{x_{1},\ldots,x_{n} \rightarrow \infty} \frac{f(x_{1},\ldots,x_{n})}{g(x_{1},\ldots,x_{n})} = 0$. Finally, we use ``$f$ is polynomial in $x$" as shorthand for ``there exist $0 < C_{1},C_{2}, C_{3}< \infty$ such that $f(x) \leq C_{1} x^{C_{2}}$ for all $x>C_{3}$."

We give two small extensions of this notation; all of these modifications apply in the obvious way to $\Omega(\cdot)$, $\Theta(\cdot)$ and $o(\cdot)$. First, when we wish to view a function as depending on only a subset of its arguments, we indicate the arguments of interest using a subscript. For example, if $f(x,y) = \frac{x}{1 + y^{2}}$, we write $f(x,y) = O(x)$ but also $f(x,y) = O_{x}(x)$ or $f(x,y) = O_{y}(1)$. Second, we use a superscript $*$ to indicate that the relationship holds ``up to logarithmic factors." For example, we write $f= O^{*}(g)$ if there exist constants $0 < C_{1},C_{2} < \infty$ so that, for all $x_{1},\ldots,x_{n}$, we have $f(x_{1},\ldots,x_{n}) \leq C_{1} \, g(x_{1},\ldots,x_{n}) \, \log(g(x_{1},\ldots,x_{n}))^{C_{2}}$.

\subsubsection{Ideal HMC Dynamics}

A Hamiltonian of a simple system is written as
\be \label{EqHamilDef}
H(q,p) = U(q) + \frac{\mathsf{1}}{2}\|p\|^{2}, 
\ee 
where $q$ represents `position', $p$ represents `momentum,' $U$ represents `potential energy,' and $\frac{\mathsf{1}}{2}\|p\|^{2}$ represents  `kinetic energy.'

For fixed $\mathbf{q} \in \mathbb{R}^d,\, \mathbf{p} \in \mathbb{R}^d$, we denote by $\{q_t(\mathbf{q},\mathbf{p})\}_{t \geq 0}$, $\{ p_t(\mathbf{q},\mathbf{p}) \}_{t \geq 0}$ the solutions to Hamilton's equations:
\be \label{EqHamiltonEquations}
\frac{\mathrm{d}q_t(\mathbf{q},\mathbf{p})}{\mathrm{d}t} = p_t(\mathbf{q},\mathbf{p}), \qquad \frac{\mathrm{d}p_t(\mathbf{q},\mathbf{p})}{\mathrm{d}t} = -U'(q_t(\mathbf{q},\mathbf{p})), 
\ee 
with initial conditions 
\be 
q_0(\mathbf{q},\mathbf{p}) = \mathbf{q}, \qquad p_0(\mathbf{q},\mathbf{p}) = \mathbf{p}.
\ee 
When  the initial conditions $(\mathbf{q},\mathbf{p})$ are clear from the context, we write $q_t$, $p_t$ in place of $q_t(\mathbf{q},\mathbf{p})$ and $p_t(\mathbf{q},\mathbf{p})$, respectively. The dependence of these solutions on the Hamiltonian $H$ is always suppressed in our notation, as it will always be clear from the context.

For a fixed integration time $T \in \mathbb{R}^{+}$ and starting point $\mathbf{q} \in \mathbb{R}^{d}$, we define the solution map $\mathcal{Q}_{T}^{\mathbf{q}} \, : \, \mathbb{R}^{d} \mapsto \mathbb{R}^{d}$ by
\be  \label{EqForwardMappingRep}
\mathcal{Q}_{T}^{\mathbf{q}}(\mathbf{p}) := q_T(\mathbf{q},\mathbf{p}).
\ee

For $V > 0$, denote by $\Phi_{V}(\cdot)$ the normal distribution on $\mathbb{R}^{d}$ with mean 0 and variance $V$ times the identity matrix. We study the simplest Hamiltonian Monte Carlo algorithm (see Figure \ref{fig:walk}):

\begin{algorithm}[H]
\caption{Simple HMC \label{DefSimpleHMC}}
\flushleft
\textbf{parameters:} Potential $U$, integration time $T>0$, running time $i_{\max} \in \mathbb{N}$.\\
 \textbf{input:}   Initial point $X_0 \in \mathbb{R}^d$.\\
 \textbf{output:} Markov chain $X_0, X_1, \ldots, X_{i_{\max}}$.
\begin{algorithmic}[1]
\For{$i=0$ to $i_{\mathrm{max}}-1$}
\\Sample $\mathbf{p}_i \sim \Phi_{1}$.
\\ Set $X_{i+1} = \mathcal{Q}_{T}^{X_{i}}(\mathbf{p}_i)$.
\EndFor
\end{algorithmic}
\end{algorithm}

In the context of HMC, we refer to $q_t$ as the \textit{position} variable and $p_t$ as the \textit{momentum} variable. In this context, we call $\mathbb{R}^{d}$ the \textit{state space} of the algorithm and $\mathbb{R}^{2d}$ the \textit{phase space} of the HMC algorithm.

Note that the sequence $\{X_{i}\}_{i \geq 0}$ is a deterministic function of its initial value $X_{0}$ and the i.i.d. sequence $\{\mathbf{p}_{i}\}_{i \geq 0}$ of momentum updates sampled during this algorithm. In the Markov chain literature, this fact is summarized by saying that this algorithm defines a \textit{random mapping representation} of  $\{X_{i}\}_{ i \geq 0}$ with \textit{update sequence} $\{\mathbf{p}_{i}\}_{i \geq 0}$ (see Chapter 1.2 of \cite{LPW09}). In particular, the fact that this algorithm gives a random mapping representation means that it is possible to define a coupling of two Markov chains evolving according to this algorithm by defining a coupling of the momentum updates. All of the HMC-based algorithms defined in this paper will also have this property, and we will use it throughout the paper to construct couplings of Markov chains. Finally, note that this algorithm also naturally defines the nonreversible Markov chains  $\{ (X_i, \mathbf{p}_i)\}_{i \geq 0}$, which we call the \textit{phase-space} chain on $\mathbb{R}^{2d}$.

\begin{figure}[t]
\begin{center}
\includegraphics[trim={0 42mm 0 31mm}, clip, scale=0.4]{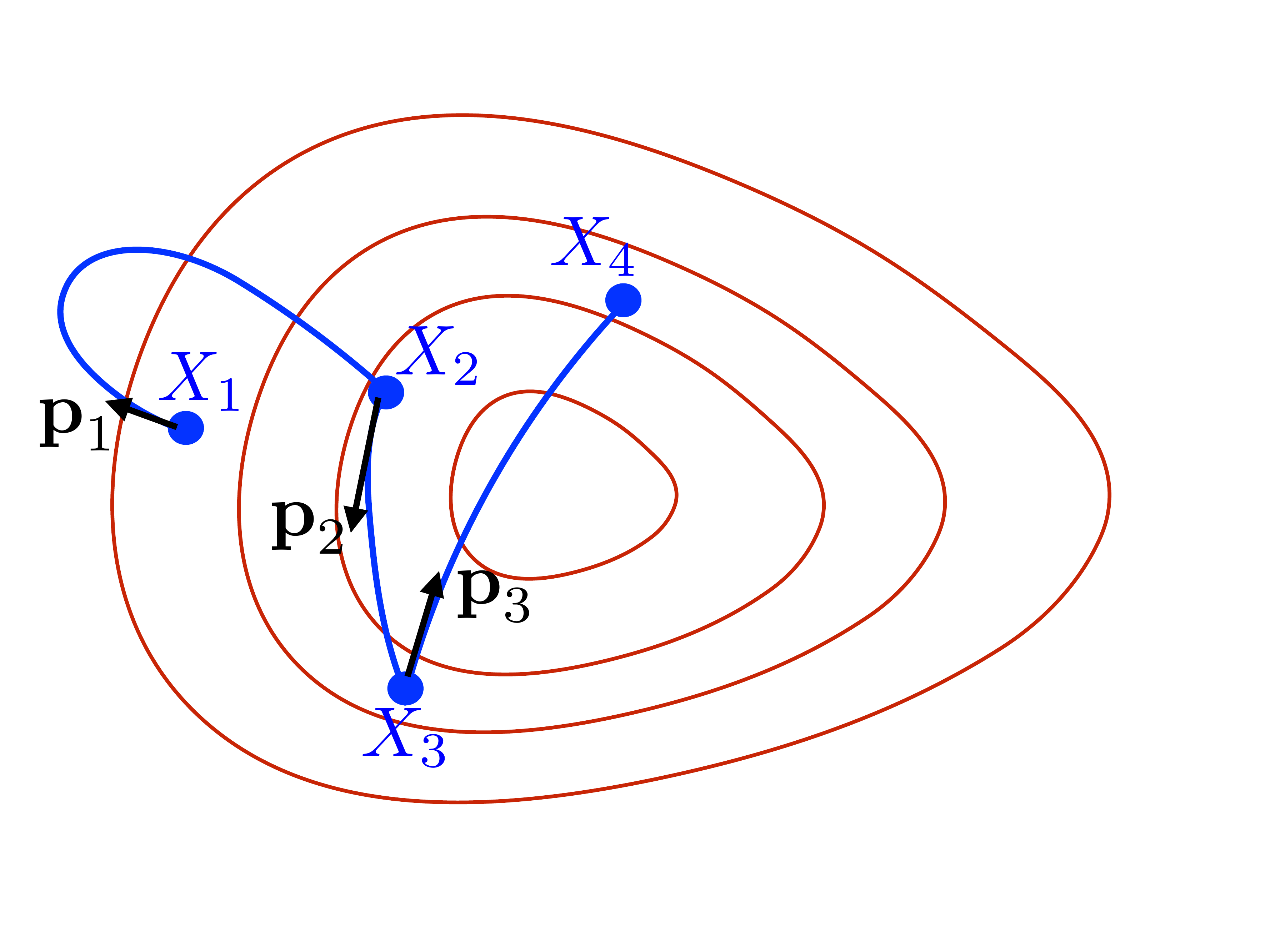}
\end{center}
\caption{The Hamiltonian Monte Carlo Markov chain $X_1, X_2, \ldots$ with momentum $\textbf{p}_{1}, \textbf{p}_{2},\ldots$.  }\label{fig:walk}
\end{figure}

\subsection{Assumptions for Ideal HMC Dynamics}

We will prove that HMC mixes quickly if the potential $U$ satisfies the following strong convexity assumption on the bulk of the target distribution: 

\begin{assumptions} [Strong Convexity Assumptions] \label{AssumptionsConvexity}

Assume that there exist constants $0 < m_{2}, M_{2} < \infty$ and a measurable set $\mathcal{X} \subset \mathbb{R}^{d}$ so that  $ -\langle D_v U'(q),v \rangle \frac{1}{\|v\|^2} \leq -m_2$ and $\|D_v U'(q) \| \leq M_2$ for all $q \in \mathcal{X}$ and $v \in \mathbb{R}^{d}$. Also assume that $U(0) = 0$ and $U(x) \geq 0 $ for all $x \in \mathbb{R}^{d}$.
\end{assumptions}

It is often convenient to assume that HMC also satisfies a drift condition (see Lemma \ref{ThmDriftHMC} for some simple and easily-verified sufficient conditions, as well as Remark \ref{RemGenDriftBounds} for related computations):

\begin{assumptions} [Drift Condition] \label{AssumptionsDrift}
Let $K$ be the transition kernel of interest. There exists a function $V \, : \, \mathbb{R}^{d} \mapsto \mathbb{R}^{+}$ and constants $0 < a \leq 1$, $1 \leq b < \infty$ so that, for all $x \in \mathbb{R}^{d}$ and $X \sim K(x,\cdot)$, 
\be 
\E[V(X) ] \leq (1 - a) V(x) + b.
\ee 
\end{assumptions}

To obtain mixing bounds in the strong Total Variation metric, we also assume:

\begin{assumption} \label{AssumptionSecondDerivative}
There exists $0 \leq M_{3} < \infty$ so that, for all $u,v, q \in \mathbb{R}^d$,
\be 
\| D_u D_v U'(q) \|\leq M_3. 
\ee
\end{assumption}

\subsection{Main Results for Ideal HMC Dynamics} \label{SecMainResApproxHmc}

We state the main results for the ideal HMC dynamics.  For simplicity, all  main results are stated for the largest integration time $T= \frac{1}{2 \sqrt{2}} \frac{\sqrt{m_{2}}}{M_{2}}$ allowed by our proofs.  We observe that these results can still be applied for any $0<T\leq \frac{1}{2 \sqrt{2}} \frac{\sqrt{m_{2}}}{M_{2}}$, since a potential $U$ that satisfies Assumption \ref{AssumptionsConvexity} for constants $m_{2}, M_{2}$ will also satisfy it for any pair $m_{2}',M_{2}'$ satisfying $0 < m_{2}' < m_{2} \leq M_{2} < M_{2}' < \infty$.

\begin{thm} [Mixing For Strongly Log-Concave Targets] \label{ThmMainConcave}
Let $K$ be the transition kernel defined by Algorithm \ref{DefSimpleHMC} with parameter $T= \frac{1}{2 \sqrt{2}} \frac{\sqrt{m_{2}}}{M_{2}}$. Let Assumption \ref{AssumptionsConvexity} hold with $\mathcal{X} = \mathbb{R}^{d}$. Then $K$ satisfies the contraction bound 
\be \label{MixingLogConcaveMainConc2}
\sup_{x,y \in \mathbb{R}^{d}} \frac{W_{k}(K(x,\cdot),K(y,\cdot))}{\| x- y \|} \leq 1 - \frac{1}{64} \left( \frac{m_{2}}{M_{2}} \right)^{2} \quad \quad \forall k \in \mathbb{N}
\ee 
and the spectral bound
\be \label{MixingLogConcaveMainConc3}
\tau_{\mathrm{rel}}(K) \leq 64  \left( \frac{M_{2}}{m_{2}} \right)^{2}.
\ee 

If Assumption \ref{AssumptionSecondDerivative} is also satisfied, then $K$  also satisfies the mixing bound
\be \label{MixingLogConcaveMainConc1}
\| K^{t}(x,\cdot) - \pi \|_{\mathrm{TV}} \leq 25^{-\lfloor \frac{\kappa t}{100} \rfloor} + e^{-\frac{\kappa \delta t}{2}}  e^{\|x\| + \frac{d}{4 m_{2}}}
\ee  
for all $x \in \mathbb{R}^{n}$ and $t \in \mathbb{N}$, for constants
\be \label{EqThm1Consts}
\kappa = \frac{1}{64} \frac{m_{2}^{2}}{M_{2}^{2}}, \, \delta = \Theta \left( \min \left( \sqrt{\frac{m_{2}}{M_{2}}}, d^{-1.5}, \frac{\sqrt{m_{2}M_{2}}}{M_{3} d}, \frac{d M_{3}}{M_{2}^{2}} \right) \right).
\ee 
\end{thm}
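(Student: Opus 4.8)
The plan is to establish the three displays in order: the $W_k$ contraction \eqref{MixingLogConcaveMainConc2} does all the real work, \eqref{MixingLogConcaveMainConc3} is a soft corollary of it, and \eqref{MixingLogConcaveMainConc1} uses Assumption~\ref{AssumptionSecondDerivative} and is where the technical difficulty sits.

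\textbf{Step 1: the Wasserstein contraction \eqref{MixingLogConcaveMainConc2}.} I would use the random-mapping representation of Algorithm~\ref{DefSimpleHMC}: couple the chains started at $x$ and $y$ by feeding them the \emph{same} momentum $\mathbf{p}\sim\Phi_1$, so $X_1=\mathcal{Q}_T^x(\mathbf{p})$, $Y_1=\mathcal{Q}_T^y(\mathbf{p})$. Writing $\xi_t = q_t(x,\mathbf{p})-q_t(y,\mathbf{p})$, Hamilton's equations and the fundamental theorem of calculus give the linear second-order ODE $\ddot\xi_t=-A_t\xi_t$ with $A_t=\int_0^1\nabla^2U\big(q_t(y,\mathbf{p})+s\xi_t\big)\,ds$, and $m_2\I\preceq A_t\preceq M_2\I$ by Assumption~\ref{AssumptionsConvexity} (with $\mathcal{X}=\mathbb{R}^d$), while $\xi_0=x-y$ and $\dot\xi_0=0$ (both trajectories leave with momentum $\mathbf{p}$). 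Now put $r(t)=\|\xi_t\|$. Differentiating twice and combining Cauchy--Schwarz with $m_2\|\xi\|^2\le\langle\xi,A_t\xi\rangle\le M_2\|\xi\|^2$ yields the differential inequalities $-M_2 r\le\ddot r\le\|\dot\xi\|^2/r-m_2 r$, together with $\dot r(0)=0$. Since $T^2=\tfrac18\,m_2/M_2^2$, a short first-exit bootstrap — use $\ddot r\ge-M_2 r$ to show $r$ stays in $[\tfrac12 r(0),r(0)]$ and $\|\dot\xi_t\|\le M_2 r(0)t\le\tfrac{1}{2\sqrt2}\sqrt{m_2}\,r(0)$ on $[0,T]$, then feed this back into the upper inequality to get $\ddot r\le-\tfrac{m_2}{4}r(0)$ there — gives $r(T)\le(1-\tfrac{m_2 T^2}{8})r(0)=(1-\tfrac1{64}(m_2/M_2)^2)\|x-y\|$. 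This bound is uniform in $\mathbf{p}$, so the synchronous coupling certifies $\E[\|X_1-Y_1\|^k]\le(1-\tfrac1{64}(m_2/M_2)^2)^k\|x-y\|^k$ for every $k$, which is \eqref{MixingLogConcaveMainConc2}. (Global existence and uniqueness of the flow follow from $U'$ being $M_2$-Lipschitz, and energy conservation confines trajectories to where the Hessian bounds apply; with $\mathcal{X}=\mathbb{R}^d$ this last point is automatic.)

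\textbf{Step 2: the spectral bound \eqref{MixingLogConcaveMainConc3}.} The $k=1$ case of Step~1 says the ideal HMC kernel $K$ has coarse Ricci curvature at least $\kappa=\tfrac1{64}(m_2/M_2)^2$. Since $K$ is reversible with respect to $\pi$ (Liouville's theorem, time-reversal symmetry of Hamiltonian dynamics, and the evenness of $\Phi_1$), the standard fact that $\mathrm{Lip}(Kf)\le(1-\kappa)\mathrm{Lip}(f)$ forces every $\lambda\neq1$ in the spectrum of $K$ to obey $|\lambda|\le1-\kappa$, so $\tau_{\mathrm{rel}}(K)\le1/\kappa=64(M_2/m_2)^2$.

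\textbf{Step 3: the TV bound \eqref{MixingLogConcaveMainConc1}.} I would combine three inputs by a coupling argument. (i) Step~1 gives $W_1(K^t(x,\cdot),\pi)\le(1-\kappa)^t(\|x\|+\E_\pi\|\cdot\|)$ with $\E_\pi\|\cdot\|=O(\sqrt{d/m_2})$, so $\mathcal{L}(X_t)$ approaches $\pi$ in $W_1$ at the fast rate $\kappa$. (ii) Assumption~\ref{AssumptionsDrift} holds for $K$ with, e.g., $V(x)=e^{\|x\|}$ via Lemma~\ref{ThmDriftHMC}, and $\E_\pi V$ is of order $e^{d/(4m_2)}$; this controls the probability that the chain leaves a large ball $B_R$ and is the source of the factor $e^{\|x\|+d/(4m_2)}$ and of the slow rate $\kappa\delta$. (iii) A one-step total-variation regularization lemma: on $B_R$ the kernel $K(x,\cdot)$, being the pushforward of $\Phi_1$ by $\mathcal{Q}_T^x$, has a Lebesgue density bounded above and below and Lipschitz in $x$, so two chains at distance $\eta$ with both endpoints in $B_R$ can be merged in one further step with probability at least $1-C\eta/\delta$. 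One then runs the chain from $x$ against a stationary copy, uses the synchronous coupling of Step~1 to make the gap $W_1$-small at rate $\kappa$, attempts an exact merge via the density overlap of (iii), and absorbs excursions out of $B_R$ into the drift estimate; iterating over blocks of length $\sim100/\kappa$ produces the geometric term $25^{-\lfloor\kappa t/100\rfloor}$ from the contraction-and-merge mechanism and the term $e^{-\kappa\delta t/2}e^{\|x\|+d/(4m_2)}$ from the drift.

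\textbf{Main obstacle.} The crux is ingredient (iii) of Step~3 with the \emph{explicit} constant $\delta$ of \eqref{EqThm1Consts}. Proving it means analyzing the Jacobian $\partial\mathcal{Q}_T^x/\partial\mathbf{p}$ — i.e.\ solving the variational (Jacobi) equation, a second-order \emph{matrix} linear ODE obtained by linearizing Hamilton's equations — and bounding its determinant away from $0$ and $\infty$, which is exactly where the Lipschitz bound $M_3$ on $\nabla^2U$ from Assumption~\ref{AssumptionSecondDerivative} enters; tracking how these bounds degrade in $d$, $M_3$, $m_2$, $M_2$ is what yields the four-term minimum defining $\delta$. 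By comparison, the ODE bootstrap of Step~1 and the coupling bookkeeping of Step~3 are routine once (iii) is in hand.
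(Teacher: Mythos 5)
Your Steps 1 and 2 match the paper's route essentially exactly: the synchronous (same-momentum) coupling, the scalar differential inequality bootstrap from strong convexity, the explicit contraction rate $1-\tfrac{1}{64}(m_2/M_2)^2$ at $T=\tfrac{1}{2\sqrt2}\tfrac{\sqrt{m_2}}{M_2}$, and coarse Ricci curvature for the spectral bound (the paper cites Ollivier directly, what you derive in Step 2). You also correctly identify the crux of Step 3 — a one-step TV regularization via bounds on the Jacobian $\partial\mathcal{Q}_T^{\mathbf q}/\partial\mathbf{p}$, where $M_3$ enters — which is precisely the paper's Lemma~\ref{LemmaMinStrongLog}.

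The one genuine divergence is in how Step 3 is packaged. You invoke the \emph{univariate} drift of Assumption~\ref{AssumptionsDrift} (via Lemma~\ref{ThmDriftHMC}) on $V(x)=e^{\|x\|}$ together with a localization to a ball $B_R$. The paper does neither for Theorem~\ref{ThmMainConcave}: because Assumption~\ref{AssumptionsConvexity} is assumed with $\mathcal{X}=\mathbb{R}^d$, Lemma~\ref{LemmaMinStrongLog} gives a minorization that is uniform over all of $\mathbb{R}^d$, so no ball is needed; and instead of a univariate Lyapunov function, the paper feeds the drift--minorization machinery (Lemma~\ref{LemmaDriftMinRep}, a variant of Rosenthal) a \emph{bivariate} Lyapunov function $h(x,y)=e^{\|x-y\|}$ on the synchronously coupled pair, whose geometric decay (when $\|x-y\|>\delta$) follows for free from the Wasserstein contraction of Step 1 with rate $\alpha^{-1}=e^{-\kappa\delta}$; the factor $e^{\|x\|+d/(4m_2)}$ then comes from $\E_\pi[e^{\|x-Y\|}]$, not from a drift bound. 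This matters for the constants: the drift constant $A$ in Lemma~\ref{ThmDriftHMC} scales like $e^{O(M_2^{2.5}/m_2^4)}$, so threading it through a Rosenthal-type estimate would not straightforwardly reproduce the exact form of~\eqref{MixingLogConcaveMainConc1}. The paper's bivariate-drift choice sidesteps this; the univariate-drift-plus-$B_R$ mechanism you sketch is what the paper actually reserves for the bulk version Theorem~\ref{ThmMainConcaveBulk}, where strong convexity fails outside $\mathcal{X}$ and the localization is unavoidable.
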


\begin{proof}
The proof is deferred to Section \ref{SecMainRes}.
\end{proof}

\begin{remark}
The ratio $\frac{m_{2}}{M_{2}}$ that appears prominently in the conclusions of Theorem \ref{ThmMainConcave} can be made much smaller for realistic examples by the use of appropriate preconditioning steps. See Section \ref{SecPreproc} for details.
\end{remark}

 It is common for posterior distributions to be strongly log-concave in a large region around their mode, but not log-concave on the entire state space; for example, the $t$ distributions have this property. When the target distribution is strongly log-concave only on the bulk, we obtain the slightly weaker guarantee:

\begin{thm} [Mixing For Targets With Strongly Log-Concave Bulk] \label{ThmMainConcaveBulk}
Let $K$ be the transition kernel defined by Algorithm \ref{DefSimpleHMC}  with parameter  $T= \frac{1}{2 \sqrt{2}} \frac{\sqrt{m_{2}}}{M_{2}}$. Let Assumptions \ref{AssumptionsConvexity}, \ref{AssumptionsDrift} and \ref{AssumptionSecondDerivative} hold, with  $\mathcal{X}$ satisfying
\be \label{MixingBulkContainmentCond1}
\{ x \, : \, U(x) \leq  d \, \log(200m) +  \sup_{x \, : \, V(x) \leq \frac{4000mb}{a}} U(x)  \} \subset \mathcal{X}.
\ee 

Then $K$  satisfies the mixing bound 
\be \label{MixingLogConcaveBulkMainConc1}
\| K^{t}(x,\cdot) - \pi \|_{\mathrm{TV}} \leq \beta_{1}^{\lfloor\frac{t}{m} \rfloor} + \beta_{2}^{\lfloor\frac{t}{m} \rfloor} \, (1 + \frac{b}{a} + V(x)), \,
\ee 
where the constants are given by
\be 
\log(\beta_{1}) = \frac{\log(1 + \frac{a}{3})}{\log(\frac{80b}{a^{2}})} \log(0.9) < 0, \quad \beta_{2} = (1 - \frac{1}{3}a) < 1
\ee

and 

\be \label{EqThm2Consts}
\delta = \Theta \left( \min \left( \sqrt{\frac{m_{2}}{M_{2}}}, d^{-1.5}, \frac{\sqrt{m_{2}M_{2}}}{M_{3} d}, \frac{d M_{3}}{M_{2}^{2}} \right) \right), \, m = \big\lceil \frac{\log(\delta)}{\log(1 - \frac{m_{2}^{2}}{64 M_{2}^{2}})} \big\rceil.
\ee 

Furthermore, under the same conditions $K$ satisfies the spectral bound 
\be \label{SpectralLogConcaveBulkMainConc1}
\tau_{\mathrm{rel}}(K) = O \left(\frac{m}{1 - \max(\beta_{1},\beta_{2})} \right).
\ee 

\end{thm}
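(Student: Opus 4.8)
The plan is to run the classical drift-and-minorization argument. The drift condition is exactly Assumption \ref{AssumptionsDrift}, so it is given; the missing ingredient is a minorization condition on a sublevel set of $V$ at the time scale of $m$ steps, which I would build from the Wasserstein contraction of Theorem \ref{ThmMainConcave} localized to the convex region $\mathcal{X}$, together with the smoothing supplied by Assumption \ref{AssumptionSecondDerivative}. Feeding this data into a quantitative convergence theorem of Rosenthal / Hairer--Mattingly type then produces \eqref{MixingLogConcaveBulkMainConc1} with the stated $\beta_1,\beta_2$, and \eqref{SpectralLogConcaveBulkMainConc1} follows from reversibility of $K$.

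\emph{Minorization at lag $m$.} Put $R:=\tfrac{4000mb}{a}$, $C:=\{z:V(z)\le R\}$, $U^{\star}:=\sup_{V(z)\le R}U(z)$, and fix $x,y\in C$. Run the two HMC chains from $x,y$ with a common momentum update $\mathbf{p}_i$ (synchronous coupling). Energy conservation gives $U(q_t(X_i,\mathbf{p}_i))\le U(X_i)+\tfrac12\|\mathbf{p}_i\|^2$, so on the event that the chain is still in $C$ at step $i$ and $\tfrac12\|\mathbf{p}_i\|^2\le d\log(200m)$ the entire $i$-th Hamiltonian segment lies in $\{U\le U^{\star}+d\log(200m)\}\subseteq\mathcal{X}$ by \eqref{MixingBulkContainmentCond1}, and there the per-step contraction from the proof of Theorem \ref{ThmMainConcave} (which uses Assumption \ref{AssumptionsConvexity} only along the traversed trajectory) gives $\|X_{i+1}-Y_{i+1}\|\le(1-\kappa)\|X_i-Y_i\|$, $\kappa=\tfrac{m_2^2}{64M_2^2}$. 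The $\|\mathbf{p}_i\|^2$ are $\chi^2_d$, so a union bound over $i<m$ controls the momenta with all but exponentially small probability, and iterating the drift (using $V(x)\le R$ and the factor $4000m$) keeps the chain in $C$ throughout the block with probability near $1$; hence with probability $\ge0.9$ the synchronous coupling is valid for the whole block and, by the choice of $m$ in \eqref{EqThm2Consts} (so that $(1-\kappa)^m$ is of order $\delta$), the positions are then within a distance $O(\delta)$ — the four quantities in the minimum defining $\delta$ being exactly those needed for the next step to be uniform in $T$, $M_3$ and $d$. On this event I replace the final synchronous step by a coupling of the two momentum updates that exploits the local invertibility of the flow map $\mathcal{Q}_T^{(\cdot)}$ (whose derivatives are controlled via $M_3$), so that $\mathcal{Q}_T^{X_{m-1}}(\mathbf{p}_{m-1})=\mathcal{Q}_T^{Y_{m-1}}(\mathbf{p}'_{m-1})$ with probability at least an absolute constant (the worst-case overlap of $\Phi_1$ with its push-forward under $(\mathcal{Q}_T^{Y_{m-1}})^{-1}\circ\mathcal{Q}_T^{X_{m-1}}$). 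Combining, $\inf_{x\in C}K^m(x,\cdot)\ge\epsilon_0\,\nu(\cdot)$ for an absolute $\epsilon_0>0$ and a probability measure $\nu$; thus $C$ is a small set for $K^m$.

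\emph{Assembling the bounds.} Iterating Assumption \ref{AssumptionsDrift} gives $\E[V(X_m)\mid X_0=x]\le(1-a)^mV(x)+\tfrac ba$, and the constant $4000m$ makes $R$ comfortably larger than $\tfrac{2(b/a)}{1-(1-a)^m}$, so the hypotheses of the standard quantitative drift-and-minorization theorems hold for $K^m$. Applying such a theorem with drift rate $(1-a)$, offset $b$, small-set level $\tfrac{4000mb}{a}$, and minorization constant $\epsilon_0$, and optimizing the free parameter, produces exactly the geometric bases $\beta_1=0.9^{\log(1+a/3)/\log(80b/a^2)}$ and $\beta_2=1-\tfrac13a$ (using $(1-a)^{1/3}\le1-\tfrac13a$), i.e.\ $\|K^{mj}(x,\cdot)-\pi\|_{\TV}\le\beta_1^{j}+\beta_2^{j}(1+\tfrac ba+V(x))$ for $j\in\mathbb{N}$; monotonicity of $t\mapsto\|K^t(x,\cdot)-\pi\|_{\TV}$ with $j=\lfloor t/m\rfloor$ gives \eqref{MixingLogConcaveBulkMainConc1}. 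Finally, the ideal HMC position kernel is reversible with respect to $\pi$ (the time-$T$ flow preserves Lebesgue measure and energy and satisfies $\phi_{-T}=S\circ\phi_T\circ S$ with $S(q,p)=(q,-p)$, and the refresh uses the symmetric $\Phi_1$), hence self-adjoint on $L^2(\pi)$; the drift and minorization just established therefore yield a spectral gap for $K$, and tracking the constants gives \eqref{SpectralLogConcaveBulkMainConc1}.

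\emph{Main obstacle.} The crux is the claim inside the minorization step that, started anywhere in $C$, the coupled Hamiltonian trajectories remain in $\mathcal{X}$ throughout a whole block of $m$ steps with probability bounded below by an absolute constant. This requires a simultaneous control of the $\chi^2_d$ momentum norms across the block and of the growth of $V(X_i)$ (hence of $U(X_i)$) under the drift, tied together through energy conservation and \eqref{MixingBulkContainmentCond1}; the two tuned constants $d\log(200m)$ and $\tfrac{4000mb}{a}$ are precisely what makes this union bound close. A secondary bookkeeping point is verifying that $m$ as defined in \eqref{EqThm2Consts}, although written only in terms of $\kappa$ and $\delta$, is long enough once the (polynomial) diameter of $C$ is accounted for. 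Everything else is either given (the drift) or a black-box citation (the Rosenthal/Hairer--Mattingly theorem, and reversibility $+$ geometric ergodicity $\Rightarrow$ spectral gap).
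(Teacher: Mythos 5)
Your overall strategy matches the paper's almost exactly: run the two chains synchronously to contract the positions, keep a uniform energy bound so Theorem \ref{ThmContractionConvexMainResult} can be applied trajectory-by-trajectory inside $\mathcal{X}$, then close the coupling using the kernel-smoothness controlled by $M_3$ (this is Lemma \ref{LemmaMinStrongLog}), and feed the result into a drift-and-coupling theorem for the $m$-skeleton. That said, there are two concrete gaps.

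First, you conclude from the pairwise coupling argument that $\inf_{x\in C}K^m(x,\cdot)\ge\epsilon_0\nu(\cdot)$ for a single measure $\nu$. This inference is not valid: what your (and the paper's) construction proves is the weaker uniform total-variation overlap
\begin{equation}
\sup_{x,y\in C}\|K^{m}(x,\cdot)-K^{m}(y,\cdot)\|_{\mathrm{TV}}\le 1-\epsilon_0,
\end{equation}
and pairwise overlap does not imply a Doeblin minorization by a common $\nu$ (think of a three-point chain where each kernel is uniform on two of three points: every pair overlaps by $1/2$, yet $\inf_x K(x,\{z\})=0$ for every $z$). This matters because the off-the-shelf Rosenthal (1995) Theorem 12 and the Hairer--Mattingly Harris theorem you cite both demand genuine minorization. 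The paper recognizes this and proves a custom variant (Lemma \ref{LemmaDriftMinRep}, Remark \ref{RemThm12Rose}) whose hypothesis is exactly the pairwise-coupling bound above; the proof of Rosenthal's theorem carries over word-for-word because minorization is only ever used there to construct a coupling. You need either to swap in that weakened version, or actually prove minorization, which your argument does not do.

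Second, the bookkeeping of the small set is wrong. You take $C=\{z:V(z)\le\tfrac{4000mb}{a}\}$ and $x,y\in C$, then assert that "iterating the drift keeps the chain in $C$ throughout the block with probability near $1$." But iterating the drift from $V(x)\le R$ only gives $\E[V(X_i)]\le R+\tfrac{b}{a}$, so Markov's inequality gives $\P[V(X_i)>R]$ of order one, not small, and the union bound over $m$ steps does not close. The paper's actual choice is a \emph{two-tier} construction: the small set for the minorization is $\mathcal{S}=\{V\le\tfrac{40b}{a}\}$, and the event $\mathcal{E}_1$ requires only that $V$ stay below the much larger threshold $\tfrac{4000mb}{a}$ throughout the block. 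Starting in $\mathcal{S}$, one has $\E[V(X_i)]\le\tfrac{41b}{a}$ for all $i$, and Markov gives $\P[V(X_i)>\tfrac{4000mb}{a}]\le\tfrac{41}{4000m}$, so the union bound over $2m$ checks is at most $\approx\tfrac{1}{25}$. The factor $\approx 100m$ of slack between the small-set level and the containment level is precisely what makes the probability small, and your version has zero slack. Fixing this also brings your Rosenthal parameters in line with the paper's, which applies Theorem 12 with small-set level $\tfrac{40b}{a}$ (yielding the stated $\beta_1,\beta_2$). You correctly flag the third bookkeeping point — that $m$ as written does not account for the diameter of the small set in the contraction $(1-\kappa)^m\|x-y\|\le\delta$ — which is a real (if minor) issue in the paper as well.
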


\begin{proof}
The proof is deferred to Appendix \ref{SecThmMainConcaveBulk}.
\end{proof}

\subsection{Approximate HMC Dynamics}

It is difficult to solve Hamilton's equations \eqref{EqHamiltonEquations} for most Hamiltonians of interest. In practice, one uses numerical integrators such as the Euler or Verlet integrator in order to approximate solutions (see \cite{neal2011mcmc}). We set some generic notation for approximations of HMC dynamics and recall some standard numerical integration algorithms. 

Throughout the paper, by ``numerical integrator" we mean any parameterized family $\dagger = \{ \dagger_{d, U, \theta, T} \}$ of functions $\dagger_{d,U,\theta,T} \, : \,  \mathbb{R}^{2d} \mapsto \mathbb{R}^{2d}$, where the parameters take the following values:
\begin{itemize}
\item The dimension $d \in \mathbb{N}$.
\item The potential function $U$ is any twice-differentiable function $U \, : \, \mathbb{R}^{d} \mapsto \mathbb{R}^{+}$.
\item The accuracy level $\theta \in \mathbb{R}^{+}$ is any nonnegative real number.
\item The integration time $T \in \mathbb{R}^{+}$ is any nonnegative real number.
\end{itemize}

We assume that all numerical integrators $\dagger$ satisfy the following separability condition:

\begin{defn} [Separability Condition] \label{DefSepCond}
For all $\theta, T > 0$, all $a, b \in \mathbb{N}$ and all potential functions $U_{a} \, : \, \mathbb{R}^{a} \mapsto \mathbb{R}$ and $U_{b} \, : \, \mathbb{R}^{b} \mapsto \mathbb{R}$, we have 
\be 
(\dagger_{a,U_{a},\theta,T}, \dagger_{b,U_{b},\theta,T}) = \dagger_{a+b,U_{a} \otimes U_{b},\theta,T},
\ee 
where $U_{a} \otimes U_{b} \, : \, \mathbb{R}^{a+b} \mapsto \mathbb{R}$ is given by the formula
\be 
U_{a} \otimes U_{b}  (x_{1},\ldots,x_{a+b}) = U_{a}(x_{1},\ldots,x_{a}) + U_{b}(x_{a+1},\ldots,x_{a+b}).
\ee 
\end{defn}

To make the notation for approximate HMC dynamics similar to that for the ideal HMC dynamics, for any numerical integrator $\dagger$ and potential function $U \, : \, \mathbb{R}^{d} \mapsto \mathbb{R}^{+}$ we define 
\be 
(q^{\dagger \theta}_{T}(\mathbf{q},\mathbf{p}), p^{\dagger \theta}_{T}(\mathbf{q},\mathbf{p})) &= \dagger_{d,U,\theta,T}(\mathbf{q},\mathbf{p}). \\
\ee 
Note that, as in the notation for exact solutions to Hamilton's dynamics, the dependence on $U$ is suppressed.

Numerical integrators generally work by repeatedly taking very small steps. To distinguish between the full numerical integrator and its component small steps, we refer to a parameterized collection of functions $ \bigstar = \{ \bigstar_{d,U,\theta} \}$  with $\bigstar_{d,U,\theta} \, : \, \mathbb{R}^{2d} \mapsto \mathbb{R}^{2d}$ as an \textit{oracle}, where the range of $d,U, \theta$ is the same as for numerical integrators. Again, to make the notation similar to our notation for HMC we define 
\be 
(q^\star_{\theta}(\mathbf{q},\mathbf{p}), p^\star_{\theta}(\mathbf{q},\mathbf{p})) = \bigstar_{d, U,\theta}(\mathbf{q},\mathbf{p})
\ee 

and suppress the dependence on the potential $U$. We now recall some popular numerical integrators in the literature. The standard first-order Euler integrator is given by Algorithm \ref{alg:Approx_integrator} with oracle given by Algorithm \ref{alg:Integrator_first_order} and $k=1$:

\begin{algorithm}[H]
\caption{First-order Euler integrator \label{alg:Integrator_first_order}}
\flushleft
\textbf{parameter:} Potential $U$ and accuracy $\theta>0$.\\
 \textbf{input:}   Initial point $\mathbf{q} \in \mathbb{R}^d$, initial momentum $\mathbf{p} \in \mathbb{R}^d$.\\
 \textbf{output:}  Approximate position $q^\star_{\theta}(\mathbf{q},\mathbf{p})$ and momentum $p^\star_{\theta}(\mathbf{q},\mathbf{p})$.\\
\begin{algorithmic}[1]
\State Set $q^\star_{\theta}(\mathbf{q},\mathbf{p}) = \mathbf{q}+\mathbf{p}\theta$.
\\ Set $p^\star_{\theta}(\mathbf{q},\mathbf{p}) = \mathbf{p} - \theta U'(\mathbf{q})$.
\end{algorithmic}
\end{algorithm}

The standard second-order integrator is given by Algorithm \ref{alg:Approx_integrator} with oracle given by Algorithm \ref{alg:Integrator_leapfrog} and $k=2$:

\begin{algorithm}[H]
\caption{Second-order leapfrog integrator \label{alg:Integrator_leapfrog}}
\flushleft
\textbf{parameter:} Potential $U$ and accuracy $\theta>0$.\\
 \textbf{input:}   Initial point $\mathbf{q} \in \mathbb{R}^d$, initial momentum $\mathbf{p} \in \mathbb{R}^d$.\\
 \textbf{output:}  Approximate position $q^\star_{\theta}(\mathbf{q},\mathbf{p})$ and momentum $p^\star_{\theta}(\mathbf{q},\mathbf{p})$.\\
\begin{algorithmic}[1]
\State Set $\hat{p}^\star_{\frac{\theta}{2}}(\mathbf{q},\mathbf{p}) = \mathbf{p} - \frac{\sqrt{\theta}}{2} U'(\mathbf{q})$.
\\Set $q^\star_{\theta}(\mathbf{q},\mathbf{p}) = \mathbf{q}+\sqrt{\theta} \hat{p}^\star_{\frac{\theta}{2}}(\mathbf{q},\mathbf{p}) $.
\\ Set $p^\star_{\theta}(\mathbf{q},\mathbf{p}) = \hat{p}^\star_{\frac{\theta}{2}}(\mathbf{q},\mathbf{p}) - \frac{\sqrt{\theta}}{2} U'(q^\star_{\theta}(\mathbf{q},\mathbf{p}))$.
\end{algorithmic}
\end{algorithm}

Algorithm \ref{alg:Approx_integrator} gives the standard Hamiltonian integrator algorithm based on a generic oracle:

\begin{algorithm}[H]
\caption{Numerical Approximation of Hamiltonian Trajectory \label{alg:Approx_integrator}}
\flushleft
\textbf{parameters:} Oracle $\bigstar$, potential $U$, integration time $T>0$, order $k \in \mathbb{N}$ and accuracy $\theta>0$.\\
 \textbf{input:}   Initial point $\mathbf{q} \in \mathbb{R}^d$, initial momentum $\mathbf{p} \in \mathbb{R}^d$.\\
 \textbf{output:} $q^{\dagger \theta}_{T}(\mathbf{q},\mathbf{p})$.
\begin{algorithmic}[1]
\State Set $q^0_\dagger=\mathbf{q}$ and $p^0_\dagger = \mathbf{p}$. 
\For{$i=0$ to $\lceil \frac{T}{\theta^{\frac{1}{k}}}-1 \rceil$}
\\ Call the oracle $\bigstar$ to compute $q^{i+1}_\dagger = q^\star_{\theta}(q^i_\dagger,p^i_\dagger)$ and $p^{i+1}_\dagger = p^\star_{\theta}(q^i_\dagger,p^i_\dagger)$.
\EndFor
\\ Set $q^{\dagger \theta}_{T}(\mathbf{q},\mathbf{p}) = q^{i+1}_\dagger$.
\end{algorithmic}
\end{algorithm}

There are two popular algorithms for defining an HMC Markov chain based on a numerical integrator. The first is the \textit{unadjusted} HMC algorithm:

\begin{algorithm}[H]
\caption{Unadjusted HMC \label{alg:Unadjusted}}
\flushleft
\textbf{parameters:} Number of steps $i_{\max} \in \mathbb{N}$, numerical integrator $\dagger$, potential $U$, integration time $T>0$, and accuracy $\theta>0$.\\
 \textbf{input:}   Initial point $X_0 \in \mathbb{R}^d$.\\
 \textbf{output:} Markov chain $X_0, X_1, \ldots, X_{i_{\max}}$.
\begin{algorithmic}[1]
\For{$i=0$ to $i_{\mathrm{max}}-1$}
\\Sample $\mathbf{p}_i \sim \Phi_{1}$.
\\ Set $X_{i+1} =  q^{\dagger \theta}_{T}(X_i,\mathbf{p}_i)$.
\EndFor
\end{algorithmic}
\end{algorithm}

Algorithm \ref{alg:Unadjusted} also naturally defines the phase-space unadjusted HMC Markov chain $\{(X_i, \mathbf{p}_i) \}_{i \geq 0}$.

The second is the \textit{Metropolis-adjusted} HMC Markov chain. Letting $\mathrm{Unif}([0,1])$ be the uniform distribution on $[0,1]$, we write:

\begin{algorithm}[H]
\caption{Metropolis-adjusted HMC \label{alg:Metropolis}} 
\flushleft
\textbf{parameters:} Number of steps $i_{\max} \in \mathbb{N}$, numerical integrator $\dagger$, potential $U$, integration time $T>0$, and accuracy $\theta>0$.\\
 \textbf{input:}   Initial point $X_0 \in \mathbb{R}^d$.\\
 \textbf{output:} Markov chain $X_0, X_1, \ldots, X_{i_{\max}}$.
\begin{algorithmic}[1]
\For{$i=0$ to $i_{\mathrm{max}}-1$}
\\Sample $\mathbf{p}_i \sim \Phi_{1}$.
\\ Set $q_{i+1} =  q^{\dagger \theta}_{T}(X_i,\mathbf{p}_i)$ and  $p_{i+1} =  p^{\dagger \theta}_{T}(X_i,\mathbf{p}_i)$.
\\ Sample $b_{i} \sim \mathrm{Unif}([0,1])$.
\\ Set  $X_{i+1} = q_{i+1}$ if $b_{i} < \min\{1, e^{H(q_{i},p_{i})-H(q_{i+1},p_{i+1})}\}$; else set  $X_{i+1} =X_i$
\EndFor
\end{algorithmic}
\end{algorithm}

Algorithm \ref{alg:Metropolis} also naturally defines the phase-space unadjusted HMC Markov chain $\{(X_i, \mathbf{p}_i) \}_{i \geq 0}$.

\subsection{Main Assumptions for Approximate HMC Dynamics}

We will obtain bounds on the mixing of several approximations to the ideal HMC dynamics. When analyzing the first-order Euler approximation of Algorithms \ref{alg:Approx_integrator} and \ref{alg:Integrator_first_order}, we will not need any additional assumptions. However, when analyzing higher-order approximations, we will make additional assumptions on the potential $U$ and numerical integrator $\dagger$. These assumptions follow in the tradition of previous papers on scaling limits, such as \cite{HMC_optimal_tuning, roberts1997weak}. 

Fix $\mathsf{m} \in \mathbb{N}$ for this section. Also set the notation $x^{(i)} := (x[\mathsf{m}(i-1)+1], \ldots, x[\mathsf{m}i])$ for $x \in  \mathbb{R}^{d}$. We make the following seperability assumption on $U$:

\begin{assumption} \label{assumption:product_measure_potential}
There exist functions $U_1,U_2, \ldots, U_{\frac{d}{\mathsf{m}}}$ with  $U_i \, : \, \mathbb{R}^{\mathsf{m}}\rightarrow \mathbb{R}^+$ for every $i \in \{1, \ldots \frac{d}{\mathsf{m}}\}$, such that
\be
U(q) = \sum_{i=1}^\frac{d}{\mathsf{m}} U_i(q^{(i)}) \quad \quad \forall q \in \mathbb{R}^d.
\ee
In this case we define $H_i(q^{(i)},p^{(i)}) = U_i(q^{(i)}) + \frac{1}{2}\|p^{(i)}\|^2$ for every $ q,p \in \mathbb{R}^d$ and all $i \in \{1, \ldots \frac{d}{\mathsf{m}}\}$.
\end{assumption}

We will assume that our numerical integrator is very accurate for well-behaved potentials. In particular, we consider the class of potentials:

\begin{assumption} (Assumption 5.1 of \cite{HMC_optimal_tuning}) \label{assumption:leapfrog} 
There exists a constant $\mathsf{B}>0$, and numbers $\mathsf{a},\mathsf{b} \in \mathbb{N}$, such that for every $i$ the potential $U_i$ satisfies:
\begin{itemize}
\item $U_i \in C^{\mathsf{a}}( \mathbb{R}^\mathsf{m} \rightarrow \mathbb{R}^{+})$.
\item The first $\mathsf{b}$ Fr\'echet derivatives of $U'_i$ are uniformly bounded by the constant $\mathsf{B}$.
\end{itemize}
\end{assumption}

On this class of potentials, we make the following assumption about the integrator $\dagger$:

\begin{cond} \label{condition:kth_order}
There exists some constants $\mathsf{B}, \mathsf{c}>0$, integers $\mathsf{a},\mathsf{b} \in \mathbb{N}$, and a constant $\mathsf{K} = \mathsf{K}_{T} > 0$  that depends only on $T$, such that for every potential $U: \mathbb{R}^{\mathsf{m}} \rightarrow \mathbb{R}^+$ satisfying Assumption \ref{assumption:leapfrog} with these constants $\mathsf{a}, \mathsf{b}, \mathsf{B}$ we have
\be
\|q_T^{ \dagger \theta }(\mathbf{q}, \mathbf{p}) - q_T(\mathbf{q}, \mathbf{p})\| \leq \theta \mathsf{K} (H^{\mathsf{c}}(\mathbf{q}, \mathbf{p}) +1)
\ee
and
\be
|H(q_T^{ \dagger \theta}(\mathbf{q},\mathbf{p}), p_T^{\dagger \theta}(\mathbf{q}, \mathbf{p}))-  H(\mathbf{q},\ \mathbf{p})| \leq \theta \mathsf{K} (H^{2\mathsf{c}}(\mathbf{q},\mathbf{p}) +1).
\ee
\end{cond}

Finally, we formalize the notion of ``computational cost" of a numerical implementation of HMC as follows:

\begin{defn} [Computational Cost]

Let $Q$ be the transition kernel of a Markov chain given by either Algorithm \ref{alg:Unadjusted} or \ref{alg:Metropolis}, with notation as in those algorithms. Assume that the numerical integrator $\dagger$ used to define $Q$ is given by Algorithm \ref{alg:Approx_integrator}, with order $k$ as in that algorithm. We then define the \textit{computational cost of taking $s$ steps from $Q$} by the function 
\be
N(Q,s) = \theta^{-\frac{1}{k}} \, T \, s 
\ee
for $s \geq 0$.

\end{defn}

When the oracle used by Algorithm \ref{alg:Approx_integrator} is given by either the Algorithm \ref{alg:Integrator_first_order} or  \ref{alg:Integrator_leapfrog}, $N(Q,s)$ is a bound on the number of times that the gradient of $U$ is evaluated while taking $s$ steps of the HMC Markov chain. More generally, this is close to the number of gradient evaluations that a typical $k$'th-order implementation of HMC will require. In practice, the number of gradient evaluations made is a good proxy for the running time of an HMC chain, which motivates the definition of $N(Q,s)$.

\subsection{Main Results for Approximate HMC Dynamics}

Using the bounds on the mixing of ideal HMC dynamics in Section \ref{SecMainResApproxHmc}, we can obtain similar results for implementations of HMC with various integrators.  All proofs are deferred to Appendix \ref{AppProofsMainApproxRes}. Note that, in each theorem, we give a range on the value of the parameter $\theta$ for which the conclusion holds. 

Our first result says that a simple numerical integrator can give approximate samples from $\pi$ with  $\mathcal{O}_{d}^{*}(\sqrt{d})$ computational cost:

\begin{thm} [Mixing of first-order Unadjusted HMC] \label{ThmMainApprox}

Fix $0 < \epsilon < e^{-1}$ and let $U$ satisfy Assumption \ref{AssumptionsConvexity} with $\mathcal{X} = \mathbb{R}^{d}$. Let $\dagger$ be the integrator given by Algorithm \ref{alg:Approx_integrator} with oracle given by Algorithm \ref{alg:Integrator_first_order} and let $T= \frac{1}{2\sqrt{2}} \frac{\sqrt{m_2}}{M_2}$. For $\theta > 0$, let $Q_{\theta}$ be the transition kernel defined in Algorithm \ref{alg:Unadjusted} with these parameters, and let $\nu_{\theta}$ be its stationary measure. 

Then there exists some constant $\theta_{0} = \theta_{0}(m_{2},M_{2},d,\epsilon)$ satisfying 

\be
\theta_{0} =\Omega^{\ast}\left(d^{-\frac{1}{2}} \times \epsilon \times (\frac{M_2}{m_2})^{-4.5}\right)
\ee
and some universal constant $0 < c < \infty$ so that, for all $0 < \theta \leq \theta_{0}$, the following hold:

For all $\mathcal{I} \geq c\, \frac{M_{2}^{2}}{m_{2}^{2}} \log(\frac{M_{2}}{m_{2} \epsilon})$ and all $x$ satisfying $\|x\|  \leq \frac{\sqrt{d}}{\sqrt{m_2}}$,
\be \label{WassContConc_Unadjusted1}
W_{1}(Q_{\theta}^{\mathcal{I}}(x,\cdot), \pi) \leq \epsilon.
\ee 
Furthermore, if $Q_{\theta}$ is ergodic,
\be \label{WassContConc_Unadjusted2}
W_{1}( \nu_{\theta}, \pi) \leq \epsilon.
\ee 

Finally, if we choose $\theta = \Omega^{\ast}(\theta_{0})$, then the computational cost to compute the first $s= c\, \frac{M_{2}^{2}}{m_{2}^{2}} \log(\frac{M_{2}}{m_{2} \epsilon})$ steps of the Markov chain is
\be
N(Q_{\theta},s) = O^{\ast}\left(d^{\frac{1}{2}} \times \epsilon^{-1} \times (\frac{M_2}{m_2})^{6.5}\right).
\ee
\end{thm}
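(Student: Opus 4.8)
The plan is to combine the dimension-free $W_1$-contraction for the \emph{ideal} HMC kernel (Theorem~\ref{ThmMainConcave}, equation~\eqref{MixingLogConcaveMainConc2}) with a one-step error bound comparing the Euler-based kernel $Q_\theta$ to the ideal kernel, and then to propagate this error along the chain. Write $K$ for the ideal HMC kernel of Algorithm~\ref{DefSimpleHMC} with the same integration time $T=\frac{1}{2\sqrt 2}\frac{\sqrt{m_2}}{M_2}$; by Theorem~\ref{ThmMainConcave} it is a $(1-\kappa)$-contraction in $W_1$ with $\kappa=\frac{1}{64}(m_2/M_2)^2$ (the $k=1$ case of \eqref{MixingLogConcaveMainConc2}, which upgrades to $W_1(K\mu,K\nu)\le(1-\kappa)W_1(\mu,\nu)$ by gluing), and $\pi$ is stationary for $K$, so that $W_1(K^{\mathcal I}(x,\cdot),\pi)\le(1-\kappa)^{\mathcal I}W_1(\delta_x,\pi)$.

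\textbf{Step 1: one-step comparison.} Since both $Q_\theta$ and $K$ are random mapping representations driven by a Gaussian momentum $\mathbf p\sim\Phi_1$, coupling through a shared $\mathbf p$ gives $W_1(Q_\theta(x,\cdot),K(x,\cdot))\le\E_{\mathbf p\sim\Phi_1}\|q^{\dagger\theta}_T(x,\mathbf p)-q_T(x,\mathbf p)\|$. I would bound the integrand by the standard global-error estimate for the forward Euler scheme of Algorithm~\ref{alg:Integrator_first_order}: because $\mathcal X=\mathbb R^d$, the Hamiltonian vector field $(q,p)\mapsto(p,-U'(q))$ is globally Lipschitz with constant $\max(1,M_2)$, and along any exact trajectory energy conservation together with $\frac{m_2}{2}\|q\|^2\le U(q)\le\frac{M_2}{2}\|q\|^2$ controls $\|q_t\|,\|p_t\|$ linearly in $\|x\|$ and $\|\mathbf p\|$ over $[0,T]$; a Gronwall argument on the short window $[0,T]$ then yields $\|q^{\dagger\theta}_T(x,\mathbf p)-q_T(x,\mathbf p)\|\le\theta\,C(m_2,M_2)(\|x\|+\|\mathbf p\|)$ with an explicit $C$. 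Taking expectations and using $\E_{\Phi_1}\|\mathbf p\|\le\sqrt d$ gives
\[
W_1(Q_\theta(x,\cdot),K(x,\cdot))\le\theta\,C(m_2,M_2)\,(\|x\|+\sqrt d)\qquad\text{for all }x\in\mathbb R^d .
\]

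\textbf{Steps 2--3: drift, propagation, stationary measure, cost.} To telescope I first establish a uniform first-moment bound for $Q_\theta$. Combining the ideal contraction toward the mode, the estimate $W_1(K(0,\cdot),\delta_0)=\E\|q_T(0,\mathbf p)\|\le\sqrt{d/m_2}$ (again from energy conservation), and the Step-1 bound shows that, once $\theta\,C(m_2,M_2)\le\kappa/2$, one has $\E_{Q_\theta(y,\cdot)}\|X\|\le(1-\tfrac{\kappa}{2})\|y\|+O(\sqrt{d/m_2})$, hence $R_*:=\sup_i\E_{Q_\theta^i(x,\cdot)}\|\cdot\|=O(\kappa^{-1}\sqrt{d/m_2})$ whenever $\|x\|\le\sqrt{d/m_2}$. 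Setting $e_i=W_1(Q_\theta^i(x,\cdot),K^i(x,\cdot))$ and using the triangle inequality with the $W_1$-contraction of $K$ gives $e_{i+1}\le(1-\kappa)e_i+\theta\,C(m_2,M_2)(R_*+\sqrt d)$, so $e_{\mathcal I}\le\kappa^{-1}\theta\,C(m_2,M_2)(R_*+\sqrt d)$. On the other hand $W_1(K^{\mathcal I}(x,\cdot),\pi)\le(1-\kappa)^{\mathcal I}(\|x\|+\E_\pi\|Z\|)$, with $\E_\pi\|Z\|$ polynomial in $m_2,M_2$ and $\sqrt d$ (Brascamp--Lieb plus a mode/mean comparison), so this is below $\epsilon/2$ once $\mathcal I\ge c\kappa^{-1}\log(\cdots)$; choosing $\theta\le\theta_0$ with $\theta_0=\Omega^*(d^{-1/2}\epsilon(M_2/m_2)^{-4.5})$ makes $e_{\mathcal I}\le\epsilon/2$, which together proves \eqref{WassContConc_Unadjusted1}. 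For \eqref{WassContConc_Unadjusted2}, apply this with $x=0$: if $Q_\theta$ is ergodic then the sequence $Q_\theta^{\mathcal I}(0,\cdot)$ has uniformly bounded first moments, hence is tight and converges weakly to $\nu_\theta$, so lower semicontinuity of $W_1$ gives $W_1(\nu_\theta,\pi)\le\liminf_{\mathcal I}W_1(Q_\theta^{\mathcal I}(0,\cdot),\pi)\le\epsilon$. Finally, the cost bound follows from $N(Q_\theta,s)=\theta^{-1}Ts$ with $\theta=\Theta^*(\theta_0)$, $T=\frac{1}{2\sqrt2}\sqrt{m_2}/M_2$ and $s=c(M_2/m_2)^2\log(M_2/(m_2\epsilon))$, after collecting the powers of $m_2,M_2,d,\epsilon$.

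\textbf{Main obstacle.} The crux is Step~1: producing a one-step error estimate that is only linear in $\sqrt d$ (this linearity is exactly what yields the $d^{1/2}$ cost rather than something worse), while keeping the dependence on $m_2,M_2$ tight enough to land the advertised exponents in $\theta_0$ and $N$. This needs care in the Gronwall/energy-conservation argument --- in particular exploiting that $T$ lives on the natural time scale $\sqrt{m_2}/M_2$, so that the amplification factor $e^{\max(1,M_2)T}$ stays bounded --- and in handling the contribution of atypically large momenta, either by keeping the bound linear in $\|\mathbf p\|$ and invoking Gaussian moments, or by truncating $\|\mathbf p\|$ at a $\mathrm{polylog}$ threshold (which is where the logarithmic slack hidden in $\Omega^*$, and the linear dependence on $\epsilon$, enter).
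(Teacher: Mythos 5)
Your overall strategy -- contract the ideal kernel $K$ in $W_1$, compare $Q_\theta$ to $K$ one step at a time via a coupled momentum and a trajectory error bound, control the first moment of $Q_\theta^i(x,\cdot)$ by a drift inequality, and telescope -- is a valid proof of the theorem, and it is genuinely simpler than the paper's own route. The paper reaches the same conclusion via Lemma~\ref{thm:unadjusted_wass2}, which in turn assembles the Euler error bound of Lemma~\ref{thm:Approx_integrator} (equivalent to your Step~1), a \emph{multiplicative} Lyapunov function $V(x)=e^{\lambda\|x\|}$ (Lemma~\ref{ThmGenericApproxErrorBound3}), and a generic perturbation machine (Lemma~\ref{ThmGenericApproxErrorBound2}) built around a trace-chain argument on a sublevel set $\{V\le C\}$. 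Your additive first-moment Lyapunov avoids all of that machinery. The reason the paper uses the heavier apparatus is that Lemma~\ref{ThmGenericApproxErrorBound2} only requires the one-step closeness $W_1(K(x,\cdot),Q(x,\cdot))<\delta$ on a bounded set, which is what is needed when the same machinery is later applied to the ``toy'' higher-order integrator of Theorems~\ref{ThmMainApprox_leapfrog_unadjusted}--\ref{ThmMainApprox_leapfrog2}, where the error control is not globally linear. For the Euler integrator the error bound of Lemma~\ref{thm:Approx_integrator}, namely $\|q_T^{\dagger\theta}-q_T\|\le 6\theta T\frac{M_2}{\sqrt{m_2}}\sqrt{H(\mathbf{q},\mathbf{p})}$, \emph{is} globally linear in $\|\mathbf{q}\|$ and $\|\mathbf{p}\|$, so your telescoping works directly and is arguably the cleaner argument for this special case.

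One specific point in Step~1 is wrong as stated. You claim the amplification factor $e^{\max(1,M_2)T}$ stays bounded because $T$ lives on the scale $\sqrt{m_2}/M_2$; in fact $\max(1,M_2)T=\frac{\sqrt{m_2}}{2\sqrt 2}$ when $M_2\ge 1$ (and $\frac{\sqrt{m_2}}{2\sqrt 2 M_2}$ otherwise), and neither is uniformly bounded. A naive Gronwall with Lipschitz constant $\max(1,M_2)$ therefore does \emph{not} give a bound with the advertised exponents. The correct amplification is $e^{\sqrt{M_2}T}\le e^{1/(2\sqrt 2)}$, which exploits the second-order structure of Hamilton's equations (the ``$\hat q'\le\hat p,\ \hat p'\le M_2\hat q$'' comparison, whose growth rate is $\sqrt{M_2}$ rather than $M_2$). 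This is precisely what the paper's Lemma~\ref{thm:bounds} provides and what Lemma~\ref{thm:Approx_integrator} iterates over the $T/\theta$ substeps. Your proof as written would give a $C(m_2,M_2)$ of the form $e^{\Theta(\sqrt{m_2})}$, destroying the polynomial bound on $\theta_0$; replacing the first-order Gronwall with the Hamiltonian comparison fixes it, and then everything else in your argument (first-moment drift, the $e_{i+1}\le(1-\kappa)e_i+\theta C(R_*+\sqrt d)$ recursion, lower semicontinuity of $W_1$ for the stationary measure, and the final accounting of exponents) is sound.
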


\begin{remark} [Starting Point]
Under the assumption that $U$ is strongly log-concave, it is straightforward to find a starting point $x \in \mathbb{R}^{d}$ satisfying $\|x \| \leq \sqrt{\frac{d}{m_{2}}}$ using algorithms from convex optimization.
\end{remark}


Our next result shows that $k$'th-order HMC algorithms can give approximate samples from $\pi$ with  $\mathcal{O}_{d}^{*}(d^{\frac{1}{2k}})$ computational cost, under stronger assumptions:

\begin{thm} [Prokhorov Mixing of Higher-Order unadjusted HMC] \label{ThmMainApprox_leapfrog_unadjusted}

Fix $0 < \epsilon < e^{-1}$, let $U$ be a potential satisfying Assumptions \ref{assumption:product_measure_potential}, \ref{assumption:leapfrog} and \ref{AssumptionsConvexity} with $\mathcal{X} = \mathbb{R}^{d}$,  let $\dagger$ be a numerical integrator of order $k\in \mathbb{N}$ satisfying Condition \ref{condition:kth_order}, and set $T =\frac{1}{2\sqrt{2}} \frac{\sqrt{m_2}}{M_2}$. For $\theta > 0$, let $Q_{\theta}$ be the transition kernel  defined by Algorithm \ref{alg:Unadjusted} with these parameters, and denote by $\nu_{\theta}$ its stationary distribution. Then there exists some universal constant $0 < c < \infty$ and, for all $\mathcal{I} \geq c \, \frac{M_{2}^{2}}{m_{2}^{2}} \log(\frac{M_{2}}{m_{2} \epsilon})$, there exists some function $\theta_{0}$ depending on all parameters, with the following properties:

For all $0 < \theta < \theta_{0}$ and all $x \in \mathbb{R}^{d}$ satisfying $\|x\|  \leq \frac{\sqrt{d}}{\sqrt{m_2}}$, 
\be \label{WassContConcUA}
\mathsf{Prokh}(Q_{\theta}^{\mathcal{I}}(x,\cdot), \pi) \leq \epsilon.
\ee

Furthermore, $\theta_{0}$ satisfies

\be
\theta_{0}  =\Omega_{\epsilon, d}^*\left( \frac{\epsilon^2}{\sqrt{d}}\right).
\ee

Finally, if $\dagger$ is given by Algorithm \ref{alg:Approx_integrator} with order $k$ and we choose $\mathcal{I} = \Omega^*(c\, \frac{M_{2}^{2}}{m_{2}^{2}} \log(\frac{M_{2}}{m_{2} \epsilon}))$ and $\theta = \Omega^{\ast}(\theta_{0})$, the computational cost to compute the first $\mathcal{I}$ steps of the Markov chain is
\be
N(Q_{\theta}, \mathcal{I}) = O_{\epsilon, d}^{\ast}\left(d^{\frac{1}{2k}} \times \epsilon^{-\frac{2}{k}}\right).
\ee

\end{thm}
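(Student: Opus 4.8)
The plan is to bound $W_{1}$ between $Q_{\theta}^{\mathcal I}(x,\cdot)$ and $\pi$ and then pass to the Prokhorov metric via the elementary estimate $\mathsf{Prokh}(\mu,\nu)\le W_{1}(\mu,\nu)^{1/2}$ (apply Markov's inequality to an almost-optimal $W_{1}$ coupling). By the triangle inequality for $\mathsf{Prokh}$ it suffices to make both $W_{1}(K^{\mathcal I}(x,\cdot),\pi)$ and $W_{1}(Q_{\theta}^{\mathcal I}(x,\cdot),K^{\mathcal I}(x,\cdot))$ at most $\epsilon^{2}/4$, where $K$ is the ideal kernel of Algorithm \ref{DefSimpleHMC} with the same $T$. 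The first is handled by iterating the one-step contraction \eqref{MixingLogConcaveMainConc2} of Theorem \ref{ThmMainConcave}: since $\pi$ is stationary for $K$, convexity of $W_{1}$ gives $W_{1}(K^{\mathcal I}(x,\cdot),\pi)\le(1-\tfrac{m_{2}^{2}}{64 M_{2}^{2}})^{\mathcal I}\,\mathbb E_{Y\sim\pi}\|x-Y\|$, which, using $\|x\|\le\sqrt{d/m_{2}}$ and the standard bound $\mathbb E_{\pi}\|Y\|^{2}=O(d/m_{2})$ for an $m_{2}$-strongly-log-concave measure centred near $0$, is $O\big((1-\tfrac{m_{2}^{2}}{64 M_{2}^{2}})^{\mathcal I}\sqrt{d/m_{2}}\big)$; this is below $\epsilon^{2}/4$ once $\mathcal I$ is of the order in the statement (up to a logarithmic-in-$d$ factor, which we will see is forced).

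For the comparison of $Q_{\theta}$ with $K$, couple $X_{j}\sim Q_{\theta}^{j}(x,\cdot)$ and $\widetilde X_{j}\sim K^{j}(x,\cdot)$ with a common momentum sequence $\mathbf p_{j}\sim\Phi_{1}$, and split one step as $X_{j+1}-\widetilde X_{j+1}=\big(q^{\dagger\theta}_{T}(X_{j},\mathbf p_{j})-q_{T}(X_{j},\mathbf p_{j})\big)+\big(q_{T}(X_{j},\mathbf p_{j})-q_{T}(\widetilde X_{j},\mathbf p_{j})\big)$. The coupling behind \eqref{MixingLogConcaveMainConc2} bounds the conditional expectation of the second term by $(1-\tfrac{m_{2}^{2}}{64 M_{2}^{2}})\|X_{j}-\widetilde X_{j}\|$, so with $\mathcal E_{j}:=\mathbb E\|q^{\dagger\theta}_{T}(X_{j},\mathbf p_{j})-q_{T}(X_{j},\mathbf p_{j})\|$ the local discretization error, iterating the resulting recursion from $X_{0}=\widetilde X_{0}=x$ yields $W_{1}(Q_{\theta}^{\mathcal I}(x,\cdot),K^{\mathcal I}(x,\cdot))\le\frac{64 M_{2}^{2}}{m_{2}^{2}}\sup_{0\le j<\mathcal I}\mathcal E_{j}$.

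The crucial step is to estimate $\mathcal E_{j}$ so that it scales like $\theta\sqrt d$ rather than $\theta d$. By the separability of $\dagger$ (Definition \ref{DefSepCond}) together with the product form of $U$ (Assumption \ref{assumption:product_measure_potential}), both $q^{\dagger\theta}_{T}$ and the exact flow act block-diagonally, so $\|q^{\dagger\theta}_{T}(X_{j},\mathbf p_{j})-q_{T}(X_{j},\mathbf p_{j})\|^{2}=\sum_{i}\|q^{\dagger\theta}_{T}(X_{j}^{(i)},\mathbf p_{j}^{(i)})-q_{T}(X_{j}^{(i)},\mathbf p_{j}^{(i)})\|^{2}$; Condition \ref{condition:kth_order} applied in each $\mathsf m$-dimensional block (legitimately, since each $U_{i}$ obeys Assumption \ref{assumption:leapfrog}) bounds the $i$-th summand by $\theta^{2}\mathsf K^{2}(H_{i}^{\mathsf c}(X_{j}^{(i)},\mathbf p_{j}^{(i)})+1)^{2}$, and integrating the $\mathsf m$-dimensional Gaussian $\mathbf p_{j}^{(i)}$ gives $\mathcal E_{j}\le C'\theta\sqrt{\mathbb E[\Psi(X_{j})]+d/\mathsf m}$, where $\Psi(q):=\sum_{i}U_{i}(q^{(i)})^{2\mathsf c}$ and $C'$ depends only on $m_{2},M_{2},\mathsf m,\mathsf c,\mathsf K$. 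If $\sup_{j}\mathbb E[\Psi(X_{j})]=O(d)$ then $\mathcal E_{j}=O^{*}(\theta\sqrt d)$, whence the bound above is at most $\epsilon^{2}/4$ for all $\theta\le\theta_{0}$ with $\theta_{0}=\Theta^{*}\big(\epsilon^{2} m_{2}^{2}/(M_{2}^{2}\sqrt d)\big)=\Omega^{*}_{\epsilon,d}(\epsilon^{2}/\sqrt d)$, as claimed.

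The main obstacle is the uniform-in-time drift bound $\sup_{j}\mathbb E[\Psi(X_{j})]=O(d)$ — and this is also precisely where the favourable $\sqrt d$ dependence is earned, through the fact that the $d/\mathsf m$ block-errors add in quadrature rather than coherently. Since the blocks decouple, each $X_{j}^{(i)}$ is an independent copy of the $\mathsf m$-dimensional unadjusted HMC chain for the $m_{2}$-strongly-log-concave $U_{i}$; the exact $\mathsf m$-dimensional flow contracts and the energy estimate in Condition \ref{condition:kth_order} controls the numerical drift, which I expect to produce a geometric drift $\mathbb E\big[U_{i}(X_{j+1}^{(i)})^{2\mathsf c}\,\big|\,X_{j}^{(i)}\big]\le(1-a')U_{i}(X_{j}^{(i)})^{2\mathsf c}+b'$ with $a',b'$ independent of $d$, hence $\mathbb E[\Psi(X_{j})]\le(1-a')^{j}\Psi(x)+b'd/(\mathsf m a')$. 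For the permitted starting points one only has $\Psi(x)\le(M_{2}/2)^{2\mathsf c}\|x\|^{4\mathsf c}=\mathrm{poly}(d)$ (an ``energy-unbalanced'' start), but this enters the telescoped bound for $W_{1}(Q_{\theta}^{\mathcal I}(x,\cdot),K^{\mathcal I}(x,\cdot))$ only through the first $O^{*}(1)$ steps, whose contribution is damped by the factor $(1-\tfrac{m_{2}^{2}}{64 M_{2}^{2}})^{\mathcal I}$ and is negligible once $\mathcal I$ is as in the statement; this is the only place a logarithmic-in-$d$ factor is needed in $\mathcal I$. Finally the cost bound is immediate from $N(Q_{\theta},\mathcal I)=\theta^{-1/k}T\mathcal I$: taking $\theta=\Omega^{*}(\theta_{0})=\Omega^{*}(\epsilon^{2}/\sqrt d)$, $T=\Theta(\sqrt{m_{2}}/M_{2})$, and $\mathcal I$ of the order specified, we get $N(Q_{\theta},\mathcal I)=O^{*}_{\epsilon,d}\big((\sqrt d/\epsilon^{2})^{1/k}\big)=O^{*}_{\epsilon,d}(d^{1/2k}\epsilon^{-2/k})$.
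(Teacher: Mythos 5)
There is a genuine gap at the step you explicitly flag as ``the main obstacle,'' and it does not close with the tools you invoke.  You need a geometric drift $\mathbb{E}\big[U_i(X_{j+1}^{(i)})^{2\mathsf c}\,\big|\,X_j^{(i)}\big]\le(1-a')U_i(X_j^{(i)})^{2\mathsf c}+b'$ with $a',b'$ independent of the dimension and the state, for the chain driven directly by the $k$-th order integrator $\dagger$.  But Condition~\ref{condition:kth_order} only gives a one-step energy increment bounded by $\theta\mathsf K\,(H^{2\mathsf c}+1)$, and a one-step position error bounded by $\theta\mathsf K\,(H^{\mathsf c}+1)$.  For the leapfrog integrator $\mathsf c=2$ (Lemma~\ref{thm:leapfrog}), and more generally for any $\mathsf c>\tfrac12$, these error terms grow super-linearly in $\|q\|$ (since $H\gtrsim m_2\|q\|^2$), while the contraction from Theorem~\ref{ThmContractionConvexMainResult} only buys you a reduction of size $\kappa\|q\|$.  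So for any \emph{fixed} $\theta>0$ there is a region of large $\|q^{(i)}\|$ where the discretization term $\theta\mathsf K\,H_i^{\mathsf c}$ overwhelms the contraction, and no universal drift inequality for $U_i^{2\mathsf c}$ (or any other polynomial Lyapunov function) can hold under $Q_\theta$.  Your comparison $W_1(Q_\theta^{\mathcal I}(x,\cdot),K^{\mathcal I}(x,\cdot))\le\kappa^{-1}\sup_j\mathcal E_j$ is therefore left without a finite right-hand side, and in fact one should not expect a uniform Wasserstein bound — the unadjusted $k$-th order chain may have heavy tails from low-probability excursions, which is precisely why the theorem is stated in the Prokhorov rather than the Wasserstein metric.

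The paper closes this gap by a different decomposition: it introduces the ``toy'' integrator $\diamondsuit$ (Definition~\ref{defn:toy}) that coincides with the given integrator $\sharp$ on a bounded ``good set'' $\mathsf G$ (Equation~\eqref{eq:good_set}) and falls back to the first-order Euler integrator outside.  The Euler integrator is used outside $\mathsf G$ because its energy-error estimate (Lemma~\ref{thm:Approx_integrator}) is \emph{multiplicative}, $|\Delta H|\le 7\tfrac{\theta}{T}H$, which \emph{does} support a dimension-independent drift for $e^{\lambda\|x\|}$ (Lemma~\ref{ThmGenericApproxErrorBound3}).  The proof then couples the four chains $\Lambda,Q_\theta,K,K$ with a shared momentum sequence, observes that $\Lambda$ and $Q_\theta$ agree until exit from $\mathsf G$, bounds $\mathbb{P}[\mathfrak t_G^{(j)}\le\mathcal I]$ blockwise via Lemmas~\ref{thm:MH2'},~\ref{LemmaMH5} and the Lyapunov bound for $\Lambda$, runs the telescoping argument (with Lemma~\ref{thm:leapfrog3} providing the $\sqrt{d/\mathsf m}\,\theta\mathsf A$ per-step error inside $\mathsf G$) conditional on staying in $\mathsf G$, and finally absorbs the small exit probability into the Prokhorov metric.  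Your telescoping recursion and the $\sqrt d$-from-blockwise-quadrature heuristic are both in the spirit of the paper's argument, but without the good-set localization and toy integrator they cannot be made rigorous for $\mathsf c\ge 1$, and the passage to Prokhorov must happen by adding an exit probability rather than by square-rooting a global $W_1$ bound.
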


Our remaining result gives mixing bounds for Metropolis-adjusted HMC, which is guaranteed to have the correct stationary measure (see \cite{neal2011mcmc}):

\begin{thm} [Prokhorov Mixing of higher-order Metropolis-adjusted HMC] \label{ThmMainApprox_leapfrog2}

Fix $0 < \epsilon < e^{-1}$, let $U$ satisfy Assumptions \ref{assumption:product_measure_potential}, \ref{assumption:leapfrog} and \ref{AssumptionsConvexity} with $\mathcal{X} = \mathbb{R}^{d}$, let $\dagger$ satisfy Condition \ref{condition:kth_order}, and let $ T = \frac{1}{2\sqrt{2}} \frac{\sqrt{m_2}}{M_2}$. For fixed $\theta > 0$, let $Q_{\theta}$ be the transition kernel defined by Algorithm \ref{alg:Metropolis} with these parameters and let $\nu_{\theta}$ be its stationary distribution. 

Then there exists some universal constant $0 < c < \infty$ and, for all $\mathcal{I} \geq c \, \frac{M_{2}^{2}}{m_{2}^{2}} \log(\frac{M_{2}}{m_{2} \epsilon})$, there exists some function $\theta_{0}$ depending on all parameters, with the following properties:

For all $0 < \theta < \theta_{0}$ and all $x \in \mathbb{R}^{d}$ satisfying $\|x\|  \leq \frac{\sqrt{d}}{\sqrt{m_2}}$,

\be \label{ProkhContConcMH}
\mathsf{Prokh}(Q_{\theta}^{\mathcal{I}}(x,\cdot), \pi) \leq \epsilon.
\ee 

Furthermore, $\theta_{0}$ satisfies

\be
\theta_{0}  = \Omega_{\epsilon, d}^*\left( \min(\frac{\epsilon}{d}, \, \frac{\epsilon^2}{\sqrt{d}})\right).
\ee

Finally, if $\dagger$ is given by Algorithm \ref{alg:Approx_integrator} with order $k$ and we choose $\mathcal{I} = \Omega^*(c\, \frac{M_{2}^{2}}{m_{2}^{2}} \log(\frac{M_{2}}{m_{2} \epsilon}))$ and $\theta = \Omega^{\ast}(\theta_{0})$, then the computational cost to compute the first $\mathcal{I}$ steps of the Markov chain is
\be
N(Q_{\theta}, \mathcal{I}) = O_{\epsilon, d}^{\ast}\left(\max(d^{\frac{1}{k}} \epsilon^{-\frac{1}{k}}, \, \, d^{\frac{1}{2k}} \epsilon^{-\frac{2}{k}})\right).
\ee

\end{thm}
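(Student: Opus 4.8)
The plan is to bound the Prokhorov distance by comparing the Metropolis-adjusted chain $Q_\theta$ to the \emph{ideal} chain $K$ of Algorithm~\ref{DefSimpleHMC}, which mixes rapidly by Theorem~\ref{ThmMainConcave}. Writing $\mathsf{Prokh}(Q_\theta^{\mathcal I}(x,\cdot),\pi) \le \mathsf{Prokh}(Q_\theta^{\mathcal I}(x,\cdot),K^{\mathcal I}(x,\cdot)) + \mathsf{Prokh}(K^{\mathcal I}(x,\cdot),\pi)$, I would first dispatch the second term. By the $W_1$-contraction \eqref{MixingLogConcaveMainConc2} and the standard bound $\E_\pi\|Y\|^2 \le d/m_2$ for an $m_2$-strongly log-concave $\pi$ with mode at $0$, we get $W_1(K^{\mathcal I}(x,\cdot),\pi) \le (1-\kappa)^{\mathcal I}(\|x\| + \sqrt{d/m_2}) \le 2(1-\kappa)^{\mathcal I}\sqrt{d/m_2}$ with $\kappa = \tfrac1{64}(m_2/M_2)^2$; since $\mathsf{Prokh}(\mu,\nu)\le\sqrt{W_1(\mu,\nu)}$ (apply Markov's inequality to an optimal $W_1$-coupling), taking $\mathcal I \ge c\,\tfrac{M_2^2}{m_2^2}\log(\tfrac{M_2}{m_2\epsilon})$ with $c$ large (the extra $\log\sqrt d$ being absorbed into the implied $\Omega^*$-constant) makes this term at most $\epsilon/2$.

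The work is in the term $\mathsf{Prokh}(Q_\theta^{\mathcal I}(x,\cdot),K^{\mathcal I}(x,\cdot))$. I would build a coupling $(X_i,Y_i)_{i\le\mathcal I}$ with $X_0 = Y_0 = x$, in which $X$ runs according to $Q_\theta$, $Y$ according to $K$, both driven by the same momenta $\mathbf p_i\sim\Phi_1$, with the Metropolis coin of $X$ coupled maximally. When $X$ accepts, the synchronous coupling — under which the proof of Theorem~\ref{ThmMainConcave} proceeds — gives $\|X_{i+1}-Y_{i+1}\| \le (1-\kappa)\|X_i-Y_i\| + \eta_i$, where $\eta_i := \|q_T^{\dagger\theta}(X_i,\mathbf p_i) - q_T(X_i,\mathbf p_i)\|$ is the one-step integrator error, which by the separability Assumption~\ref{assumption:product_measure_potential} and Condition~\ref{condition:kth_order} satisfies $\eta_i^2 \le \theta^2\mathsf K^2\sum_j (H_j^{\mathsf c}(X_i^{(j)},\mathbf p_i^{(j)})+1)^2$. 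When $X$ rejects, I would use only the crude estimate that the discrepancy it introduces is at most the one-step ideal displacement $\|X_i - q_T(X_i,\mathbf p_i)\|$, which a Gronwall estimate on \eqref{EqHamiltonEquations} (using $M_2 T^2 \le 1/8$) bounds by $O(\|X_i\| + T\|\mathbf p_i\|) = O(\sqrt d)$ whenever $\|X_i\| = O(\sqrt{d/m_2})$.

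Now split the trajectory into a transient phase (the first $\mathcal I - L$ steps) and a final phase of length $L = \Theta(\tfrac1\kappa\log(d/\epsilon)) = O^*(1)$. On the high-probability event — obtained from the drift condition, Lemma~\ref{ThmDriftHMC}, applied to both chains — that every $X_i$ and $Y_i$ lies in the ball of radius $O(\sqrt{d/m_2})$, the entire contribution of the transient phase to $\|X_{\mathcal I}-Y_{\mathcal I}\|$ is at most $(1-\kappa)^L\cdot O(\sqrt{d/m_2}) \le \epsilon/6$ by the choice of $L$. In the final phase the chain is concentrated on the $\pi$-typical set (by the ideal mixing together with the transient bound), where each $\|X_i^{(j)}\|$ and $\|\mathbf p_i^{(j)}\|$ is $O(\sqrt{\log d})$, so there $\sum_j(H_j^{\mathsf c}+1)^2 = O(d\,\mathrm{polylog}\,d)$; hence $\E[\eta_i^2]=O^*(\theta^2 d)$ and the damped sum $\sum_i (1-\kappa)^{\#\{\text{accepts after }i\}}\eta_i$ has expectation $O^*(\theta\sqrt d/\kappa)=O^*(\theta\sqrt d)$, while the probability of any rejection in the final phase is at most $L\sup_i\E[(\Delta H_i)^-]\le L\,\theta\mathsf K\,\E[\sum_j(H_j^{2\mathsf c}+1)] = O^*(\theta d)$, where $\Delta H_i$ is the energy change of step $i$. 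Combining these via Markov's inequality gives $\mathsf{Prokh}(Q_\theta^{\mathcal I}(x,\cdot),K^{\mathcal I}(x,\cdot)) = O^*(\theta d) + O^*((\theta\sqrt d)^{1/2})$, which is $\le\epsilon/2$ once $\theta\le\theta_0 = \Omega_{\epsilon,d}^*(\min(\epsilon/d,\,\epsilon^2/\sqrt d))$; this yields \eqref{ProkhContConcMH} and the claimed $\theta_0$. The cost bound then follows by substituting $\theta = \Theta^*(\theta_0)$ and $\mathcal I = O^*(\tfrac{M_2^2}{m_2^2}\log\tfrac{M_2}{m_2\epsilon})$ into $N(Q_\theta,\mathcal I) = \theta^{-1/k}T\mathcal I$, giving $N(Q_\theta,\mathcal I)=O_{\epsilon,d}^*(\max(d^{1/k}\epsilon^{-1/k},\,d^{1/(2k)}\epsilon^{-2/k}))$.

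The main obstacle is the control of the Metropolis rejections: the hypothesis $\|x\|\le\sqrt{d/m_2}$ permits a single coordinate block of $x$ to have norm as large as $\sqrt{d/m_2}$, hence block-Hamiltonian polynomially large in $d$, which could in principle generate a persistent stream of rejections preventing the chain from reaching the $\pi$-typical set within $O^*(1)$ steps. Resolving this requires using the smoothness Assumption~\ref{assumption:leapfrog} together with the drift estimate to show that, under the Metropolis filter, the chain's energy still relaxes geometrically, so that after absorbing the transient contributions into the exponentially small $(1-\kappa)^L$ factor the effective per-step rejection probability is $O^*(\theta d)$ throughout; the remainder of the argument is routine bookkeeping of the two error sources — position error $O^*(\theta\sqrt d)$ and rejection probability $O^*(\theta d)$ — against the constant contraction rate $\kappa = \Theta(1)$.
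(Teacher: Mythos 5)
Your proposal takes a genuinely different route from the paper — a direct comparison of $Q_\theta$ with the ideal chain $K$, with explicit bookkeeping of accumulated integrator error and rejection probability — and you correctly identify where it gets stuck, but you leave that gap unresolved, and the gap is precisely where the paper's key construction is needed.

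The central difficulty you name (a single coordinate block with $\|x^{(j)}\| \asymp \sqrt{d/m_2}$ giving a polynomially-large block Hamiltonian, hence unbounded $k$-th order integrator error and a persistent stream of rejections during the transient) is real, and your proposed resolution — ``use Assumption~\ref{assumption:leapfrog} together with the drift estimate to show the energy relaxes geometrically under the Metropolis filter'' — is a program, not a proof. The trouble is that Condition~\ref{condition:kth_order} gives an error bound that grows polynomially in the block Hamiltonian, so during the transient the per-step position error $\eta_i$ and the per-step rejection probability are not uniformly $O^\ast(\theta d)$; your final-phase estimate does not propagate backward. Also, Lemma~\ref{ThmDriftHMC} is proved for the \emph{ideal} kernel; it does not apply directly to the $\sharp$-driven Metropolis chain, which is what you need to guarantee ``every $X_i$ lies in the ball of radius $O(\sqrt{d/m_2})$.''

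The paper dissolves this difficulty via the \emph{toy integrator} $\diamondsuit$ (Definition~\ref{defn:toy}): $\diamondsuit$ equals $\sharp$ on the good set $\mathsf G$ and falls back to the first-order Euler integrator of Algorithm~\ref{alg:Integrator_first_order} outside $\mathsf G$. The Euler integrator satisfies the \emph{uniform} Condition~\ref{cond:integrator} (Lemma~\ref{thm:Approx_integrator}), so $\diamondsuit$ does too (Lemma~\ref{thm:leapfrog5}); this gives a clean geometric drift condition (Lemma~\ref{ThmGenericApproxErrorBound3}) and a Wasserstein mixing bound (Lemma~\ref{thm:unadjusted_wass2}) for the $\diamondsuit$-chain, valid from any starting point with $\|x\| \le \sqrt{d/m_2}$, with no good-set restriction. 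Meanwhile, the $Q_\theta$-trajectory and the $\diamondsuit$-trajectory (driven by the same momenta and Metropolis coins) coincide until the first exit from $\mathsf G$ or the first rejection, and Lemma~\ref{thm:badset} bounds the probability of that event by $O(\epsilon)$. The Prokhorov bound is then the triangle inequality: $\mathsf{Prokh}(Q_\theta^{\mathcal I},\pi) \le \sqrt{W_1(L^{\mathcal I},\pi)} + \P[\min(\mathfrak t_G, \mathfrak t_{\mathrm{reject}})\le \mathcal I]$. In short: the paper never needs to control the $\sharp$-chain outside the good set — it only needs to show the trajectory is unlikely to leave — whereas your proposal must control the accumulated error there, and the tools in the paper don't give that. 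To make your approach work you would essentially have to re-invent something equivalent to the toy-integrator construction, because $\sharp$ alone is not uniformly well-behaved.

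One smaller point: your treatment of rejections as a one-step displacement $\|X_i - q_T(X_i,\mathbf p_i)\| = O(\sqrt d)$ is correct in form, but observe that this is $O(\sqrt d)$ per rejection, not $O(1)$, so even a modest number of rejections in the transient (which, per the unresolved gap, you cannot yet rule out) would destroy the $(1-\kappa)^L \sqrt{d/m_2}$ transient bound unless those rejections occur early enough to be damped — and that damping argument requires uniform control you don't yet have.
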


\subsection{Literature Review}

There is a large literature on obtaining quantitative bounds on the convergence rates of Markov chains (see \cite{jones2001honest} for an introduction to the statistical literature on the topic, and \cite{diaconis2009markov} for connections in other fields, including computer science). In general, it is difficult to obtain good quantitative bounds for large classes of chains. As such, the literature focuses on either finding very tight bounds for specific chains (see \textit{e.g.} \cite{diaconis2008gibbs}) or on quantatative bounds on the running time of the algorithm as a function of the problem complexity (see \textit{e.g.} \cite{borgs1999torpid} or essentially any paper in the large computer science literature on the subject). Our work falls in the latter category.

Although there are many papers that obtain quantitative bounds on the convergence of Markov chains, very little has been written in the context HMC. To our knowledge, \cite{seiler2014positive} is the only paper that focuses on obtaining quantitative bounds on the convergence rates of HMC, and it served as an inspiration for this work. A number of other papers, most prominently \cite{HMC_optimal_tuning}, have also worked on the problem of calculating the computational complexity of HMC algorithms by computing the rate at which certain proxies for the mixing or relaxation time of HMC increase with the dimension of the target distribution under reasonable conditions (see \cite{roberts2016complexity} for a general discussion relating results similar to \cite{HMC_optimal_tuning} to the usual notions of complexity). Several other papers give calculations that imply or suggest quantitative bounds, though we are not aware of any that are close to tight (see \textit{e.g.} the discussion in Section 7.5 of \cite{livingstone2016geometric}).

Our main results most closely resemble those of \cite{durmus2016sampling}, which studies the non-asymptotic mixing properties of the Langevin algorithm on strongly log-concave distributions. Their main results hold under essentially the same conditions as our Theorems \ref{ThmMainConcave}, \ref{ThmMainConcaveBulk} and \ref{ThmMainApprox}, and are compared to our results in Table \ref{table:cost}. See also \cite{dalalyan2016theoretical}, which studies very similar conditions to \cite{durmus2016sampling}.

\subsection{Paper Overview}

In Section \ref{SecTechBound} we state the main lemmas required for our main theorems and give proofs of the simplest and most important bounds (the proofs of the remaining technical bounds are deferred to the appendix). In Section \ref{SecMainRes}, we give the proof of our main result for ideal HMC dynamics. In Section \ref{SecPreproc}, we discuss preconditioning steps that may be taken before running HMC in order to improve mixing. We also explain the relationship between these preprocessing steps and our main results. Finally, in Section \ref{SecDisc} we discuss several open problems related to this work. The Appendix contains the proofs of our remaining main results.

\section{Technical Results} \label{SecTechBound}

In this section, we give some useful bounds related to the solutions of Hamilton's equations \eqref{EqHamiltonEquations}. We begin with some notation and estimates that will be used for the remainder of the section.

\subsection{Definitions and notation}

Throughout this section, we assume that the potential $U$ satisfies Assumption \ref{AssumptionsConvexity} with $\mathcal{X} = \mathbb{R}^{d}$, and we consider two solutions $(q^{(1)}_t, p^{(1)}_t)$ and $(q^{(2)}_t, p^{(2)}_t)$ of Equation \eqref{EqHamiltonEquations}. Denote by  $\tilde{q}_t := q^{(2)}_t - q^{(1)}_t$ and $\tilde{p}_t := p^{(2)}_t - p^{(1)}_t$ the differences between these solutions, and denote by $\hat{q}_t := \|\tilde{q}_t\|_{2}$ and $\hat{p}_t := \|\tilde{p}_t\|_{2}$ the magnitudes of these differences.

Hamilton's equations give 
\be\label{eq:Galilean}
\frac{\mathrm{d}\tilde{q}_t}{\mathrm{d}t} = \tilde{p}_t, \quad \quad \frac{\mathrm{d}\tilde{p}_t}{\mathrm{d}t} = -\bigg(U'(q^{(2)}_t) - U'(q^{(1)}_t)\bigg),
\ee

so we have
\be \frac{\mathrm{d}\hat{q}_t}{\mathrm{d}t} =\frac{\mathrm{d}}{\mathrm{d}t}\|\tilde{q}_t\| = \frac{\langle \tilde{p}_t, \tilde{q}_t \rangle}{\|\tilde{q}_t\|} \leq \|\tilde{p}_t\| = \hat{p}_t
\ee
and
\be
\frac{\mathrm{d}\hat{p}_t}{\mathrm{d}t} = \frac{\mathrm{d}}{\mathrm{d}t} \|\tilde{p}_t\| = \frac{\langle \frac{\d}{\d t}\tilde{p}_t, \tilde{p}_t \rangle}{\|\tilde{p}_t\|} \leq \|\frac{\mathrm{d}}{\mathrm{d}t} \tilde{p}_t\| \stackrel{{\scriptsize \textrm{Eq. }}\ref{eq:Galilean}}{=}  \| U'(q^{(2)}_t) - U'(q^{(1)}_t) \|.
\ee
This implies the following system of differential inequalities
\be\label{eq:chaplygin1}
\frac{\mathrm{d}\hat{q}_t}{\mathrm{d}t} \leq \hat{p}_t, \quad \quad \frac{\mathrm{d}\hat{p}_t}{\mathrm{d}t} \leq \bigg \| U'(q^{(2)}_t) - U'(q^{(1)}_t) \bigg\|,
\ee
with initial conditions $\hat{q}_0$ and $\hat{p}_0$.

Finally, for two points $u,v \in \mathbb{R}^{d}$, we define the unit-speed parametrization of the line connecting $u$ and $v$ to be $\ell_s(u,v) := u+s\frac{v-u}{\|v-u\|}$ for all $0\leq s \leq \|v-u\|$. We keep this last notation for the remainder of the paper.

\subsection{ODE Comparison Theorem}

We make frequent use of the following comparison theorem for systems of ordinary differential equations, a generalization of Gr{\"o}nwall's inequality originally stated in Proposition 1.4 of \cite{comparison_ODE}:

\begin{lemma} [ODE comparison theorem, Proposition 1.4 of \cite{comparison_ODE}] \label{LemmaOdeComp}
Let $U \subset \mathbb{R}^{n}$ and $I \subset \mathbb{R}$ be open, nonempty and connected. Let $f, \, g \, : \, I \times U \mapsto \mathbb{R}^{n}$ be continuous and locally Lipschitz maps. Then the following are equivalent:
\begin{enumerate}
\item For each pair $(t_{0},y)$, $(t_{0}, \overline{y})$ with $t_{0} \in I$ and $y, \overline{y} \in U$, the inequality $y \leq \overline{y}$ implies $z(t) \leq \overline{z}(t)$ for all $t \geq t_{0}$, where 
\be 
\frac{d}{dt} z &= f(t,z), \qquad z(t_{0}) = y \\
\frac{d}{dt} \overline{z} &= g(t,\overline{z}), \qquad \overline{z}(t_{0}) = \overline{y}. \\
\ee 
\item For all $i \in \{1,2,\ldots,n\}$ and all $t \geq t_{0}$, the inequality 
\be 
g(t, (\overline{x}[1], \ldots, \overline{x}[i-1], x[i], \overline{x}[i+1],\ldots,\overline{x}[n]))[i] \geq f(t, (x[1],\ldots,x[i-1],x[i],x[i+1],\ldots,x[n]))[i]
\ee 
holds whenever $\overline{x}[j] \geq x[j]$ for every $j \neq i$.
\end{enumerate}
\end{lemma}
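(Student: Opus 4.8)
The plan is to prove the two implications separately. The substantive direction is $(2)\Rightarrow(1)$, which I would establish by a perturbation-and-first-crossing argument; the direction $(1)\Rightarrow(2)$ follows quickly by contraposition.

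For $(2)\Rightarrow(1)$, fix $t_{0}\in I$ and $y\le\overline{y}$, and let $z,\overline{z}$ be the solutions of the two initial value problems, which exist on some interval and, on any compact subinterval $[t_{0},t_{1}^{\ast}]\subset I$, keep $\overline{z}$ inside a compact set $C\subset U$. For small $\epsilon>0$ I would introduce the perturbed flow $\frac{d}{dt}\overline{z}_{\epsilon}=g(t,\overline{z}_{\epsilon})+\epsilon\mathbf{1}$, $\overline{z}_{\epsilon}(t_{0})=\overline{y}+\epsilon\mathbf{1}$, where $\mathbf{1}=(1,\dots,1)$; by local existence and continuous dependence on parameters (this is where the local Lipschitz hypothesis is used), for all sufficiently small $\epsilon$ this solution exists on $[t_{0},t_{1}^{\ast}]$ and tends to $\overline{z}$ uniformly there as $\epsilon\to0$. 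I claim $z(t)<\overline{z}_{\epsilon}(t)$ coordinatewise on $[t_{0},t_{1}^{\ast}]$. Strict inequality holds at $t_{0}$ since $z[j](t_{0})=y[j]\le\overline{y}[j]<\overline{y}[j]+\epsilon=\overline{z}_{\epsilon}[j](t_{0})$; if it failed somewhere, let $t_{1}:=\inf\{t>t_{0}:z[i](t)\ge\overline{z}_{\epsilon}[i](t)\text{ for some }i\}$, so that by continuity and minimality $z(t_{1})\le\overline{z}_{\epsilon}(t_{1})$ with equality in some coordinate $i$. Applying hypothesis $(2)$ at time $t_{1}$ with $x=z(t_{1})$ and $\overline{x}=\overline{z}_{\epsilon}(t_{1})$: since $z[i](t_{1})=\overline{z}_{\epsilon}[i](t_{1})$, the mixed vector appearing in $(2)$ is exactly $\overline{z}_{\epsilon}(t_{1})$, and $\overline{x}[j]\ge x[j]$ for $j\ne i$, so $(2)$ gives $g(t_{1},\overline{z}_{\epsilon}(t_{1}))[i]\ge f(t_{1},z(t_{1}))[i]$. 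Hence
\[
\tfrac{d}{dt}\bigl(\overline{z}_{\epsilon}[i]-z[i]\bigr)(t_{1})\ \ge\ \bigl(g(t_{1},\overline{z}_{\epsilon}(t_{1}))[i]+\epsilon\bigr)-f(t_{1},z(t_{1}))[i]\ \ge\ \epsilon\ >\ 0,
\]
which contradicts the fact that the nonnegative function $\overline{z}_{\epsilon}[i]-z[i]$ vanishes at $t_{1}$ while being positive on $[t_{0},t_{1})$. Thus $z<\overline{z}_{\epsilon}$ on $[t_{0},t_{1}^{\ast}]$; letting $\epsilon\to0$ gives $z\le\overline{z}$ there, and since $t_{1}^{\ast}$ was an arbitrary compact endpoint this holds for all $t\ge t_{0}$ in $I$.

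For $(1)\Rightarrow(2)$ I argue by contraposition. If $(2)$ fails, there are $t_{0}\in I$, an index $i$, and reals $x[1],\dots,x[n]$ and $\overline{x}[j]$ ($j\ne i$) with $\overline{x}[j]\ge x[j]$ such that, writing $w:=(\overline{x}[1],\dots,\overline{x}[i-1],x[i],\overline{x}[i+1],\dots,\overline{x}[n])$, one has $g(t_{0},w)[i]<f(t_{0},x)[i]$. Take the initial data $y:=x$ and $\overline{y}:=w$; then $y\le\overline{y}$, and the corresponding solutions satisfy $z[i](t_{0})=\overline{z}[i](t_{0})$ but $\frac{d}{dt}z[i](t_{0})=f(t_{0},x)[i]>g(t_{0},w)[i]=\frac{d}{dt}\overline{z}[i](t_{0})$, so $z[i](t)>\overline{z}[i](t)$ for $t$ slightly larger than $t_{0}$. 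This violates the conclusion of $(1)$, establishing the contrapositive.

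I expect the main obstacle to be the technical bookkeeping in $(2)\Rightarrow(1)$: making the first-crossing time $t_{1}$ rigorous (verifying via continuity and minimality that at $t_{1}$ some coordinate is in contact while none has strictly crossed), and invoking correctly the two standard ODE facts underlying the limiting step — that the perturbed solutions $\overline{z}_{\epsilon}$ exist on a fixed interval $[t_{0},t_{1}^{\ast}]$ for all small $\epsilon$, and that they converge uniformly to $\overline{z}$ as $\epsilon\to0$, both of which follow from the local Lipschitz assumption via a Gr\"onwall/continuation argument. The rest of the proof is essentially formal.
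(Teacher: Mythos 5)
The paper does not prove this lemma; it is cited verbatim as Proposition~1.4 of \cite{comparison_ODE}, so there is no in-paper argument to compare yours against.

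Your argument is the standard proof of the Kamke--M\"uller quasimonotonicity comparison theorem and it is correct. For $(2)\Rightarrow(1)$: perturbing to $\dot{\overline{z}}_{\epsilon}=g+\epsilon\mathbf{1}$ with initial data $\overline{y}+\epsilon\mathbf{1}$ gives strict inequality at $t_{0}$; at a putative first crossing time $t_{1}$, the touching coordinate $i$ satisfies $z[i](t_{1})=\overline{z}_{\epsilon}[i](t_{1})$ while $z\le\overline{z}_{\epsilon}$, so the mixed vector in~(2) is precisely $\overline{z}_{\epsilon}(t_{1})$ and hypothesis~(2) applies with $x=z(t_{1})$ to yield $\tfrac{d}{dt}(\overline{z}_{\epsilon}[i]-z[i])(t_{1})\ge\epsilon>0$, contradicting that a function that is positive on $[t_{0},t_{1})$ and vanishes at $t_{1}$ has nonpositive left derivative there. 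Letting $\epsilon\to0$ over a compact subinterval where both solutions stay in a compact subset of $U$ (uniform convergence via local Lipschitz continuity and Gr\"onwall) gives the weak inequality; exhausting the maximal interval finishes this direction. The contraposition for $(1)\Rightarrow(2)$ is equally clean: taking $y=x$, $\overline{y}$ equal to the mixed vector, and $t_{0}$ equal to the time at which~(2) fails, the solutions agree in coordinate $i$ at $t_{0}$ but $\dot z[i](t_{0})>\dot{\overline z}[i](t_{0})$, so $z[i]>\overline z[i]$ immediately after, violating~(1). The technical steps you flag (existence of $\overline{z}_{\epsilon}$ on a fixed compact subinterval for all small $\epsilon$, identification of the touching coordinate at the infimum time) are routine and go through exactly as you indicate.
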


\subsection{Error Bounds for HMC}

We give a simple estimate, showing that solutions to Equation \eqref{EqHamiltonEquations} don't diverge by very much on a small timescale:

\begin{lemma}\label{thm:bounds}
With notation as above:
\begin{enumerate}
\item  For $t \geq 0$ we have
\be \label{thmboundsconc1}
\hat{q}_t &\leq k_1e^{t\sqrt{M_2}} + k_2e^{-t\sqrt{M_2}}\\
\hat{p}_t &\leq k_1\sqrt{M_2}e^{t\sqrt{M_2}} - k_2\sqrt{M_2}e^{-t\sqrt{M_2}},
\ee 
where $k_1 = \frac{1}{2}(\hat{q}_0 + \frac{\hat{p}_0}{\sqrt{M_2}})$, $k_2 = \frac{1}{2}(\hat{q}_0 - \frac{\hat{p}_0}{\sqrt{M_2}})$.

\item Suppose that $\hat{q}_0=0$, and that $\tau > 0$ is any positive real number. Then
\be \label{thmboundsconc4}
\hat{q}_t \geq    \hat{p}_0(t - \sqrt{M_2}\sinh(\tau \sqrt{M_2})\times t^{2} ) \quad \quad \forall t\in[0,\tau].
\ee

\item Suppose instead that $\hat{p}_0=0$ and that $\tau>0$ is such that $\hat{q}_t \leq 2\hat{q}_0$ on all $t\in[0,\tau]$.  Then 
\be \label{thmboundsconc3}
\hat{q}_t \geq  \hat{q}_0 (1 - 2M_{2} t^{2}) \quad \quad \forall t\in[0,\tau].
\ee
\end{enumerate}
\end{lemma}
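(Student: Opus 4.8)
The plan is to prove each of the three parts by converting the differential inequalities \eqref{eq:chaplygin1} into ODEs, bounding the forcing term $\|U'(q^{(2)}_t) - U'(q^{(1)}_t)\|$, and then invoking the ODE comparison theorem (Lemma \ref{LemmaOdeComp}).

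\textbf{Part (1).} First I would use Assumption \ref{AssumptionsConvexity} to bound the force difference. Writing $U'(q^{(2)}_t) - U'(q^{(1)}_t) = \int_0^{\hat q_t} D_{w} U'(\ell_s(q^{(1)}_t, q^{(2)}_t))\, \d s$ where $w = (q^{(2)}_t - q^{(1)}_t)/\hat q_t$ is the unit vector, the bound $\|D_v U'(q)\| \leq M_2 \|v\|$ gives $\|U'(q^{(2)}_t) - U'(q^{(1)}_t)\| \leq M_2 \hat q_t$. Plugging this into \eqref{eq:chaplygin1} yields the system $\dot{\hat q}_t \leq \hat p_t$, $\dot{\hat p}_t \leq M_2 \hat q_t$. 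The comparison system (with equalities) is the linear system $\dot z_1 = z_2$, $\dot z_2 = M_2 z_1$, whose solution with initial data $(\hat q_0, \hat p_0)$ is exactly the stated combination of $e^{\pm t\sqrt{M_2}}$ with the given $k_1, k_2$. To apply Lemma \ref{LemmaOdeComp} I need to check its monotonicity hypothesis (2): the comparison vector field $g(t,x) = (x_2, M_2 x_1)$ has the property that $g(\cdot)[1]$ is nondecreasing in $x_2$ and $g(\cdot)[2]$ is nondecreasing in $x_1$, which is exactly what is required (the off-diagonal couplings are nonnegative). The only subtlety is that the differential \emph{inequality} for $\hat q_t$ and $\hat p_t$ must be matched with $f \le g$, which holds since $\hat p_t \le \hat p_t$ and $\|U'(q^{(2)}_t)-U'(q^{(1)}_t)\| \le M_2\hat q_t$. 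This gives $(\hat q_t, \hat p_t) \le (z_1(t), z_2(t))$ componentwise, which is the claim. One should double-check that $\hat p_t$ remains nonnegative so that $\dot{\hat p}_t = \langle \dot{\tilde p}_t, \tilde p_t\rangle / \|\tilde p_t\|$ is well-defined; since these are norms this is automatic where $\hat p_t > 0$, and one handles $\hat p_t = 0$ by a standard limiting/absolute-continuity argument.

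\textbf{Part (2).} Here $\hat q_0 = 0$. First I would get a crude upper bound on $\hat q_t$ valid on $[0,\tau]$: by part (1) with $\hat q_0 = 0$ we have $k_1 = -k_2 = \frac{\hat p_0}{2\sqrt{M_2}}$, so $\hat q_t \le \frac{\hat p_0}{\sqrt{M_2}}\sinh(t\sqrt{M_2}) \le \frac{\hat p_0}{\sqrt{M_2}}\sinh(\tau\sqrt{M_2})$ for $t \in [0,\tau]$. Then for the lower bound, integrate: $\hat q_t = \int_0^t \dot{\hat q}_s\, \d s$ and $\dot{\hat q}_t \ge$ something. Actually the cleaner route is: $\hat q_t \ge \int_0^t \hat p_s \, \d s$ is false in general (the inequality goes the wrong way), so instead I would work directly from $\frac{\d}{\d t}\hat q_t = \langle \tilde p_t, \tilde q_t\rangle/\|\tilde q_t\|$ together with $\hat p_t \ge \hat p_0 - \int_0^t \|U'(q^{(2)}_s) - U'(q^{(1)}_s)\|\,\d s \ge \hat p_0 - M_2 \int_0^t \hat q_s\, \d s \ge \hat p_0 - M_2 \cdot \frac{\hat p_0}{\sqrt{M_2}}\sinh(\tau\sqrt{M_2}) \cdot t$. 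To turn $\hat p_t$ into a bound on $\hat q_t$ I need $\frac{\d}{\d t}\hat q_t \ge \hat p_t - (\text{something})$; note in fact that $\hat q_t = \|\int_0^t \tilde p_s \, \d s\| \ge \|\tilde p_0\| t - \int_0^t \|\tilde p_s - \tilde p_0\|\, \d s \ge \hat p_0 t - \int_0^t \int_0^s \|U'(q^{(2)}_u) - U'(q^{(1)}_u)\|\,\d u\, \d s$ (using $\tilde q_t = \int_0^t \tilde p_s\,\d s$ since $\tilde q_0 = 0$, and $\tilde p_s - \tilde p_0 = \int_0^s \dot{\tilde p}_u \d u$). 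Bounding the double integral by $M_2 \cdot \frac{\hat p_0}{\sqrt{M_2}}\sinh(\tau\sqrt{M_2}) \cdot \frac{t^2}{2} \le \sqrt{M_2}\sinh(\tau\sqrt{M_2})\hat p_0 t^2$ (with a factor of $2$ to spare) gives precisely \eqref{thmboundsconc4}.

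\textbf{Part (3).} Now $\hat p_0 = 0$ and by hypothesis $\hat q_t \le 2\hat q_0$ on $[0,\tau]$. By the same reasoning $\tilde q_t = \tilde q_0 + \int_0^t \tilde p_s \, \d s$ and $\tilde p_s = \int_0^s \dot{\tilde p}_u \, \d u$ (since $\tilde p_0 = 0$), so $\hat q_t \ge \|\tilde q_0\| - \int_0^t \|\tilde p_s\|\, \d s \ge \hat q_0 - \int_0^t \int_0^s \|U'(q^{(2)}_u) - U'(q^{(1)}_u)\|\, \d u\, \d s \ge \hat q_0 - M_2 \int_0^t \int_0^s \hat q_u \, \d u\, \d s \ge \hat q_0 - M_2 \cdot 2\hat q_0 \cdot \frac{t^2}{2} = \hat q_0(1 - M_2 t^2)$, which is in fact slightly stronger than the stated $\hat q_0(1 - 2M_2 t^2)$; I would present it with the stated constant for uniformity, or note the improvement.

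\textbf{Main obstacle.} The routine calculations (force-difference bound, iterated integration) are straightforward. The one genuine technical point requiring care is the \emph{differentiability of the norms} $\hat q_t = \|\tilde q_t\|$, $\hat p_t = \|\tilde p_t\|$: these functions can fail to be differentiable where they vanish, so statements like $\frac{\d}{\d t}\|\tilde q_t\| = \langle \tilde p_t, \tilde q_t\rangle/\|\tilde q_t\|$ and the application of Lemma \ref{LemmaOdeComp} need justification via one-sided/Dini derivatives or by perturbing $\tilde q_0, \tilde p_0$ slightly and passing to the limit. I expect this is the step the authors handle with a brief remark, and it is the only place where one must be slightly delicate rather than merely computational.
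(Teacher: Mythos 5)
Your Part~(1) matches the paper's argument essentially line for line: bound the force difference by $M_2\hat q_t$, compare against the linear ODE system, and apply Lemma \ref{LemmaOdeComp} using the quasimonotonicity of the off-diagonal couplings. For Parts~(2) and~(3), however, you take a genuinely different route. The paper sets $z_t := \frac{\d}{\d t}\hat q_t$, bounds $|\dot z_t|\le M_2\hat q_t\le M_2\mathsf{C}$ (for a crude a~priori bound $\mathsf{C}$ on $\hat q$), and then runs Lemma \ref{LemmaOdeComp} a second time on the scalar pair $(\hat q_t,z_t)$ against the comparison ODE $\ddot{\hat q}^\dagger=-M_2\mathsf{C}$. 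You instead work directly at the level of the \emph{vector-valued} quantities: since $\tilde q_t=\tilde q_0+\int_0^t\tilde p_s\,\d s$ and $\tilde p_s=\tilde p_0+\int_0^s\dot{\tilde p}_u\,\d u$, the reverse triangle inequality plus the same force bound $\|\dot{\tilde p}_u\|\le M_2\hat q_u$ gives the double-integral estimate immediately, with no need for a second appeal to the ODE comparison lemma. Your route is shorter, avoids re-invoking Lemma \ref{LemmaOdeComp} (and with it the differentiability-at-zero caveat you flag), and in fact produces the sharper constants $\frac{1}{2}\sqrt{M_2}\sinh(\tau\sqrt{M_2})$ and $(1-M_2t^2)$ in Parts~(2) and~(3) respectively, beating the stated bounds by a factor of two; the paper's constants arise because its comparison solution $\hat q^\dagger_t=-M_2\mathsf{C}t^2+z_0t+\hat q_0$ quietly discards the $\frac{1}{2}$ in $-\frac{1}{2}M_2\mathsf{C}t^2$, which is harmless for a lower bound but lossy. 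What the paper's approach buys in exchange is uniformity: it reuses the same comparison-theorem machinery throughout the lemma and the rest of the paper, which is a reasonable stylistic trade-off. Your concern about differentiability of $\|\tilde q_t\|$ and $\|\tilde p_t\|$ at zeros is legitimate and is indeed handled only implicitly in the paper; your integral formulation sidesteps most of it, which is another small advantage of your route.
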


\begin{proof}
Recalling $\|D_v U' \|\leq M_2 \, \|v \|$ for all $v \in \mathbb{R}^{d}$, we have the initial estimate:
\be \label{eq:gradient}
\left\|\frac{\d \tilde{p}_t}{\d t} \right\| &= \bigg\|U'(q^{(2)}_t) -U'(q^{(1)}_t)  \bigg\|\\
&=  \bigg \| \int_0^{\|\tilde{q}_t\|} D_\frac{\tilde{q}_t}{\|\tilde{q}_t\|} U' \big|_{\ell_s(q^{(1)}_t, q^{(2)}_t)} \mathrm{d}s \bigg \|\\
&\leq  \int_0^{\|\tilde{q}_t\|} \bigg \| D_\frac{\tilde{q}_t}{\|\tilde{q}_t\|} U' \big|_{\ell_s(q^{(1)}_t, q^{(2)}_t)}  \bigg \| \mathrm{d}s \\
&\leq \int_0^{\|\tilde{q}_t\|} M_2 \mathrm{d}s = \|\tilde{q}_t\| M_2. 
\ee

We now prove our three conclusions in order:

\begin{enumerate}

\item  Equations \eqref{eq:chaplygin1} and \eqref{eq:gradient} together give the system of differential inequalities
\be
\frac{\mathrm{d}\hat{q}_t}{\mathrm{d}t} \leq \hat{p}_t := f_1(\hat{q}_t,\hat{p}_t), \quad \quad \frac{\mathrm{d}\hat{p}_t}{\mathrm{d}t} \leq M_2 \hat{q}_t:= f_2(\hat{q}_t,\hat{p}_t).
\ee

Define $\hat{q}^\star_t$ and $\hat{p}^\star_t$ to be the solution to the system of differential equations
\be \label{eq:chaplygin2}
\frac{\mathrm{d}\hat{q}^\star_t}{\mathrm{d}t} = \hat{p}^\star_t = f_1(\hat{q}^\star_t,\hat{p}^\star_t), \quad \quad \frac{\mathrm{d}\hat{p}^\star_t}{\mathrm{d}t} = M_2 \hat{q}^\star_t = f_2(\hat{q}^\star_t,\hat{p}^\star_t)
\ee
 with initial conditions $\hat{q}^\star_0 = \hat{q}_0$ and $\hat{p}^\star_0 = \hat{p}_0$. We now compute  $\hat{q}^\star_t$.  Turning the system of equations \ref{eq:chaplygin2} into a single second-order equation gives
\[ \frac{\mathrm{d}^2\hat{q}^\star_t}{\mathrm{d}t^2} = M_2 \hat{q}^\star_t\]
which has solution
\[\hat{q}^\star_t = k_1e^{t\sqrt{M_2}} + k_2e^{-t\sqrt{M_2}}\]
for some constants $k_1, k_2$.  Using the initial conditions to solve for the constants, 
\[k_1 = \frac{1}{2}(\hat{q}_0 + \frac{\hat{p}_0}{\sqrt{M_2}}), \quad \quad k_2 = \frac{1}{2}(\hat{q}_0 - \frac{\hat{p}_0}{\sqrt{M_2}}).\]

Noting that
\[\hat{q}_t \leq \hat{q}^\star_t,  \quad \quad \hat{p}_t \leq \hat{p}^\star_t, \quad \quad t \in [0,\infty)\]
by Lemma \ref{LemmaOdeComp} completes the proof of Inequality \eqref{thmboundsconc1}.

\item[(2),(3)] Define $z_t := \frac{\mathrm{d}}{\mathrm{d}t}\hat{q}_t$. Fix $\tau>0$.  Suppose that $\mathsf{C}>0$ is a number such that $\hat{q}_t \leq \mathsf{C}$ for all $t \in [0,\tau]$, to be fixed later in the proof.  Now,
\be \label{eq:z1}
|\frac{\d}{\d t}z_t| = |\frac{\langle \frac{\d}{\d t}\tilde{p}_t, \tilde{q}_t \rangle}{\|\tilde{q}_t\|}|  \leq \| \frac{\mathrm{d}}{\mathrm{d}t}\tilde{p}_t\| \leq M_2 \hat{q}_t,
\ee
which implies
\be
\frac{\mathrm{d}\hat{q}_t}{\mathrm{d}t} = z_t := g_1(\hat{q}_t,z_t), \quad \quad \frac{\mathrm{d}z_t}{\mathrm{d}t} \geq -M_2 \hat{q}_t \geq -M_2\mathsf{C} := g_2(\hat{q}_t,z_t) \quad \quad \forall  t \in [0,\tau].
\ee

 Define $\hat{q}^\dagger_t$ and $z^\dagger_t$ to be the solution to the system of differential equations
\be \label{eq:chaplygin3}
\frac{\mathrm{d}\hat{q}^\dagger_t}{\mathrm{d}t} = \hat{z}^\dagger_t = g_1(\hat{q}^\dagger_t,\hat{z}^\dagger_t), \quad \quad \frac{\mathrm{d}\hat{z}^\dagger_t}{\mathrm{d}t} = -M_2 \mathsf{C} = g_2(\hat{q}^\dagger_t,\hat{z}^\dagger_t)
\ee
 with initial conditions $\hat{q}^\dagger_0 = \hat{q}_0$ and $z^\dagger_0= z_0$.
 
Since $g_1$ and $g_2$ are nondecreasing in each variable, Lemma \ref{LemmaOdeComp} implies
\be 
\hat{q}_t \geq \hat{q}^\dagger_t,  \quad \quad \hat{p}_t \geq z^\dagger_t, \quad \quad t \in [0,\tau].
\ee

Hence, all we need to do is solve for $z^\dagger_t$. 
We can turn the system of equations \eqref{eq:chaplygin3} into the following second-order equation:
\[ \frac{\mathrm{d}^2\hat{q}^\dagger_t}{\mathrm{d}t^2} = -M_2 \mathsf{C}\],
whose solutions are of the form
\[\hat{q}^\dagger_t = -M_2\mathsf{C}t^2 +z_0t+\hat{q}_0.\]

Therefore, we have
\be
\hat{q}_t \geq \hat{q}^\dagger_t = -M_2\mathsf{C}t^2 +z_0t + \hat{q}_0 \quad \quad \forall \, t\in[0,\tau].
\ee

In the special case that $\hat{q}_0 = 0$, we have that $z_0 = \hat{p}_0$.  By Inequality \eqref{thmboundsconc1}, for any $\tau >0$ we also have in this special case that $\hat{q}_t \leq \frac{\hat{p}_0}{\sqrt{M_2}}\sinh(\tau \sqrt{M_2})$ for all $t \in [0,\tau]$.  So setting $\mathsf{C} =  \frac{\hat{p}_0}{\sqrt{M_2}}\sinh(\tau \sqrt{M_2})$ completes the proof of Inequality \eqref{thmboundsconc4}.
\\

In the special case that $\hat{p}_0 = 0$, we have $|z_0| \leq |\hat{p}_0| = 0$.  Suppose that $\tau>0$ is such that $\hat{q}_t \leq 2\hat{q}_0$ holds for  all $t\in[0,\tau]$.  Then setting $\mathsf{C} = 2\hat{q}_0$ completes the proof of Inequality \eqref{thmboundsconc3}.
\end{enumerate}
\end{proof}

\subsection{Contraction for Strongly Log-Concave Targets}

In this section, we show that two solutions to Hamilton's equations with the same initial momenta will tend to move closer to each other over a moderate time interval (see Figure \ref{fig:coupling}).
\\
\\
\begin{figure}[h]
\begin{center}
\includegraphics[trim={0 42mm 0 31mm}, scale=0.4]{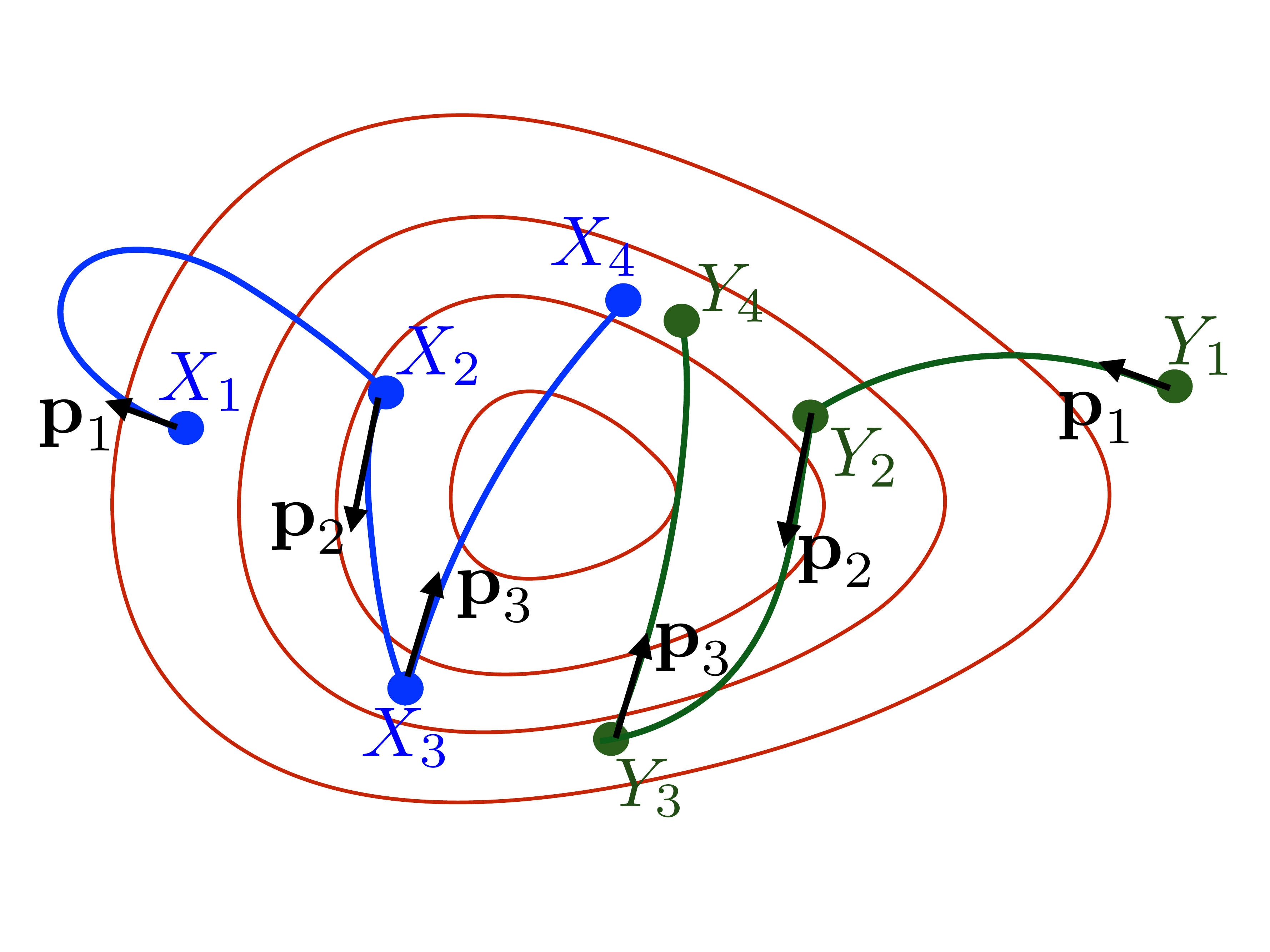}
\end{center}
\caption{Coupling two copies $X_1,X_2,\ldots$ (blue) and $Y_1,Y_2,\ldots$ (green) of HMC by choosing same momentum $\textbf{p}_i$ at every step. Note that contraction is guaranteed at each step, despite the fact that the trajectory from $X_1$ to $X_2$ is not the shortest path between those two points. This is in contrast to typical comparison theorems from differential geometry, such as the Rauch comparison theorem, which require the assumption that paths are length-minimizing. }\label{fig:coupling}
\end{figure}

For this section, we define the error function
\be
\mathcal{ERR}(t) := \frac{\sinh^2(t)}{1-2t^2} \\ 
\ee
for all $t \geq 0$. We also recall:

\begin{lemma}\label{thm:strong_convexity}(Theorem 2.1.9 of \cite{nesterov2013introductory})
Fix $m > 0$. If $U \, : \, \mathbb{R}^{d} \mapsto \mathbb{R}^{+}$ satisfies 
\be 
\langle D_{v} U'(q), v \rangle \leq \|v \|^{2} \, m 
\ee 
for all $q, v \in \mathbb{R}^{d}$, then it is strongly convex with parameter $m$.

\end{lemma}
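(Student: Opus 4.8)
The plan is to establish the strong-convexity inequality of the Definition of Strong Convexity directly from the pointwise bound on the quadratic form $\langle D_v U'(\cdot), v\rangle$ by integrating along the straight segment joining the two points — this is the argument behind Theorem 2.1.9 of \cite{nesterov2013introductory}. Fix $x \neq y$ in $\mathbb{R}^d$, put $u := \frac{y-x}{\|y-x\|}$, and use the unit-speed parametrization $\ell_s := \ell_s(x,y)$, $0 \le s \le \|y-x\|$, of that segment (defined just above). Step one: since $U$ is twice differentiable, the chain rule gives $\frac{d}{ds} U'(\ell_s) = D_u U'(\ell_s)$, so the fundamental theorem of calculus, applied coordinatewise to the vector-valued $U'$, yields the identity
\[
\langle U'(y) - U'(x),\, y - x \rangle \;=\; \|y - x\| \int_0^{\|y-x\|} \langle D_u U'(\ell_s),\, u \rangle \, ds .
\]
Step two: insert the pointwise quadratic-form bound coming from the hypothesis, $\langle D_u U'(\ell_s), u\rangle \ge m\|u\|^2 = m$, to obtain $\langle U'(y) - U'(x), y-x\rangle \ge m\|y-x\|^2$; since $x, y$ were arbitrary, this is precisely strong convexity with parameter $m$.

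I do not expect any real obstacle, but one point must be flagged. For the conclusion to follow, the displayed hypothesis has to be read as the \emph{lower} bound $\langle D_v U'(q), v\rangle \ge m\|v\|^2$ (equivalently $\nabla^2 U(q) \succeq m\,\mathrm{Id}$), consistent with Assumption \ref{AssumptionsConvexity} and with the cited theorem; the inequality exactly as printed points the other way and would be vacuous (for instance $U \equiv 0$ satisfies it while failing to be strongly convex). The only genuine verification in the argument is the regularity needed to apply the fundamental theorem of calculus on the compact segment, which is immediate from twice-differentiability of $U$. If one prefers to sidestep even that, the same conclusion drops out of the monotone-operator viewpoint: $\langle D_v U'(q), v\rangle \ge m\|v\|^2$ says $q \mapsto U'(q) - m q$ has positive-semidefinite Jacobian, hence is a monotone map, and monotonicity unwinds directly to $\langle U'(x) - U'(y), x-y\rangle \ge m\|x-y\|^2$. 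I therefore expect the author's proof to be either a one-line appeal to Nesterov's Theorem 2.1.9 or the two-step integration argument above.
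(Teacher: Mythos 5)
Your proof is correct, and it is exactly the standard argument behind the cited result: the paper itself offers no proof, only the appeal to Theorem 2.1.9 of Nesterov, so your integration along $\ell_s(x,y)$ simply fills in what the citation elides. Your flag about the direction of the inequality is also right: as printed, Lemma \ref{thm:strong_convexity} asserts an \emph{upper} bound $\langle D_v U'(q),v\rangle \leq m\|v\|^2$, which cannot imply strong convexity (it holds for $U\equiv 0$); the intended hypothesis is the lower bound $\langle D_v U'(q),v\rangle \geq m\|v\|^2$, consistent with Assumption \ref{AssumptionsConvexity} and with how the lemma is invoked in the proof of Lemma \ref{lemma:contraction} (Inequality \eqref{eq:InnerProduct1}), so this is a sign typo in the paper's statement rather than a gap in your argument.
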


We can now prove the following contraction estimate:

\begin{lemma}\label{lemma:contraction}
Define 
\be 
\Psi_T(t) &:=  \bigg(-\frac{1}{2} + \frac{M_2}{m_2}\mathcal{ERR}(T\sqrt{M_2})\bigg)\cdot \frac{1}{2} (\sqrt{m}_2t)^2 +1  \\
\psi(t) &:= -2M_2 t^2 + 1. \\
\ee

Suppose that $0 < T \leq \frac{1}{2\sqrt{2}} \frac{\sqrt{m_2}}{M_{2}}$.  Then if $\tilde{p}_0 = 0$,
\be\label{eq:contraction}
\hat{q}_0\psi(t) \leq \hat{q}_t \leq \hat{q}_0\Psi_T(t)
\ee
for all $t \in [0,T]$.
\end{lemma}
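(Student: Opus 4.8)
The plan is to prove the two inequalities of \eqref{eq:contraction} in turn, establishing the lower (``contraction floor'') bound first and then using it inside the proof of the upper bound; since both bounds are trivial when $\hat{q}_0 = 0$, assume $\hat{q}_0 > 0$. From $0 < T \le \frac{1}{2\sqrt{2}}\frac{\sqrt{m_2}}{M_2}$ together with $m_2 \le M_2$ (immediate from Assumption \ref{AssumptionsConvexity}) I first record the two elementary facts used repeatedly below: for all $t \in [0,T]$, $t\sqrt{M_2} \le T\sqrt{M_2} \le \frac{1}{2\sqrt{2}} < \frac{1}{\sqrt{2}}$ and $M_2 t^2 \le M_2 T^2 = \frac{m_2}{8 M_2} \le \frac18$; in particular $\mathcal{ERR}(t\sqrt{M_2})$ is finite and $\psi(t) = 1 - 2M_2 t^2 \ge \frac34 > 0$ on $[0,T]$. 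Since $\tilde{p}_0 = 0$ gives $\hat{p}_0 = 0$, Lemma \ref{thm:bounds}(1) yields $\hat{q}_t \le \hat{q}_0\cosh(t\sqrt{M_2}) \le \hat{q}_0\cosh(\tfrac{1}{2\sqrt{2}}) < 2\hat{q}_0$ on $[0,T]$, so Lemma \ref{thm:bounds}(3) applies with $\tau = T$ and produces exactly $\hat{q}_t \ge \hat{q}_0(1 - 2M_2 t^2) = \hat{q}_0\psi(t)$, the left-hand inequality; it also shows $\hat{q}_t \ge \frac34\hat{q}_0 > 0$ on $[0,T]$, so $\hat{q}_t$ is smooth there and the quantities below are well-defined.

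For the upper bound, set $z_t := \dot{\hat{q}}_t$, so that $z_0 = \langle\tilde{p}_0,\tilde{q}_0\rangle/\hat{q}_0 = 0$; differentiating the identity $\hat{q}_t z_t = \langle\tilde{q}_t,\tilde{p}_t\rangle$ once more and using \eqref{eq:Galilean} gives
\be
\hat{q}_t\,\dot{z}_t = \|\tilde{p}_t\|^2 - \big\langle \tilde{q}_t,\, U'(q^{(2)}_t) - U'(q^{(1)}_t)\big\rangle - z_t^2 .
\ee
I would bound the right side term by term: drop $-z_t^2 \le 0$; use strong convexity with parameter $m_2$ (Assumption \ref{AssumptionsConvexity}) for $\langle\tilde{q}_t, U'(q^{(2)}_t) - U'(q^{(1)}_t)\rangle \ge m_2\hat{q}_t^2$; and use Lemma \ref{thm:bounds}(1) with $\hat{p}_0 = 0$ for $\|\tilde{p}_t\|^2 = \hat{p}_t^2 \le \hat{q}_0^2 M_2\sinh^2(t\sqrt{M_2})$. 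Dividing by $\hat{q}_t > 0$ and then inserting the floor $\hat{q}_t \ge \hat{q}_0(1-2M_2 t^2)$ into the two remaining terms, together with the monotonicity of $\mathcal{ERR}$ on $[0,\tfrac{1}{\sqrt{2}})$ (so that $\frac{\sinh^2(t\sqrt{M_2})}{1-2M_2 t^2} = \mathcal{ERR}(t\sqrt{M_2}) \le \mathcal{ERR}(T\sqrt{M_2})$), yields
\be
\dot{z}_t \le \hat{q}_0\Big(M_2\,\mathcal{ERR}(T\sqrt{M_2}) - m_2 + 2 m_2 M_2 t^2\Big), \qquad t \in [0,T].
\ee

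It then remains to integrate twice, using $z_0 = 0$ and $\hat{q}_0$ as the value of $\hat{q}_t$ at $t = 0$, which yields
\be
\hat{q}_t \le \hat{q}_0\Big(1 + \frac{M_2}{2}\,\mathcal{ERR}(T\sqrt{M_2})\,t^2 - \frac{m_2}{2}\,t^2 + \frac{m_2 M_2}{6}\,t^4\Big),
\ee
and then to absorb the quartic remainder using $M_2 t^2 \le \frac18$, which gives $\frac{m_2 M_2}{6}t^4 \le \frac{m_2}{4}t^2$ and hence $\hat{q}_t \le \hat{q}_0\big(1 - \frac{m_2}{4}t^2 + \frac{M_2}{2}\mathcal{ERR}(T\sqrt{M_2})\,t^2\big) = \hat{q}_0\Psi_T(t)$, the right-hand inequality. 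I expect the main obstacle to be this upper bound, and specifically the last two maneuvers: one must feed the already-established lower bound on $\hat{q}_t$ back into the differential inequality for $z_t$ in order to tame the momentum-driven error term $\|\tilde{p}_t\|^2/\hat{q}_t$ (a lower bound on $\hat{q}_t$ is essential there, since it sits in the denominator), and then verify that the quartic term created by integrating twice is genuinely dominated by the strong-convexity term on the allowed range $0 < T \le \frac{1}{2\sqrt{2}}\frac{\sqrt{m_2}}{M_2}$ — indeed the particular constant $\frac{1}{2\sqrt{2}}$ is exactly what makes both the ``$\cosh < 2$'' step and this absorption step go through.
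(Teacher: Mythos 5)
Your proof is correct, and it takes a genuinely different and cleaner route than the paper's. The paper's argument introduces three bootstrap times $\tau_1,\tau_2,\tau_3$ and closes a self-referential loop by contradiction: it does not directly verify that the ``$\hat q_t\le 2\hat q_0$'' hypothesis of Lemma \ref{thm:bounds}(3) holds on all of $[0,T]$, and instead maintains the invariant $\tfrac34\hat q_0\le\hat q_t\le\hat q_0$ on a maximal interval, extends it by continuity, and concludes. You observe that this machinery is unnecessary: part (1) of Lemma \ref{thm:bounds} with $\hat p_0=0$ already gives $\hat q_t\le\hat q_0\cosh(t\sqrt{M_2})\le\hat q_0\cosh(\tfrac{1}{2\sqrt2})<2\hat q_0$ on \emph{all} of $[0,T]$, so the hypothesis of part (3) holds globally, the lower bound $\hat q_t\ge\hat q_0\psi(t)\ge\tfrac34\hat q_0$ follows in one step, and the ``good interval'' is simply $[0,T]$ from the outset. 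You then reach the same differential inequality $\hat q_t\ddot{\hat q}_t\le\|\tilde p_t\|^2-m_2\hat q_t^2$ that the paper obtains (via the Pythagorean-projection step, which your $-z_t^2$ term subsumes after dropping it), feed the floor into both $\|\tilde p_t\|^2/\hat q_t$ and $-m_2\hat q_t$, and close by elementary double integration plus absorbing the $O(t^4)$ remainder, instead of invoking the ODE comparison lemma \ref{LemmaOdeComp} on the frozen linearized system $\ddot{\hat q}^\ddagger=\text{const}$. The paper's version is marginally tighter at the integration step (freezing at $t=T$ avoids the $m_2M_2t^4/6$ term), but your absorption $m_2M_2t^4/6\le m_2t^2/48\le m_2t^2/4$ comfortably recovers $\Psi_T$ given $M_2t^2\le\tfrac18$; both arguments land on the same constants. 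Net effect: your proof trades the contradiction/bootstrap scaffolding for a direct estimate, which is shorter and arguably more transparent, while relying on exactly the same technical inputs (Lemma \ref{thm:bounds} and strong convexity).
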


\begin{proof}
By Lemma \ref{thm:strong_convexity}, we have:
\be\label{eq:InnerProduct1}
\bigg \langle -\bigg( U'(q^{(2)}_t) - U'(q^{(1)}_t)\bigg), q^{(2)}_t - q^{(1)}_t \bigg \rangle \leq -m_2 \|q^{(2)}_t - q^{(1)}_t\|^2.
\ee

Note that
\be \label{eq:a1'}
\frac{\mathrm{d}}{\mathrm{d}t}\|\tilde{q}_t\| = \frac{\langle \tilde{p}_t, \tilde{q}_t \rangle}{\|\tilde{q}_t\|},
\ee 
so that
\be\label{eq:a1}
\|\tilde{q}_t\| \frac{\mathrm{d}}{\mathrm{d}t}\|\tilde{q}_t\| = \langle \tilde{p}_t, \tilde{q}_t\rangle.
\ee
\\

Taking derivatives,
\be \label{eq:a2} 
\frac{\mathrm{d}}{\mathrm{d}t} \Bigg ( \|\tilde{q}_t\|\frac{\mathrm{d}}{\mathrm{d}t}\|\tilde{q}_t\| \Bigg) &= \frac{\mathrm{d}}{\mathrm{d}t} \langle \tilde{p}_t, \tilde{q}_t \rangle\\
&=\langle \tilde{p}_t, \frac{\mathrm{d} \tilde{q}_t}{\mathrm{d}t} \rangle +  \langle \frac{\mathrm{d} \tilde{p}_t}{\mathrm{d}t}, \tilde{q}_t\rangle\\
&=\|\tilde{p}_t\|^2 + \langle \frac{\mathrm{d} \tilde{p}_t}{\mathrm{d}t}, \tilde{q}_t \rangle\\
&\stackrel{{\scriptsize \textrm{Eq. }}\ref{eq:Galilean}}{=}\|\tilde{p}_t\|^2 + \bigg \langle -\bigg(U'(q^{(2)}_t) - U'(q^{(1)}_t)\bigg), q^{(2)}_t - q^{(1)}_t \bigg \rangle\\
&\stackrel{{\scriptsize \textrm{Eq. }}\ref{eq:InnerProduct1}}{\leq} \|\tilde{p}_t\|^2  -m_2 \|q^{(2)}_t - q^{(1)}_t\|^2 \\
&\stackrel{{\scriptsize \textrm{Pythagorean theorem }}}{=}\bigg\|\frac{\langle \tilde{q}_t, \tilde{p}_t \rangle}{\|\tilde{q}_t\|}\bigg\|^2 + \bigg\| \tilde{p}_t - \frac{\tilde{q}_t}{\|\tilde{q}_t\|}\frac{\langle \tilde{q}_t, \tilde{p}_t \rangle}{\|\tilde{q}_t\|}\bigg\|^2    -m_2 \|q^{(2)}_t - q^{(1)}_t\|^2 \\
&\stackrel{{\scriptsize \textrm{Eq. }}\ref{eq:a1'}}{=} \bigg(\frac{\mathrm{d}}{\mathrm{d}t}\|\tilde{q}_t\|\bigg) ^2 + \bigg\| \tilde{p}_t - \frac{\tilde{q}_t}{\|\tilde{q}_t\|}\frac{\langle \tilde{q}_t, \tilde{p}_t \rangle}{\|\tilde{q}_t\|}\bigg\|^2    -m_2 \|q^{(2)}_t - q^{(1)}_t\|^{2},
\ee
where the sixth line follows from the fact that $\frac{\tilde{q}_t}{\|\tilde{q}_t\|}\frac{\langle \tilde{q}_t, \tilde{p}_t \rangle}{\|\tilde{q}_t\|}$ is the projection of $\tilde{p}_t$ onto the unit vector $\frac{\tilde{q}_t}{\|\tilde{q}_t\|}$.

Applying the chain rule to the LHS of Equation \eqref{eq:a1},
\be \label{eq:a3} 
\frac{\mathrm{d}}{\mathrm{d}t} \Bigg ( \|\tilde{q}_t\| \frac{\mathrm{d}}{\mathrm{d}t}\|\tilde{q}_t\| \Bigg) &= \|\tilde{q}_t\| \frac{\mathrm{d}^2}{\mathrm{d}t^2}\|\tilde{q}_t\| + \bigg(\frac{\mathrm{d}}{\mathrm{d}t} \|\tilde{q}_t\|\bigg ) \times \bigg( \frac{\mathrm{d}}{\mathrm{d}t}\|\tilde{q}_t\|\bigg)\\
&= \|\tilde{q}_t\| \frac{\mathrm{d}^2}{\mathrm{d}t^2}\|\tilde{q}_t\| + \bigg(\frac{\mathrm{d}}{\mathrm{d}t} \|\tilde{q}_t\|\bigg)^2.
\ee

Combining Equality \eqref{eq:a3} with Inequality \eqref{eq:a2}, we get
\[
\|\tilde{q}_t\| \frac{\mathrm{d}^2}{\mathrm{d}t^2}\|\tilde{q}_t\| + \bigg(\frac{\mathrm{d}}{\mathrm{d}t} \|\tilde{q}_t\|\bigg)^2 \leq  \bigg(\frac{\mathrm{d}}{\mathrm{d}t}\|\tilde{q}_t\|\bigg) ^2 + \bigg\| \tilde{p}_t - \frac{\tilde{q}_t}{\|\tilde{q}_t\|}\frac{\langle \tilde{q}_t, \tilde{p}_t \rangle}{\|\tilde{q}_t\|}\bigg\|^2    -m_2 \|\tilde{q}_t\|^2,
\]
and hence
\be\label{eq:a4}
\frac{\mathrm{d}^2}{\mathrm{d}t^2}\|\tilde{q}_t\| &\leq \bigg\| \tilde{p}_t - \frac{\tilde{q}_t}{\|\tilde{q}_t\|}\frac{\langle \tilde{q}_t, \tilde{p}_t \rangle}{\|\tilde{q}_t\|}\bigg\|^2/ \|\tilde{q}_t\|    -m_2\|\tilde{q}_t\| \\
&\leq  \frac{\| \tilde{p}_t \|^2}{ \|\tilde{q}_t\|}    -m_2\|\tilde{q}_t\|,
\ee
where again we use the fact that $\frac{\tilde{q}_t}{\|\tilde{q}_t\|}\frac{\langle \tilde{q}_t, \tilde{p}_t \rangle}{\|\tilde{q}_t\|}$ is the projection of $\tilde{p}_t$ onto the unit vector $\frac{\tilde{q}_t}{\|\tilde{q}_t\|}$.


Recall that we assume $\tilde{p}_0= 0$ in the statement of this Lemma. Inequality \eqref{eq:contraction} is obviously true if $\hat{q}_0=0$, so without loss of generality we may also assume that $\hat{q}_0 >0$. By the fact that solutions to Hamilton's equations are continuous (first shown in \cite{poincare1899celeste}) and our assumptions that $\tilde{p}_0= 0$ and $\|\tilde{q}_0\| =\hat{q}_0 \neq 0$, there must exist an $\epsilon>0$ such that $\frac{\| \tilde{p}_t \|^2}{ \|\tilde{q}_t\|} -m_2\|\tilde{q}_t\| < 0$ for every $t \in (0,\epsilon]$.  Recall that $\hat{q}_t:= \|\tilde{q}_t\|$ and define
\be
\tau_1 := \max\{ \tau \in [0,T] :\hat{q}_t \leq 2\hat{q}_0 \, \, \forall \, t  \in[0,\tau]\},
\ee
where a maximum value exists by continuity of $\hat{q}_t$.
By parts 3 and 1, respectively, of Lemma \ref{thm:bounds}, we have
\be\label{eq:a5}
\|\tilde{q}_t\| \geq  -2M_2\hat{q}_0t^2 + \hat{q}_0, \quad \quad \forall t \in [0,\tau_1]
\ee
and
\be\label{eq:a6}
\|\tilde{p}_t\| \leq \frac{1}{2}\hat{q}_0\sqrt{M_2}\bigg(e^{t\sqrt{M_2}} - e^{-t\sqrt{M_2}} \bigg).
\ee

Hence, Inequalities \eqref{eq:a4}, \eqref{eq:a5}, and \eqref{eq:a6} together imply that for all $t \in [0,\tau_1]$ we have,
\be \label{eq:a9}
\frac{\mathrm{d}^2}{\mathrm{d}t^2}\|\tilde{q}_t\| &\leq \frac{\| \tilde{p}_t \|^2}{ \|\tilde{q}_t\|}    -m_2\|\tilde{q}_t\| \\
&\leq \frac{\bigg[ \frac{1}{2}\hat{q}_0\sqrt{M_2}\bigg(e^{t\sqrt{M_2}} - e^{-t\sqrt{M_2}} \bigg) \bigg]^2}{-2M_2\hat{q}_0t^2 + \hat{q}_0} - m_2\|\tilde{q}_t\| \\
&=  - m_2\|\tilde{q}_t\| + \frac{\frac{1}{4}\hat{q}_0 M_2\bigg(e^{t\sqrt{M_2}} - e^{-t\sqrt{M_2}} \bigg)^2}{-2(t\sqrt{M_2})^2 + 1}.
\ee
But $T \leq \frac{1}{2\sqrt{2}} \frac{\sqrt{m_2}}{\sqrt{M_2}} \frac{1}{\sqrt{M_2}}$ implies that $\mathcal{ERR}(t\sqrt{M_2})$ is nondecreasing on $t \in [0, T]$, so by Inequality \eqref{eq:a9} we have

\be \label{eq:a16}
\frac{\mathrm{d}^2}{\mathrm{d}t^2}\|\tilde{q}_t\| \leq -m_2\|\tilde{q}_t\| + \hat{q}_0M_2\mathcal{ERR}(T\sqrt{M_2}) \quad \quad \forall t \in [0, \tau_1].
\ee
 
Define 
\be
\tau_2 := \max\{ \tau  \in [0,T] :\hat{q}_t \geq \frac{1}{2}\hat{q}_0 \, \, \forall \, t  \in[0,\tau]\},
\ee
where a maximum value exists by continuity of $\hat{q}_t$. Then by Inequality \eqref{eq:a16},
 \be\label{eq:a7}
 \frac{\mathrm{d}^2}{\mathrm{d}t^2}\|\tilde{q}_t\| \leq -\frac{m_2}{2}\hat{q}_0 + \hat{q}_0M_2\mathcal{ERR}(T\sqrt{M_2}) \quad \quad \forall t \in  [0, \tau_1] \cap[0,\tau_2]
 \ee
 
Let  $\hat{q}^\ddagger_t$ be the solution to the differential equation
\be
\frac{\mathrm{d}^2}{\mathrm{d}t^2} \hat{q}^\ddagger_t = -\frac{m_2}{2}\hat{q}_0 + \hat{q}_0M_2\mathcal{ERR}(T\sqrt{M_2}),
\ee
with initial conditions $\hat{q}^\ddagger_0 = \hat{q}_0$ and $\frac{\mathrm{d}}{\mathrm{d}t} \hat{q}^\ddagger_0=\hat{p}_0=0$. Since the RHS of the differential inequality \eqref{eq:a7} is nondecreasing in both variables $\hat{q}_t$ and $\frac{\mathrm{d}}{\mathrm{d}t}\hat{q}_t$, we have by Lemma \ref{LemmaOdeComp} that
\be \label{IneqContLemmaApplyCompAgain}
\hat{q}_t \leq \hat{q}^\ddagger_t \quad \quad \forall t \in  [0, \tau_1] \cap[0,\tau_2].
\ee
Solving the differential equation for $\hat{q}^{\ddagger}_t$ gives
\be 
\hat{q}^\ddagger_t &=\bigg(-\frac{m_2}{2}\hat{q}_0 + \hat{q}_0M_2\mathcal{ERR}(T\sqrt{M_2})\bigg)\cdot \frac{1}{2} t^2 + \hat{p}_0t+\hat{q}_0\\
&=\bigg(-\frac{m_2}{2}\hat{q}_0 + \hat{q}_0M_2\mathcal{ERR}(T\sqrt{M_2})\bigg)\cdot \frac{1}{2} t^2 +\hat{q}_0\\
&=\hat{q}_0\Bigg[\bigg(-\frac{1}{2} + \frac{M_2}{m_2}\mathcal{ERR}(T\sqrt{M_2})\bigg)\cdot \frac{1}{2} (\sqrt{m}_2t)^2 +1\Bigg],
\ee
where the second line uses the fact that $\hat{p}_0=0$. Therefore, by Inequality \eqref{IneqContLemmaApplyCompAgain}, this implies
\be\label{eq:a8}
 \hat{q}_t \leq \hat{q}_0\Bigg[\bigg(-\frac{1}{2} + \frac{M_2}{m_2}\mathcal{ERR}(T\sqrt{M_2})\bigg)\cdot \frac{1}{2} (\sqrt{m}_2t)^2 +1\Bigg] \quad \quad \forall t \in  [0, \tau_1] \cap[0,\tau_2].
 \ee

Note that $\frac{\sqrt{m_2}}{\sqrt{M_2}} \leq 1$, so
\be\label{eq:b1}
T \leq \frac{1}{2\sqrt{2}} \frac{\sqrt{m_2}}{\sqrt{M_2}} \frac{1}{\sqrt{M_2}} \leq \frac{1}{2\sqrt{2}}\frac{1}{\sqrt{M_2}}.
\ee
Hence,
\be\label{eq:b2'}  \psi(t) = -2M_2t^2 + 1 \geq \frac{3}{4} \quad \quad \forall t \in [0,T]. \ee

We calculate,

\be \label{eq:b3}
\frac{M_2}{m_2}\mathcal{ERR}(T\sqrt{M_2}) &= \frac{M_2}{m_2}\frac{\sinh^2(\sqrt{M_2}T)}{1 - 2(\sqrt{M_2}T)^2}\\
&\leq \frac{M_2}{m_2}\frac{\sinh^2(\sqrt{M_2}T)}{1 - 2(\sqrt{M_2}T)^2} \leq \frac{1}{4},
\ee
where both inequalities use the fact that $T \leq \frac{1}{2\sqrt{2}} \frac{\sqrt{m_2}}{\sqrt{M_2}} \frac{1}{\sqrt{M_2}}$ and the first also uses the fact that $\sinh^2(t) \leq 1.2 t^2$ for all $t \in [0,\frac{1}{2}]$. This bound implies
\be\label{eq:b4'} \Psi_T(t) \leq 1- \frac{1}{4} \cdot \frac{1}{2} (\sqrt{m}_2t)^2 \leq 1 \quad \quad \forall t \in [0,T].  \ee

Therefore, Inequalities \eqref{eq:b2'} and \eqref{eq:b4'}, respectively, imply that
\be \label{eq:a14}
\hat{q}_0\psi(t) \geq \frac{3}{4}\hat{q}_0 \quad \textrm{ and } \quad \hat{q}_0\Psi_T(t)\leq \hat{q}_0 \quad \forall t\in [0,T].
\ee

We now define $\tau_{3}$ to be supremum of all values of $0 \leq \tau \leq T$ so that the following inequalities hold:
\be \label{eq:a10}
 \frac{3}{4}\hat{q}_0 \leq \hat{q}_0\psi(t) \leq \hat{q}_t \leq \hat{q}_0\Psi_T(t)\leq \hat{q}_0  \quad \quad \forall t \in [0,\tau].
\ee
Observe that $\Psi_T(0) = \psi(0) = 1$, which implies $\tau_{3} \geq 0$. Moreover, since $\hat{q}_t$, $\psi(t)$, and $\Psi_T(t)$ are continuous on $t\in [0,T]$, Inequalities \eqref{eq:a10} are satisfied for $t = \tau_{3}$. We now prove by contradiction that in fact $\tau_{3} = T$.

\begin{itemize}[\qquad \qquad]
\item Suppose (towards a contradiction) that $\tau_3<T$.
Since $\hat{q}_t$ is continuous on $t\in \mathbb{R}$, Equation \eqref{eq:a10} (which is satisfied for $\tau= \tau_3$) implies that there exists a number $\delta$ with $0< \delta \leq T-\tau_3$ such that
\be \label{eq:a11}
\frac{1}{2} \hat{q}_0 \leq \hat{q}_t \leq 2\hat{q}_0 \quad \quad \forall t\in[0, \tau_3+\delta].
\ee
Equation \eqref{eq:a11} implies that $\tau_3+\delta \leq \tau_1$ and $\tau_3 +\delta \leq \tau_2$.  Therefore, by Equation \eqref{eq:a8}, we have
\be \label{eq:a12}
\hat{q}_t \leq \hat{q}_0\Psi_T(t) \quad \quad \forall t \in [0, \tau_3+\delta]
\ee
and by part 3 of Lemma \ref{thm:bounds} we have
\be \label{eq:a13}
\hat{q}_0\psi(t) \leq \hat{q}_t \quad \quad \forall t \in [0, \tau_3+\delta].
\ee
Therefore, Equations  \eqref{eq:a14}, \eqref{eq:a12}, and \eqref{eq:a13} together imply that
\be \label{eq:a15}
\frac{3}{4}\hat{q}_0 \leq \hat{q}_0\psi(t) \leq \hat{q}_t \leq \hat{q}_0\Psi_T(t)\leq \hat{q}_0  \quad \quad \forall t \in [0,\tau_3+\delta].
\ee
But Equation \eqref{eq:a15} implies that Equation \eqref{eq:a10} is satisfied for $\tau= \tau_3+\delta$, which contradicts the fact that $\tau=\tau_3$ is the largest value of $\tau$ that satisfies equation $\ref{eq:a10}$.  Therefore, by contradiction, our assumption that $\tau_3<T$ must be false.\\
\end{itemize}

Therefore, $\tau_3 = T$, and so Equation \eqref{eq:a10} is satisfied for $\tau=T$:
\be
\frac{3}{4}\hat{q}_0 \leq \hat{q}_0\psi(t) \leq \hat{q}_t \leq \hat{q}_0\Psi_T(t)\leq \hat{q}_0  \quad \quad \forall t \in [0,T].
\ee
This completes the proof of the Lemma.
\end{proof}

This bound quickly implies the main result of this section:

\begin{thm} [Contraction For Hamiltonian Mechanics with Convex Potentials] \label{ThmContractionConvexMainResult}
For $0 \leq T \leq \frac{1}{2\sqrt{2}} \frac{\sqrt{m_2}}{M_2}$, 
\be \label{IneqContractLemmaMain}
\hat{q}_T \leq \left[1- \frac{1}{8} (\sqrt{m}_2T)^2 \right] \times \hat{q}_0.
\ee

In particular, if $T = \frac{1}{2\sqrt{2}} \frac{\sqrt{m_2}}{M_2}$, then
\be \label{IneqContractLemmaCorr}
\hat{q}_T \leq \left[1- \frac{1}{64} \left(\frac{m_2}{M_2}\right)^2 \right] \times \hat{q}_0.
\ee
\end{thm}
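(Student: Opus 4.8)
The plan is to derive both inequalities as an immediate consequence of Lemma~\ref{lemma:contraction}, which has already done all the analytic work. Recall that throughout this subsection we are in the setting $\tilde p_0 = 0$ (equal initial momenta). Under the hypothesis $0 < T \leq \frac{1}{2\sqrt 2}\frac{\sqrt{m_2}}{M_2}$, Lemma~\ref{lemma:contraction} gives $\hat q_t \leq \hat q_0 \Psi_T(t)$ for all $t \in [0,T]$, where $\Psi_T(t) = \bigl(-\tfrac12 + \tfrac{M_2}{m_2}\mathcal{ERR}(T\sqrt{M_2})\bigr)\cdot\tfrac12(\sqrt{m}_2 t)^2 + 1$. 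The first step is to specialize this to $t = T$ and bound the coefficient using the estimate already established in the course of proving that lemma: Inequality~\eqref{eq:b3} says that $\frac{M_2}{m_2}\mathcal{ERR}(T\sqrt{M_2}) \leq \tfrac14$ whenever $T \leq \frac{1}{2\sqrt2}\frac{\sqrt{m_2}}{M_2}$ (this also keeps $\mathcal{ERR}(T\sqrt{M_2})$ finite, since then $(T\sqrt{M_2})^2 \leq \tfrac18$).

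Substituting this bound into $\Psi_T(T)$ yields
\be
\hat q_T \leq \hat q_0 \Psi_T(T) \leq \hat q_0\left[\Bigl(-\tfrac12 + \tfrac14\Bigr)\cdot \tfrac12(\sqrt{m}_2 T)^2 + 1\right] = \hat q_0\left[1 - \tfrac18(\sqrt{m}_2 T)^2\right],
\ee
which is exactly Inequality~\eqref{IneqContractLemmaMain}. The boundary case $T = 0$ is trivial, since then $\hat q_T = \hat q_0$ and the bracket equals $1$, so the claimed inequality holds with equality. For the ``in particular'' statement, I would simply plug $T = \frac{1}{2\sqrt 2}\frac{\sqrt{m_2}}{M_2}$ into \eqref{IneqContractLemmaMain}: then $(\sqrt{m}_2 T)^2 = m_2 T^2 = m_2 \cdot \frac{1}{8}\frac{m_2}{M_2^2} = \frac18\frac{m_2^2}{M_2^2}$, so $\frac18(\sqrt{m}_2 T)^2 = \frac{1}{64}\frac{m_2^2}{M_2^2}$, giving \eqref{IneqContractLemmaCorr}.

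There is no real obstacle here — the entire proof is a one-line substitution of \eqref{eq:b3} into the conclusion of Lemma~\ref{lemma:contraction}, followed by arithmetic. The only points worth a sanity check are that the range of $T$ assumed in this theorem coincides with the one needed by Lemma~\ref{lemma:contraction} and Inequality~\eqref{eq:b3} (it does), and that the degenerate endpoint $T = 0$, which the lemma excludes, is handled separately by the trivial observation above.
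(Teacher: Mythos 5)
Your proof is correct and follows the paper's argument exactly: both substitute the bound $\frac{M_2}{m_2}\mathcal{ERR}(T\sqrt{M_2}) \leq \frac{1}{4}$ from Inequality~\eqref{eq:b3} into the upper bound $\hat q_T \leq \hat q_0 \Psi_T(T)$ of Lemma~\ref{lemma:contraction}. Your explicit handling of the degenerate endpoint $T=0$ and the spelled-out arithmetic for the specialization $T = \frac{1}{2\sqrt2}\frac{\sqrt{m_2}}{M_2}$ are sensible additions, but the approach is the same.
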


\begin{proof}
By Inequality  \eqref{eq:b3},
\be \label{eq:b3'}
\frac{M_2}{m_2}\mathcal{ERR}(T\sqrt{M_2}) \leq \frac{1}{4}.
\ee

This implies
\be\label{eq:b4} \Psi_T(t) \leq 1- \frac{1}{4} \cdot \frac{1}{2} (\sqrt{m}_2t)^2 \quad \quad \forall t \in [0,T].  \ee

Applying Lemma \ref{lemma:contraction}, Equation \eqref{eq:b4} implies that
\be
\hat{q}_T \leq \left[1- \frac{1}{8}(\sqrt{m}_2T)^2 \right] \times \hat{q}_{0}.
\ee
This completes the proof of Inequality \eqref{IneqContractLemmaMain}. Inequality \eqref{IneqContractLemmaCorr} is an immediate consequence of Inequality \eqref{IneqContractLemmaMain}.
\end{proof}

\subsection{Bounds Leading to Total Variation Mixing}

To prove that HMC mixes quickly in the strong Total Variation metric, not merely the weaker Wasserstein metric, we need the following continuity estimate:

\begin{lemma} [Kernel Continuity] \label{LemmaMinStrongLog}
Fix $\epsilon' > 0$, let Assumption \ref{AssumptionsConvexity} hold with $\mathcal{X} = \mathbb{R}^{d}$, and let Assumption \ref{AssumptionSecondDerivative} hold. Let $K$ be the transition kernel defined by Algorithm \ref{DefSimpleHMC} with parameter $0 \leq T \leq \frac{1}{2\sqrt{2}} \frac{\sqrt{m_{2}}}{M_{2}}$.

For all $\mathbf{q}^{(1)}, \mathbf{q}^{(2)} \in \mathbb{R}^{d}$ with 
\be 
\| \mathbf{q}^{(1)} - \mathbf{q}^{(2)} \| \leq \epsilon \equiv \epsilon' \times \min \left( [\frac{40}{9T\sqrt{M_2}}]^{-1} \, \, , \, \, [8 d\sqrt{d}]^{-1} \, \,  , \, \, \left[16e\sqrt{2} d\right]^{-1} \,\, , \, \, [d\frac{M_3}{(M_2)^{3/2} T}]^{-1} \, \, ,\, \, [10\frac{(M_2)^2}{d M_3}]^{-1}\right),
\ee  
we have 
\be 
\| K(\mathbf{q}^{(1)},\cdot) - K(\mathbf{q}^{(2)},\cdot) \|_{\mathrm{TV}} \leq 5 \epsilon'.
\ee 
\end{lemma}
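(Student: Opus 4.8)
The plan is to show that when two starting points $\mathbf{q}^{(1)},\mathbf{q}^{(2)}$ are very close, the push-forward measures $K(\mathbf{q}^{(i)},\cdot) = (\mathcal{Q}_T^{\mathbf{q}^{(i)}})_* \Phi_1$ are close in total variation. The key structural fact is that for a fixed starting point, the map $\mathbf{p} \mapsto \mathcal{Q}_T^{\mathbf{q}}(\mathbf{p}) = q_T(\mathbf{q},\mathbf{p})$ is a diffeomorphism of $\mathbb{R}^d$ (since Hamilton's flow is invertible, and the momentum-to-position map has full-rank Jacobian for small $T$), so each $K(\mathbf{q}^{(i)},\cdot)$ has an explicit density obtained by the change-of-variables formula: $K(\mathbf{q}^{(i)},\cdot)$ has density proportional to $\varphi_1\bigl((\mathcal{Q}_T^{\mathbf{q}^{(i)}})^{-1}(x)\bigr) \, \bigl| \det D_x (\mathcal{Q}_T^{\mathbf{q}^{(i)}})^{-1} \bigr|$, where $\varphi_1$ is the standard Gaussian density. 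The total variation distance is then $\tfrac12 \int |f_1(x) - f_2(x)|\,\d x$ of these two densities, and I would bound this by controlling (a) how much the preimage point $(\mathcal{Q}_T^{\mathbf{q}})^{-1}(x)$ moves as $\mathbf{q}$ varies, and (b) how much the Jacobian determinant moves. Both are governed by the sensitivity of the Hamiltonian flow to its initial position, which is exactly what Lemma \ref{thm:bounds} and the surrounding ODE comparison machinery quantify: perturbing $\mathbf{q}$ by $\epsilon$ perturbs $q_T$ and the first-variation (Jacobian) fields by $O(\epsilon)$ on the short time interval $[0,T]$ of interest, with the second-derivative bound $M_3$ from Assumption \ref{AssumptionSecondDerivative} entering when we differentiate the Jacobi/variational equation.

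The concrete steps I would carry out, in order, are: (1) Fix the momentum $\mathbf{p}$ and compare the two Hamiltonian trajectories started at $(\mathbf{q}^{(1)},\mathbf{p})$ and $(\mathbf{q}^{(2)},\mathbf{p})$ — this has $\hat q_0 = \|\mathbf{q}^{(1)}-\mathbf{q}^{(2)}\| \le \epsilon$, $\hat p_0 = 0$, so Lemma \ref{thm:bounds}(1) (or Lemma \ref{lemma:contraction}) gives $\hat q_T = O(\epsilon)$ and $\hat p_T = O(\epsilon \sqrt{M_2}\,T)$; this handles the displacement of the solution map itself. (2) Differentiate Hamilton's equations with respect to initial momentum to get the variational (Jacobi) equations for $J_t := \partial q_t/\partial \mathbf{p}$, and likewise differentiate with respect to initial position; using $\|D_v U'\| \le M_2$ one gets, via the same ODE-comparison argument as in the proof of Lemma \ref{thm:bounds}, that $J_t = \mathrm{Id}\cdot t + O(M_2 t^3)$ is close to $t\,\mathrm{Id}$ and in particular invertible for $0\le t\le T$, and that $J_T$ for the two starting points differ by $O(\epsilon\, M_3\, T^3 / \text{(something)})$ once the second-derivative bound $M_3$ is used to control $\partial^2 q_t / \partial \mathbf{q}\partial \mathbf{p}$. (3) Convert these pointwise trajectory/Jacobian bounds into a bound on $\|f_1 - f_2\|_{L^1}$: write $f_i(x) = \varphi_1(\Psi_i(x))\,|\det D\Psi_i(x)|$ with $\Psi_i := (\mathcal{Q}_T^{\mathbf{q}^{(i)}})^{-1}$, and bound $|f_1 - f_2| \le |\varphi_1(\Psi_1) - \varphi_1(\Psi_2)|\,|\det D\Psi_1| + \varphi_1(\Psi_2)\,\big||\det D\Psi_1| - |\det D\Psi_2|\big|$; the first term is controlled by the Lipschitz modulus of $\varphi_1$ times $\|\Psi_1 - \Psi_2\|$, the second by the relative perturbation of the Jacobian determinant (which for matrices near $T\,\mathrm{Id}$ is the trace of the relative perturbation, hence dimension-$d$ dependent, explaining the $d$'s in the definition of $\epsilon$). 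Integrating against the (near-Gaussian) reference measure and using $\|x\| \le O(\sqrt d)$ on the bulk of $\Phi_1$ produces the factors of $d$, $d^{3/2}$, and $d M_3/M_2^2$ in the stated threshold. Finally choose $\epsilon$ to be the given minimum so that each contribution is at most $\epsilon'$, summing to $\le 5\epsilon'$.

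The main obstacle, I expect, is step (3) combined with the determinant control in step (2): one must show the Jacobian $D\mathcal{Q}_T^{\mathbf{q}}(\mathbf{p})$ (equivalently the first-variation matrix $J_T$) is uniformly invertible and has a determinant whose logarithm is Lipschitz in $\mathbf{q}$ with a constant that scales correctly in $d$, $M_2$, $M_3$, and $T$ — this requires propagating the matrix-valued variational ODE and its $\mathbf{q}$-derivative (a second-variation equation whose inhomogeneous term involves $M_3$) through a Grönwall/comparison estimate, and then carefully converting "$J_T^{(1)}$ and $J_T^{(2)}$ are close as matrices" into "$|\det J_T^{(1)}| $ and $|\det J_T^{(2)}|$ have close logarithms," which is where the dimension enters through $\det(\mathrm{Id}+E) \approx 1 + \Tr(E)$ and $|\Tr(E)| \le d\|E\|_{\mathrm{op}}$. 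A secondary technical point is justifying that the change-of-variables formula applies globally — i.e. that $\mathbf{p}\mapsto q_T(\mathbf{q},\mathbf{p})$ is a global diffeomorphism of $\mathbb{R}^d$ for $T \le \tfrac{1}{2\sqrt2}\tfrac{\sqrt{m_2}}{M_2}$ — which follows from injectivity (two momenta giving the same $q_T$ would contradict the contraction/expansion estimates of Lemmas \ref{thm:bounds} and \ref{lemma:contraction}) together with the nonvanishing Jacobian and properness of the map.
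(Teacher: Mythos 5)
Your proposal is correct in spirit but takes a genuinely different route from the paper's proof, so a comparison is worth spelling out. The paper (Lemma \ref{LemmaKernelContinuityMainLemma} in Appendix \ref{SecAppendixStrong}) does \emph{not} work with a global change-of-variables density formula. Instead, it first approximates the Gaussian momentum distribution by a mixture of uniform distributions on small lattice cubes of side $2\delta$ (this costs one $\epsilon'$ and is where the $[8d\sqrt d]^{-1}$ and $[16e\sqrt 2 d]^{-1}$ thresholds come from, since they control the Gaussian gradient on a cube); it then argues, conditionally on the cube, that the push-forwards of the uniform-on-a-cube measure under $\mathcal{Q}_T^{\mathbf{q}^{(1)}}$ and $\mathcal{Q}_T^{\mathbf{q}^{(2)}}$ overlap almost completely, by sandwiching each image between two parallelepipeds given by the linearization (Lemmas \ref{thm:homotopy1} and \ref{thm:cube}), comparing the parallelepipeds (Lemma \ref{thm:overlap}), and comparing the Jacobian determinants across the cube and across the shift of $\mathbf{q}$ (Lemmas \ref{thm:Jacobian} and \ref{thm:determinant}). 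Your proposal instead treats $\mathbf{p}\mapsto\mathcal{Q}_T^{\mathbf{q}}(\mathbf{p})$ as a global diffeomorphism and compares the two push-forward densities $f_i = \varphi_1(\Psi_i)\,|\det D\Psi_i|$ directly in $L^1$, splitting into a displacement term and a determinant term. The underlying primitives are the same — both need the sensitivity estimates on $q_T$ and its first and second variations with respect to $(\mathbf{q},\mathbf{p})$, which are Lemmas \ref{thm:differential_operator}, \ref{thm:TV2}, \ref{thm:Jacobian}, \ref{thm:determinant} — and your identification of the main obstacle (propagating the second-variation ODE to get a log-Lipschitz bound on $\det J_T$ with the right $d, M_2, M_3, T$ dependence) exactly matches what Lemma \ref{thm:determinant} supplies. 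What the two routes buy: yours is conceptually cleaner and avoids the lattice and the Jordan--Brouwer machinery (Lemma \ref{thm:homotopy1}), at the price of needing $\mathcal{Q}_T^{\mathbf{q}}$ to be a \emph{global} diffeomorphism of $\mathbb{R}^d$ (which you correctly observe follows from the expansion bound in Lemma \ref{thm:bounds}(2), the singular-value lower bound of Lemma \ref{thm:TV2}, and properness); the paper's route deliberately uses only \emph{local} injectivity and sidesteps the global-inverse question, which is why the topological lemma is there. One caveat: the paper's two-stage argument decouples the Gaussian Lipschitz modulus (which carries the $d\sqrt d$ factors) from the Hamiltonian geometry (which carries the $T\sqrt{M_2}$ and $dM_3/M_2^{3/2}T$ factors) by conditioning on the cube first; your direct density comparison couples them (the displacement term in your $|f_1-f_2|$ bound multiplies a $\|\nabla\varphi_1\|$-type factor by a $\|\Psi_1-\Psi_2\|\approx \epsilon/T$ factor), so the threshold $\epsilon$ you would need to write down to make each term $\le\epsilon'$ will not be exactly the minimum appearing in the statement, and you should check it is not strictly more restrictive.
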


The proof of this lemma is deferred to Appendix \ref{SecAppendixStrong}.

\subsection{Drift Condition} \label{SecDriftCond}

Although this paper focuses on mixing bounds, we feel it is worth mentioning that the strong log-concavity assumption will also imply a quantitatively useful drift condition:

\begin{thm} [Drift Conditions for HMC: Quadratic Tails] \label{ThmDriftHMC}
Fix $1 < C < \infty$ and define
\be 
S = \{x \in \mathbb{R}^{d} \, : \,  \| x \| \leq C\}.
\ee 

Let Assumption \ref{AssumptionsConvexity} hold for $\mathcal{X} = S^{c}$. Let $\{X_{t}\}_{t \geq 0}$ be the Markov chain given defined by Algorithm \ref{DefSimpleHMC} with parameter $T = \frac{\sqrt{m_{2}}}{2\sqrt{2 }M_{2}}$. Then

\be \label{IneqDriftConcMain}
\E[e^{\| X_{1}\|} \, | \, X_{0}] \leq e^{-1} e^{\|X_{0}\|} + A,
\ee 

where the constant $0 < A < \infty$ satisfies

\be \label{IneqDriftConcConst}
\log(A) = O \left( \frac{M_{2}^{2.5}}{m_{2}^{4}}, \frac{M_{2}^{3}}{m_{2}^{3.5}}, C \frac{M_{2}}{m_{2}}, \frac{\sqrt{M_{2}}}{m_{2}^{1.5}} \right).
\ee 
\end{thm}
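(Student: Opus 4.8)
The plan is to establish the drift inequality \eqref{IneqDriftConcMain} by bounding $\|X_1\|$ pathwise in terms of $\|X_0\|$ and the Gaussian momentum $\mathbf{p}_0$, then taking expectations. The essential geometric input is that, outside the ball $S$, the potential is strongly convex, so Hamiltonian trajectories started in the exterior are pulled back toward the origin on the time scale $T = \frac{\sqrt{m_2}}{2\sqrt 2 M_2}$, and this contraction should dominate the ``kinetic'' spreading term $\|\mathbf{p}_0\| T$. First I would compare the single trajectory $q_t(\mathbf{q},\mathbf{p}_0)$ to the trajectory started at the minimizer $0$ with the \emph{same} momentum: applying the contraction machinery of Lemma \ref{lemma:contraction} / Theorem \ref{ThmContractionConvexMainResult} to the pair $(q_t(\mathbf{q},\mathbf{p}_0), q_t(0,\mathbf{p}_0))$ (which have equal initial momenta, so $\tilde p_0 = 0$) gives $\|q_T(\mathbf{q},\mathbf{p}_0) - q_T(0,\mathbf{p}_0)\| \le (1 - \tfrac{1}{64}(m_2/M_2)^2)\|\mathbf{q}\|$ \emph{provided the whole trajectory stays in the strongly convex region} $S^c$. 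Then $\|X_1\| = \|q_T(\mathbf{q},\mathbf{p}_0)\| \le (1-\tfrac{1}{64}(m_2/M_2)^2)\|\mathbf{q}\| + \|q_T(0,\mathbf{p}_0)\|$, and $\|q_T(0,\mathbf{p}_0)\|$ is controlled by Lemma \ref{thm:bounds}(1) (with the first trajectory $\equiv 0$) as at most $\|\mathbf{p}_0\| \cdot \frac{1}{\sqrt{M_2}}\sinh(T\sqrt{M_2}) = O(\|\mathbf{p}_0\| T)$.

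\textbf{Handling the two regimes.} The subtlety is that the above contraction only applies while the trajectory avoids $S$. I would split on whether $\|\mathbf{q}\|$ is large or small relative to $C$ and the typical excursion size $\|\mathbf{p}_0\| T$. When $\|\mathbf{q}\| \geq 2C + (\text{upper bound on } \|\mathbf{p}_0\|T)$ and $\|\mathbf{p}_0\|$ is not atypically large, the trajectory cannot reach $S$ within time $T$ (using the crude bound $\|q_t - \mathbf{q}\| \le \|\mathbf{p}_0\| t + \sup\|U'\|\, t^2/2$, or more cleanly the $e^{t\sqrt{M_2}}$ bound of Lemma \ref{thm:bounds}(1) applied to the pair $(q_t(\mathbf{q},\mathbf{p}_0), \mathbf{q})$ after noting $U'$ is bounded on the relevant region), so the contraction argument goes through and yields $\|X_1\| \le \|\mathbf{q}\| - c\,(m_2/M_2)^2 \|\mathbf{q}\| + O(\|\mathbf{p}_0\|T)$, which for $\|\mathbf{q}\|$ large enough gives a genuine decrease of at least, say, $1$ when $\|\mathbf{p}_0\|$ is controlled. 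When $\|\mathbf{q}\|$ is bounded (of order $C$ plus a few standard deviations of $\mathbf{p}_0$), I would instead just use a worst-case bound: $\|X_1\| \le \|\mathbf{q}\| + O(\|\mathbf{p}_0\|T)$ together with $\|\mathbf{q}\| = O(C + \|\mathbf{p}_0\|T)$, absorbing everything into the additive constant $A$.

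\textbf{Taking expectations and the constant.} With $X_1 = q_T(X_0,\mathbf{p}_0)$ and $\mathbf{p}_0 \sim \Phi_1$ independent of $X_0$, I would write, on the ``good'' event where $\|\mathbf{p}_0\|$ is not too large, $\|X_1\| \le \|X_0\| - 1$ once $\|X_0\|$ exceeds some threshold $R_0 = O(\max(C, \text{polynomial in } m_2,M_2))$, and on the complementary ``bad'' event (Gaussian tail, exponentially small) or when $\|X_0\| \le R_0$, use the crude linear-in-$\|\mathbf{p}_0\|$ bound. Then
\begin{align*}
\E[e^{\|X_1\|}\mid X_0] &\le e^{-1} e^{\|X_0\|}\,\mathbbm{1}[\|X_0\| > R_0]\cdot \E[e^{O(\|\mathbf{p}_0\|T)}] + e^{O(R_0)}\,\E[e^{O(\|\mathbf{p}_0\|T)}]\\
&\le e^{-1} e^{\|X_0\|} + A,
\end{align*}
where $\E[e^{O(\|\mathbf{p}_0\|T)}] = \E[e^{O(\|Z\|T)}]$ for $Z\sim\mathcal N(0,I_d)$ is a finite constant (here $T = \Theta(\sqrt{m_2}/M_2)$ keeps $\|\mathbf{p}_0\|T$ small enough, using $\|Z\|\le\sqrt d + O(\sqrt{\log(\cdot)})$ concentration so that the contribution is at worst $e^{O(dT^2)} = e^{O(m_2/M_2^2)}$ times polynomial factors) — note this is where I must be careful to choose the threshold $R_0$ so that the contraction term $c(m_2/M_2)^2\|X_0\|$ beats $\|\mathbf{p}_0\|T + 1$; this forces $R_0 = O((M_2/m_2)^2 \cdot (\|\mathbf{p}_0\|T))$, and after substituting the bounds on $\|\mathbf{p}_0\|$ and $T$ one reads off $\log A = O(R_0 + dT^2 + \dots)$, which should match the claimed exponents $\frac{M_2^{2.5}}{m_2^4}, \frac{M_2^3}{m_2^{3.5}}, C\frac{M_2}{m_2}, \frac{\sqrt{M_2}}{m_2^{1.5}}$. \textbf{The main obstacle} I anticipate is bookkeeping the region-confinement argument precisely — ensuring the trajectory from $X_0$ genuinely stays in $S^c$ throughout $[0,T]$ so that Theorem \ref{ThmContractionConvexMainResult} applies — and then tracking the various powers of $m_2, M_2$ through the choice of $R_0$ and the Gaussian moment generating bound to land exactly on \eqref{IneqDriftConcConst} rather than something merely polynomially comparable.
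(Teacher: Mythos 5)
Your proposal follows essentially the same route as the paper's (packaged there as Lemmas \ref{LemmaSmallDisplacement}--\ref{LemmaLyapunovHamDynBasic}): compare $q_t(X_0,\mathbf{p}_0)$ with the reference trajectory $q_t(0,\mathbf{p}_0)$ sharing the same momentum, contract the initial gap via Theorem \ref{ThmContractionConvexMainResult}, bound the size of the reference trajectory at time $T$, define a ``good'' set $\mathcal{G}(\mathbf{q})$ of momenta on which the deterministic inequality $\|X_1\|\le\|X_0\|-1$ holds and the main trajectory is confined to $S^c$, and absorb the Gaussian tail of $\mathcal{G}(\mathbf{q})^c$ into $A$. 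The one subsidiary step you handle differently is the bound on $\|q_T(0,\mathbf{p}_0)\|$: you propose to use the Gr\"onwall-type bound of Lemma \ref{thm:bounds}(1), whereas the paper uses conservation of energy together with $U(q)\ge m_2\|q\|^2$ to get $\|q_T(0,\mathbf{p}_0)\|\le\|\mathbf{p}_0\|/\sqrt{2m_2}$. The energy route is the safer choice under the hypothesis $\mathcal{X}=S^c$: Lemma \ref{thm:bounds}(1) needs the second-derivative bound $\|D_vU'\|\le M_2\|v\|$ along the trajectory, and a trajectory launched from the origin necessarily passes through the ball $S$ where no such bound is assumed, while conservation of energy sidesteps Hessian control along the path entirely. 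Your instinct that the confinement bookkeeping is the delicate part is right, and the paper's Lemma \ref{LemmaSmallDisplacement} is precisely the rigorous form of the crude displacement bound you sketch for that purpose, so that part of your plan matches what the paper does.
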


\begin{proof}
The proof is given in Appendix \ref{AppDriftCond}.
\end{proof}

\begin{remark} [Other Drift Bounds In the HMC Literature] \label{RemGenDriftBounds}
Quantitative drift conditions are required to extend the quantiative mixing bounds in the present paper to more general distributions. Finding such bounds is not generally easy, and is beyond the scope of this paper, but we mention some existing related work.  Although there is very little published work obtaining general quantitative drift conditions in the HMC literature, there has been substantial work on finding drift conditions without explicit quantitative bounds, most noteably in \cite{livingstone2016geometric} \cite{durmus2017convergence}. Of course it is possible to obtain quantitative bounds by following the calculations in \textit{e.g.} Theorem 5.4 of \cite{livingstone2016geometric}. However, in order to obtain useful results for the ideal HMC integrator, additional assumptions are required. We briefly sketch the difficulty.

In the last clause in the last sentence of the proof of Proposition 5.10, they assert that a certain sum is strictly positive because all of its terms are nonnegative and at least one is strictly positive. The continuous analogue to this argument fails: the analogous integral cannot be bounded away from 0 simply by noting that the integrand is strictly positive at at least one point. In order to avoid this problem, it is sufficient to add an assumption that $U'(q)$ changes slowly with $q$ - for example, our Assumption \ref{AssumptionSecondDerivative}. 
\end{remark}

\section{Proofs of Main Result for Ideal Integrator} \label{SecMainRes}

We prove Theorem \ref{ThmMainConcave}: 

\begin{proof} [Proof of Theorem \ref{ThmMainConcave}]
Inequality \eqref{MixingLogConcaveMainConc2} follows immediately from Theorem \ref{ThmContractionConvexMainResult}.  Inequality \eqref{MixingLogConcaveMainConc3} follows immediately from   Inequality \eqref{MixingLogConcaveMainConc2} and Proposition 30 of \cite{ollivier2009ricci}.

Finally, we prove Inequality \eqref{MixingLogConcaveMainConc1}  by applying Lemma \ref{LemmaDriftMinRep}. Define 
\be 
\delta = \frac{1}{125} \, \min \left( [\frac{40}{9T\sqrt{M_2}}]^{-1} \, \, , \, \, [8 d\sqrt{d}]^{-1} \, \,  , \, \, \left[16e\sqrt{2} d\right]^{-1} \,\, , \, \, [d\frac{M_3}{(M_2)^{3/2} T}]^{-1} \, \, ,\, \, [10\frac{(M_2)^2}{d M_3}]^{-1}\right),
\ee 
noting that this agrees with Equation \eqref{EqThm1Consts}. In the notation of Lemma \ref{LemmaDriftMinRep}, we will set 
\be 
h(x,y) &= e^{\|x-y\|}, \, k_{0} = 1, \\
S &= \{ (x, y) \in \mathbb{R}^{2d} \, : \, \|x - y \| \leq  \delta \}.
\ee 
Applying Theorem \ref{ThmContractionConvexMainResult} for both bounds, the constants $\alpha, A$ that appear in Lemma \ref{LemmaDriftMinRep} may be taken to be 
\be 
\alpha^{-1} &= e^{-\frac{ \delta}{64} \left( \frac{m_{2}}{M_{2}} \right)^{2} }\\
A &= e^{2 \delta}.
\ee 

By Lemma \ref{LemmaMinStrongLog}, the constant $\epsilon$ that appears in the statement of Lemma \ref{LemmaDriftMinRep} may be taken to be
\be 
\epsilon = \frac{24}{25}.
\ee 
Let $Y \sim \pi$. Applying Lemma \ref{LemmaDriftMinRep} with these constants, we find 
\be \label{IneqStrongLongConcReallyAlmostDone}
\| \mathcal{L}(X_{t}) - \pi \|_{\mathrm{TV}} \leq \inf_{0 \leq j \leq t} \left( (1 - \epsilon)^{j} + \alpha^{-t + j -1} \, A^{j-1} \, \E[h(x,Y)] \right).
\ee 

Finally, we bound $\E[h(x,Y)]$. Let $Z \sim \Phi_{1}$ be a standard univariate Gaussian. By the strong log-concavity of $\pi$, we have 
\be 
\P[ \| Y \| > y] \leq \P[ | Z | > \frac{\sqrt{2 m_{2}} y}{\sqrt{d}}] 
\ee  
for all $y > 0$, so 
\be \label{SillyMeanSize}
\E[e^{\|Y-x\|}] \leq e^{\|x \|} \,\E[e^{\|Y\|}] \leq  e^{\|x\| + \frac{d}{4 m_{2}}}.
\ee 
Applying this with Inequality \eqref{IneqStrongLongConcReallyAlmostDone},
\be 
\| \mathcal{L}(X_{t}) - \pi \|_{\mathrm{TV}} &\leq \inf_{0 \leq j \leq t} \left( (1 - \epsilon)^{j} + \alpha^{-t + j +1} \, A^{j-1} \, e^{\|x\| + \frac{d}{4 m_{2}}} \right) \\
&\leq  \inf_{0 \leq j \leq t} \left( 25^{-j} + e^{-(t-j+1)\frac{ \delta}{64} \left( \frac{m_{2}}{M_{2}} \right)^{2} } e^{2 \delta j} e^{\|x\| + \frac{d}{4 m_{2}}} \right).
\ee 
Defining $\kappa = \frac{1}{64} \frac{m_{2}^{2}}{M_{2}^{2}}$ and fixing $j = \lfloor \frac{\kappa  t}{100} \rfloor$, we have 
\be 
\| \mathcal{L}(X_{t}) - \pi \|_{\mathrm{TV}} &\leq 25^{-\lfloor \frac{\kappa t}{100} \rfloor} + e^{-t (1 - \frac{\kappa}{100}) \delta \kappa} e^{2 \delta \frac{\kappa t}{100}}  e^{\|x\| + \frac{d}{4 m_{2}}} \\
&\leq 25^{-\lfloor \frac{\kappa t}{100} \rfloor} + e^{-\frac{\kappa \delta t}{2}}  e^{\|x\| + \frac{d}{4 m_{2}}},
\ee  

completing the proof of the theorem.

\end{proof}

\section{Preconditioning and Optimization} \label{SecPreproc}

Our main results are stated in terms of the ratio $\frac{M_{2}}{m_{2}}$ of upper and lower bounds $m_{2},M_{2}$ on the Hessian of the potential $U$. This ratio can be quite large for many target distributions, such as the posterior distribution of a regression problem in which different coefficients have very different sizes. In this section, we show that simple preprocessing steps can make this ratio much smaller, thus making our bounds much better in practice than they might first appear. We note that this preprocessing is common in other ``geometric" Markov chain applications (see \textit{e.g.} the survey \cite{vempala2005geometric}). The basic idea is to find a linear transformation of the potential for which this ratio is small on the bulk of the target distribution. Fix a  probability distribution $\pi$ given by $\pi(x) =  \frac{1}{\int_{\mathbb{R}^d}e^{-U(x)} \mathrm{d}x}e^{-U(x)}$ for some potential function $U$. We consider the assumption:

\begin{defn}[Rounding Matrix] \label{DefRounding}
Fix a constant $\epsilon > 0$. Define the \textit{bulk level set} $\mathfrak{S}_{\epsilon}$ of $\pi$ by the pair of equations
\be
L_{\epsilon} &= \inf\{ C \, : \, \pi( \{x \, : \, \pi(x) \geq C\}) \leq 1 - \epsilon \} \\
\mathfrak{S}_{\epsilon} &= \{x \, : \, \pi(x) \geq L_{\epsilon} \}.
\ee 
Call a matrix $A$ a \textit{rounding matrix} for $\pi$ with constants $\epsilon, m_{2}, M_{2}$ if the eigenvalues of the Hessian of $\hat{U}(x):=U(A^{-1}x)$ are bounded below and above by $m_2$ and $M_2$, respectively, at every point $x \in A \mathfrak{S}_\epsilon$.
\end{defn}

For every $x \in \mathbb{R}^d$, define $H_x$ to be the Hessian of $U$ evaluated at $x$. The following theorem says that, if there \textit{exists} a linear transformation with associated ratio $\frac{M_{2}}{m_{2}}$, then it is \textit{easy to find} a linear transformation with associated ratio $\frac{M_{2}^{2}}{m_{2}^{2}}$:

\begin{thm}
Fix a probability distribution $\pi(x) = \frac{1}{\int_{\mathbb{R}^d}e^{-U(x)} \mathrm{d}x} e^{-U(x)}$. Suppose that there exists a rounding matrix $A$ for $\pi$ with constants $\epsilon, m_2, M_2 > 0 $. Define $\tilde{U}(z):= U(\sqrt{H_x}^{-1}z)$ for $z\in \mathbb{R}^d$. For every $\zeta \in \mathbb{R}^{d}$, let $\tilde{H}_{\zeta}$ be the Hessian of $\tilde{U}$ evaluated at the point $z= \sqrt{H_x}\zeta$.  Then for every $y \in \mathfrak{S}_\epsilon$, $\tilde{H}_y$ has all its eigenvalues bounded below and above by $\frac{m_2}{M_2}$ and $\frac{M_2}{m_2}$, respectively.
\end{thm}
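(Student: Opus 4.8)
The plan is to reduce the statement to two short manipulations with the Loewner (positive semidefinite) order, written $\preceq$. First I would rewrite $\tilde H_y$ as a congruence of $H_y$. Put $P := \sqrt{H_x}$, the symmetric positive-definite square root of $H_x$; this makes sense because, as noted below, the rounding hypothesis forces $H_x \succ 0$. Since $P^{-1}$ is symmetric, the chain rule gives that the Hessian of $\tilde U(z) = U(P^{-1}z)$ at $z$ equals $P^{-1} H_{P^{-1}z}\, P^{-1}$, and evaluating at $z = P\zeta$ yields
\[
\tilde H_\zeta \;=\; P^{-1} H_\zeta\, P^{-1} \;=\; H_x^{-1/2}\, H_\zeta\, H_x^{-1/2}.
\]
Because conjugation by the invertible matrix $H_x^{1/2}$ is a congruence and therefore preserves $\preceq$, the conclusion of the theorem is equivalent to the pair of Loewner inequalities
\[
\tfrac{m_2}{M_2}\, H_x \;\preceq\; H_y \;\preceq\; \tfrac{M_2}{m_2}\, H_x \qquad \text{for every } y \in \mathfrak{S}_\epsilon .
\]

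Next I would unpack the rounding hypothesis in the same language. The Hessian of $\hat U(w) = U(A^{-1}w)$ at $w$ is $A^{-T} H_{A^{-1}w} A^{-1}$, so at a point $w = Ay$ with $y \in \mathfrak{S}_\epsilon$ (equivalently $w \in A\mathfrak{S}_\epsilon$) the defining bound of a rounding matrix reads $m_2 I \preceq A^{-T} H_y A^{-1} \preceq M_2 I$. Conjugating by $A^{T}$ on the left and $A$ on the right — again a congruence — turns this into
\[
m_2\, A^{T}A \;\preceq\; H_y \;\preceq\; M_2\, A^{T}A \qquad \text{for every } y \in \mathfrak{S}_\epsilon .
\]
Taking $y = x$ here (using that the base point $x$ lies in $\mathfrak{S}_\epsilon$) gives $m_2 A^{T}A \preceq H_x \preceq M_2 A^{T}A$; since $A^{T}A \succ 0$ this in particular shows $H_x \succ 0$, which is what justified forming $\sqrt{H_x}$ above.

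Finally I would combine the two displays. Writing $G := A^{T}A$, the bound $H_x \preceq M_2 G$ gives $G \succeq M_2^{-1} H_x$, and $H_x \succeq m_2 G$ gives $G \preceq m_2^{-1} H_x$. Substituting these into $m_2 G \preceq H_y \preceq M_2 G$ yields exactly $\tfrac{m_2}{M_2} H_x \preceq H_y \preceq \tfrac{M_2}{m_2} H_x$, and by the first paragraph this is the assertion of the theorem. I do not expect a genuine obstacle: the only points that need a word of care are the change-of-variables formula for the Hessian, the fact that congruence transformations preserve the Loewner order (so that one may freely pass between the bounds on $\tilde H_y$, on $A^{-T}H_y A^{-1}$, and on $H_y$ itself), and the implicit assumption that the base point $x$ lies in $\mathfrak{S}_\epsilon$, so that the rounding estimate is available at $x$ as well as at $y$.
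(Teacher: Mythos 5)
Your proof is correct and follows essentially the same route as the paper's: both derive the intermediate bound $\frac{m_2}{M_2}H_x \preceq H_y \preceq \frac{M_2}{m_2}H_x$ by applying the rounding estimate at both $x$ and $y$, then conjugate by $H_x^{-1/2}$ to conclude. You are a bit more explicit than the paper about the congruence manipulations (introducing $G = A^\top A$ rather than working with $\hat H$), and you correctly flag the implicit assumption $x \in \mathfrak{S}_\epsilon$, but the argument is the same.
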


\begin{proof}
As in Definition \ref{DefRounding}, let $\hat{U}(z):=U(A^{-1}z)$ for every $z \in \mathbb{R}^d$

For every $\zeta \in \mathbb{R}^d$, let $\hat{H}_\zeta$ be the Hessian of $\hat{U}(z)$ evaluated at the point $z=A\zeta$. 
By Definition \ref{DefRounding},
\be\label{eq:r0}
m_2 u^\top u \leq u^\top \hat{H}_{\zeta} u \leq M_2 u^\top u,
\ee
for all $\zeta \in \mathfrak{S}_\epsilon$ and all $u\in \mathbb{R}^d$ . Fixing $x \in \mathfrak{S}_\epsilon$ and applying Inequality \eqref{eq:r0} twice gives
\be\label{eq:r1}
\frac{m_2}{M_2} u^\top \hat{H}_x u \leq m_2 u^\top u \leq u^\top \hat{H}_y u \leq M_2 u^\top u \leq \frac{M_2}{m_2} u^\top \hat{H}_x u.
\ee

Since $\sqrt{H}_z = A \sqrt{\hat{H}_z}$ for every $z \in \mathbb{R}^d$, applying Equation \eqref{eq:r1} with $v = Au$ gives
\be\label{eq:r2}
\frac{m_2}{M_2}v^\top H_x v \leq v^\top H_y v \leq \frac{M_2}{m_2} v^\top H_x v
\ee
 for any $u \in \mathbb{R}^d$ (and hence for any $v \in \mathbb{R}^d$, since $A$ is invertible).

Now $\sqrt{\tilde{H}_z} = \sqrt{H_x}^{-1} \sqrt{H_z}$ for every $z \in \mathbb{R}^d$.  Therefore, Equation \eqref{eq:r2} implies that:
\be\label{eq:r3}
\frac{m_2}{M_2} v^\top v \leq v^\top \tilde{H}_y v \leq \frac{M_2}{m_2} v^\top v
\ee
for every $y \in \mathfrak{S}_\epsilon$ and every $v \in \mathbb{R}^d$.
Therefore, by the minimax theorem for eigenvalues, $\tilde{H}_y$ has all its eigenvalues bounded between $\frac{m_2}{M_2}$ and $\frac{M_2}{m_2}$  for all $y \in \mathfrak{S}_\epsilon$.
\end{proof}

\section{Discussion} \label{SecDisc}

In this paper, we provide useful bounds on the convergence rate of HMC under rather strong assumptions of strong log-concavity. These bounds improve on several earlier results, and in particular give mixing bounds with near-optimal dependence on dimension for certain implementable variants of HMC, but we leave many important questions open. In this section, we mention some that seem most interesting.

\subsection{Relationship to the Jacobi Metric}
The biggest difference between the approach of  \cite{seiler2014positive} and our paper is as follows. \cite{seiler2014positive} uses concentration of measure to analyze contraction of ``\emph{typical}" Hamiltonian trajectories with parallel initial momenta on strongly convex potentials by expressing them as geodesic trajectories on a positively curved manifold under the Jacobi metric, via the Rauch comparison theorem from differential geometry.  Our paper instead proves contraction of \emph{all} Hamiltonian trajectories with parallel initial momenta by applying comparison theorems for ordinary differential equations (ODEs) directly to the Hamilton's equations that define the Hamiltonian trajectories.  Because the Jacobi manifold is never defined on the entire state space of an HMC algorithm, it does not seem possible to extend an approach based on the Jacobi manifold to obtain uniform contraction estimates for all Hamiltonian trajectories. As a result, we are able to achieve bounds that do not grow explicitly with the dimension $d$, while the Jacobi metric approach in \cite{seiler2014positive} yields bounds that grow like $d^2$. 

We found this slightly dissapointing: the Jacobi metric is a natural tool for analyzing HMC, and it is far from clear to us if the technical difficulties that appear in \cite{seiler2014positive} can be overcome. We leave as an open problem the question of whether the Jacobi metric approach of \cite{seiler2014positive} can be refined to obtain bounds that do not grow explicitly with $d$, as well as to possibly further strengthen the relaxation time bound in our Theorem \ref{ThmMainConcave} from $\mathcal{O}^*(\frac{M_2^2}{m_2^2})$ to the conjectured value of $\mathcal{O}^*(\frac{M_2}{m_2})$.

\subsection{Riemannian HMC}
This paper analyzes one of the simplest possible HMC algorithms. However, many other variants exist. Riemannian HMC, introduced in \cite{girolami2011riemann}, is one of the most popular. This approach seems to obviate the need for the preconditioning step discussed in Section \ref{SecPreproc}, but there are very few rigorous results on the performance of this algorithm. It would be interesting to check that Riemannian HMC does have this property, and that no additional problems arise.

\subsection{Quantitative Drift Conditions}

As discussed in \cite{livingstone2016geometric}, it is more difficult to obtain a Lyapunov condition for HMC than for MH. Obtaining much more general quantitative drift conditions for HMC would allow us to check that bad behaviour ``in the tails" of the target distribution does not greatly influence mixing.

\section*{Acknowledgements}
We are grateful to Natesh Pillai and Alain Durmus for helpful discussions.  Oren Mangoubi was supported by a Canadian Statistical Sciences
Institute (CANSSI) Postdoctoral Fellowship, and by an NSERC
Discovery grant. Aaron Smith was supported by an NSERC Discovery
grant.

\bibliographystyle{plain}
\bibliography{HmcBib}

\newpage

\appendix

\section{Bounds Related to Total Variation Mixing} \label{SecAppendixStrong}

In this section, we will prove Lemmas \ref{LemmaMinStrongLog} and \ref{LemmaDriftMinRep}. Before proving Lemma \ref{LemmaMinStrongLog} in Section \ref{SubsecProofLemmaMinStrongLog}, we need a technical result from analysis that is not directly related to the study of HMC. We give this result in Section \ref{SubsecCoverIntersection}, using notation that is independent of the remainder of the paper. Readers not interested in these details can easly skip to Section \ref{SubsecProofLemmaMinStrongLog}.

\subsection{Coverings and Intersections}\label{SubsecCoverIntersection}

In this section, we prove a small extension of the Jordan-Brouwer curve theorem. Roughly speaking, the main result of this section is the intuitively obvious fact that it is impossible to go from the interior of the image of a ``nice" function to the exterior of the image without passing through the boundary of the image. We suspect that this result is well-known, but could not find it in the literature.

For a function $f \, : \, \Omega_{1} \mapsto \Omega_{1}$ and a set $S \subset \Omega_{1}$, we denote by $f|_{S}$ the \textit{restriction of $f$ to $S$}. This is a function $f|_{S} \, : \, S \mapsto \Omega_{2}$ with values $f|_{S}(x) = f(x)$ for all $x \in S$. We define:

\begin{defn} [Local Injectivity Constant]
Fix $\Omega \subset \mathbb{R}^{d}$ and function $f \, : \, \Omega \mapsto \mathbb{R}^{d}$. For $x \in \mathbb{R}^{d}$ and $r > 0$, denote by $B_{r}(x)$ the ball of radius $r$ with center $x$. For $x \in \Omega$, define the local injectivity constant $C(x)$ by 
\be 
C(x) = \sup \{ r \geq 0 \, : \, f|_{B_{r}(x) \cap \Omega} \text{ is injective.} \}.
\ee 
Say that $f$ if \textit{locally injective} if $C(x) > 0$ for all $x \in \Omega$.
\end{defn}

Throughout this section, we denote by $\sigma_{1}(A)$ the smallest singular value of a matrix $A$.

\begin{lemma}\label{thm:homotopy1}
Let $S = [0,1]^{d} \subset \mathbb{R}^{d}$, and let $f \, : \, S \mapsto \mathbb{R}^{d}$ be continuous. Let $J(f)$ and $C$ be the Jacobian and local injectivity constant of $f$. Assume there exists $\eta > 0$ so that $\sigma_{1}(J(f)(x)), \, C(x) > \eta > 0$ for all $x \in S$. Fix a smooth function $\gamma \, : \, [0,1] \mapsto \mathbb{R}^{d}$ satisfying $\gamma(0) \in f(S)$,  $\gamma(1) \notin f(S)$, and $\| \gamma'(t) \| = v > 0$ for all $t \in [0,1]$. Then there exists $0 \leq t \leq 1$ such that $\gamma(t) \in \partial f(S)$.
\end{lemma}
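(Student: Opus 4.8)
The plan is to show that the image $f(S)$ is relatively open in a suitable sense near any point that $f$ maps into the interior of the image, so that the path $\gamma$ cannot escape $f(S)$ without first touching $\partial f(S)$. Concretely, I would argue by contraposition: suppose $\gamma(t) \notin \partial f(S)$ for all $t \in [0,1]$. Then I would decompose $[0,1]$ according to the two open sets $A_1 = \{t : \gamma(t) \in f(S) \setminus \partial f(S)\}$ and $A_2 = \{t : \gamma(t) \notin f(S)\}$ (the latter is open since $f(S)$ is compact, hence closed). Since $\gamma$ is continuous and $A_1, A_2$ partition $[0,1]$ under our assumption, connectedness of $[0,1]$ forces one of them to be empty; since $0 \in A_1$ and $1 \in A_2$, this is a contradiction. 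So the crux is entirely in showing that $A_1$ is open, i.e. that $f(S) \setminus \partial f(S) = \mathrm{int}(f(S))$ is open — which is trivial — together with the genuinely substantive point that a point $y = f(x)$ with $x$ in the interior of $S$ and $\sigma_1(J(f)(x)) > \eta$ actually lies in the interior of $f(S)$.

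The heart of the argument is therefore a local-surjectivity statement: if $f : S \to \mathbb{R}^d$ is continuous on the cube, locally injective with constant bounded below by $\eta$, and has $\sigma_1(J(f)(x)) \geq \eta$ everywhere, then $f$ maps a neighborhood of each interior point $x$ onto a neighborhood of $f(x)$. For $C^1$ maps this is just the inverse function theorem, but here $f$ is merely assumed continuous with a Jacobian bound, so I would instead invoke a degree-theoretic / topological argument. The clean route is to use the theory of topological degree for continuous maps: on a small closed ball $\bar B_r(x) \subset \mathrm{int}(S)$ on which $f$ is injective (possible since $C(x) > \eta > 0$), $f$ is a continuous injection of $\bar B_r(x)$ into $\mathbb{R}^d$, so by Brouwer's invariance of domain $f(B_r(x))$ is open, and in particular $f(x) \in \mathrm{int}(f(S))$. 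Invariance of domain is exactly the tool that lets us drop differentiability. The Jacobian/singular-value hypothesis is then only needed to guarantee that the relevant local pieces are nondegenerate — or, alternatively, invariance of domain alone suffices and the singular-value bound is used elsewhere in the paper; I would state the argument so that local injectivity plus invariance of domain carry the interior claim.

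The main obstacle I anticipate is handling the boundary of the cube $S$ and the precise meaning of $\partial f(S)$: invariance of domain gives openness of $f$ restricted to the \emph{interior} of $S$, but $f(S)$ includes images of boundary points of $S$, and $\partial f(S)$ (the topological boundary of the image in $\mathbb{R}^d$) must be related to $f(\partial S)$. I would handle this by showing that any $y \in f(S)$ with $y \notin \mathrm{int}(f(S))$ must be the image of a point of $\partial S$, hence lies in $\overline{f(\partial S)}$, and conversely that points of $\mathrm{int}(f(S))$ cannot be boundary points of the image; combined with compactness this pins down $\partial f(S) \subset f(\partial S) \cup (\text{limit points})$ cleanly enough for the connectedness argument. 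A secondary technical point is verifying that $A_2$ is open, which follows immediately from compactness of $S$ (so $f(S)$ is closed). Assembling these, the contradiction from connectedness of $[0,1]$ completes the proof.
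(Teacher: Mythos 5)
Your argument is correct and takes a genuinely different, considerably more elementary route than the paper's. The paper builds a finite covering $\{Q_i\}$ of $S$ by pieces on which $f$ is injective, discretizes $[0,1]$ at scale roughly $\delta/v$, locates the last discrete time before $\gamma$ leaves $f(S)$, and invokes the Jordan--Brouwer separation theorem on the injective restriction $f|_{Q_{i_N}}$ to produce an explicit crossing time $t^\star$. You instead argue by contraposition and connectedness: if $\gamma$ avoids $\partial f(S)$, then $[0,1]$ is covered by the disjoint open sets $\gamma^{-1}(\mathrm{int}\,f(S))$ and $\gamma^{-1}(\mathbb{R}^d\setminus f(S))$, both nonempty, contradicting connectedness. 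Note, however, that your own proof is simpler than you seem to realize: the invariance-of-domain step and the $f(\partial S)$ versus $\partial f(S)$ discussion are unnecessary. The set $A_1=\gamma^{-1}(\mathrm{int}\,f(S))$ is open automatically, as the preimage of an open set under a continuous map, and $0\in A_1$ follows because $\gamma(0)\in f(S)$, $f(S)$ is closed (continuous image of a compact set), and $\gamma(0)\notin\partial f(S)$ under the contraposition hypothesis, whence $\gamma(0)\in f(S)\setminus\partial f(S)=\mathrm{int}\,f(S)$. So your argument in fact proves the lemma for \emph{any} continuous $f$ on a compact domain and any continuous $\gamma$: the hypotheses on $\sigma_1(J(f))$, the local injectivity constant $C(x)$, smoothness of $\gamma$, and the constant-speed condition are all unused. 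Those hypotheses are not idle in the paper's treatment, though: the covering-and-Jordan--Brouwer machinery is essentially a local form of the stronger containment $\partial f(S)\subset f(\partial S)$ (which does genuinely require injectivity and nondegeneracy), and it is that sharper structure and the covering scaffolding, not merely the existence of some crossing, that is reused in the proof of Lemma~\ref{thm:homotopy2}. The tradeoff: your proof is shorter, elementary, and hypothesis-free; the paper's proof builds machinery it needs downstream and, in effect, establishes a somewhat stronger statement than the lemma as written.
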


\begin{proof}
Fix a triple $(M, \{Q_{i}\}_{i=1}^{M}, \delta)$ so that $M \in \mathbb{N}$, $\{ Q_{i} \}_{i =1}^{M}$ is a covering of $S$, and $\delta > 0$ is a real number, satisfying the properties 
\begin{enumerate}
\item for all $i$, $Q_{i}$ is homotopic to a ball.
\item For all $i$, $f|_{Q_{i}}$ is injective.
\item For all $x \in f(S)$, there exists $1 \leq i \leq M$ so that
\be \label{EqSillyLargeContainmentCondition}
B_{2 \delta}(x) \cap f(S)  \subset f(Q_{i}).
\ee
\end{enumerate}

The existence of a covering with this property is guaranteed by the lower bounds on $\sigma_{1}(J(f)(x))$ and $C(x)$.

Define $w^{-1} = \inf \{ k \in \mathbb{N} \, : \, k^{-1} \leq \frac{\delta}{v}\} \leq \frac{3v}{\delta}$ and define $t_{n} = n w$ for $0 \leq n \leq w^{-1}$. Inductively define $\{ i_{n} \}_{n \geq 0}$ by:

\begin{enumerate}
\item If $\gamma([0,t_{n}]) \subset f(S)$, let $i_{n}$ be any integer in $\{1,2,\ldots,M\}$ that satisfies $B_{2 \delta}(\gamma(t_{n})) \cap f(S) \subset Q_{i_{n}}$ (this is possible by Equation \eqref{EqSillyLargeContainmentCondition}).
\item Otherwise, let $i_{n} = M+1$.
\end{enumerate}

Define $N = \max \{n \, : \, \gamma([0,t_{n}]) \subset f(S)\}$. Since $\gamma(0) \in f(S)$ but $\gamma(1) \notin f(S)$, we have $N \in \{0,\ldots,w^{-1} - 1\}$. Let $t^\star = \inf \{ t \, : \, \gamma(t) \notin f(Q_{i_{N}}) \} \in [t_{N},t_{N+1}]$. By the Jordan-Brouwer separation theorem and the fact that $f|_{Q_{i_{N}}}$ is injective, we must have $\gamma(t^\star) \in \partial f|_{Q_{i_{N}}}(S)$ (Figure \ref{fig:homotopy}) - that is, $\gamma(t^\star)$ is in the boundary of the image of the \textit{restriction} of $f$ to $Q_{i_{N}}$. Since $\| \gamma(t^\star) - \gamma(t_{N}) \| \leq \delta$ by definition and $B_{2 \delta}(\gamma(t_{N})) \cap f(S)  \subset f(Q_{i_{N}})$, this implies that $\gamma(t^\star) \in \partial f(S)$ - that is, $\gamma(t^\star)$ is in the boundary of the image of $f$ itself. This completes the proof.
\end{proof}

\begin{figure}[t]
\begin{center}
\includegraphics[trim={0 5mm 0 5mm}, clip, scale=0.3]{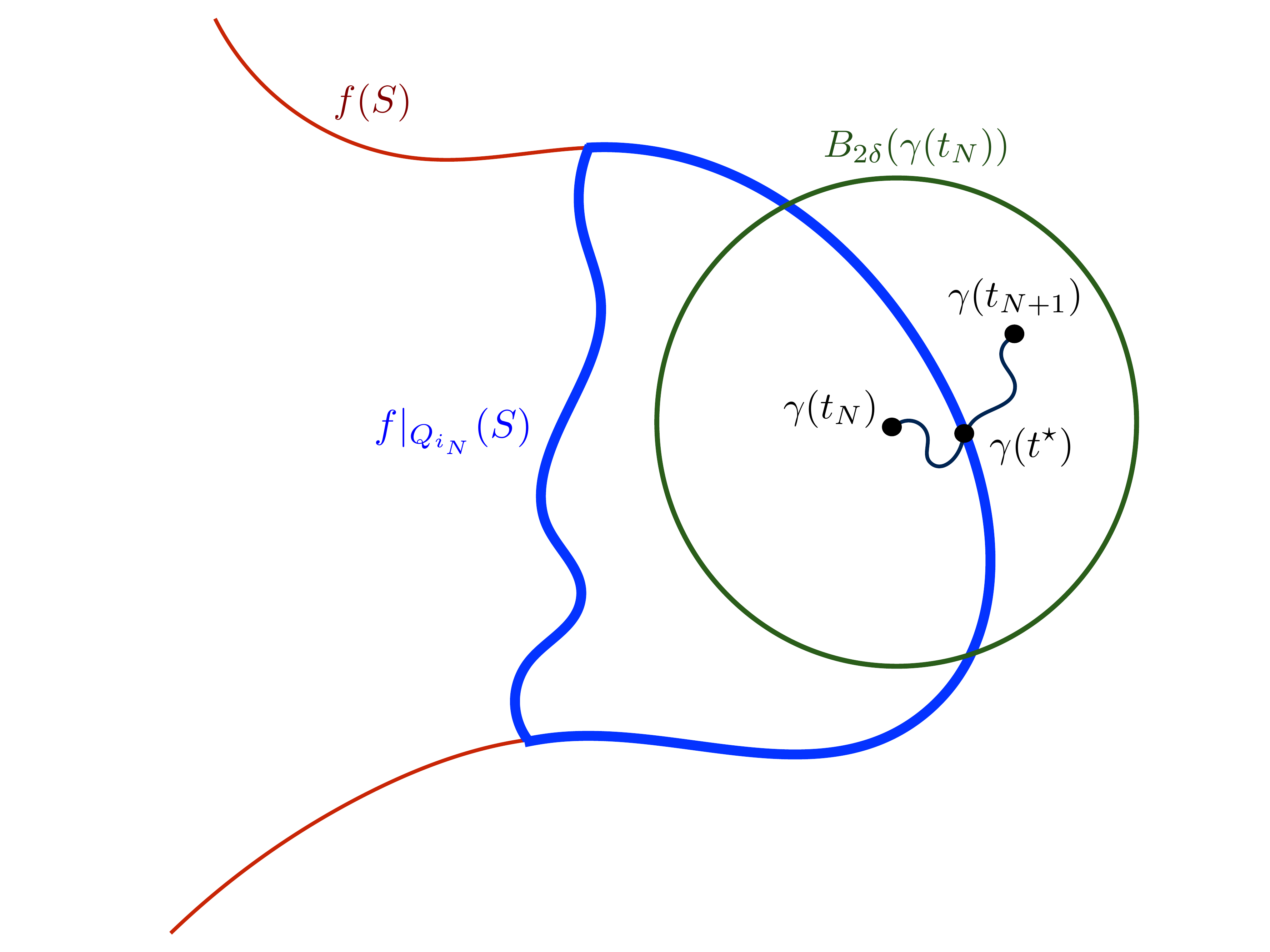}
\end{center}
\caption{Illustration of the proof of Lemma \ref{thm:homotopy1}.}\label{fig:homotopy}
\end{figure}

\subsection{Proof of Lemma \ref{LemmaMinStrongLog}} \label{SubsecProofLemmaMinStrongLog}

 Let $\Phi$ and $\phi$ denote the probability measure and probability density function of the multivariate Gaussian distribution on $\mathbb{R}^{d}$ with mean 0 and covariance matrix equal to the identity. We now prove Lemma \ref{LemmaMinStrongLog} as an immediate corollary of the following slightly more complicated lemma: 

\begin{lemma} \label{LemmaKernelContinuityMainLemma}
Fix all notation as in Lemma \ref{LemmaMinStrongLog}. Set $\delta = \min(\epsilon, \,\, \frac{\epsilon}{d} \cdot \frac{(M_2)^2 T}{M_3}$). Let $L_\delta$ be the lattice $L_\delta := \{x = 2 \delta i :  i \in \mathbb{Z}^d\}$; for $x \in L_{\delta}$, let $C_x^\delta$ be a cube with side length $2 \delta$ and center $x$.

Let $\mathsf{p}^\star_0$ be a random variable supported on the lattice $L_\delta$ distributed according to the law $\mathbb{P}(\mathsf{p}^\star_0 = x) = \Phi(C_x^\delta)$ for all $x\in L_\delta$.  Let $\overline{\mathfrak{p}_0}(x)$ be distributed according to the standard Gaussian distribution, conditioned on the event $\{ \overline{\mathfrak{p}_0}(x) \in C_x^\delta \}$.  Define the random variable $\overline{\mathfrak{p}_0}$ by $\overline{\mathfrak{p}_0} := \overline{\mathfrak{p}_0}(\mathsf{p}^\star_0)$ (Note that $\overline{\mathfrak{p}_0}$ is itself a standard Gaussian on $\mathbb{R}^d$).  Then 

\be 
\mathbb{E}\left[\, \,  \big \| \mathcal{L}(\mathcal{Q}^{\mathbf{q}^{(2)}}_T(\overline{\mathfrak{p}_0}) | \, \,  \mathsf{p}^\star_0) - \mathcal{L}(\mathcal{Q}^{\mathbf{q}^{(1)}}_T(\overline{\mathfrak{p}_0})  | \, \,  \mathsf{p}^\star_0) \big \|_{\mathrm{TV}} \right] \leq  5\epsilon'.
\ee
\end{lemma}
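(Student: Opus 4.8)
The goal is to bound the expected total-variation distance between the push-forwards of a single Gaussian momentum (discretized on a lattice, then refined inside a small cube) under the two solution maps $\mathcal{Q}^{\mathbf{q}^{(1)}}_T$ and $\mathcal{Q}^{\mathbf{q}^{(2)}}_T$. The essential idea is that, conditional on $\mathsf{p}^\star_0 = x$, both laws are supported on the images of the same small cube $C_x^\delta$ under two nearby smooth diffeomorphisms, so their TV distance can be controlled by (i) how much the two images fail to overlap and (ii) how much the two density ratios (Jacobians of the solution maps) differ on the overlap. I would first fix $x$ and work conditionally. Using Lemma \ref{thm:bounds} (part 1), the trajectories starting from $\mathbf{q}^{(1)}$ and $\mathbf{q}^{(2)}$ with the \emph{same} momentum stay within distance $O(\|\mathbf{q}^{(1)}-\mathbf{q}^{(2)}\| e^{T\sqrt{M_2}})$ of each other, and since $T\sqrt{M_2}\le \frac{1}{2\sqrt2}$ this is $O(\epsilon)$; similarly the derivative-in-momentum of the solution map at a fixed point is comparable to $1$ up to constants (again via the linearized Hamilton equations and the $M_2$ bound), so the map $\overline{\mathfrak{p}_0}(x)\mapsto \mathcal{Q}^{\mathbf{q}^{(j)}}_T(\overline{\mathfrak{p}_0}(x))$ is a bi-Lipschitz diffeomorphism of $C_x^\delta$ onto its image with controlled constants.

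Next I would compare the two image densities. Both conditional laws are the pushforward of (Gaussian restricted to $C_x^\delta$) under $\mathcal{Q}^{\mathbf{q}^{(j)}}_T$, so their densities at a common point $z$ in the intersection of the two images are, by change of variables, $\phi(\cdot)/\Phi(C_x^\delta)$ evaluated at the respective preimages, times the respective inverse Jacobian determinants. Three error sources appear: the two preimages differ by $O(\|\mathbf{q}^{(1)}-\mathbf{q}^{(2)}\|)$ (giving a $\phi$-ratio close to $1$ since $\phi$ varies slowly on scale $\delta$ near any $x$ with the relevant tail bounds), the two Jacobian determinants differ because $U'$ differs along the two trajectories — here is exactly where Assumption \ref{AssumptionSecondDerivative} ($\|D_uD_vU'\|\le M_3$) enters, since the variational equation for $\partial \mathcal{Q}_T/\partial \mathbf{p}$ is driven by the Hessian $D\,U'$ along the trajectory and the two Hessians differ by $O(M_3 \cdot \|\mathbf{q}^{(1)}-\mathbf{q}^{(2)}\| e^{T\sqrt{M_2}})$ — and the two image regions are not identical. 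For the last point, one argues via Lemma \ref{thm:homotopy1}: the boundary $\partial\, \mathcal{Q}^{\mathbf{q}^{(j)}}_T(C_x^\delta)$ is a ``nice'' surface (the solution maps are locally injective with singular values bounded below, which follows from the same linearized-ODE estimates), and the symmetric difference of the two images has measure $O(\text{perimeter}\times \|\mathbf{q}^{(1)}-\mathbf{q}^{(2)}\|)$; Lemma \ref{thm:homotopy1} is what rules out pathological cases where a point of one image lies ``across the boundary'' of the other. Collecting all three errors gives, conditionally on $\mathsf{p}^\star_0 = x$, a bound of the form $C\,\|\mathbf{q}^{(1)}-\mathbf{q}^{(2)}\|\times(\text{a polynomial in } d, M_2, M_3, T, \text{ and }\|x\|)$.

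Then I would take the expectation over $\mathsf{p}^\star_0 \sim \Phi(\cdot)$-on-the-lattice. The conditional bound grows polynomially in $\|x\|$ (coming from the $\phi$-ratio term and from how far the trajectory travels), but $\mathsf{p}^\star_0$ is essentially a standard Gaussian, so $\mathbb{E}[\text{poly}(\|\mathsf{p}^\star_0\|)]$ is a dimension-dependent constant, and all the $d$-dependence is already accounted for in the definition of $\epsilon$ in the statement of Lemma \ref{LemmaMinStrongLog}: the five terms in the minimum defining $\epsilon$ are precisely calibrated so that each of the five contributions — the trajectory-spreading term ($\frac{40}{9T\sqrt{M_2}}$), two terms from the Gaussian tail / volume-of-cube comparison ($8d\sqrt d$ and $16e\sqrt2\,d$), and two terms from the Jacobian-mismatch driven by $M_3$ ($d\frac{M_3}{M_2^{3/2}T}$ and $10\frac{M_2^2}{dM_3}$) — is at most $\epsilon'$, so the sum is at most $5\epsilon'$. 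Finally, Lemma \ref{LemmaMinStrongLog} follows from Lemma \ref{LemmaKernelContinuityMainLemma} by the triangle inequality and the coupling characterization of TV: $\|K(\mathbf{q}^{(1)},\cdot)-K(\mathbf{q}^{(2)},\cdot)\|_{\mathrm{TV}} \le \|\mathcal{L}(\mathcal{Q}^{\mathbf{q}^{(1)}}_T(\overline{\mathfrak{p}_0}))-\mathcal{L}(\mathcal{Q}^{\mathbf{q}^{(1)}}_T(\mathsf{p}^\star_0\text{-based coupling}))\|+\cdots$, but in fact since $\overline{\mathfrak{p}_0}$ is genuinely a standard Gaussian on all of $\mathbb{R}^d$, $\mathcal{L}(\mathcal{Q}^{\mathbf{q}^{(j)}}_T(\overline{\mathfrak{p}_0})) = K(\mathbf{q}^{(j)},\cdot)$ exactly, and averaging the conditional TV bound over $\mathsf{p}^\star_0$ dominates $\|K(\mathbf{q}^{(1)},\cdot)-K(\mathbf{q}^{(2)},\cdot)\|_{\mathrm{TV}}$ by the standard fact that mixing over a common randomization can only decrease TV distance.

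\textbf{The main obstacle.} The hard part is the geometric comparison of the two image regions $\mathcal{Q}^{\mathbf{q}^{(1)}}_T(C_x^\delta)$ and $\mathcal{Q}^{\mathbf{q}^{(2)}}_T(C_x^\delta)$ — making rigorous that their symmetric difference is small and, more subtly, that a density-ratio bound on the intersection genuinely controls TV rather than being undermined by a point that is ``inside one image but outside the other by only a hair.'' This is precisely the role of the Jordan–Brouwer-type Lemma \ref{thm:homotopy1}, and verifying its hypotheses — uniform lower bounds on $\sigma_1(J(\mathcal{Q}^{\mathbf{q}}_T))$ and on the local injectivity radius $C(x)$ — requires carefully propagating the linearized Hamilton equations over the interval $[0,T]$, which is where the restriction $T\le \frac{1}{2\sqrt2}\frac{\sqrt{m_2}}{M_2}$ is used to keep all the relevant Jacobian eigenvalues bounded away from $0$ and $\infty$. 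Everything else is bookkeeping of polynomial factors against the five-way minimum defining $\epsilon$.
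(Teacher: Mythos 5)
Your sketch captures essentially the same ingredients as the paper's proof: condition on $\mathsf{p}^\star_0$, compare the images of the small cube under the two solution maps via the Jordan--Brouwer-type Lemma \ref{thm:homotopy1}, compare Jacobian determinants on the overlap using the $M_3$-driven second-derivative bounds (Lemmas \ref{thm:Jacobian} and \ref{thm:determinant}), bound the symmetric difference of the two images by a perimeter argument (Lemma \ref{thm:overlap}), and then average over $\mathsf{p}^\star_0$. There is, however, one organizational difference that matters for getting the bookkeeping right. The paper does \emph{not} carry the Gaussian-conditioned-on-$C^\delta_x$ density all the way through the image comparison. Instead it first replaces that law by the \emph{uniform} law $\mathfrak{p}_0$ on $C^\delta_x$, paying a TV cost whose $\mathsf{p}^\star_0$-expectation is bounded by $\epsilon'$ via an explicit one-dimensional Gaussian integral; only after that substitution does it run the geometric/Jacobian argument (Lemma \ref{Thm:cube_overlap}), whose conclusion is then uniform in $x$, so nothing $x$-dependent needs to be averaged at the end. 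This is precisely where your sketch is imprecise: on $C^\delta_x$ the Gaussian density $\phi$ varies by a factor of order $e^{\Theta(\delta\sqrt{d}\,\|x\|)}$, which is \emph{exponential} in $\|x\|$, not ``close to $1$ up to a polynomial.'' The Gaussian tail on $\mathsf{p}^\star_0$ does defeat this factor after averaging, but that is not of the form ``$\mathbb{E}[\mathrm{poly}(\|\mathsf{p}^\star_0\|)]$ is a constant''; it is a cancellation that requires the explicit integral the paper carries out (and which is exactly what generates the $[8d\sqrt d]^{-1}$ and $[16e\sqrt 2\,d]^{-1}$ terms in the definition of $\epsilon$). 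The two-step decomposition the paper uses confines all the $\|x\|$-dependence to this single, cleanly computable term; if you adopt it, the rest of your plan goes through.
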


In the remainder of this section of the appendix, we will prove a sequence of lemmas leading up to the proof Lemma \ref{LemmaKernelContinuityMainLemma}. Unless otherwise noted, all notation from the statement of Lemma \ref{LemmaKernelContinuityMainLemma} will be retained throughout the section. We will also use the following notation: 

\begin{itemize}
\item We write $q_t(\mathbf{q},\mathbf{p})$ for the solution to Hamilton's equation \eqref{EqHamiltonEquations} for the position variable at time $t$, with initial position $\mathbf{q}$ and initial momentum $\mathbf{p}$ (in previous sections $q_t(\mathbf{q},\mathbf{p})$ was often denoted instead by $q(t)$, but in this section we emphasize the dependence on initial conditions). 

\item For any multivariate function $f: \mathbb{R}^d \rightarrow \mathbb{R}^d$, denote its Jacobian by $J(f)$, and denote its Jacobian at $p \in \mathbb{R}^{d}$ by $J(f)|_{p}$.
\item For a $d$ by $d$ matrix $H$, let $\sigma_{1}(H) \leq \ldots \leq \sigma_{d}(H)$ be the ordered singular values of $H$; let $\sigma_{\mathrm{max}}(H) = \sigma_{d}(H)$ and $\sigma_{\mathrm{min}}(H)=\sigma_{1}(H)$ be the largest and smallest singular values, respectively.
\item  We often use $\mathbf{p}^\star \in \mathbb{R}^{d}$ as a placeholder for a generic momentum. We define three quantities in terms of $\mathbf{p}^{\star}$: the set
\be \label{EqDefP_0ByP_Star}
P_0: = \mathbf{p}^\star + \frac{\epsilon}{d} \cdot \frac{(M_2)^2 T}{M_3} \cdot[-1,1]^{d}
\ee
and the two families of sets 
\be
S_{q} &:= \mathcal{Q}^{q}_T(\mathbf{p}^\star) + (1-\frac{\epsilon}{d})\cdot\left[J\big(\mathcal{Q}^{q}_T\big)\big|_{\mathbf{p}^\star}(P_0 -\mathbf{p}^\star)\right] \\
S'_{q} &:= \mathcal{Q}^{q}_T(\mathbf{p}^\star) + (1+\frac{\epsilon}{d})\cdot\left[J\big(\mathcal{Q}^{q}_T\big)\big|_{\mathbf{p}^\star}(P_0 -\mathbf{p}^\star)\right] \\
\ee
parameterized by an initial position $q \in \mathbb{R}^{d}$.
\end{itemize}

Most of the proof of Lemma \ref{LemmaKernelContinuityMainLemma} consists of bounding bounding the dependence of a solution to Hamilton's equations on the \textit{initial point} $\mathbf{h}:= (\mathbf{q},\mathbf{p})$. Our first bound on this dependence is:

\begin{lemma}\label{thm:differential_operator}
Let Assumption \ref{AssumptionSecondDerivative} hold. Let $\zeta = (u, \omega) \in \mathbb{R}^{2d}$ and $\eta = (v,\xi) \in \mathbb{R}^{2d}$ be any two vectors in the phase space. Then for all $\mathbf{h} \in \mathbb{R}^{d}\times \mathbb{R}^{d}$,
\be\label{eq:TV7}
\left\| D_\zeta q_t\big|_{\mathbf{h}}\right\| \leq \kappa_1e^{t\sqrt{M_2}} + \kappa_2e^{-t\sqrt{M_2}},
\ee 
where $\kappa_1 :=  \frac{1}{2}\|u\|+  \frac{1}{2\sqrt{M_2}}\|\omega\|$ and $\kappa_2 := \frac{1}{2}\|u\| -  \frac{1}{2\sqrt{M_2}}\|\omega\|$. Moreover, \\

\be\label{eq:TV8}
\left\| D_\eta D_\zeta q_t\big|_{\mathbf{h}}\right\| &\leq \frac{M_3}{M_2} \left(\frac{\kappa_1 \kappa'_1}{3}e^{2t\sqrt{M_2}} +  \frac{\kappa_2\kappa'_2}{3}e^{-2t\sqrt{M_2}} - \kappa_1\kappa'_2-\kappa'_1\kappa_2\right)\\
&\quad \quad + \kappa_3e^{\sqrt{M_2}t}+ \kappa_4e^{-\sqrt{M_2}t},
\ee
where
\be 
\kappa'_1 &=  \frac{1}{2}\|v\|+  \frac{1}{2\sqrt{M_2}}\|\xi\| \\
\kappa'_2 &= \frac{1}{2}\|v\| -  \frac{1}{2\sqrt{M_2}}\|\xi\| \\
\kappa_3 &=  -\frac{M_3}{2M_2} \left(\frac{\kappa_1 \kappa'_1}{3} +  \frac{\kappa_2\kappa'_2}{3} - \kappa_1\kappa'_2-\kappa'_1\kappa_2\right)  -\frac{M_3}{M_2} \left(\frac{\kappa_1 \kappa'_1}{3} -  \frac{\kappa_2\kappa'_2}{3}\right) \\
\kappa_4 &= -\frac{M_3}{M_2} \left(\frac{\kappa_1 \kappa'_1}{3} +  \frac{\kappa_2\kappa'_2}{3} - \kappa_1\kappa'_2-\kappa'_1\kappa_2\right) - \kappa_3.
\ee

\end{lemma}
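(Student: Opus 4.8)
The plan is to derive both bounds by setting up linear (resp. bilinear) ODEs for the variational quantities $D_\zeta q_t$ and $D_\eta D_\zeta q_t$, and then applying the scalar ODE comparison results of Lemma \ref{thm:bounds} and Lemma \ref{LemmaOdeComp}. First I would differentiate Hamilton's equations \eqref{EqHamiltonEquations} with respect to the initial condition $\mathbf{h}$ in the direction $\zeta=(u,\omega)$. Writing $a_t := D_\zeta q_t|_{\mathbf{h}}$ and $b_t := D_\zeta p_t|_{\mathbf{h}}$, the chain rule gives the first variational system
\be
\frac{\d a_t}{\d t} = b_t, \qquad \frac{\d b_t}{\d t} = -\,\big(D^2 U(q_t)\big)\, a_t,
\ee
with $a_0 = u$, $b_0 = \omega$. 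Taking norms exactly as in the derivation of \eqref{eq:chaplygin1} and using $\|D_v U'\|\le M_2\|v\|$ (equivalently $\|D^2U(q_t) a_t\|\le M_2\|a_t\|$), we obtain $\frac{\d}{\d t}\|a_t\| \le \|b_t\|$ and $\frac{\d}{\d t}\|b_t\| \le M_2\|a_t\|$ with $\|a_0\|=\|u\|$, $\|b_0\|=\|\omega\|$. This is precisely the hypothesis of part (1) of Lemma \ref{thm:bounds} with $\hat q_0 = \|u\|$, $\hat p_0 = \|\omega\|$, and its conclusion \eqref{thmboundsconc1} yields \eqref{eq:TV7} with $\kappa_1, \kappa_2$ as stated. (Strictly, Lemma \ref{thm:bounds} is phrased for the difference of two Hamiltonian trajectories, but the same comparison-theorem argument via Lemma \ref{LemmaOdeComp} applies verbatim to this linear variational system; alternatively one recovers it as a limit of difference quotients.)

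Next I would differentiate a second time in the direction $\eta=(v,\xi)$. Writing $A_t := D_\eta D_\zeta q_t|_{\mathbf{h}}$ and $B_t := D_\eta D_\zeta p_t|_{\mathbf{h}}$, the product rule applied to the first variational system gives
\be
\frac{\d A_t}{\d t} = B_t, \qquad \frac{\d B_t}{\d t} = -\big(D^2U(q_t)\big) A_t \;-\; \big(D^3U(q_t)[\,\cdot\,, D_\eta q_t, a_t\,]\big),
\ee
with $A_0 = 0$, $B_0 = 0$ (since $\zeta,\eta$ are constant directions, the second derivatives of the initial data vanish). Taking norms, the first term on the right contributes at most $M_2\|A_t\|$ and the inhomogeneous term is bounded, using Assumption \ref{AssumptionSecondDerivative} ($\|D_uD_vU'\|\le M_3$), by $M_3\,\|D_\eta q_t\|\,\|a_t\|$. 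Now $\|a_t\| = \|D_\zeta q_t\|$ is bounded by \eqref{eq:TV7} and $\|D_\eta q_t\|$ by the same bound with $\kappa_1',\kappa_2'$ in place of $\kappa_1,\kappa_2$. Hence
\be
\frac{\d}{\d t}\|A_t\| \le \|B_t\|, \qquad \frac{\d}{\d t}\|B_t\| \le M_2\|A_t\| + M_3\,\big(\kappa_1 e^{t\sqrt{M_2}}+\kappa_2 e^{-t\sqrt{M_2}}\big)\big(\kappa_1' e^{t\sqrt{M_2}}+\kappa_2' e^{-t\sqrt{M_2}}\big).
\ee
I then compare against the scalar second-order linear ODE $\ddot{Q}_t = M_2 Q_t + M_3(\kappa_1 e^{t\sqrt{M_2}}+\kappa_2 e^{-t\sqrt{M_2}})(\kappa_1' e^{t\sqrt{M_2}}+\kappa_2' e^{-t\sqrt{M_2}})$ with zero initial data, via Lemma \ref{LemmaOdeComp} (monotonicity of the right-hand side in both arguments, exactly as in the proof of Lemma \ref{thm:bounds}), so that $\|A_t\|\le Q_t$. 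Solving this ODE explicitly — variation of parameters with homogeneous solutions $e^{\pm t\sqrt{M_2}}$, and forcing terms $e^{2t\sqrt{M_2}}$, constant, $e^{-2t\sqrt{M_2}}$ — produces particular solutions with coefficients $\tfrac{1}{3M_2}$ times the resonance terms (the $\tfrac13$ comes from $\tfrac{1}{(2\sqrt{M_2})^2 - M_2} = \tfrac{1}{3M_2}$), giving exactly the $\frac{M_3}{M_2}(\frac{\kappa_1\kappa_1'}{3}e^{2t\sqrt{M_2}} + \frac{\kappa_2\kappa_2'}{3}e^{-2t\sqrt{M_2}} - \kappa_1\kappa_2' - \kappa_1'\kappa_2)$ piece of \eqref{eq:TV8}, and the homogeneous correction needed to match $Q_0 = \dot Q_0 = 0$ supplies the $\kappa_3 e^{\sqrt{M_2}t} + \kappa_4 e^{-\sqrt{M_2}t}$ terms with the stated $\kappa_3,\kappa_4$. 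Verifying that the initial conditions force precisely those formulas for $\kappa_3,\kappa_4$ is a routine but bookkeeping-heavy calculation.

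The main obstacle is not conceptual but careful: one must rigorously justify that the vector-valued variational quantities $\|D_\zeta q_t\|$ and $\|D_\eta D_\zeta q_t\|$ satisfy the same \emph{differential inequalities} as the scalar comparison systems — in particular handling the non-smoothness of $t\mapsto\|A_t\|$ where $A_t=0$ (as is done in \eqref{eq:chaplygin1} and in the proof of Lemma \ref{thm:bounds}, by passing to the inner-product form or restricting to intervals where the norm is positive) — and then bounding the cross-term $\|D^3U(q_t)[\cdot, D_\eta q_t, a_t]\|$ by $M_3 \|D_\eta q_t\|\|a_t\|$ using Assumption \ref{AssumptionSecondDerivative}. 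I expect the secret subtlety to be keeping track of signs in $\kappa_2, \kappa_2', \kappa_4$ (which need not be nonnegative) when invoking the monotone comparison Lemma \ref{LemmaOdeComp}; this is why the forcing function must be written in the manifestly monotone exponential form before comparison, and the resulting explicit solution then rearranged into the stated closed form.
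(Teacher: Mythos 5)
Your proposal is correct and follows essentially the same route as the paper: differentiate Hamilton's equations in the directions $\zeta$ and then $\eta$, obtain the variational ODE systems for $D_\zeta q_t$ and $D_\eta D_\zeta q_t$, take norms to get differential inequalities of the form $\frac{\d}{\d t}\|\mathcal{D}q_t\|\le\|\mathcal{D}p_t\|$ and $\frac{\d}{\d t}\|\mathcal{D}p_t\|\le M_2\|\mathcal{D}q_t\|$ (plus, in the second-order case, the $M_3$-forcing term bounded via the first-order estimate), and close with Lemma \ref{LemmaOdeComp} by solving the associated linear scalar ODE explicitly. One small remark: the ``secret subtlety'' you anticipate about the signs of $\kappa_2,\kappa_2',\kappa_4$ is not actually an obstacle — Lemma \ref{LemmaOdeComp} only requires monotonicity of the right-hand side in the dependent variables, not nonnegativity of the time-dependent forcing, and the paper applies it directly without any extra care on this point; the forcing $M_3\|D_\eta q_t\|\|D_\zeta q_t\|$ is in any case nonnegative, and the possibly-negative coefficients only appear after expanding the product in the closed-form solution.
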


\begin{proof}
Define $V:= U'[l]$ for each $l \in \{1,\ldots, d\}$.  Then for every $x\in \mathbb{R}^d$ we have
\be \label{eq:TV1'}
D_\zeta U'(q_t)[l]\big|_{x} &= \sum_{j=1}^{2d} \zeta_j \frac{\partial}{\partial x_j} V(q_t)\\
&\stackrel{\textrm{chain rule}}{=} \sum_{j=1}^{2d} \zeta_j \sum_{i=1}^d \frac{\partial V}{\partial y_i} \big|_{q_t(x)}  \frac{\partial q_t}{\partial x_j}[i]\\
&= \sum_{i=1}^d \frac{\partial V}{\partial y_i} \big|_{q_t(x)} \sum_{j=1}^{2d} \zeta_j  \frac{\partial q_t}{\partial x_j}[i]\\
&= \sum_{i=1}^d \frac{\partial V}{\partial y_i} \big|_{q_t(x)} \left( D_\zeta q_t |_{x}[i]\right)\\
&= D_{D_\zeta q_t |_{x}}V\big|_{q_t(x)},
\ee
so  
\be \label{eq:TV1}
\bigg\|D_\zeta U'(q_t)\big|_{\mathbf{h}}\bigg\| &\stackrel{\textrm{Eq. } \ref{eq:TV1'}}{=}\left \| D_{D_\zeta q_t |_{\mathbf{h}}} U'\big|_{q_t(\mathbf{h})}\right\|\\
&=\left\| D_{\frac{D_\zeta q_t |_{\mathbf{h}}}{\|D_\zeta q_t |_{\mathbf{h}}\|}} U'\big|_{q_t(\mathbf{h})}\right\| \times \left\|D_\zeta q_t\big|_{\mathbf{h}} \right\| \\
&\leq M_2 \times \left\|D_\zeta q_t\big|_{\mathbf{h}}\right\|. \\
\ee 

By a similar calculation,

\be \label{eq:TV2'}
D_\eta D_\zeta U'(q_t)[l]\big|_{\mathbf{h}} &= D_\eta D_\zeta V(q_t)\big|_{\mathbf{h}}\\
&\stackrel{\textrm{Eq. } \ref{eq:TV1'}}{=}  D_\eta \left( D_{D_\zeta q_t |_{x}} V\big|_{q_t(x)} \right) \bigg|_{x=\mathbf{h}} \\
&=   D_\eta \left(\left(D_\zeta q_t |_{x} \right)^\top \nabla V \bigg |_{q_t(x)} \right) \bigg|_{x=\mathbf{h}}\\
&=   D_\eta \left( \sum_{i=1}^{d} \left(D_\zeta q_t[i] |_{x} \right) \frac{\partial V}{\partial y_i} \bigg |_{q_t(x)} \right) \bigg|_{x=\mathbf{h}}\\
&=   D_\eta \left( \sum_{i=1}^{d} \left(\sum_{j=1}^{2d} \zeta_j \frac{\partial q_t}{\partial x_j}[i]\bigg |_x \right) \frac{\partial V}{\partial y_i} \bigg |_{q_t(x)} \right) \bigg|_{x=\mathbf{h}}\\
&=   \eta^\top \nabla \left[ \sum_{i=1}^{d} \left(\sum_{j=1}^{2d} \zeta_j \frac{\partial q_t}{\partial x_j}[i]\bigg |_x \right) \frac{\partial V}{\partial y_i} \bigg |_{q_t(x)} \right] \Bigg|_{x=\mathbf{h}}\\
&=   \sum_{k=1}^{2d} \eta_k \frac{\partial}{\partial x_k} \left[ \sum_{i=1}^{d} \left(\sum_{j=1}^{2d} \zeta_j \frac{\partial q_t}{\partial x_j}[i]\bigg |_x \right) \frac{\partial V}{\partial y_i} \bigg |_{q_t(x)} \right] \Bigg|_{x=\mathbf{h}}\\
&=   \sum_{k=1}^{2d} \eta_k \sum_{i=1}^{d} \left[ \left(\sum_{j=1}^{2d} \zeta_j \frac{\partial^2 q_t}{\partial x_k \partial x_j}[i]\bigg |_x \right) \frac{\partial V}{ \partial y_i} \bigg |_{q_t(x)}     
+ \left(\sum_{j=1}^{2d} \zeta_j \frac{\partial q_t}{\partial x_j}[i]\bigg |_x \right) \frac{\partial}{\partial x_k} \left(\frac{\partial V}{ \partial y_i} \bigg |_{q_t(x)}\right) \right] \Bigg|_{x=\mathbf{h}}\\
&\stackrel{\textrm{chain rule}}{=}   \sum_{k=1}^{2d} \eta_k \sum_{i=1}^{d} \bigg[ \left(\sum_{j=1}^{2d} \zeta_j \frac{\partial^2 q_t}{\partial x_k \partial x_j}[i]\bigg |_x \right) \frac{\partial V}{ \partial y_i} \bigg |_{q_t(x)}\\     
&+ \left(\sum_{j=1}^{2d} \zeta_j \frac{\partial q_t}{\partial x_j}[i]\bigg |_x \right) \left(\sum_{m=1}^d \frac{\partial^2 V}{ \partial y_m \partial y_i} \bigg |_{q_t(x)} \times \frac{\partial q_t}{\partial x_k}[m]\right) \bigg] \Bigg|_{x=\mathbf{h}}\\
&=   \sum_{k,i,j} \eta_k \zeta_j \frac{\partial^2 q_t}{\partial x_k \partial x_j}[i]\bigg |_{\mathbf{h}} \times \frac{\partial V}{ \partial y_i} \bigg |_{q_t(\mathbf{h})}\\     
&+ \sum_{i=1}^{d} \left[ \left(\sum_{j=1}^{2d} \zeta_j \frac{\partial q_t}{\partial x_j}[i]\bigg |_{\mathbf{h}} \right) \left(\sum_{m=1}^d \frac{\partial^2 V}{ \partial y_m \partial y_i} \bigg |_{q_t(\mathbf{h})} \times \bigg( \sum_{k=1}^{2d} \eta_k \frac{\partial q_t}{\partial x_k}[m]\Bigg|_{\mathbf{h}} \bigg) \right) \right]\\
&=    (D_\eta D_\zeta q_t  |_{\mathbf{h}})^\top \nabla V  |_{q_t(\mathbf{h})} + \sum_{i=1}^{d} \left[ \left(D_\zeta q_t |_{\mathbf{h}}[i] \right) \left(\sum_{m=1}^d \frac{\partial^2 V}{ \partial y_m \partial y_i} \bigg |_{q_t(\mathbf{h})} \times \bigg( D_\eta q_t |_{\mathbf{h}}[m] \bigg) \right) \right]\\
&=    (D_\eta D_\zeta q_t  |_{\mathbf{h}})^\top \nabla V  |_{q_t(\mathbf{h})} + D_{D_\eta q_t |_{\mathbf{h}}} D_{D_\zeta q_t |_{\mathbf{h}}} V |_{q_t(\mathbf{h})},\\
\ee
so
\be \label{eq:TV2}
\|D_\eta D_\zeta U'(q_t)\big|_{\mathbf{h}} \| &\stackrel{{\scriptsize \textrm{Eq. }}\ref{eq:TV2'}}{\leq} \|(D_\eta D_\zeta q_t  |_{\mathbf{h}})^\top \nabla U'  |_{q_t(\mathbf{h})}\| + \|D_{D_\eta q_t |_{\mathbf{h}}} D_{D_\zeta q_t |_{\mathbf{h}}} U' |_{q_t(\mathbf{h})}\|\\
&= \|(D_\eta D_\zeta q_t  |_{\mathbf{h}})^\top \nabla U'  |_{q_t(\mathbf{h})}\| + \|D_{\frac{D_\eta q_t |_{\mathbf{h}}}{\|D_\eta q_t |_{\mathbf{h}}\|}} D_{\frac{D_\zeta q_t |_{\mathbf{h}}}{\|D_\zeta q_t |_{\mathbf{h}}\|}} U' |_{q_t(\mathbf{h})}\| \times \|D_\eta q_t |_{\mathbf{h}}\| \times \|D_\zeta q_t |_{\mathbf{h}}\| \\
&\stackrel{{\scriptsize \textrm{Assumptions }}\ref{AssumptionsConvexity}, \, \ref{AssumptionSecondDerivative}}{\leq} M_2\|(D_\eta D_\zeta q_t  |_{\mathbf{h}})\| + M_3\|D_\eta q_t |_{\mathbf{h}}\| \times \|D_\zeta q_t |_{\mathbf{h}}\|.
\ee

We denote by $\mathcal{D}$ either of the differential operators  $D_\zeta$ or  $D_\eta D_\zeta$. Recall Equation \eqref{eq:Galilean}:

\be 
\frac{\mathrm{d}q}{\mathrm{d}t} = p, \quad \quad \frac{\mathrm{d}p}{\mathrm{d}t} = -U'(q).
\ee
Applying $\mathcal{D}$ to both sides gives
\be 
\mathcal{D} \left(\frac{\mathrm{d}q_t}{\mathrm{d}t} \right)\bigg|_{\mathbf{h}} = \mathcal{D} p_t\big|_{\mathbf{h}}, \quad \quad \mathcal{D} \left(\frac{\mathrm{d}p_t}{\mathrm{d}t} \right)\bigg|_{\mathbf{h}}= -\mathcal{D}U'(q_t) \big|_{\mathbf{h}},
\ee 
and exchanging derivatives gives
\be 
\frac{\mathrm{d}}{\mathrm{d}t} \mathcal{D} q_t\big|_{\mathbf{h}} = \mathcal{D} p_t\big|_{\mathbf{h}}, \quad \quad \frac{\mathrm{d}}{\mathrm{d}t} \mathcal{D} p_t\big|_{\mathbf{h}} = -\mathcal{D}U'(q_t)\big|_{\mathbf{h}}. 
\ee
Thus, 
\be \label{eq:TV4}
 \frac{\mathrm{d}}{\mathrm{d}t}\| \mathcal{D} q_t \big|_{\mathbf{h}}\| & \leq \|\frac{\mathrm{d}}{\mathrm{d}t} \mathcal{D} q_t \big|_{\mathbf{h}}\| = \|\mathcal{D} p_t \big|_{\mathbf{h}}\| \\
 \frac{\mathrm{d}}{\mathrm{d}t} \|\mathcal{D} p_t\big|_{\mathbf{h}}\| &\leq \|\frac{\mathrm{d}}{\mathrm{d}t} \mathcal{D} p_t\big|_{\mathbf{h}}\| = \|\mathcal{D}U'(q_t) \big|_{\mathbf{h}}\|.
\ee

We now prove Inequality \eqref{eq:TV7}. In the case where $\mathcal{D} = D_\zeta$, Inequalities \eqref{eq:TV4} and \eqref{eq:TV1} give the following system of differential inequalities:
\be \label{eq:TV26}
\frac{\mathrm{d}}{\mathrm{d}t}\| \mathcal{D} q_t \big|_{\mathbf{h}}\| \leq \|\mathcal{D} p_t \big|_{\mathbf{h}}\|, \quad \quad \frac{\mathrm{d}}{\mathrm{d}t} \|\mathcal{D} p_t \big|_{\mathbf{h}}\| \leq M_2 \, \|\mathcal{D} q_t \big|_{\mathbf{h}}\|. 
\ee

Since $D_\zeta q_0 \big|_{\mathbf{h}} = u$ and  $D_\zeta p_0 \big|_{\mathbf{h}} = \omega$, the constants $\kappa_1$ and $\kappa_2$ satisfy
\be 
\kappa_1+ \kappa_2 &= \|\mathcal{D} q_0\big|_{\mathbf{h}}\| \\
\kappa_1\sqrt{M_2} - \kappa_2\sqrt{M_2} &= \frac{\mathrm{d}}{\mathrm{d}t} \|\mathcal{D} q_0\big|_{\mathbf{h}}\| = \|\mathcal{D} p_0\big|_{\mathbf{h}}\|,
\ee
so applying Lemma \ref{LemmaOdeComp} to the system of equations \eqref{eq:TV26} we have
\begin{equation}\label{eq:TV3}
\| \mathcal{D} q_t \big|_{\mathbf{h}}\| \leq \kappa_1e^{t\sqrt{M_2}} + \kappa_2e^{-t\sqrt{M_2}}.
\end{equation}

This completes the proof of Inequality \eqref{eq:TV7}.

We now prove Inequality \eqref{eq:TV8}. In the case where $\mathcal{D} \equiv D_\eta D_\zeta$,

\be \label{eq:TV5}
\|D_\eta D_\zeta U'(q_t&)\big|_{\mathbf{h}}\| \stackrel{{\scriptsize \textrm{Eq. }}\ref{eq:TV2}}{\leq} M_2 \cdot  \|D_\eta D_\zeta q_t \big|_{\mathbf{h}}\| + M_3 \cdot \|D_\eta q_t \big|_{\mathbf{h}}\| \cdot \|D_\zeta q_t \big|_{\mathbf{h}}\| \\
&\stackrel{{\scriptsize \textrm{Eq. }}\ref{eq:TV3}}{\leq} M_2 \times  \|D_\eta D_\zeta q_t \big|_{\mathbf{h}}\| + M_3 \times \left(\kappa_1e^{t\sqrt{M_2}} + \kappa_2e^{-t\sqrt{M_2}}\right) \, \left(\kappa'_1e^{t\sqrt{M_2}} + \kappa'_2e^{-t\sqrt{M_2}}\right) \\
&= M_2 \times  \|D_\eta D_\zeta q_t \big|_{\mathbf{h}}\| + M_3 \times \left(\kappa_1\kappa'_1e^{2t\sqrt{M_2}} + \kappa_2\kappa'_2e^{-2t\sqrt{M_2}} + \kappa_1\kappa'_2+\kappa'_1\kappa_2\right), \ee

where $\kappa'_1 =  \frac{1}{2}\|v\| +  \frac{1}{2\sqrt{M_2}}\|\xi\|$  and $\kappa'_2 := \frac{1}{2}\|v\| -  \frac{1}{2\sqrt{M_2}}\|\xi\|$. Therefore, Equations \eqref{eq:TV4} and \eqref{eq:TV5} give the following system of differential inequalities when $\mathcal{D} \equiv D_\eta D_\zeta$:
\be \label{eq:TV27}
\frac{\mathrm{d}}{\mathrm{d}t}\| \mathcal{D} q_t \big|_{\mathbf{h}}\| &\leq \|\mathcal{D} p_t \big|_{\mathbf{h}}\|,\\
\frac{\mathrm{d}}{\mathrm{d}t} \|\mathcal{D} p_t \big|_{\mathbf{h}}\| &\leq \left(\kappa_1\kappa'_1e^{2t\sqrt{M_2}} + \kappa_2\kappa'_2e^{-2t\sqrt{M_2}} + \kappa_1\kappa'_2+\kappa'_1\kappa_2\right)\times M_3  +   \| \mathcal{D}q_t \big|_{\mathbf{h}} \|  \times M_2.
\ee

Since $\| \mathcal{D} q_0 \big| =\| D_\eta D_\zeta q_0 \big|_{\mathbf{h}}\| =0$ and $\| \mathcal{D} p_0 \big|_{\mathbf{h}}\| = \| D_\eta D_\zeta p_0 \big|_{\mathbf{h}}\| =0$, the constants $\kappa_3$ and $\kappa_4$ satisfy
\be 
\frac{M_3}{M_2} \left(\frac{\kappa_1 \kappa'_1}{3} +  \frac{\kappa_2\kappa'_2}{3} - \kappa_1\kappa'_2-\kappa'_1\kappa_2\right) + \kappa_3+ \kappa_4 & =\| \mathcal{D} q_0 \big|_{\mathbf{h}}\| \\
\frac{2M_3}{\sqrt{M_2}} \left(\frac{\kappa_1 \kappa'_1}{3} -  \frac{\kappa_2\kappa'_2}{3}\right) + \kappa_3\sqrt{M_2}- \kappa_4\sqrt{M_2} &=\| \mathcal{D} p_0 \big|_{\mathbf{h}}\|.
\ee

Thus, applying Lemma \ref{LemmaOdeComp} to the system of equations \eqref{eq:TV27}, we get

\be\label{eq:TV6}
\| \mathcal{D} q_t \big|_{\mathbf{h}}\| &\leq \frac{M_3}{M_2} \left(\frac{\kappa_1 \kappa'_1}{3}e^{2t\sqrt{M_2}} +  \frac{\kappa_2\kappa'_2}{3}e^{-2t\sqrt{M_2}} - \kappa_1\kappa'_2-\kappa'_1\kappa_2\right)\\
&\quad \quad + \kappa_3e^{\sqrt{M_2}t}+ \kappa_4e^{-\sqrt{M_2}t}.
\ee
This completes the proof of the Lemma. 

\end{proof}

\begin{lemma}\label{thm:TV2}
We have
\be \label{IneqThmTV2Conc1}
\frac{9}{10} T \leq \sigma_{\mathrm{min}}\big(J(\mathcal{Q}^{\mathbf{q}}_T)\big|_{\mathbf{p}}\big) \leq \sigma_{\mathrm{max}}\big(J(\mathcal{Q}^{\mathbf{q}}_T)\big|_{\mathbf{p}}\big) \leq \frac{2}{\sqrt{M_2}}.
\ee
Moreover, fix any $v, \omega, \xi \in \mathbb{R}^{d}$. Then
\be \label{IneqThmTV2Conc2}
\|D_{(0,\xi)} D_{(0,\omega)} q_T \big|_{\mathbf{h}}\| \leq  \frac{1}{40} \frac{M_3}{(M_2)^2} \cdot \|\omega\| \cdot \|\xi\|
\ee
and
\be \label{IneqThmTV2Conc3}
\| D_{(v,0)} D_{(0,\omega)} q_T \big|_{\mathbf{h}}\| \leq \frac{1}{10}\frac{M_3}{(M_2)^{\frac{3}{2}}}\cdot \|\omega\|\cdot \|v\|.
\ee

\end{lemma}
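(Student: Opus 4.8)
The three bounds in Lemma \ref{thm:TV2} are all statements about first and second directional derivatives of the position map $q_T$ with respect to its momentum argument, so the plan is to specialize Lemmas \ref{thm:differential_operator} and \ref{thm:bounds} and then insert the standing bound $T\sqrt{M_2}\le\frac{1}{2\sqrt2}$. The starting observation is that $J(\mathcal{Q}^{\mathbf{q}}_T)\big|_{\mathbf{p}}\,\omega = D_{(0,\omega)}q_T\big|_{\mathbf{h}}$ for $\mathbf{h}=(\mathbf{q},\mathbf{p})$, so that $\sigma_{\mathrm{max}}\big(J(\mathcal{Q}^{\mathbf{q}}_T)|_{\mathbf{p}}\big)=\sup_{\|\omega\|=1}\|D_{(0,\omega)}q_T|_{\mathbf{h}}\|$ and $\sigma_{\mathrm{min}}\big(J(\mathcal{Q}^{\mathbf{q}}_T)|_{\mathbf{p}}\big)=\inf_{\|\omega\|=1}\|D_{(0,\omega)}q_T|_{\mathbf{h}}\|$.

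For the upper bound in \eqref{IneqThmTV2Conc1} I would apply \eqref{eq:TV7} with $\zeta=(0,\omega)$: then $\kappa_1=-\kappa_2=\frac{\|\omega\|}{2\sqrt{M_2}}$, so $\|D_{(0,\omega)}q_T|_{\mathbf{h}}\|\le\frac{\|\omega\|}{\sqrt{M_2}}\sinh(T\sqrt{M_2})\le\frac{2}{\sqrt{M_2}}\|\omega\|$ since $T\sqrt{M_2}\le\frac{1}{2\sqrt2}$ gives $\sinh(T\sqrt{M_2})\le2$; taking the supremum over unit $\omega$ yields $\sigma_{\mathrm{max}}\le\frac{2}{\sqrt{M_2}}$. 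For the lower bound the crude estimate of Lemma \ref{thm:bounds}(2) is not sharp enough (it loses a constant, giving only $\approx0.86\,T$), so I would instead use the linearized equation. As in the proof of Lemma \ref{thm:differential_operator}, $w_t:=D_{(0,\omega)}q_t|_{\mathbf{h}}$ satisfies $\ddot w_t=-D_{w_t}U'(q_t)$ with $w_0=0$ and $\dot w_0=\omega$. Integrating twice and using $\|D_{w_r}U'(q_r)\|\le M_2\|w_r\|$ (Assumption \ref{AssumptionsConvexity}) together with the bound $\|w_r\|\le\frac{\|\omega\|}{\sqrt{M_2}}\sinh(r\sqrt{M_2})$ just established, one obtains
\[
\|w_T-T\omega\|\;\le\;\int_0^T\!\!\int_0^s M_2\|w_r\|\,\mathrm{d}r\,\mathrm{d}s\;\le\;\|\omega\|\Big(\tfrac{\sinh(T\sqrt{M_2})}{\sqrt{M_2}}-T\Big),
\]
hence $\|w_T\|\ge\|\omega\|\big(2T-\tfrac{\sinh(T\sqrt{M_2})}{\sqrt{M_2}}\big)\ge\tfrac{9}{10}T\|\omega\|$, where the final step uses $\sinh(x)\le\tfrac{11}{10}x$ on $[0,\tfrac{1}{2\sqrt2}]$. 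Taking the infimum over unit $\omega$ gives $\sigma_{\mathrm{min}}\ge\tfrac{9}{10}T$.

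For \eqref{IneqThmTV2Conc2} and \eqref{IneqThmTV2Conc3} I would substitute the appropriate $\zeta,\eta$ into \eqref{eq:TV8}. Taking $\zeta=(0,\omega)$ and $\eta=(0,\xi)$ gives $\kappa_1=-\kappa_2=\frac{\|\omega\|}{2\sqrt{M_2}}$, $\kappa'_1=-\kappa'_2=\frac{\|\xi\|}{2\sqrt{M_2}}$, and a short computation gives $\kappa_3=\kappa_4=-\frac{M_3\|\omega\|\|\xi\|}{3M_2^{2}}$; plugging these into \eqref{eq:TV8} and using $\cosh(2x)=2\cosh^2x-1$, the whole right-hand side telescopes to $\frac{M_3\|\omega\|\|\xi\|}{3M_2^{2}}\big(\cosh(T\sqrt{M_2})-1\big)^2$, which is $\le\frac1{40}\frac{M_3}{M_2^{2}}\|\omega\|\|\xi\|$ since $T\sqrt{M_2}\le\tfrac{1}{2\sqrt2}$ forces $\tfrac13\big(\cosh(T\sqrt{M_2})-1\big)^2\le\tfrac1{40}$. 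Taking instead $\zeta=(0,\omega)$ and $\eta=(v,0)$ gives $\kappa'_1=\kappa'_2=\frac{\|v\|}{2}$ (and $\kappa_4=-\kappa_3$); now the $\cosh$-type contributions cancel, the $\sinh$-type ones survive, and using $\sinh(2x)=2\sinh x\cosh x$ the bound telescopes to $\frac{M_3\|\omega\|\|v\|}{3M_2^{3/2}}\sinh(T\sqrt{M_2})\big(\cosh(T\sqrt{M_2})-1\big)$, which is $\le\frac1{10}\frac{M_3}{M_2^{3/2}}\|\omega\|\|v\|$ because $\tfrac13\sinh(T\sqrt{M_2})\big(\cosh(T\sqrt{M_2})-1\big)\le\tfrac1{10}$ on $[0,\tfrac{1}{2\sqrt2}]$.

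The only genuine work is the bookkeeping in the last step: correctly specializing the somewhat intricate constants $\kappa_3,\kappa_4$ of Lemma \ref{thm:differential_operator} and verifying that, after specialization, the messy combination of exponentials collapses to the clean factors $(\cosh(T\sqrt{M_2})-1)^2$ and $\sinh(T\sqrt{M_2})(\cosh(T\sqrt{M_2})-1)$; once that is done, each inequality is an elementary monotonicity estimate on $[0,\tfrac{1}{2\sqrt2}]$. The second mildly delicate point is the lower bound in \eqref{IneqThmTV2Conc1}, where Lemma \ref{thm:bounds}(2) is off by a constant and one must integrate the linearized equation twice by hand to recover the factor $\tfrac{9}{10}$.
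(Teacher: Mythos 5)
Your proposal is correct, and the overall strategy -- specialize the directional-derivative bounds from Lemma~\ref{thm:differential_operator} and Lemma~\ref{thm:bounds} and then use monotonicity on the interval $[0,\tfrac{1}{2\sqrt{2}}]$ -- is the same as the paper's. Two points where you diverge are worth flagging. First, for the lower bound $\sigma_{\min}\ge\tfrac{9}{10}T$, the paper cites Inequality~\eqref{thmboundsconc4} of Lemma~\ref{thm:bounds}, which as stated gives $\hat q_T \ge \hat p_0\big(T-\sqrt{M_2}\sinh(T\sqrt{M_2})T^2\big) \approx 0.87\,T\,\hat p_0$ at $T\sqrt{M_2}=\tfrac{1}{2\sqrt2}$, short of $\tfrac{9}{10}T$ (there is a factor of $\tfrac12$ that went missing when the paper integrated $\ddot{\hat q}^\dagger=-M_2\mathsf{C}$ in Lemma~\ref{thm:bounds}, and the intermediate line in the paper's Equation~\eqref{EqInterJacDDiffOp} also has a stray power of $T\sqrt{M_2}$). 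Your direct double integration of the linearized equation, keeping the $r$-dependent upper bound $\|w_r\|\le\tfrac{\|\omega\|}{\sqrt{M_2}}\sinh(r\sqrt{M_2})$ rather than a constant majorant, yields $\|w_T\|\ge\|\omega\|\big(2T-\tfrac{\sinh(T\sqrt{M_2})}{\sqrt{M_2}}\big)\ge\|\omega\|(2-\tfrac{11}{10})T$, which is both cleaner and strictly sharper -- it sidesteps the slip in Lemma~\ref{thm:bounds} and actually gives closer to $0.98\,T$. Second, for \eqref{IneqThmTV2Conc2} and \eqref{IneqThmTV2Conc3}, your observation that the specialization of Lemma~\ref{thm:differential_operator} collapses exactly to $\tfrac{M_3}{3M_2^2}\|\omega\|\|\xi\|(\cosh(T\sqrt{M_2})-1)^2$ and $\tfrac{M_3}{3M_2^{3/2}}\|\omega\|\|v\|\sinh(T\sqrt{M_2})(\cosh(T\sqrt{M_2})-1)$ (I checked the algebra via $\cosh 2x=2\cosh^2x-1$ and $\sinh 2x=2\sinh x\cosh x$ and it is right) is a genuine improvement over the paper's treatment, which simply substitutes $T\sqrt{M_2}=\tfrac{1}{\sqrt2}$ into the messy $\kappa_1,\ldots,\kappa_4$ expression and evaluates numerically; your closed forms make the smallness of the constants $\tfrac{1}{40}$ and $\tfrac{1}{10}$ transparent.
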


\begin{proof}

We begin by proving Inequality \eqref{IneqThmTV2Conc1}. By Inequality \eqref{thmboundsconc4} of Lemma \ref{thm:bounds},

\be  \label{EqInterJacDDiffOp}
\frac{1}{\|\omega\|}D_{(0,\omega)} q_T \big|_{\mathbf{h}}  & \geq    T - \sqrt{M_2}\sinh(T \sqrt{M_2})\times T^{2}\\
&\geq T - 1.2\sqrt{M_2} (T \sqrt{M_2})^2\times T^{2}\\
&\geq \frac{9}{10}T,\\
\ee 
where we use the fact $T \leq \frac{1}{2\sqrt{2}} \frac{\sqrt{m_2}}{M_{2}}$ in  both inequalities, and the additional fact that $\sinh^2(t) \leq 1.2 t^2$ for all $t \in [0,\frac{1}{2}]$ in the first inequality. By Lemma \ref{thm:differential_operator},

\be 
\frac{1}{\|\omega\|}D_{(0,\omega)} q_T \big|_{\mathbf{h}}  &\leq \frac{1}{2\sqrt{M_2}}e^{T\sqrt{M_2}} \\
&\leq \frac{1}{2\sqrt{M_2}}e^{\frac{1}{\sqrt{2}}} \leq \frac{2}{\sqrt{M_2}}.
\ee 

Combining this with Inequality \eqref{EqInterJacDDiffOp} gives
\be 
\frac{9}{10} T \leq D_{(0,\omega)} q_T \big|_{\mathbf{h}} \leq \frac{2}{\sqrt{M_2}},
\ee
and so
\be 
\frac{9}{10}T \leq \sigma_{\mathrm{min}}\big(J(\mathcal{Q}^{\mathbf{q}}_T)\big|_{\mathbf{p}}\big) \leq \sigma_{\mathrm{max}}\big(J(\mathcal{Q}^{\mathbf{q}}_T)\big|_{\mathbf{p}}\big) \leq \frac{2}{\sqrt{M_2}}. 
\ee
This completes the proof of Inequality \eqref{IneqThmTV2Conc1}. 

 Next, we prove Inequality \eqref{IneqThmTV2Conc2}. By Lemma \ref{thm:differential_operator}, we have

\be \label{IneqThmTv2Int2}
\hspace{-15mm} \| D_{(0,\xi)} D_{(0,\omega)} q_T \big|_{\mathbf{h}}\| \leq \frac{M_3}{M_2} \left(\frac{\kappa_1 \kappa'_1}{3}e^{\sqrt{2}} +  \frac{\kappa_2\kappa'_2}{3}e^{-\sqrt{2}} - \kappa_1\kappa'_2-\kappa'_1\kappa_2\right) + \kappa_3e^{\frac{1}{\sqrt{2}}}+ \kappa_4e^{-\frac{1}{\sqrt{2}}},
\ee

where  
\be 
\kappa_1 &= - \kappa_{2} =  \frac{1}{2\sqrt{M_2}}\|\omega\| \\
\kappa'_1 &= -\kappa'_{2} = \frac{1}{2\sqrt{M_2}}\|\xi\| \\
\kappa_3 &=  -\frac{M_3}{2M_2} \left(\frac{\kappa_1 \kappa'_1}{3} +  \frac{\kappa_2\kappa'_2}{3} - \kappa_1\kappa'_2-\kappa'_1\kappa_2\right)  -\frac{M_3}{M_2} \left(\frac{\kappa_1 \kappa'_1}{3} -  \frac{\kappa_2\kappa'_2}{3}\right) \\
\kappa_4 &= -\frac{M_3}{M_2} \left(\frac{\kappa_1 \kappa'_1}{3} +  \frac{\kappa_2\kappa'_2}{3} - \kappa_1\kappa'_2-\kappa'_1\kappa_2\right) - \kappa_3.
\ee
Observing
\be 
- \kappa_1\kappa'_2-\kappa'_1\kappa_2 &= 2\kappa_1\kappa'_1 = 2\times \frac{1}{2\sqrt{M_2}}\|\omega\|\cdot\frac{1}{2\sqrt{M_2}}\|\xi\|= \frac{1}{2M_2}\|\omega\| \cdot \|\xi\| \\
\kappa_1\kappa'_2-\kappa'_1\kappa_2 &= -\kappa_1\kappa'_1+\kappa'_1\kappa_1 =0  \\
\kappa_1 \kappa'_1 +  \kappa_2\kappa'_2 &= 2\kappa_1\kappa'_1 = \frac{1}{2M_2}\|\omega\| \cdot \|\xi\| \\
\kappa_1 \kappa'_1 -  \kappa_2\kappa'_2 &= \kappa_1 \kappa'_1 -  \kappa_1\kappa'_1 =0, 
\ee 

we have $\kappa_3 = \kappa_4 =  -\frac{M_3}{2M_2} \left(\frac{4}{3}\times\frac{1}{2M_2}\|\omega\| \cdot \|\xi\|\right) = -\frac{M_3}{3(M_2)^2}\|\omega\| \cdot \|\xi\|$.

By Inequality \eqref{IneqThmTv2Int2}, this gives
\be 
\|D_{(0,\xi)} D_{(0,\omega)} q_T\| &\leq  \frac{M_3}{M_2} \left(\frac{\kappa_1 \kappa'_1}{3}e^{\sqrt{2}} +  \frac{\kappa_2\kappa'_2}{3}e^{-\sqrt{2}} - \kappa_1\kappa'_2-\kappa'_1\kappa_2\right) + \kappa_3e^{\frac{1}{\sqrt{2}}}+ \kappa_4e^{-\frac{1}{\sqrt{2}}}\\
&=  \frac{M_3}{M_2} \left(\frac{\kappa_1 \kappa'_1}{3}\left(e^{\sqrt{2}} +  e^{-\sqrt{2}}\right) + \frac{1}{2M_2}\|\omega\| \cdot \|\xi\|\right) + \kappa_3\left(e^{\frac{1}{\sqrt{2}}}+ e^{-\frac{1}{\sqrt{2}}}\right)\\
&=  \frac{M_3}{M_2} \left(\frac{1}{12M_2}\|\omega\| \cdot \|\xi\|\left(e^{\sqrt{2}} +  e^{-\sqrt{2}}\right) + \frac{1}{2M_2}\|\omega\| \cdot \|\xi\|\right)  -\frac{M_3}{3(M_2)^2}\|\omega\| \cdot \|\xi\|\cdot\left(e^{\frac{1}{\sqrt{2}}}+ e^{-\frac{1}{\sqrt{2}}}\right)\\
&=  \frac{M_3}{(M_2)^2} \left[\frac{1}{12}\left(e^{\sqrt{2}} +  e^{-\sqrt{2}}\right) + \frac{1}{2}  -\frac{1}{3}\cdot\left(e^{\frac{1}{\sqrt{2}}}+ e^{-\frac{1}{\sqrt{2}}}\right)\right]\cdot \|\omega\| \cdot \|\xi\|\\
&\leq \frac{1}{40}  \frac{M_3}{(M_2)^2} \cdot \|\omega\| \cdot \|\xi\|.
\ee

This completes the proof of Inequality \eqref{IneqThmTV2Conc2}.

Finally, we prove Inequality \eqref{IneqThmTV2Conc3}. 
By Lemma \ref{thm:differential_operator} we have:

\be \label{IneqThmTv2Int3}
\left\| D_{(v,0)} D_{(0,\omega)} q_T \big|_{\mathbf{h}}\right\| &\leq \frac{M_3}{M_2} \left(\frac{\kappa_1 \kappa'_1}{3}e^{2T\sqrt{M_2}} +  \frac{\kappa_2\kappa'_2}{3}e^{-2T\sqrt{M_2}} - \kappa_1\kappa'_2-\kappa'_1\kappa_2\right)\\
& \quad \quad+ \kappa_3e^{\sqrt{M_2}T}+ \kappa_4e^{-\sqrt{M_2}T}
\ee
where  
\be 
\kappa_1 &= - \kappa_{2} = \frac{1}{2\sqrt{M_2}}\|\omega\| \\
\kappa'_1 &= \kappa'_{2} =  \frac{1}{2}\|v\| \\
\kappa_3 &=  -\frac{M_3}{2M_2} \left(\frac{\kappa_1 \kappa'_1}{3} +  \frac{\kappa_2\kappa'_2}{3} - \kappa_1\kappa'_2-\kappa'_1\kappa_2\right)  -\frac{M_3}{M_2} \left(\frac{\kappa_1 \kappa'_1}{3} -  \frac{\kappa_2\kappa'_2}{3}\right) \\
\kappa_4 &= -\frac{M_3}{M_2} \left(\frac{\kappa_1 \kappa'_1}{3} +  \frac{\kappa_2\kappa'_2}{3} - \kappa_1\kappa'_2-\kappa'_1\kappa_2\right) - \kappa_3.
\ee

Observing
\be 
- \kappa_1\kappa'_2-\kappa'_1\kappa_2 &= - \kappa_1\kappa'_1 +\kappa'_1\kappa_1 = 0 \\
\kappa_1\kappa'_2-\kappa'_1\kappa_2 &= 2\kappa_1\kappa'_1 =  2\times \frac{1}{2\sqrt{M_2}}\|\omega\|\cdot \frac{1}{2}\|v\| = \frac{1}{2\sqrt{M_2}}\|\omega\|\cdot \|v\| \\
\kappa_1 \kappa'_1 +  \kappa_2\kappa'_2 &= \kappa_1\kappa'_1 - \kappa_1\kappa'_1 = 0\\
\kappa_1 \kappa'_1 -  \kappa_2\kappa'_2 &= 2\kappa_1 \kappa'_1= \frac{1}{2\sqrt{M_2}}\|\omega\|\cdot \|v\|, 
\ee
we have the simpler relationships
\be
\kappa_3 =  -\frac{M_3}{2M_2} \left(\frac{\kappa_1 \kappa'_1}{3} -  \frac{\kappa_1\kappa'_1}{3} - 0\right)  -\frac{M_3}{M_2} \left(\frac{\kappa_1 \kappa'_1}{3} +  \frac{\kappa_1\kappa'_1}{3}\right) = 0 - \frac{2}{3}\frac{M_3}{M_2} \kappa_1 \kappa'_1 = - \frac{M_3}{6(M_2)^\frac{3}{2}}\|\omega\|\cdot \|v\|
\ee 
and $\kappa_4 = - \kappa_3$.

Applying Inequality \eqref{IneqThmTv2Int3}, this gives
\be  
\|D_{(v,0)} D_{(0,\omega)} q_T \big|_{\mathbf{h}}\| &\leq  \frac{M_3}{M_2} \left(\frac{\kappa_1 \kappa'_1}{3}e^{\sqrt{2}} +  \frac{\kappa_2\kappa'_2}{3}e^{-\sqrt{2}} - \kappa_1\kappa'_2-\kappa'_1\kappa_2\right) + \kappa_3e^{\frac{1}{\sqrt{2}}}+ \kappa_4e^{-\frac{1}{\sqrt{2}}}\\
&=  \frac{M_3}{M_2} \left(\frac{\kappa_1 \kappa'_1}{3}e^{\sqrt{2}} -  \frac{\kappa_1\kappa'_1}{3}e^{-\sqrt{2}}-0\right) + \kappa_3e^{\frac{1}{\sqrt{2}}} - \kappa_3e^{-\frac{1}{\sqrt{2}}}\\
&=  \frac{M_3}{M_2} \frac{1}{12\sqrt{M_2}}\|\omega\|\cdot \|v\| \left(e^{\sqrt{2}} -  e^{-\sqrt{2}}\right)  - \frac{M_3}{6(M_2)^\frac{3}{2}} \left(e^{\frac{1}{\sqrt{2}}} - e^{-\frac{1}{\sqrt{2}}}\right)\cdot \|\omega\|\cdot \|v\|\\
&=  \frac{M_3}{(M_2)^{\frac{3}{2}}} \left[\frac{1}{12} \left(e^{\sqrt{2}} -  e^{-\sqrt{2}}\right)  - \frac{1}{6}\left(e^{\frac{1}{\sqrt{2}}} - e^{-\frac{1}{\sqrt{2}}}\right) \right]\cdot \|\omega\|\cdot \|v\|\\
&\leq \frac{1}{10}\frac{M_3}{(M_2)^{\frac{3}{2}}}\cdot \|\omega\|\cdot \|v\|.
\ee

This completes the proof of the lemma.
\end{proof}

\begin{lemma} \label{thm:Jacobian}
For any $p, w \in \mathbb{R}^d$ we have
\be
\left \| \left(J(\mathcal{Q}^{\mathbf{q}^{(2)}}_T)\big|_{p}\right)\omega - \left(J(\mathcal{Q}^{\mathbf{q}^{(1)}}_T)\big|_{p}\right)\omega \right \| \leq \frac{1}{10}\frac{M_3}{(M_2)^{\frac{3}{2}}}\cdot \|\omega\|\cdot \|\mathbf{q}^{(2)} - \mathbf{q}^{(1)}\|.
\ee

\end{lemma}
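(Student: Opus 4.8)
\textbf{Proof plan for Lemma \ref{thm:Jacobian}.}

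The plan is to express the difference between the two Jacobians (applied to $\omega$) as an integral of a directional derivative along the line segment connecting $\mathbf{q}^{(1)}$ and $\mathbf{q}^{(2)}$ in position space, and then to bound the relevant second-order mixed directional derivative using Lemma \ref{thm:TV2}. Concretely, fix $\omega \in \mathbb{R}^d$ and note that $\left(J(\mathcal{Q}^{\mathbf{q}}_T)\big|_{p}\right)\omega = D_{(0,\omega)} q_T\big|_{(\mathbf{q},p)}$, viewed as a function of the initial position $\mathbf{q}$ (with the initial momentum held fixed at $p$). Writing $\mathbf{h}^{(j)} = (\mathbf{q}^{(j)}, p)$ and letting $v = \mathbf{q}^{(2)} - \mathbf{q}^{(1)}$, the fundamental theorem of calculus along the segment $\ell_s(\mathbf{q}^{(1)}, \mathbf{q}^{(2)})$ gives
\be
\left(J(\mathcal{Q}^{\mathbf{q}^{(2)}}_T)\big|_{p}\right)\omega - \left(J(\mathcal{Q}^{\mathbf{q}^{(1)}}_T)\big|_{p}\right)\omega = \int_0^{\|v\|} D_{(v/\|v\|, 0)} D_{(0,\omega)} q_T \big|_{(\ell_s(\mathbf{q}^{(1)},\mathbf{q}^{(2)}), p)} \, \mathrm{d}s.
\ee
Taking norms and applying the triangle inequality for integrals reduces the problem to a pointwise bound on $\| D_{(v/\|v\|,0)} D_{(0,\omega)} q_T\big|_{\mathbf{h}}\|$ uniformly over $\mathbf{h}$ on the segment.

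That pointwise bound is exactly Inequality \eqref{IneqThmTV2Conc3} of Lemma \ref{thm:TV2}, which yields $\| D_{(v/\|v\|,0)} D_{(0,\omega)} q_T\big|_{\mathbf{h}}\| \leq \frac{1}{10}\frac{M_3}{(M_2)^{3/2}} \|\omega\| \cdot \|v/\|v\|\| = \frac{1}{10}\frac{M_3}{(M_2)^{3/2}} \|\omega\|$, a bound that does not depend on $s$ or on the basepoint. Integrating this constant over $s \in [0,\|v\|]$ produces the factor $\|v\| = \|\mathbf{q}^{(2)} - \mathbf{q}^{(1)}\|$, giving precisely the claimed inequality.

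The only points requiring care — and the main (minor) obstacle — are the bookkeeping facts that justify the first display: first, that $\frac{\partial}{\partial \mathbf{q}}$ of $D_{(0,\omega)} q_T$ along direction $v$ is indeed $D_{(v,0)}D_{(0,\omega)} q_T$, which follows from the smoothness of solutions of Hamilton's equations in their initial conditions (used freely in Lemma \ref{thm:differential_operator}) together with symmetry of mixed partials; and second, that differentiating with respect to the \emph{position} component of the initial condition, as opposed to the momentum component, is legitimate here. Since $q_T$ is a smooth function of the full initial point $\mathbf{h} = (\mathbf{q},\mathbf{p}) \in \mathbb{R}^{2d}$, and the directional derivatives $D_{(v,0)}$ and $D_{(0,\omega)}$ are with respect to orthogonal coordinate blocks of this ambient space, both facts are immediate. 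No new estimates are needed beyond those already established in Lemmas \ref{thm:bounds}, \ref{thm:differential_operator}, and \ref{thm:TV2}.
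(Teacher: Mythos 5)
Your proposal is correct and uses essentially the same argument as the paper: write the Jacobian difference as an integral of $D_{(v/\|v\|,0)} D_{(0,\omega)} q_T$ along the segment $\ell_s(\mathbf{q}^{(1)},\mathbf{q}^{(2)})$, bound the integrand pointwise via Inequality \eqref{IneqThmTV2Conc3} of Lemma \ref{thm:TV2}, and integrate. Your bookkeeping with the unit direction vector is in fact a bit cleaner than the paper's displayed computation, which carries an extraneous factor of $\|\mathbf{q}^{(2)}-\mathbf{q}^{(1)}\|$ inside the integrand (a harmless typo) before arriving at the same final bound.
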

\begin{proof}
By Inequality \eqref{IneqThmTV2Conc3} of Lemma \ref{thm:TV2}, we have
\be \label{eq:J1}
\| D_{(v,0)} \left(J(\mathcal{Q}^{x}_T)\big|_{y}\right)\omega \big|_{(x,y)=(q,p)}\| = \| D_{(v,0)} D_{(0,\omega)} q_T \big|_{(q,p)}\| \leq \frac{1}{10}\frac{M_3}{(M_2)^{\frac{3}{2}}}\cdot \|\omega\|\cdot \|v\|
\ee
for all $q,v\in \mathbb{R}^d$. Therefore,

\be
\left \| \left(J(\mathcal{Q}^{\mathbf{q}^{(2)}}_T)\big|_{p}\right)\omega - \left(J(\mathcal{Q}^{\mathbf{q}^{(1)}}_T)\big|_{p}\right)\omega \right \| &=  \bigg \| \int_0^{\|\mathbf{q}^{(2)} - \mathbf{q}^{(1)}\|} D_\frac{\mathbf{q}^{(2)} - \mathbf{q}^{(1)}}{\|\mathbf{q}^{(2)} - \mathbf{q}^{(1)}\|} \left(J(\mathcal{Q}^{x}_T)\big|_{p}\right)\omega \big|_{\ell_s(\mathbf{q}^{(1)}, \mathbf{q}^{(2)})} \mathrm{d}s \bigg \|\\
&\leq  \int_0^{\|\mathbf{q}^{(2)} - \mathbf{q}^{(1)}\|} \bigg \|D_\frac{\mathbf{q}^{(2)} - \mathbf{q}^{(1)}}{\|\mathbf{q}^{(2)} - \mathbf{q}^{(1)}\|} \left(J(\mathcal{Q}^{x}_T)\big|_{p}\right)\omega \big|_{x =\ell_s(\mathbf{q}^{(1)}, \mathbf{q}^{(2)})} \bigg \| \mathrm{d}s\\
&\stackrel{{\scriptsize \textrm{Eq. }}\eqref{eq:J1}}{\leq}  \int_0^{\|\mathbf{q}^{(2)} - \mathbf{q}^{(1)}\|} \frac{1}{10}\frac{M_3}{(M_2)^{\frac{3}{2}}}\cdot \|\omega\|\cdot \|\mathbf{q}^{(2)} - \mathbf{q}^{(1)}\| \mathrm{d}s\\
&\leq \frac{1}{10}\frac{M_3}{(M_2)^{\frac{3}{2}}}\cdot \|\omega\|\cdot \|\mathbf{q}^{(2)} - \mathbf{q}^{(1)}\|.
\ee

This completes the proof of the lemma.
\end{proof}

\begin{lemma}\label{thm:determinant}
For any two momenta $\mathbf{p}, \mathbf{p}^{\star} \in \mathbb{R}^d$, we have:
\be\label{eq:det1}
 e^{-  \frac{d M_{3}}{30 \, M_{2}^{2} \, T} \|\mathbf{p}^\star - \mathbf{p}\| }  \leq \left| \frac{\det\big(J(\mathcal{Q}^{\mathbf{q}}_T)\big|_{\mathbf{p}}\big)}{ \det \big( J(\mathcal{Q}^{\mathbf{q}}_T)\big|_{\mathbf{p}^\star}\big)} \right | \leq e^{\frac{d M_{3} }{30 M_{2}^{2} \, T} \|\mathbf{p}^\star - \mathbf{p}\|   }.
\ee

For any two positions $\mathbf{q}, \mathbf{q}^{\star} \in \mathbb{R}^d$, we have:
\be\label{eq:det2} 
e^{- \frac{d M_{3} }{9 M_{2}^{3/2} T} \|\mathbf{q}^\star - \mathbf{q}\|  }  \leq \left| \frac{\det\big(J(\mathcal{Q}^{\mathbf{q}}_T)\big|_{\mathbf{p}}\big)}{ \det \big(J(\mathcal{Q}^{\mathbf{q}^\star}_T)\big|_{\mathbf{p}}\big)} \right | \leq e^{ \frac{d M_{3} }{9 M_{2}^{3/2} T} \|\mathbf{q}^\star - \mathbf{q}\| }.
\ee
\end{lemma}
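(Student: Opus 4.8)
The plan is to write each determinant ratio as the exponential of an integral of a logarithmic derivative taken along a straight-line segment in the variable being moved — the momentum for \eqref{eq:det1}, the position for \eqref{eq:det2} — and to control that derivative via Jacobi's formula together with the pointwise second-derivative bounds of Lemma \ref{thm:TV2}. The one global input is that, by Inequality \eqref{IneqThmTV2Conc1} of Lemma \ref{thm:TV2}, for every $\mathbf{q},\mathbf{p}\in\mathbb{R}^d$ the matrix $J(\mathcal{Q}^{\mathbf{q}}_T)|_{\mathbf{p}}$ has all singular values at least $\frac{9}{10}T>0$, hence is invertible with $\|(J(\mathcal{Q}^{\mathbf{q}}_T)|_{\mathbf{p}})^{-1}\|_{\Op}\leq\frac{10}{9T}$ and nonvanishing determinant; moreover, since solutions of Hamilton's equations depend smoothly on their initial data (quantified by Lemma \ref{thm:differential_operator}), the map $s\mapsto\log|\det J(\mathcal{Q}^{\mathbf{q}}_T)|_{\ell_s}|$ is $C^1$ along any segment, so the fundamental theorem of calculus applies and (by continuity plus non-vanishing) the sign of the determinant is constant along the path.

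For \eqref{eq:det1}, first I would fix $\mathbf{q}$, assume $\mathbf{p}\neq\mathbf{p}^\star$ (the equal case being trivial), set $u:=\frac{\mathbf{p}-\mathbf{p}^\star}{\|\mathbf{p}-\mathbf{p}^\star\|}$, and define $A(s):=J(\mathcal{Q}^{\mathbf{q}}_T)|_{\ell_s(\mathbf{p}^\star,\mathbf{p})}$ and $g(s):=\log|\det A(s)|$ for $s\in[0,\|\mathbf{p}-\mathbf{p}^\star\|]$. The $i$-th column of $A(s)$ is $A(s)e_i=D_{(0,e_i)}q_T|_{(\mathbf{q},\ell_s(\mathbf{p}^\star,\mathbf{p}))}$, so the chain rule (using $\tfrac{\d}{\d s}\ell_s(\mathbf{p}^\star,\mathbf{p})=u$) gives $\tfrac{\d}{\d s}A(s)e_i=D_{(0,u)}D_{(0,e_i)}q_T|_{(\mathbf{q},\ell_s(\mathbf{p}^\star,\mathbf{p}))}$, which Inequality \eqref{IneqThmTV2Conc2} bounds in norm by $\frac{1}{40}\frac{M_3}{(M_2)^2}$. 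By Jacobi's formula $g'(s)=\Tr(A(s)^{-1}\tfrac{\d}{\d s}A(s))$; expanding the trace over the standard basis and bounding term by term yields $|g'(s)|\leq\|A(s)^{-1}\|_{\Op}\sum_{i=1}^d\|\tfrac{\d}{\d s}A(s)e_i\|\leq\frac{10}{9T}\cdot d\cdot\frac{1}{40}\frac{M_3}{(M_2)^2}=\frac{dM_3}{36(M_2)^2T}\leq\frac{dM_3}{30(M_2)^2T}$. Integrating over the segment of length $\|\mathbf{p}^\star-\mathbf{p}\|$ and exponentiating gives \eqref{eq:det1}.

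For \eqref{eq:det2} I would run the identical argument with $\mathbf{p}$ held fixed and the position moving along $\ell_s(\mathbf{q}^\star,\mathbf{q})$, so that $\tfrac{\d}{\d s}A(s)e_i=D_{(v,0)}D_{(0,e_i)}q_T|_{(\ell_s(\mathbf{q}^\star,\mathbf{q}),\mathbf{p})}$ with $v$ the unit vector from $\mathbf{q}^\star$ to $\mathbf{q}$; Inequality \eqref{IneqThmTV2Conc3} then replaces the per-column bound $\frac{1}{40}\frac{M_3}{(M_2)^2}$ by $\frac{1}{10}\frac{M_3}{(M_2)^{3/2}}$, giving $|g'(s)|\leq\frac{10}{9T}\cdot d\cdot\frac{1}{10}\frac{M_3}{(M_2)^{3/2}}=\frac{dM_3}{9(M_2)^{3/2}T}$, and integrating over the segment of length $\|\mathbf{q}^\star-\mathbf{q}\|$ produces \eqref{eq:det2}.

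The only point requiring care — everything else being bookkeeping — is the trace estimate: one must bound $|\Tr(A^{-1}B)|$ by controlling the columns of $B$ one at a time (each column has norm at most $c$, producing the clean factor $d$), rather than passing through $\|B\|_{\F}$, which would introduce a spurious $\sqrt{d}$ and break the stated constants. It is also worth noting explicitly that the absolute value inside the logarithm is harmless since the determinant keeps a fixed sign along the segment.
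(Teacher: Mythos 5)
Your proof is correct but follows a genuinely different route from the paper. You use Jacobi's formula to write the logarithmic derivative of the determinant as $\mathrm{Tr}(A^{-1}A')$, bound the trace column-by-column via $|\mathrm{Tr}(A^{-1}B)|\leq\|A^{-1}\|_{\mathrm{op}}\sum_i\|Be_i\|$, and integrate by the fundamental theorem of calculus. The paper instead invokes Weyl's theorem from singular value perturbation theory to bound $|D\sigma_i|\leq\frac{1}{40}\frac{M_3}{M_2^2}$ for each singular value $\sigma_i$ of the Jacobian, then applies the product rule to $|\det| = \prod_i\sigma_i$ to get $\frac{|D\det|}{|\det|}\leq\sum_i\frac{|D\sigma_i|}{\sigma_i}$, and closes the argument with the ODE comparison lemma (Lemma \ref{LemmaOdeComp}) rather than direct integration. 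Both approaches consume the same inputs from Lemma \ref{thm:TV2}—the lower bound $\frac{9}{10}T$ on singular values and the per-direction second-derivative bounds—and both land on the identical constant $\frac{10}{9T}\cdot d\cdot\frac{1}{40}\frac{M_3}{M_2^2}=\frac{dM_3}{36 M_2^2 T}\leq\frac{dM_3}{30 M_2^2 T}$ (and the analogous computation for the position case). One advantage of your Jacobi-formula route is that it differentiates the determinant directly, a smooth function of the matrix entries, whereas the paper differentiates singular values, which are only locally Lipschitz and not $C^1$ at degeneracies; the paper's step is therefore true only in an almost-everywhere/Lipschitz-envelope sense (which suffices for the integral bound, but requires a word of justification that the paper omits). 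Your explicit remark that the sign of the determinant is locked by continuity and non-vanishing of $\det$ along the segment is also a point the paper leaves implicit.
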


\begin{proof}
By Inequality \eqref{IneqThmTV2Conc2} in Lemma \ref{thm:TV2} and Weyl's theorem from singular value perturbation theory, we have:

\be\label{eq:TV16}
\left | D_{(0,\xi)} \sigma_i \big(J(\mathcal{Q}^{q}_T)\big|_{p} \big) \big|_{(q,p)= \mathbf{h}}\right | \leq \frac{1}{40} \frac{M_3}{(M_2)^2} \cdot \|\xi\|
\ee
and
\be\label{eq:TV17}
\left | D_{(v,0)} \sigma_i \big(J(\mathcal{Q}^{q}_T)\big|_{p}\big) \big|_{(q,p)= \mathbf{h}}\right | \leq \frac{1}{10}\frac{M_3}{(M_2)^{\frac{3}{2}}}\cdot \|v\|
\ee

for every $i \in \{1,\ldots,d\}$. If $\|\xi\|=1$, we have:
\be 
\left |D_{(0,\xi)} \det\left(J(\mathcal{Q}^{q}_T)\big|_{p}\right) \big|_{(q,p)= \mathbf{h}} \right | &= \left | D_{(0,\xi)}  \prod_{i=1}^d  \sigma_i \big(J(\mathcal{Q}^{q}_T)\big|_{p}\big) \big|_{(q,p)= \mathbf{h}} \right |\\
&= \left |\sum_{i=1}^d  \left[ D_{(0,\xi)} \sigma_i \big(J(\mathcal{Q}^{q}_T)\big|_{p}\big) \big|_{(q,p)= \mathbf{h}} \right ] \cdot \prod_{j\neq i}  \sigma_j \big(J(\mathcal{Q}^{\mathbf{q}}_T)\big|_{\mathbf{p}}\big) \right | \\
&= \left |\sum_{i=1}^d  \left[ \frac{D_{(0,\xi)} \sigma_i \big(J(\mathcal{Q}^{q}_T)\big|_{p}\big) \big|_{(q,p)= \mathbf{h}}}{\sigma_i \big(J(\mathcal{Q}^{\mathbf{q}}_T)\big|_{\mathbf{p}}\big)} \right ]\cdot \prod_{j=1}^d  \sigma_j \big(J(\mathcal{Q}^{\mathbf{q}}_T)\big|_{\mathbf{p}}\big) \right |\\
&= \left |\det \big(J(\mathcal{Q}^{\mathbf{q}}_T)\big|_{\mathbf{p}}\big) \cdot \sum_{i=1}^d  \frac{D_{(0,\xi)} \sigma_i \big(J(\mathcal{Q}^{\mathbf{q}}_T)\big|_{\mathbf{p}}\big)}{\sigma_i \big(J(\mathcal{Q}^{\mathbf{q}}_T)\big|_{\mathbf{p}}\big)} \right |\\
&\leq \left |\det \big(J(\mathcal{Q}^{\mathbf{q}}_T)\big|_{\mathbf{p}}\big) \cdot \sum_{i=1}^d \frac{\frac{1}{40} \frac{M_3}{(M_2)^2}}{\frac{9}{10}T}  \right |\\
&= \left |\det \big(J(\mathcal{Q}^{\mathbf{q}}_T)\big|_{\mathbf{p}}\big) \cdot d \cdot \frac{ M_{3} }{36 T M_{2}^{2}}  \right |,
\ee
where the inequality holds by Inequality \eqref{eq:TV16} and Lemma \ref{thm:TV2}. Therefore, by Lemma \ref{LemmaOdeComp}, we have  
\be\label{eq:TV15}
\left| \det\big(J(\mathcal{Q}^{\mathbf{q}}_T)\big|_{\mathbf{p}}\big) \right | \leq \left | \det \left(J(\mathcal{Q}^{\mathbf{q}}_T)\big|_{\mathbf{p}^\star} \right)\right | \cdot e^{\|\mathbf{p}^\star - \mathbf{p}\|   \times d\times 
\frac{1}{30}\frac{M_3}{(M_2)^2 T}}.
\ee
Since Equation \eqref{eq:TV15} also holds if we exchange $\mathbf{p}$ and $\mathbf{p}^\star$, it must be true that

\[ e^{-\|\mathbf{p}^\star - \mathbf{p}\|   \times \frac{d}{30}\frac{M_3}{(M_2)^2 T}}  \leq \left| \frac{\det\big(J(\mathcal{Q}^{\mathbf{q}}_T)\big|_{\mathbf{p}}\big)}{ \det \big(J(\mathcal{Q}^{\mathbf{q}}_T)\big|_{\mathbf{p}^\star}\big)} \right | \leq e^{\|\mathbf{p}^\star - \mathbf{p}\|   \times  \frac{d}{30}\frac{M_3}{(M_2)^2 T}}.\]

By the same reasoning as above, applying Equation \eqref{eq:TV17} in place of Equation \eqref{eq:TV16}, we get
\[  e^{-\|\mathbf{q}^\star - \mathbf{q}\|   \times  \frac{d}{9}\frac{M_3}{(M_2)^{3/2} T}}  \leq \left| \frac{\det\big(J(\mathcal{Q}^{\mathbf{q}}_T)\big|_{\mathbf{p}}\big)}{ \det \big(J(\mathcal{Q}^{\mathbf{q}^\star}_T)\big|_{\mathbf{p}}\big)} \right | \leq e^{\|\mathbf{q}^\star - \mathbf{q}\|   \times  \frac{d}{9}\frac{M_3}{(M_2)^{3/2} T}}.\]
This completes the proof.
\end{proof}

For any $r \geq 0$ we denote by $\mathbb{B}_r$ the Euclidean ball of radius $r$ centered at the origin.  Also, for any two sets $A,B \subseteq \mathbb{R}^d$ denote by $A+B := \{a+b : a\in A, b \in B\}$ the Minkowski sum of $A$ and $B$.

Recalling the definitions of $P_{0}$, $S_{\mathbf{q}}$,  and $S'_{\mathbf{q}}$ from Equation \eqref{EqDefP_0ByP_Star} and the following lines, we have the following containment bound:

\begin{lemma}\label{thm:homotopy2}\label{thm:cube}
For any $\mathbf{q} \in \mathbb{R}^d$, we have

\[S_{\mathbf{q}} \subseteq \mathcal{Q}_T^{\mathbf{q}}(P_0) \subseteq S'_{\mathbf{q}}.  \]
\end{lemma}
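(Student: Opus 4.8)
\textbf{Proof strategy for Lemma \ref{thm:cube}.}
The plan is to exploit the fact that $\mathcal{Q}_T^{\mathbf{q}}$ is a smooth map whose Jacobian at the fixed base-point $\mathbf{p}^\star$ is well-controlled (by Inequality \eqref{IneqThmTV2Conc1} of Lemma \ref{thm:TV2}), and whose Jacobian varies slowly over the small cube $P_0$ (by Inequality \eqref{IneqThmTV2Conc3} of Lemma \ref{thm:TV2}, which bounds $\|D_{(v,0)}D_{(0,\omega)}q_T\|$ — wait, we need variation in the \emph{momentum} argument, so the relevant bound is the mixed second derivative $\|D_{(0,\xi)}D_{(0,\omega)}q_T\|$ from Inequality \eqref{IneqThmTV2Conc2}). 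Concretely, for any momentum $\mathbf{p} \in P_0$, write $\mathcal{Q}_T^{\mathbf{q}}(\mathbf{p}) - \mathcal{Q}_T^{\mathbf{q}}(\mathbf{p}^\star) = \int_0^1 J(\mathcal{Q}_T^{\mathbf{q}})|_{\mathbf{p}^\star + s(\mathbf{p}-\mathbf{p}^\star)}(\mathbf{p}-\mathbf{p}^\star)\,\mathrm{d}s$, and compare this to the linear approximation $J(\mathcal{Q}_T^{\mathbf{q}})|_{\mathbf{p}^\star}(\mathbf{p}-\mathbf{p}^\star)$. The error between the two is controlled by integrating the mixed second derivative of $q_T$ in the momentum variable over the segment, which by Inequality \eqref{IneqThmTV2Conc2} is at most $\frac{1}{40}\frac{M_3}{(M_2)^2}\|\mathbf{p}-\mathbf{p}^\star\|^2$. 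Since $\mathbf{p} \in P_0$ forces $\|\mathbf{p}-\mathbf{p}^\star\| \leq \sqrt{d}\cdot\frac{\epsilon}{d}\cdot\frac{(M_2)^2 T}{M_3}$, this error is bounded by $\frac{\sqrt{d}}{40}\cdot\frac{\epsilon}{d}\cdot\frac{(M_2)^2 T}{M_3}\cdot\|\mathbf{p}-\mathbf{p}^\star\|$, which (using $\sigma_{\mathrm{min}}(J) \geq \frac{9}{10}T$) is at most $\frac{\epsilon}{d}\|J(\mathcal{Q}_T^{\mathbf{q}})|_{\mathbf{p}^\star}(\mathbf{p}-\mathbf{p}^\star)\|$ after checking the constants ($\frac{\sqrt{d}}{40d}\cdot\frac{(M_2)^2 T}{M_3}\cdot\frac{M_3}{(9/10)(M_2)^2 T} \leq \frac{1}{d}$ comfortably for $\epsilon$-sized relative error). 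Hence $\mathcal{Q}_T^{\mathbf{q}}(\mathbf{p})$ lands inside the dilated parallelepiped $S'_{\mathbf{q}}$, giving the right-hand inclusion.

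For the left-hand inclusion $S_{\mathbf{q}} \subseteq \mathcal{Q}_T^{\mathbf{q}}(P_0)$, I would argue by a topological/degree argument rather than direct construction of preimages, since inverting $\mathcal{Q}_T^{\mathbf{q}}$ explicitly is not feasible. The idea: $\mathcal{Q}_T^{\mathbf{q}}$ restricted to $P_0$ is injective (its Jacobian is everywhere close to the fixed invertible matrix $J(\mathcal{Q}_T^{\mathbf{q}})|_{\mathbf{p}^\star}$, with the second-derivative bound of Inequality \eqref{IneqThmTV2Conc2} ensuring the perturbation over $P_0$ is too small to destroy injectivity — one can quantify this via the standard lemma that a $C^1$ map whose derivative stays within a small ball of a fixed invertible matrix is a diffeomorphism onto its image). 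Its image $\mathcal{Q}_T^{\mathbf{q}}(P_0)$ is then a topological ball whose boundary is $\mathcal{Q}_T^{\mathbf{q}}(\partial P_0)$. By the right-hand inclusion applied on the boundary, together with the first-order lower bound showing $\mathcal{Q}_T^{\mathbf{q}}(\partial P_0)$ stays \emph{outside} the slightly-shrunk parallelepiped $S_{\mathbf{q}}$ (the shrinkage factor $1-\frac{\epsilon}{d}$ is precisely chosen so that the $O(\frac{\epsilon}{d})$ nonlinear error cannot push a boundary point of $P_0$ into $S_{\mathbf{q}}$), the center $\mathcal{Q}_T^{\mathbf{q}}(\mathbf{p}^\star)$ lies in $S_{\mathbf{q}}$ but no boundary point does; since $S_{\mathbf{q}}$ is connected and disjoint from $\partial\mathcal{Q}_T^{\mathbf{q}}(P_0)$, either $S_{\mathbf{q}} \subseteq \mathcal{Q}_T^{\mathbf{q}}(P_0)$ or $S_{\mathbf{q}} \cap \mathcal{Q}_T^{\mathbf{q}}(P_0) = \emptyset$; the former holds because the center is in both. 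This is exactly the kind of statement packaged in Lemma \ref{thm:homotopy1} (the Jordan--Brouwer extension), which I would invoke to make the ``cannot cross the boundary'' step rigorous.

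\textbf{Main obstacle.} The crux is getting the constants in the two inclusions to line up with the dilation factors $1\mp\frac{\epsilon}{d}$ — that is, verifying that the quadratic Taylor remainder of $\mathcal{Q}_T^{\mathbf{q}}$ on $P_0$, measured relative to the linear image $J(\mathcal{Q}_T^{\mathbf{q}})|_{\mathbf{p}^\star}(P_0-\mathbf{p}^\star)$, is genuinely at most a $\frac{\epsilon}{d}$ fraction in the appropriate (parallelepiped, not Euclidean-ball) sense. Because $P_0$ is a cube of half-width $\frac{\epsilon}{d}\cdot\frac{(M_2)^2T}{M_3}$, its Euclidean diameter carries a $\sqrt{d}$, so one must be careful that the $\sqrt{d}$ from $\|\mathbf{p}-\mathbf{p}^\star\| \leq \sqrt{d}\cdot(\text{half-width})$ is absorbed — it is, because the remainder is quadratic in $\|\mathbf{p}-\mathbf{p}^\star\|$ while we only need it linear in $\|\mathbf{p}-\mathbf{p}^\star\|$ times a $\frac{\epsilon}{d}$ factor, so one factor of the half-width times $\frac{M_3}{(M_2)^2}$ must beat $\frac{\epsilon}{d}\cdot\sigma_{\min} \asymp \frac{\epsilon}{d}T$; plugging in the half-width gives $\frac{\epsilon}{d}\cdot\frac{(M_2)^2T}{M_3}\cdot\frac{M_3}{(M_2)^2} = \frac{\epsilon T}{d}$, which matches up to the harmless constant $\frac{1}{40}$ versus $\frac{9}{10}$. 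The second subtlety is the injectivity/diffeomorphism claim needed to identify $\partial\mathcal{Q}_T^{\mathbf{q}}(P_0)$ with $\mathcal{Q}_T^{\mathbf{q}}(\partial P_0)$; this needs either a direct quantitative inverse function theorem estimate or an appeal to Lemma \ref{thm:homotopy1}, and one should double-check that the hypotheses there ($\sigma_1(J)>\eta$, local injectivity constant $>\eta$) are met uniformly on $P_0$ with the constants in hand.
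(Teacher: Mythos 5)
Your proposal follows essentially the same route as the paper: Taylor expansion of $\mathcal{Q}_T^{\mathbf{q}}$ about $\mathbf{p}^\star$ with the mixed second-derivative bound of Lemma \ref{thm:TV2} to control the remainder, Lemma \ref{thm:homotopy1} to handle the topology of the inner inclusion, and the singular value bounds of Lemma \ref{thm:TV2} to pass between Euclidean-ball neighborhoods and the dilated parallelepipeds. The paper packages the topological step slightly differently — it introduces the undilated linear image $W^1_{\mathbf{q}}$ as an intermediary and obtains $W^1_{\mathbf{q}}\setminus[(W^1_{\mathbf{q}})^c+\mathbb{B}_{T\epsilon/10d}] \subseteq \mathcal{Q}_T^{\mathbf{q}}(P_0) \subseteq W^1_{\mathbf{q}}+\mathbb{B}_{T\epsilon/10d}$ directly from Lemma \ref{thm:homotopy1}, then relates these sets to $W^{1\mp\epsilon/d}_{\mathbf{q}} = S_{\mathbf{q}}, S'_{\mathbf{q}}$ — but this is equivalent to your connectedness-plus-boundary-avoidance phrasing.

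One point to fix: the parenthetical arithmetic in your outer-inclusion step does not close "comfortably." You wrote that
\[
\frac{\sqrt{d}}{40d}\cdot\frac{(M_2)^2 T}{M_3}\cdot\frac{M_3}{(9/10)(M_2)^2 T} \leq \frac{1}{d},
\]
but the left side simplifies to $\frac{1}{36\sqrt{d}}$, so the inequality is false once $d > 1296$. The extra $\sqrt{d}$ from the Euclidean diagonal of the cube $P_0$ is not absorbed by the constants alone; you need the hypothesis $\epsilon \leq \epsilon'\cdot[10\frac{(M_2)^2}{dM_3}]^{-1}$ from Lemma \ref{LemmaMinStrongLog}, which buys an extra factor of $\frac{dM_3}{10(M_2)^2}$ when you trade one power of $\epsilon$ in $\|\mathbf{p}-\mathbf{p}^\star\|^2$. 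This is exactly how the paper closes Eq. \eqref{eq:h6}: it bounds the Taylor remainder by $\frac{\epsilon^2}{d^2}\frac{(M_2)^2T}{M_3}$ and then uses the constraint on $\epsilon$ to get $\frac{T\epsilon}{10d}$. So the strategy is correct and the lemmas you cite are the right ones, but the dimension accounting in the remainder estimate needs the $\epsilon$-constraint, not just the harmless-looking constant comparison.
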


\begin{proof}

Define
\be
W^r_{q} &:= \mathcal{Q}^{q}_T(\mathbf{p}^\star) + r\times\left[J\big(\mathcal{Q}^{q}_T\big)\big|_{\mathbf{p}^\star}(P_0 -\mathbf{p}^\star)\right] \quad \quad \forall r\geq 0, \\
\ee
so that $S_{\mathbf{q}} = W^{1-\frac{\epsilon}{d}}_{\mathbf{q}}$ and $S'_{\mathbf{q}} = W^{1+\frac{\epsilon}{d}}_{\mathbf{q}}$.

Now, for every $\mathbf{p} \in \partial P_0$,
\be 
\|\mathcal{Q}_T^{\mathbf{q}}(\mathbf{p})  - \left[\mathcal{Q}^{\mathbf{q}}_T(\mathbf{p}^\star) + J\big(\mathcal{Q}^{\mathbf{q}}_T\big)\big|_{\mathbf{p}^\star} \mathbf{p}\right]\| &=  \bigg \| \int_0^{\|\mathbf{p}- \mathbf{p}^\star \|} \int_0^s D_\frac{\mathbf{p}- \mathbf{p}^\star}{\|\mathbf{p}- \mathbf{p}^\star\|} D_\frac{\mathbf{p}- \mathbf{p}^\star}{\|\mathbf{p}- \mathbf{p}^\star\|} \mathcal{Q}_T^{\mathbf{q}}\big|_{\ell_\alpha(\mathbf{p}^\star, s(\mathbf{p}^\star - \mathbf{p}))} \mathrm{d}\alpha \mathrm{d}s \bigg \|\\
&\leq \int_0^{\|\mathbf{p}- \mathbf{p}^\star \|} \int_0^s \left \| D_\frac{\mathbf{p}- \mathbf{p}^\star}{\|\mathbf{p}- \mathbf{p}^\star\|} D_\frac{\mathbf{p}- \mathbf{p}^\star}{\|\mathbf{p}- \mathbf{p}^\star\|} \mathcal{Q}_T^{\mathbf{q}}\big|_{\ell_\alpha(\mathbf{p}^\star, s(\mathbf{p}^\star - \mathbf{p}))} \right \| \mathrm{d}\alpha \mathrm{d}s \\
&\stackrel{{\scriptsize \textrm{Lemma }}\ref{thm:TV2}}{\leq}  \int_0^{\|\mathbf{p}- \mathbf{p}^\star \|} \int_0^s  \frac{1}{40} \frac{M_3}{(M_2)^2} \mathrm{d}\alpha \mathrm{d}s \\
&= \frac{1}{2}\|\mathbf{p}- \mathbf{p}^\star \|^2  \frac{1}{40} \frac{M_3}{(M_2)^2}\\
&\leq \frac{\epsilon^2}{d^2}\cdot \frac{(M_2)^2 T}{M_3}\\
& \leq \frac{T\epsilon}{10d}, \label{eq:h6}
\ee  
where the first inequality holds since $\|\mathbf{p}^\star - \mathbf{p}\| \leq \frac{\epsilon}{d} \cdot \frac{(M_2)^2 T}{M_3}$ for all $\mathbf{p}\in P_0$, and the last inequality holds since $\epsilon \leq [10\frac{(M_2)^2}{d M_3}]^{-1}$.

Therefore, Equation \eqref{eq:h6} and Lemma \ref{thm:homotopy1} together imply that

\be \label{eq:h7}
W_{\mathbf{q}}^1\backslash [(W_{\mathbf{q}}^1)^c + \mathbb{B}_{\frac{T\epsilon}{10d}}] \subseteq \mathcal{Q}_T^{\mathbf{q}}(P_0) \subseteq W_{\mathbf{q}}^1 + \mathbb{B}_{\frac{T\epsilon}{10d}}.
\ee

But by Inequality \eqref{thm:TV2} of Lemma \ref{thm:TV2},
\be \label{eq:h8}
W_{\mathbf{q}}^{1-\frac{\epsilon}{d}} &\subseteq W_{\mathbf{q}}^1\backslash [(W_{\mathbf{q}}^1)^c + \mathbb{B}_{\frac{T\epsilon}{10d}}],\\
W_{\mathbf{q}}^{1+\frac{\epsilon}{d}} &\supseteq W_{\mathbf{q}}^1 + \mathbb{B}_{\frac{T\epsilon}{10d}}.
\ee

Therefore Equations \eqref{eq:h7} and \eqref{eq:h8} together imply that
\be \label{eq:h5}
 W^{1-\frac{\epsilon}{d}}_{\mathbf{q}} &\subseteq \mathcal{Q}_T^{\mathbf{q}}(P_0) \subseteq  W^{1+\frac{\epsilon}{d}}_{\mathbf{q}}.\\ 
\ee

But $S_{\mathbf{q}} = W^{1-\frac{\epsilon}{d}}_{\mathbf{q}}$ and $S'_{\mathbf{q}} = W^{1+\frac{\epsilon}{d}}_{\mathbf{q}}$, so Equation \ref{eq:h5} implies that $S_{\mathbf{q}} \subseteq \mathcal{Q}_T^{\mathbf{q}}(P_0) \subseteq S'_{\mathbf{q}}$.
\end{proof}

Recall that $\mathbf{q}^{(1)}, \mathbf{q}^{(2)}, T, \epsilon$ and $\epsilon'$ are defined in the statement of Lemma \ref{LemmaMinStrongLog}. For the rest of this section, denote by $\mathrm{Vol}(\Lambda)$ the $h$-dimensional Lebesgue measure of a set, where $h$ is the Hausdorff dimension of $\Lambda$.

\begin{lemma}\label{thm:overlap}
With notation as above,

\[\mathrm{Vol}\left(\mathcal{Q}^{\mathbf{q}^{(1)}}_T(P_0) \cap \mathcal{Q}^{\mathbf{q}^{(2)}}_T(P_0)\right) \geq (1-\epsilon')\times \mathrm{Vol}(S_{\mathbf{q}^{(1)}}).\]
\end{lemma}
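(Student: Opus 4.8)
The plan is to control the volume of the symmetric difference $\mathcal{Q}^{\mathbf{q}^{(1)}}_T(P_0) \triangle \mathcal{Q}^{\mathbf{q}^{(2)}}_T(P_0)$ by sandwiching both images between the parallelepipeds $S_{\mathbf{q}}$ and $S'_{\mathbf{q}}$ via Lemma \ref{thm:cube}, and then showing that the two parallelepipeds $S_{\mathbf{q}^{(1)}}$ and $S_{\mathbf{q}^{(2)}}$ (and their dilates $S'$) are nearly the same set. First I would invoke Lemma \ref{thm:cube} to get $S_{\mathbf{q}^{(i)}} \subseteq \mathcal{Q}^{\mathbf{q}^{(i)}}_T(P_0) \subseteq S'_{\mathbf{q}^{(i)}}$ for $i=1,2$. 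From this, $\mathcal{Q}^{\mathbf{q}^{(1)}}_T(P_0) \cap \mathcal{Q}^{\mathbf{q}^{(2)}}_T(P_0) \supseteq S_{\mathbf{q}^{(1)}} \cap S_{\mathbf{q}^{(2)}}$, so it suffices to lower-bound $\mathrm{Vol}(S_{\mathbf{q}^{(1)}} \cap S_{\mathbf{q}^{(2)}})$.

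Next I would compare the two parallelepipeds. They have the same "shape generator" $P_0 - \mathbf{p}^\star$ pushed through the linear maps $J(\mathcal{Q}^{\mathbf{q}^{(1)}}_T)|_{\mathbf{p}^\star}$ and $J(\mathcal{Q}^{\mathbf{q}^{(2)}}_T)|_{\mathbf{p}^\star}$, translated by $\mathcal{Q}^{\mathbf{q}^{(i)}}_T(\mathbf{p}^\star)$, and scaled by $1-\tfrac{\epsilon}{d}$. I would bound three sources of discrepancy: (a) the difference in centers $\|\mathcal{Q}^{\mathbf{q}^{(2)}}_T(\mathbf{p}^\star) - \mathcal{Q}^{\mathbf{q}^{(1)}}_T(\mathbf{p}^\star)\|$, which by Theorem \ref{ThmContractionConvexMainResult} (contraction) is at most $\|\mathbf{q}^{(2)}-\mathbf{q}^{(1)}\| \le \epsilon$ — in fact with $\epsilon \le [8d\sqrt{d}]^{-1}$ this is tiny relative to the diameter $\asymp \tfrac{\epsilon}{d}\cdot\tfrac{(M_2)^2 T}{M_3}\cdot \sigma_{\min} \gtrsim \tfrac{\epsilon T}{d}\cdot\tfrac{(M_2)^{3/2}}{M_3}$ of the parallelepiped; (b) the difference in the linear maps, controlled by Lemma \ref{thm:Jacobian}, which gives $\|(J(\mathcal{Q}^{\mathbf{q}^{(2)}}_T)|_{\mathbf{p}^\star} - J(\mathcal{Q}^{\mathbf{q}^{(1)}}_T)|_{\mathbf{p}^\star})\omega\| \le \tfrac{1}{10}\tfrac{M_3}{(M_2)^{3/2}}\|\omega\|\,\|\mathbf{q}^{(2)}-\mathbf{q}^{(1)}\|$, and since $\|\omega\|$ ranges over a cube of side $\tfrac{2\epsilon}{d}\tfrac{(M_2)^2 T}{M_3}$, the resulting perturbation is a small multiple of $\epsilon/d$ times the generator; and (c) the volume ratio between $S_{\mathbf{q}^{(1)}}$ and $S_{\mathbf{q}^{(2)}}$, which is exactly $|\det J(\mathcal{Q}^{\mathbf{q}^{(2)}}_T)|_{\mathbf{p}^\star}/\det J(\mathcal{Q}^{\mathbf{q}^{(1)}}_T)|_{\mathbf{p}^\star}|$, bounded by Lemma \ref{thm:determinant} as $e^{\pm \frac{dM_3}{9 M_2^{3/2} T}\|\mathbf{q}^{(2)}-\mathbf{q}^{(1)}\|}$, which is $1 + O(\epsilon')$ because $\|\mathbf{q}^{(2)}-\mathbf{q}^{(1)}\| \le \epsilon \le \epsilon'[\tfrac{40}{9T\sqrt{M_2}}] \cdot (\text{other factors})$ and in particular $\epsilon \le \epsilon'[d\tfrac{M_3}{(M_2)^{3/2}T}]^{-1}$.

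I would then assemble these: write $S_{\mathbf{q}^{(2)}}$ as the image of the reference parallelepiped $C := (1-\tfrac\epsilon d)(P_0-\mathbf{p}^\star)$ under an affine map $A_2(\cdot) = c_2 + L_2(\cdot)$ and $S_{\mathbf{q}^{(1)}}$ under $A_1(\cdot) = c_1 + L_1(\cdot)$, and show that $A_2 \circ A_1^{-1}$ moves every point of $S_{\mathbf{q}^{(1)}}$ by at most $\eta \cdot \mathrm{diam}(S_{\mathbf{q}^{(1)}})$ for some $\eta = O(\epsilon/d)$, uniformly. Since a parallelepiped shrunk inward by a factor $(1-\eta)$ lies inside any translate-by-$\eta\cdot\mathrm{diam}$ of itself, and since the $(1-\tfrac\epsilon d)$-vs-$(1+\tfrac\epsilon d)$ room built into $S$ versus $S'$ already absorbs perturbations of relative size $\tfrac{\epsilon}{d}$, a short inclusion argument gives $\mathrm{Vol}(S_{\mathbf{q}^{(1)}} \cap S_{\mathbf{q}^{(2)}}) \ge (1 - O(\epsilon'))\mathrm{Vol}(S_{\mathbf{q}^{(1)}})$, and tracking constants carefully against the five-way minimum defining $\epsilon$ yields the clean bound $(1-\epsilon')\mathrm{Vol}(S_{\mathbf{q}^{(1)}})$. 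The main obstacle I anticipate is purely bookkeeping: keeping the ratio of "perturbation size" to "parallelepiped diameter" bounded by the right multiple of $\epsilon'$ requires using the correct one of the five terms in the definition of $\epsilon$ for each of the three discrepancy sources (center shift $\leftrightarrow$ the $[8d\sqrt d]^{-1}$ term, linear-map shift $\leftrightarrow$ the $[10\tfrac{(M_2)^2}{dM_3}]^{-1}$ term, determinant ratio $\leftrightarrow$ the $[d\tfrac{M_3}{(M_2)^{3/2}T}]^{-1}$ term), and checking that the lower bound $\sigma_{\min}(J(\mathcal{Q}^{\mathbf{q}}_T)|_{\mathbf{p}^\star}) \ge \tfrac{9}{10}T$ from Lemma \ref{thm:TV2} is strong enough that the generator is never degenerate; there is no conceptual difficulty once the scalings line up.
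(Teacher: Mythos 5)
Your proposal and the paper's proof share the same skeleton: invoke Lemma \ref{thm:cube} to sandwich both images between parallelepipeds, then reduce to showing $S_{\mathbf{q}^{(1)}}$ and $S_{\mathbf{q}^{(2)}}$ overlap in nearly all their volume, controlling the discrepancy via the contraction bound (Theorem \ref{ThmContractionConvexMainResult}) and the Jacobian perturbation bound (Lemma \ref{thm:Jacobian}). That much is right and is essentially what the paper does.

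However, the paper's actual argument is lighter than what you propose, and your scaling intuition is off in one place. The paper does not use Lemma \ref{thm:determinant} or the sets $S'_{\mathbf{q}}$ anywhere in the proof of this lemma (the determinant ratio only enters later, in Lemma \ref{Thm:cube_overlap}). Instead, after bounding $\min_{y\in A}\|x-y\|\le 2\epsilon$ for every $x\in B$ (using exactly your sources (a) and (b)), the paper concludes $A\setminus(\partial A+\mathbb{B}_{2\epsilon})\subset B$ and bounds the missing sliver by the surface area: $\mathrm{Vol}((\partial S_q+\mathbb{B}_{2\epsilon})\cap S_q)\le 2\epsilon\,\mathrm{Vol}(\partial S_q)$, then uses the parallelepiped surface-to-volume bound $\mathrm{Vol}(\partial S_q)/\mathrm{Vol}(S_q)\le\sigma_{\max}/\sigma_{\min}\le\tfrac{20}{9T\sqrt{M_2}}$ from Lemma \ref{thm:TV2}. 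The factor of $\epsilon'$ then comes from $2\epsilon\cdot\tfrac{20}{9T\sqrt{M_2}}\le\epsilon'$, which is exactly the first entry $[\tfrac{40}{9T\sqrt{M_2}}]^{-1}$ in the five-way minimum defining $\epsilon$. So the relevant smallness is $\epsilon$ versus $T\sqrt{M_2}$, not, as you write, the center shift being ``tiny relative to the diameter of the parallelepiped'' --- both the center shift and the parallelepiped diameter scale linearly in $\epsilon'$, so that ratio is actually $\Theta(1)$, and an argument based on it would not close. Your proposed affine-map comparison would likely still work after this adjustment, but it is heavier machinery than the paper's direct surface-to-volume estimate and pulls in Lemma \ref{thm:determinant} unnecessarily.
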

\begin{proof}

To lighten notation, set $A := S_{\mathbf{q}^{(1)}}$ and $B:= S_{\mathbf{q}^{(2)}}$. 
By Theorem \ref{ThmContractionConvexMainResult}, we have 
\be \label{eq:parallelipiped1}
\|\mathcal{Q}^{\mathbf{q}^{(2)}}_T(p) - \mathcal{Q}^{\mathbf{q}^{(1)}}_T(p)\| \leq \|\mathbf{q}^{(2)} - \mathbf{q}^{(1)}\| \leq \epsilon  \quad \quad \forall p\in \mathbb{R}^{d}.
\ee 
Moreover, for all $q \in \mathbb{R}^d$
we have 
\be\label{eq:parallelipiped2}
\frac{\mathrm{Vol}(\partial S_q)}{\mathrm{Vol}( S_q)} \leq \frac{\sigma_{\mathrm{max}}(J\big(\mathcal{Q}^{q}_T\big)\big|_{\mathbf{p}^\star})}{\sigma_{\mathrm{min}}(J\big(\mathcal{Q}^{q}_T\big)\big|_{\mathbf{p}^\star})} \stackrel{{\scriptsize \textrm{Lemma }}\ref{thm:TV2}}{\leq} \frac{20}{9T\sqrt{M_2}},
\ee
where the first inequality holds because $S_{q} = \mathcal{Q}^{q}_T(\mathbf{p}^\star) + (1-\frac{\epsilon}{d})\cdot\left[J\big(\mathcal{Q}^{q}_T\big)\big|_{\mathbf{p}^\star}(P_0 -\mathbf{p}^\star)\right]$ is a parellelopiped.

Applying Inequality \eqref{eq:parallelipiped1} and then Lemma \ref{thm:Jacobian}, we have for every $x\in B$, 

\be
\min_{ y \in A} \|x-y\| &\leq \|\mathcal{Q}^{\mathbf{q}^{(2)}}_T(\mathbf{p}^\star) - \mathcal{Q}^{\mathbf{q}^{(1)}}_T(\mathbf{p}^\star)\| + (1-\frac{\epsilon}{d})\sup_{\omega \in (P_0 - p_\star)}\left \| \left(J(\mathcal{Q}^{\mathbf{q}^{(2)}}_T)\big|_{\mathbf{p}^\star}\right)\omega - \left(J(\mathcal{Q}^{\mathbf{q}^{(1)}}_T)\big|_{\mathbf{p}^\star}\right)\omega \right \|\\
&\leq \epsilon + \sup_{\omega \in (P_0 - p_\star)} (1-\frac{\epsilon}{d})\frac{1}{10}\frac{M_3}{(M_2)^{\frac{3}{2}}}\cdot \|\omega\|\cdot \|\mathbf{q}^{(2)} - \mathbf{q}^{(1)}\| \\
&\leq \epsilon + (1-\frac{\epsilon}{d})\frac{1}{10}\frac{M_3}{(M_2)^{\frac{3}{2}}}\cdot   2\sqrt{d}\frac{\epsilon}{d} \cdot \frac{(M_2)^2 T}{M_3}   \cdot \epsilon \\
&\leq 2 \epsilon
\ee
and hence that
\be \label{eq:parallelipiped3}
A \backslash (\partial A + \mathbb{B}_{2\epsilon}) \subset B,
\ee
since $A$ and $B$ are parallelopipeds.
But $S_{q}$ is convex, so 
\be \label{eq:parallelipiped5}
\mathrm{Vol}\big((\partial S_q + \mathbb{B}_{2\epsilon}) \cap S_q\big) \leq 2\epsilon \times \mathrm{Vol}(\partial S_q) \stackrel{{\scriptsize \textrm{Eq. }}\ref{eq:parallelipiped2}}{\leq} 2\epsilon \times \frac{20}{9T\sqrt{M_2}} \times \mathrm{Vol}(S_q) \quad \quad \forall q\in \mathbb{R}^d.
\ee

Therefore, 
\be \label{eq:parallelipiped4}
\mathrm{Vol}(A\cap B) &\stackrel{{\scriptsize \textrm{Eq. }}\ref{eq:parallelipiped3}}{\geq} \mathrm{Vol}(A \backslash (\partial A + \mathbb{B}_{2\epsilon}))\\
&= \mathrm{Vol}(A)- \mathrm{Vol}\big((\partial A + \mathbb{B}_{2\epsilon}) \cap A\big)\\
&\stackrel{{\scriptsize \textrm{Eq. }}\ref{eq:parallelipiped5}}{\geq} \mathrm{Vol}(A) - 2\epsilon \times \frac{20}{9T\sqrt{M_2}}\times \mathrm{Vol}(A)\\
&\geq (1-\epsilon') \times \mathrm{Vol}(A).
\ee

By Lemma \ref{thm:cube}, $A \subseteq \mathcal{Q}^{\mathbf{q}^{(1)}}_T(P_0)$ and $B \subseteq \mathcal{Q}^{\mathbf{q}^{(2)}}_T(P_0)$, and so $A \cap B \subseteq \mathcal{Q}^{\mathbf{q}^{(1)}}_T(P_0) \cap \mathcal{Q}^{\mathbf{q}^{(2)}}_T(P_0)$.  Therefore,

\[\mathrm{Vol}\left(\mathcal{Q}^{\mathbf{q}^{(1)}}_T(P_0)) \cap \mathcal{Q}^{\mathbf{q}^{(2)}}_T(P_0))\right) \geq \mathrm{Vol}(A\cap B)\]
\[\stackrel{{\scriptsize \textrm{Eq. }}\ref{eq:parallelipiped4}}{\geq} (1-\epsilon')\times \mathrm{Vol}(A).\]
\end{proof}

\begin{lemma}\label{Thm:cube_overlap}
Set notation as above. Let $\mathfrak{p}_0$ be uniformly distributed (with respect to the Lebesgue measure) on $P_0$.  Then

\be\label{eq:TV18}
\| \mathcal{L}(\mathcal{Q}^{\mathbf{q}^{(2)}}_T(\mathfrak{p}_0)) - \mathcal{L}(\mathcal{Q}^{\mathbf{q}^{(1)}}_T(\mathfrak{p}_0)) \|_{\mathrm{TV}} \leq  1- (1-\epsilon')^{4}.
\ee

\end{lemma}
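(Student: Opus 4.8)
The plan is to compare the two pushforward measures $\mathcal{L}(\mathcal{Q}^{\mathbf{q}^{(1)}}_T(\mathfrak{p}_0))$ and $\mathcal{L}(\mathcal{Q}^{\mathbf{q}^{(2)}}_T(\mathfrak{p}_0))$ by exploiting the large geometric overlap of their supports $\mathcal{Q}^{\mathbf{q}^{(1)}}_T(P_0)$ and $\mathcal{Q}^{\mathbf{q}^{(2)}}_T(P_0)$, which Lemma \ref{thm:overlap} guarantees is at least a $(1-\epsilon')$ fraction of $\mathrm{Vol}(S_{\mathbf{q}^{(1)}})$. First I would recall that for a uniform input $\mathfrak{p}_0$ on $P_0$, the density of $\mathcal{Q}^{\mathbf{q}^{(i)}}_T(\mathfrak{p}_0)$ at a point $x = \mathcal{Q}^{\mathbf{q}^{(i)}}_T(\mathbf{p})$ is proportional to $|\det J(\mathcal{Q}^{\mathbf{q}^{(i)}}_T)|_{\mathbf{p}}|^{-1}$ by the change-of-variables formula (using that $\mathcal{Q}^{\mathbf{q}^{(i)}}_T$ restricted to $P_0$ is a diffeomorphism onto its image — which follows from the singular value bounds in Lemma \ref{thm:TV2} together with the local injectivity arguments). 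The total variation distance then splits into a contribution from the symmetric difference of the supports and a contribution from the discrepancy of densities on the common region.

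The key steps, in order, are: (1) write $\| \mathcal{L}(\mathcal{Q}^{\mathbf{q}^{(2)}}_T(\mathfrak{p}_0)) - \mathcal{L}(\mathcal{Q}^{\mathbf{q}^{(1)}}_T(\mathfrak{p}_0)) \|_{\mathrm{TV}} = \tfrac{1}{2}\int |f_1(x) - f_2(x)|\,\mathrm{d}x$ where $f_i$ is the density of $\mathcal{Q}^{\mathbf{q}^{(i)}}_T(\mathfrak{p}_0)$; (2) use Lemma \ref{thm:overlap} (and Lemma \ref{thm:homotopy2}, which sandwiches $\mathcal{Q}^{\mathbf{q}}_T(P_0)$ between $S_{\mathbf{q}}$ and $S'_{\mathbf{q}}$) to conclude that each measure places mass at least $1-\epsilon'$ on the intersection $\mathcal{Q}^{\mathbf{q}^{(1)}}_T(P_0) \cap \mathcal{Q}^{\mathbf{q}^{(2)}}_T(P_0)$, so the "outside" contribution to the TV distance is at most $\epsilon'$ (up to a factor absorbed into the final exponent); (3) on the intersection, bound the ratio $f_1(x)/f_2(x)$ pointwise: since $x = \mathcal{Q}^{\mathbf{q}^{(1)}}_T(\mathbf{p}) = \mathcal{Q}^{\mathbf{q}^{(2)}}_T(\mathbf{p}')$ for momenta $\mathbf{p}, \mathbf{p}'$ that are close (within $\mathcal{O}(\epsilon)$ of each other and of $\mathbf{p}^\star$, using the contraction estimate \eqref{IneqContractLemmaMain}), Lemma \ref{thm:determinant} gives $|\det J(\mathcal{Q}^{\mathbf{q}^{(1)}}_T)|_{\mathbf{p}}| / |\det J(\mathcal{Q}^{\mathbf{q}^{(2)}}_T)|_{\mathbf{p}'}|$ is within a factor $e^{\mathcal{O}(\epsilon')}$ of $1$ after combining the position-shift bound \eqref{eq:det2} and the momentum-shift bound \eqref{eq:det1} and invoking the constraints on $\epsilon$ in the statement of Lemma \ref{LemmaMinStrongLog}; (4) combine the support mismatch and the density ratio bound: if both measures restricted to the common region have densities agreeing up to a multiplicative factor $e^{\mathcal{O}(\epsilon')}$ and each has mass $\geq 1-\epsilon'$ there, then the overlap (in the coupling sense) is at least $(1-\epsilon')\cdot e^{-\mathcal{O}(\epsilon')} \geq (1-\epsilon')^4$, which gives the claimed bound $1 - (1-\epsilon')^4$ on the TV distance.

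I expect step (3) — controlling the density ratio pointwise on the intersection — to be the main obstacle, for two reasons. First, one must carefully track \emph{which} momenta $\mathbf{p}$ and $\mathbf{p}'$ map to a common point $x$ under the two different flows $\mathcal{Q}^{\mathbf{q}^{(1)}}_T$ and $\mathcal{Q}^{\mathbf{q}^{(2)}}_T$, and show these are genuinely close; this requires inverting the maps, which is where injectivity (from Lemma \ref{thm:TV2}'s singular value bounds, $\sigma_{\min} \geq \tfrac{9}{10}T$) is essential, and also requires the contraction estimate to say $\mathbf{p}$ and $\mathbf{p}'$ differ by only $\mathcal{O}(\|\mathbf{q}^{(1)} - \mathbf{q}^{(2)}\|)$. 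Second, one must verify that after feeding the displacement bounds $\|\mathbf{q}^{(1)} - \mathbf{q}^{(2)}\| \leq \epsilon$ and $\|\mathbf{p} - \mathbf{p}'\|, \|\mathbf{p}-\mathbf{p}^\star\| = \mathcal{O}(\epsilon)$ into Lemma \ref{thm:determinant}, the resulting exponents $\tfrac{dM_3}{30 M_2^2 T}\|\mathbf{p}^\star - \mathbf{p}\|$ and $\tfrac{dM_3}{9 M_2^{3/2} T}\|\mathbf{q}^\star - \mathbf{q}\|$ are indeed $\mathcal{O}(\epsilon')$ — this is precisely why the definition of $\epsilon$ in Lemma \ref{LemmaMinStrongLog} includes the factors $[d\frac{M_3}{(M_2)^{3/2}T}]^{-1}$ and $[10\frac{(M_2)^2}{dM_3}]^{-1}$, so the bookkeeping of constants must be done honestly. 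The rest — the change-of-variables setup in step (1), the support-mass accounting in step (2), and the final combinatorial inequality in step (4) — is routine once these pieces are in place.
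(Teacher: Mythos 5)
Your proposal is correct in spirit and follows the same skeleton as the paper's proof: densities of the pushforwards via the change-of-variables formula, Lemma \ref{thm:determinant} to control the Jacobian determinants, Lemma \ref{thm:overlap} for the volume of the intersection, and the bound $\|\nu_1 - \nu_2\|_{\mathrm{TV}} \leq 1 - \int \min(\rho_1,\rho_2)\,\mathrm{d}q$ to finish. But there is one structural point where the paper's argument is meaningfully simpler, and it is precisely the step you flagged as the main obstacle.

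You propose in step (3) to compare $\rho_1(x)/\rho_2(x)$ pointwise, which requires identifying the preimages $\mathbf{p} = (\mathcal{Q}^{\mathbf{q}^{(1)}}_T)^{-1}(x)$ and $\mathbf{p}' = (\mathcal{Q}^{\mathbf{q}^{(2)}}_T)^{-1}(x)$, showing that $\mathbf{p}$ and $\mathbf{p}'$ are close to each other, and invoking the contraction estimate \eqref{IneqContractLemmaMain} for that purpose. This is unnecessary. Since $x \in \mathcal{Q}^{\mathbf{q}^{(1)}}_T(P_0) \cap \mathcal{Q}^{\mathbf{q}^{(2)}}_T(P_0)$, both preimages $\mathbf{p}$ and $\mathbf{p}'$ lie in $P_0$ by definition, hence both are automatically within half a diagonal of the cube's center $\mathbf{p}^\star$. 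The paper therefore never compares $\mathbf{p}$ to $\mathbf{p}'$ at all; instead it lower-bounds $\rho_1(x)$ and $\rho_2(x)$ \emph{separately} against a single reference quantity $|\det(J(\mathcal{Q}^{\mathbf{q}^{(1)}}_T)|_{\mathbf{p}^\star})|^{-1}/\mathrm{Vol}(P_0)$: for $\rho_1$ one applies the momentum-shift bound \eqref{eq:det1} with $i=1$ to move from $\mathbf{p}$ to $\mathbf{p}^\star$; for $\rho_2$ one applies \eqref{eq:det1} with $i=2$ to move from $\mathbf{p}'$ to $\mathbf{p}^\star$, and then the position-shift bound \eqref{eq:det2} to move from $\mathbf{q}^{(2)}$ to $\mathbf{q}^{(1)}$. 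No inversion of the flow maps and no contraction estimate is needed in this lemma. (The contraction \eqref{IneqContractLemmaMain} is used in Lemma \ref{thm:overlap}, not here.)

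The second difference is cosmetic but worth noting: your step (4) passes from ``densities agree up to $e^{\mathcal{O}(\epsilon')}$'' and ``each has mass $\geq 1-\epsilon'$ on the intersection'' to an overlap bound $e^{-\mathcal{O}(\epsilon')}(1-\epsilon') \geq (1-\epsilon')^4$, which would require chasing the implicit constant in $\mathcal{O}(\epsilon')$. The paper avoids this by computing $\int_{I}\min(\rho_1,\rho_2)$ directly: on the intersection $I$, $\min(\rho_1,\rho_2) \geq (1-\epsilon')^2 R$ with $R$ the reference density, and $\mathrm{Vol}(I) \geq (1-\epsilon')\mathrm{Vol}(S_{\mathbf{q}^{(1)}}) \geq (1-\epsilon')(1-\epsilon)R^{-1}$, which multiplies out to exactly $(1-\epsilon')^4$. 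Your route would give the same bound after honest bookkeeping, but the common-reference decomposition makes the exponent fall out for free.
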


\begin{proof}

Since $\|\mathbf{p}^\star - \mathbf{p}\| \leq \frac{\epsilon}{d} \cdot \frac{(M_2)^2 T}{M_3}$ for all $\mathbf{p}\in P_0$, Inequality \eqref{eq:det1} of Lemma \ref{thm:determinant} gives:

\be\label{eq:TV20}
 1-\epsilon< e^{-\epsilon} \leq \left| \frac{\det\left(J\big(\mathcal{Q}^{\mathbf{q}^{(i)}}_T)\big|_{\mathbf{p}}\right)}{ \det \left(J\big(\mathcal{Q}^{\mathbf{q}^{(i)}}_T)\big|_{\mathbf{p}^\star} \right)} \right | \leq e^{\epsilon} \leq \frac{1}{1-\epsilon}
 \ee
for $i \in \{1,2\}$ and $\mathbf{p} \in P_{0}$.

Since $\|\mathbf{q}^{(2)} - \mathbf{q}^{(1)}\| \leq  \epsilon' \times (d\frac{M_3}{(M_2)^{3/2} T})^{-1}$,  Inequality \eqref{eq:det2} of Lemma \ref{thm:determinant} gives
\be\label{eq:TV19}
1-\epsilon'< e^{-\epsilon'} \leq \left| \frac{\det\left(J\big(\mathcal{Q}^{\mathbf{q}^{(2)}}_T)\big|_{\mathbf{p}}\right)}{ \det \left(J\big(\mathcal{Q}^{\mathbf{q}^{(1)}}_T)\big|_{\mathbf{p}} \right)} \right |  \leq e^{\epsilon'} \leq \frac{1}{1-\epsilon'}.
\ee

Let $\rho_i$ be the density of $\mathcal{L}(\mathcal{Q}^{\mathbf{q}^{(i)}}_T(\mathfrak{p}_0))$ for $i \in\{1,2\}$.  Then Inequality \eqref{eq:TV20} implies that 

\be\label{eq:TV21} 
\rho_1(q) \geq (1-\epsilon')\cdot \left|\det \left(J\big(\mathcal{Q}^{\mathbf{q}^{(1)}}_T)\big|_{\mathbf{p}^\star} \right)\right|^{-1} \cdot \frac{1}{\mathrm{Vol}(P_0)}
\ee
for all $q \in \mathcal{Q}^{\mathbf{q}^{(1)}}_T(P_0))$.
Furthermore, Inequalities \eqref{eq:TV20} and \eqref{eq:TV19} together imply that
\be\label{eq:TV22}
\rho_2(q) \geq (1-\epsilon)\times (1-\epsilon')\cdot \left|\det \left(J\big(\mathcal{Q}^{\mathbf{q}^{(1)}}_T)\big|_{\mathbf{p}^\star} \right)\right|^{-1} \cdot \frac{1}{\mathrm{Vol}(P_0)}
\ee
for all $q \in \mathcal{Q}^{\mathbf{q}^{(2)}}_T(P_0))$.

Since $S_{\mathbf{q}^{(1)}} = \mathcal{Q}^{\mathbf{q}^{(1)}}_T(\mathbf{p}^\star) + (1-\frac{\epsilon}{d})\cdot\left[J\big(\mathcal{Q}^{\mathbf{q}^{(1)}}_T\big)\big|_{\mathbf{p}^\star}(P_0 -\mathbf{p}^\star)\right]$, we have the formula
\be\label{eq:TV23}
\mathrm{Vol}(S_{\mathbf{q}^{(1)}}) = (1-\frac{\epsilon}{d})^d\left|\det \left(J\big(\mathcal{Q}^{\mathbf{q}^{(1)}}_T\big)\big|_{\mathbf{p}^\star} \right)\right| \cdot \mathrm{Vol}(P_0)\\
\geq (1-\epsilon)\left|\det \left(J\big(\mathcal{Q}^{\mathbf{q}^{(1)}}_T\big)\big|_{\mathbf{p}^\star} \right)\right| \cdot \mathrm{Vol}(P_0).
\ee

Since the random variable $\mathfrak{p}_0$ is uniformly distributed on $P_0$, we have 
\be \label{eq:TV25}
\| \mathcal{L}(&\mathcal{Q}^{\mathbf{q}^{(2)}}_T(\mathfrak{p}_0)) - \mathcal{L}( \mathcal{Q}^{\mathbf{q}^{(1)}}_T(\mathfrak{p}_0)) \|_{\mathrm{TV}} \leq 1 - \int_{\mathbb{R}^d} \min( \rho_1(q), \rho_2(q)) \mathrm{d}q  \\
& = 1- \int_{\mathcal{Q}^{\mathbf{q}^{(1)}}_T(P_0)) \cap \mathcal{Q}^{\mathbf{q}^{(2)}}_T(P_0))} \min( \rho_1(q), \rho_2(q)) \mathrm{d}q \\
& \stackrel{{\scriptsize \textrm{Eq. }}\ref{eq:TV21},\, \ref{eq:TV22}}{\leq}  1- \int_{\mathcal{Q}^{\mathbf{q}^{(1)}}_T(P_0)) \cap \mathcal{Q}^{\mathbf{q}^{(2)}}_T(P_0))}  (1-\epsilon')^{2} \cdot \left|\det \left(J\big(\mathcal{Q}^{\mathbf{q}^{(1)}}_T\big)\big|_{\mathbf{p}^\star} \right)\right|^{-1} \cdot \frac{1}{\mathrm{Vol}(P_0)} \mathrm{d}q \\
& = 1- (1-\epsilon')^{2}\cdot \left|\det \left(J\big(\mathcal{Q}^{\mathbf{q}^{(1)}}_T\big)\big|_{\mathbf{p}^\star} \right)\right|^{-1} \cdot \frac{1}{\mathrm{Vol}(P_0)}  \cdot \mathrm{Vol}\left(\mathcal{Q}^{\mathbf{q}^{(1)}}_T(P_0)) \cap \mathcal{Q}^{\mathbf{q}^{(2)}}_T(P_0))\right) \\
& \stackrel{{\scriptsize \textrm{Lemma }}\ref{thm:overlap}}{\leq} 1- (1-\epsilon')^{2}\cdot \left|\det \left(J\big(\mathcal{Q}^{\mathbf{q}^{(1)}}_T\big)\big|_{\mathbf{p}^\star} \right)\right|^{-1} \cdot \frac{1}{\mathrm{Vol}(P_0)}  \cdot (1-\epsilon')\cdot \mathrm{Vol}(S_{\mathbf{q}^{(1)}}) \\
& \stackrel{{\scriptsize \textrm{Eq. }}\ref{eq:TV23}}{\leq}   1- (1-\epsilon)\cdot (1-\epsilon')^{3}\cdot \left|\det \left(J\big(\mathcal{Q}^{\mathbf{q}^{(1)}}_T\big)\big|_{\mathbf{p}^\star} \right)\right|^{-1} \cdot \frac{1}{\mathrm{Vol}(P_0)}  \cdot  \left|\det \left(J\big(\mathcal{Q}^{\mathbf{q}^{(1)}}_T\big)\big|_{\mathbf{p}^\star} \right)\right| \cdot \mathrm{Vol}(P_0)\\
&\leq  1- (1-\epsilon')^{4}.\\
\ee
This completes the proof of the lemma.
\end{proof}

Finally, we prove Lemma \ref{LemmaKernelContinuityMainLemma}: 

\begin{proof}[Proof of Lemma \ref{LemmaKernelContinuityMainLemma}]

Let the random variable $\mathfrak{p}_0(x)$ be distributed according to the uniform distribution on $C_x^\delta$, and define the random variable $\mathfrak{p}_0$ by $\mathfrak{p}_0 := \mathfrak{p}_0(\mathsf{p}^\star_0)$.

Recall that $\phi(x)$ is the density of the spherical Gaussian with identity covariance matrix, and note that $\|\nabla \phi(x) \| = (2\pi)^{-\frac{d}{2}} \|x\| e^{-\frac{1}{2}\|x\|^2}$. 
Therefore, for every $x\in \mathbb{R}^d$ we have: 

\be \label{eq:lattice1}
| \min_{y\in C_x^{2\delta}} \phi(y) - \max_{y\in C_x^{2\delta}} \phi(y)| &\leq 4 \delta \sqrt{d} \max_{y\in C_x^{2\delta}} \|\nabla \phi(y) \|\\
&\leq\begin{cases} 4 \delta \sqrt{d}\times  (2\pi)^{-\frac{d}{2}} (\|x\| +  4 \delta \sqrt{d}) e^{-\frac{1}{2}(\|x\|-  4 \delta \sqrt{d})^2} \quad \quad    \|x\| \geq  4 \delta \sqrt{d}\\ 4 \delta \sqrt{d}\times  (2\pi)^{-\frac{d}{2}} (\|x\| +  4 \delta \sqrt{d})  \quad \quad \quad \quad   \, \quad \quad  \quad \, \, \, \, 0\leq \|x\| <  4 \delta \sqrt{d}.
\end{cases}\\
\ee

Hence,

\be 
\mathbb{E}\bigg[ \big \| & \mathcal{L}(\overline{\mathfrak{p}_0} | \, \, \mathsf{p}^\star_0)-\mathcal{L}(\mathfrak{p}_0  | \, \, \mathsf{p}^\star_0) \big \|_{\mathrm{TV}} \bigg]\\
&\leq \sum_{z\in L_\delta}  \frac{| \min_{y\in C_z^\delta} \phi(y) - \max_{y\in C_z^\delta} \phi(y)|}{\Phi(C_z^\delta)} \times \Phi(C_z^\delta)\\
&\leq \int_{\mathbb{R}^d}  | \min_{y\in C_x^{2\delta}} \phi(y) - \max_{y\in C_x^{2\delta}} \phi(y)| \mathrm{d}x\\
&\leq \int_{\mathbb{R}^d}  4 \delta \sqrt{d}\times  (2\pi)^{-\frac{d}{2}} (\|x\| +  4 \delta \sqrt{d}) e^{-\frac{1}{2}(\|x\|-  4 \delta \sqrt{d})^2} \mathrm{d}x\\
& \stackrel{{\scriptsize \textrm{Eq. }}\ref{eq:lattice1}}{\leq}  \int_{0\leq \|x\| <  4 \delta \sqrt{d}}  4 \delta \sqrt{d}\times  (2\pi)^{-\frac{d}{2}} (\|x\| +  4 \delta \sqrt{d}) \mathrm{d}x\\
&\quad \quad+ \int_{\|x\| \geq  4 \delta \sqrt{d}}  4 \delta \sqrt{d}\times  (2\pi)^{-\frac{d}{2}} (\|x\| +  4 \delta \sqrt{d}) e^{-\frac{1}{2}(\|x\|-  4 \delta \sqrt{d})^2} \mathrm{d}x\\
&=  \int_0^{4 \delta \sqrt{d}}  4 \delta \sqrt{d}\times  (2\pi)^{-\frac{d}{2}} (r +  4 \delta \sqrt{d}) \times \mathrm{Vol}(r\mathbb{S}^{d-1}) \mathrm{d}r\\
&\quad \quad+ \int_{4 \delta \sqrt{d}}^\infty  4 \delta \sqrt{d}\times  (2\pi)^{-\frac{d}{2}} (r +  4 \delta \sqrt{d}) e^{-\frac{1}{2}(r-  4 \delta \sqrt{d})^2} \times \mathrm{Vol}(r\mathbb{S}^{d-1})  \mathrm{d}x\\
&= 4 \delta \sqrt{d} \mathrm{Vol}(\mathbb{S}^{d-1})  \int_0^{4 \delta \sqrt{d}} r^{d-1} \times (2\pi)^{-\frac{d}{2}} (r +  4 \delta \sqrt{d}) \mathrm{d}r\\
&\quad \quad+ 4 \delta \sqrt{d} \mathrm{Vol}(\mathbb{S}^{d-1})    \int_{4 \delta \sqrt{d}}^\infty r^{d-1}  \times  (2\pi)^{-\frac{d}{2}} (r +  4 \delta \sqrt{d}) e^{-\frac{1}{2}(r-  4 \delta \sqrt{d})^2} \mathrm{d}r\\
&\leq 4 \delta \sqrt{d} \mathrm{Vol}(\mathbb{S}^{d-1})  \times (4 \delta \sqrt{d})^{d-1} \times (2\pi)^{-\frac{d}{2}} (8 \delta \sqrt{d}) \times (4 \delta \sqrt{d})\\
&\quad \quad+ 4 \delta \sqrt{d} \mathrm{Vol}(\mathbb{S}^{d-1})    \int_{4 \delta \sqrt{d}}^\infty r^{d-1}  \times  (2\pi)^{-\frac{d}{2}} (r +  4 \delta \sqrt{d}) e^{-\frac{1}{2}(r-  4 \delta \sqrt{d})^2} \mathrm{d}r\\%
&= 2\mathrm{Vol}(\mathbb{S}^{d-1})  \times (4 \delta \sqrt{d})^{d+2} \times (2\pi)^{-\frac{d}{2}}\\
&\quad \quad+ 4 \delta \sqrt{d} \mathrm{Vol}(\mathbb{S}^{d-1})    \int_0^\infty (z+4 \delta \sqrt{d})^{d-1}  \times  (2\pi)^{-\frac{d}{2}} (z +  8 \delta \sqrt{d}) e^{-\frac{1}{2}z^2} \mathrm{d}z\\
&\leq 2\mathrm{Vol}(\mathbb{S}^{d-1}) (4 \delta \sqrt{d})^{d+2} (2\pi)^{-\frac{d}{2}}+ 4 \delta \sqrt{d} \mathrm{Vol}(\mathbb{S}^{d-1})    \int_0^\infty (z+8 \delta \sqrt{d})^{d}  \times  (2\pi)^{-\frac{d}{2}} e^{-\frac{1}{2}z^2} \mathrm{d}z\\
&=   2\mathrm{Vol}(\mathbb{S}^{d-1}) (4 \delta \sqrt{d})^{d+2} (2\pi)^{-\frac{d}{2}}\\
&\qquad \qquad \qquad \quad  + 4 \delta \sqrt{d} (2\pi)^{-\frac{d}{2}} \mathrm{Vol}(\mathbb{S}^{d-1}) \sqrt{2\pi}  \sum_{k=0}^d  {d \choose k} (8 \delta \sqrt{d})^{k} \int_0^\infty \frac{1}{\sqrt{2\pi}} z^{d-k} e^{-\frac{1}{2}z^2} \mathrm{d}z\\
&= 2 \frac{2\pi^{\frac{d}{2}}}{\Gamma(\frac{d}{2})} (4 \delta \sqrt{d})^{d+2} (2\pi)^{-\frac{d}{2}}+ 4 \delta \sqrt{d} (2\pi)^{-\frac{d}{2}} \frac{2\pi^{\frac{d}{2}}}{\Gamma(\frac{d}{2})} \sqrt{2\pi}  \sum_{k=0}^d  {d \choose k} (8 \delta \sqrt{d})^{k} \times 2^{\frac{d-k}{2}}\frac{\Gamma(\frac{d-k+1}{2})}{\Gamma(\frac{1}{2})}\\
&\leq 2 \frac{2}{\Gamma(\frac{d}{2})} (2\sqrt{2} \delta \sqrt{d})^{d+2}+ 8 \delta \sqrt{d} \sqrt{2\pi}  \sum_{k=0}^d  {d \choose k} (8 \delta \sqrt{d})^{k} \times \frac{\sqrt{d}}{\Gamma(\frac{1}{2})}\\
&\leq 2 \frac{2}{\sqrt{\pi}} (2\sqrt{2} \delta \sqrt{d})^{d+2}+ 8 \delta \sqrt{d} \sqrt{2\pi}  \sum_{k=0}^d  {d \choose k} (8 \delta \sqrt{d})^{k} \times 1^{d-k} \times \frac{\sqrt{d}}{\sqrt{\pi}}\\
&\stackrel{{\scriptsize \textrm{binomial theorem}}}{=} \frac{4}{\sqrt{\pi}} (2\sqrt{2} \delta \sqrt{d})^{d+2}+ 8 \delta d \sqrt{2} (1+8 \delta \sqrt{d})^d \\
\ee
where the second inequality holds since $C_z^\delta \subset C_x^{2\delta}$ for all $x\in C_z^\delta$, the second-last inequality holds since $\frac{\Gamma(\frac{d+1}{2})}{\Gamma(\frac{d}{2})}\leq \sqrt{d}$, and the last inequality holds snce $\Gamma(\frac{d}{2}) \geq \Gamma(\frac{1}{2}) = \sqrt{\pi}$

But 
$\delta \leq \epsilon' \times \min \left([8 d\sqrt{d}]^{-1} \, \,  , \, \, \left[16e\sqrt{2} d\right]^{-1}\right)$, so this bound implies
\be\label{eq:TV24}
\mathbb{E}\bigg[ \big \|\mathcal{L}(\overline{\mathfrak{p}_0} | \, \, \mathsf{p}^\star_0)-\mathcal{L}(\mathfrak{p}_0  | \, \, \mathsf{p}^\star_0) \big \|_{\mathrm{TV}} \bigg]  \leq \epsilon'.
\ee

Therefore, 
\be 
&\mathbb{E}\left[\, \,   \big \| \mathcal{L}(\mathcal{Q}^{\mathbf{q}^{(2)}}_T(\overline{\mathfrak{p}_0}) | \, \,  \mathsf{p}^\star_0)  - \mathcal{L}(\mathcal{Q}^{\mathbf{q}^{(1)}}_T(\overline{\mathfrak{p}_0})  | \, \,  \mathsf{p}^\star_0) \big \|_{\mathrm{TV}} \right]\\ &\leq \mathbb{E}\left[\, \,  \big \| \mathcal{L}(\mathcal{Q}^{\mathbf{q}^{(2)}}_T(\mathfrak{p}_0) | \, \,  \mathsf{p}^\star_0) - \mathcal{L}(\mathcal{Q}^{\mathbf{q}^{(1)}}_T(\mathfrak{p}_0)  | \, \,  \mathsf{p}^\star_0) \big \|_{\mathrm{TV}} + \|\mathcal{L}(\overline{\mathfrak{p}_0}  | \, \,  \mathsf{p}^\star_0)-\mathcal{L}(\mathfrak{p}_0  | \, \,  \mathsf{p}^\star_0 )\|_{\mathrm{TV}} \right]\\
&= \mathbb{E}\bigg[\, \,  \big \| \mathcal{L}(\mathcal{Q}^{\mathbf{q}^{(2)}}_T(\mathfrak{p}_0) | \, \,  \mathsf{p}^\star_0) - \mathcal{L}(\mathcal{Q}^{\mathbf{q}^{(1)}}_T(\mathfrak{p}_0)  | \, \,  \mathsf{p}^\star_0) \big \|_{\mathrm{TV}}\bigg] + \mathbb{E}\bigg[\|\mathcal{L}(\overline{\mathfrak{p}_0}  | \, \,  \mathsf{p}^\star_0)-\mathcal{L}(\mathfrak{p}_0  | \, \,  \mathsf{p}^\star_0 )\|_{\mathrm{TV}} \bigg]\\
&\stackrel{{\scriptsize \textrm{Eq. }}\ref{eq:TV24}}{\leq} \mathbb{E}\bigg[\, \,  \big \| \mathcal{L}(\mathcal{Q}^{\mathbf{q}^{(2)}}_T(\mathfrak{p}_0) | \, \,  \mathsf{p}^\star_0) - \mathcal{L}(\mathcal{Q}^{\mathbf{q}^{(1)}}_T(\mathfrak{p}_0)  | \, \,  \mathsf{p}^\star_0) \big \|_{\mathrm{TV}}\bigg] +  {\epsilon'}\\
&\stackrel{{\scriptsize \textrm{Lemma }}\ref{Thm:cube_overlap}}{\leq}  1- (1-{\epsilon'})^{4}  + {\epsilon'}\\
&\leq 5{\epsilon'}.
\ee
This completes the proof of the lemma.
\end{proof}

\subsection{Variant of Standard ``Drift and Minorization" Bound}

In the proofs of  Theorems \ref{ThmMainConcave} and \ref{ThmMainConcaveBulk}, we use the following small modification of the ``drift and minorization" bound in Theorem 5 of \cite{rosenthal1995minorization}:  

\begin{lemma} \label{LemmaDriftMinRep}
Let $P$ be the transition kernel of a Markov chain on state space $\mathbb{R}^{d}$ with stationary measure $\pi$. Assume that there exists measurable $S \subset \mathbb{R}^{2d}$, integer $k_{0} \in \mathbb{N}$, and constant $\epsilon > 0$ so that 
\be \label{IneqStarRep}
\sup_{ (X_{0},Y_{0}) = (x,y) \in S} \| P^{k_{0}}(x,\cdot) - P^{k_{0}}(y,\cdot) \|_{\mathrm{TV}} \leq 1 - \epsilon.
\ee 
Furthermore, assume that there exists a function $h \, : \, \mathbb{R}^{2d} \mapsto [1, \infty)$, a constant $\alpha > 1$, and a transition kernel $Q$ on $\mathbb{R}^{2d}$ with the following properties:
\begin{enumerate}
\item If $(x,y) \in \mathbb{R}^{2d}$ and $(X,Y) \sim Q((x,y),\cdot)$, then
\be 
X \sim P(x,\cdot), \text{ and } Y \sim P(y,\cdot).
\ee 
That is, $Q$ defines a one-step coupling of two Markov chains running according to $P$.
\item For all $(x,y) \notin S$ and $(X,Y) \sim Q((x,y),\cdot)$,
\be \label{IneqLyaRep}
\E[h(X,Y)] \leq \alpha^{-1} \, h(x,y).
\ee 
\end{enumerate} 
Finally, for $(x,y) \in \mathbb{R}^{2d}$, denote by $\{(X_{t},Y_{t})\}_{t \geq 0}$ a Markov chain evolving according to $Q$ and define
\be 
A = \sup_{(x,y) \in S} \E[h(X_{k_{0}},Y_{k_{0}})].
\ee 
Then, for all $j > 0$,
\be 
\| P^{k}(x,\cdot) - \pi(\cdot) \|_{\mathrm{TV}} \leq (1 - \epsilon)^{\lceil \frac{j}{k_{0}} \rceil} + \alpha^{-k + j k_{0} -1} A^{j-1} \E[h(x,Y)],
\ee 
where $Y \sim \pi$.
\end{lemma}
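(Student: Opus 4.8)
The plan is to follow the classical Rosenthal drift-and-minorization argument, adapted to the coupling kernel $Q$ rather than to the one-step kernel $P$ directly. First I would fix $(x,y) \in \mathbb{R}^{2d}$ and let $\{(X_t,Y_t)\}_{t\geq 0}$ be the $Q$-chain started from $(x,y)$. The idea is to run this coupled chain, and at each time that it is inside the ``small set'' $S$, attempt a one-step minorization coupling on the next block of $k_0$ steps, using \eqref{IneqStarRep}: with probability at least $\epsilon$ the two coordinates coalesce on that block. Between visits to $S$, the drift bound \eqref{IneqLyaRep} forces $\mathbb{E}[h(X_t,Y_t)]$ to contract geometrically at rate $\alpha^{-1}$ per step, so the chain cannot wander far from $S$ for long, and $h\geq 1$ gives the usual Markov-inequality control on how often $(X_t,Y_t)\notin S$.

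The key steps, in order, are: (i) record that by the coupling property of $Q$ (property 1), the marginal law of $X_t$ is $P^t(x,\cdot)$, so $\|P^k(x,\cdot)-\pi\|_{\mathrm{TV}} \leq \mathbb{P}[X_k \neq Y_k]$ once we couple against a stationary copy $Y\sim\pi$; concretely, I would start the second coordinate from $\pi$ and use $\mathbb{E}[h(x,Y)] < \infty$ as the initial value. (ii) Set up a renewal-type decomposition over the first $j$ ``attempted coalescences.'' Each attempt consists of waiting for a return to $S$, then applying the minorization on a $k_0$-block. (iii) Bound the probability that all $j$ attempted coalescences fail by $(1-\epsilon)^j$ — this gives the $(1-\epsilon)^{\lceil j/k_0\rceil}$ term once one accounts for the block length $k_0$. (iv) On the complementary event, or more precisely to bound the ``leftover'' mass, track $\mathbb{E}[h(X_t,Y_t)]$: starting from $h(x,Y)$, it contracts by $\alpha^{-1}$ on each of the $k - jk_0$ steps outside the coalescence blocks, and picks up a multiplicative factor $A$ (the worst-case expected value of $h$ after a $k_0$-block started in $S$) on each of the $j$ blocks, and possibly one extra $\alpha$ from an off-by-one in the indexing; assembling these gives the $\alpha^{-k+jk_0-1}A^{j-1}\mathbb{E}[h(x,Y)]$ term. (v) Combine the two bounds and note the estimate holds for every $j>0$.

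The main obstacle — and the only place where care beyond bookkeeping is needed — is step (iv): making the accounting of the $\alpha$ and $A$ factors precise when the times spent in $S$ versus outside $S$ are random, and ensuring that the drift inequality \eqref{IneqLyaRep} (which only applies \emph{off} $S$) composes correctly with the definition $A = \sup_{S}\mathbb{E}[h(X_{k_0},Y_{k_0})]$ across the $k_0$-blocks that straddle $S$. The clean way to handle this is to define the sequence of (random) times at which a coalescence attempt begins, condition on the coupled chain's trajectory up to each such time, apply \eqref{IneqLyaRep} step-by-step on the excursions away from $S$ and the bound $A$ on the blocks, and take expectations; the exponent $-k + jk_0 - 1$ then emerges as (total steps) minus (steps absorbed into the $j$ coalescence blocks) minus a boundary term, exactly as in Theorem 5 of \cite{rosenthal1995minorization}. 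Everything else reduces to the triangle inequality for total variation and the submultiplicativity of the drift.
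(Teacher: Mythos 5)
Your proposal is correct and takes essentially the same approach as the paper: both identify the lemma as Theorem 5 of \cite{rosenthal1995minorization} under the same three substitutions (a general coupling kernel $Q$ in place of the independence coupling, a general bivariate small set $S$ in place of $R \times R$, and the weakened pairwise-overlap condition \eqref{IneqStarRep} in place of the uniform minorization condition $(*)$), and both observe that Rosenthal's proof carries over verbatim with these changes. The paper's own proof merely lists the substitutions and defers to Rosenthal, whereas you additionally unpack the underlying renewal/drift bookkeeping, but the argument and conclusion are the same.
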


\begin{proof}
This result is very similar to Theorem 5 of \cite{rosenthal1995minorization}. Besides minor changes to notation, it is obtained by making the following substitutions in the statement of Theorem 5 of \cite{rosenthal1995minorization}:

\begin{enumerate}
\item In \cite{rosenthal1995minorization}, the set $S$ is assumed to be of the form $S = R \times R$ for some $R \subset \mathbb{R}^{d}$. We allow any measurable set $S$.
\item We have replaced the uniform minorization condition $(*)$ of \cite{rosenthal1995minorization} with the weaker condition given in Inequality \eqref{IneqStarRep}. 
\item In \cite{rosenthal1995minorization}, the coupling measure $Q((x,y),\cdot)$ is assumed to be the independent coupling $P(x,\cdot) \otimes P(y,\cdot)$.
\end{enumerate}

We assert that this result follows from \textit{exactly} the proof of Theorem 5 (and its preceding lemmas) in \cite{rosenthal1995minorization}, after making the following specific substitutions in the proof to account for the above substitutions:

\begin{enumerate}
\item The product set $R \times R$ should be replaced by $S$ every time it occurs. Similarly, the phrase ``$X^{(n)}$ and $Y^{(n)}$ are both in $R$" should be replaced by the phrase ``$(X^{(n)},Y^{(n)})$ is in $S$" when it occurs in the proof of Theorem 1.
\item References to condition $(*)$ should be replaced by references to Inequality \eqref{IneqStarRep} every time they occur throughout the proof.
\item In the main coupling construction given in Theorem 1 of \cite{rosenthal1995minorization}, the phrase ``if $X^{(n)}$ and $Y^{(n)}$ are not both in $R$, then simply update them independently, according to $P(X^{(n)},\cdot)$ and $P(Y^{(n)},\cdot)$ respectively" should be replaced by the phrase ``if $(X^{(n)}, Y^{(n)}) \notin S$, then simply update them according to $Q((X^{(n)},Y^{(n)}),\cdot)$."
\end{enumerate}

\end{proof}

\begin{remark} \label{RemThm12Rose}
We note that Theorem 12 of \cite{rosenthal1995minorization} remains true with condition $(*)$ replaced by Inequality \eqref{IneqStarRep} everywhere it appears, in both the proof and statement of the result. Since this is a simple change, we do not repeat the statement of that result.

We make one additional comment on this argument. We have asserted here that Theorem 5 of \cite{rosenthal1995minorization} remains true with condition $(*)$ replaced by the strictly weaker condition \eqref{IneqStarRep}. This may seem suspicious to a reader unfamiliar with \cite{rosenthal1995minorization} - why would the author use a stronger assumption than necessary? We point out that several stronger results that occur in other sections of \cite{rosenthal1995minorization} (including the regeneration time bounds in Section 4) \textit{do} require the full strength of $(*)$, and would not be true if it were replaced by condition \eqref{IneqStarRep}. Thus it is useful to concentrate on the condition $(*)$ throughout the paper.
\end{remark}

\section{Proof of Theorem  \ref{ThmMainConcaveBulk}} \label{SecThmMainConcaveBulk}

We prove  Theorem  \ref{ThmMainConcaveBulk}:

\begin{proof}[Proof of Theorem \ref{ThmMainConcaveBulk}]

Define the set
\be 
\mathcal{S} &= \{x \, : \, V(x) \leq \frac{40b}{a} \},
\ee 
the constant
\be 
\delta = \frac{1}{100} \, \min \left( [\frac{40}{9T\sqrt{M_2}}]^{-1} \, \, , \, \, [8 d\sqrt{d}]^{-1} \, \,  , \, \, \left[16e\sqrt{2} d\right]^{-1} \,\, , \, \, [d\frac{M_3}{(M_2)^{3/2} T}]^{-1} \, \, ,\, \, [10\frac{(M_2)^2}{d M_3}]^{-1}\right),
\ee 
and $m$ as in the statement of the theorem. Note that $\delta$ satisfies Equation \eqref{EqThm2Consts}.

Fix $x,y \in \mathcal{S}$. We denote by $\{X_{i}\}_{i \geq 0}$, $\{ Y_{i} \}_{i \geq 0}$ two Markov chains with transition kernel $K$ and starting points $X_{0} = x$, $Y_{0} = y$. We now describe a coupling of these two Markov chains (Figure \ref{fig:coupling}). Recall from Algorithm \ref{DefSimpleHMC} that the chains $\{X_{i}\}_{i \geq 0}$, $\{Y_{i}\}_{i \geq 0}$ can be determined by two i.i.d. sequences $\{{p}_i^{(x)}\}_{i \geq 0}$, $\{{p}_i^{(y)}\}_{i \geq 0}$ with distribution $\Phi_{1}$, so that 
\be 
X_{i+1} &= \mathcal{Q}_{T}^{X_{i}}({p}_i^{(x)}) \\ 
Y_{i+1} &= \mathcal{Q}_{T}^{Y_{i}}({p}_i^{(y)}). \\ 
\ee 
To define a coupling of the Markov chains $\{X_{i}\}_{i \geq 0}$, $\{ Y_{i} \}_{i \geq 0}$, it is enough to define a coupling of the \textit{update sequences} $\{{p}_i^{(x)}\}_{i \geq 0}$, $\{{p}_i^{(y)}\}_{i \geq 0}$. We need some additional notation before defining the full coupling.

For $x',y' \in \mathbb{R}^{d}$, define $K^{(2)}((x',y'),\cdot)$ to be the maximal coupling of the measures $K(x',\cdot)$ and $K(y',\cdot)$ constructed in Theorem 2.12 of \cite{den2012probability}. The details of this coupling are not important for this analysis; we simply state the two properties that are used in this proof:
\begin{enumerate}
\item For fixed $x',y' \in \mathbb{R}^{d}$ and $(X,Y) \sim K^{(2)}((x',y'),\cdot)$, we have:
\be \label{EqCouplingConsOpt}
\P[ X \neq Y ] \leq  \| K(x',\cdot) - K(y',\cdot) \|_{\mathrm{TV}}.
\ee 
This is the content of Theorem 2.12 of \cite{den2012probability}.
\item Since $K(x',\cdot)$ and $K(y',\cdot)$ both have densities, it is clear from the explicit construction in Theorem 2.12 of \cite{den2012probability} that the collection of distributions $\{K^{(2)}((x',y'),\cdot)\}_{(x',y') \in \mathbb{R}^{2d}}$ in fact defines a measurable probability kernel.
\end{enumerate} 

We now define our coupling of $\{{p}_i^{(x)}\}_{i \geq 0}$, $\{{p}_i^{(y)}\}_{i \geq 0}$. Roughly speaking, we set ${p}_i^{(x)} = {p}_i^{(y)}$ for all $i \neq m$, and for $i=m$ use the maximal coupling $K^{(2)}$ described above:

\begin{itemize}
\item For $i < m$, set ${p}_i^{(y)} = {p}_i^{(x)}$.
\item For $i = m$, begin by coupling $X_{m+1}, Y_{m+1}$ conditional on $\{ ({p}_{s}^{(y)}, {p}_{s}^{(x)})\}_{s < m}$ so that
\be 
\P[X_{m+1} \neq Y_{m+1} | X_{m} = x', Y_{m} = y'] \leq \| K(x',\cdot) - K(y',\cdot) \|_{\mathrm{TV}}. 
\ee 
A measurable coupling of $(X_{m+1},Y_{m+1})$ with this property exists by the properties of $K^{(2)}$ given immediately above, and in particular Inequality \eqref{EqCouplingConsOpt}. Since $X_{m+1}, Y_{m+1}$ are measurable functions of $(X_{m},p_{m}^{(x)})$ and $(Y_{m}, p_{m}^{(y)})$ respectively, the standard  disintegration theorem (see \textit{e.g.} Theorem 5.4 of \cite{kallenberg2006foundations}) implies that there exists coupling of the update sequence $(p_{m}^{(x)}, p_{m}^{(y)})$ conditional on $\{ ({p}_i^{(y)}, {p}_i^{(x)})\}_{s < m}$ so that
\be 
\P[\mathcal{Q}_{T}^{X_{m}}({p}_{m}^{(x)}) \neq \mathcal{Q}_{T}^{Y_{m}}({p}_{m}^{(y)}) | X_{m} = x', Y_{m} = y'] \leq  \| K(x',\cdot) - K(y',\cdot) \|_{\mathrm{TV}}. 
\ee 
We use this coupling of $({p}_{m}^{(x)}, {p}_{m}^{(y)})$.
\item For $i > m$, set ${p}_i^{(y)} = {p}_i^{(x)}$.
\end{itemize}

This completes the definition of our coupling. Define the events
\be 
\mathcal{E}_{1} &= \{ \forall \, 0 \leq i \leq m, \, V(X_{i}), \, V(Y_{i}) \leq \frac{4000mb}{a}  \} \\
\mathcal{E}_{2} &= \{ \forall \, 0 \leq i < m, \,  \|{p}_i^{(x)}\|  \leq \sqrt{ 2 \, d \, \log(200m) } \} \\
\mathcal{E} &= \mathcal{E}_{1} \cap \mathcal{E}_{2}.
\ee

Recall that, in the process of generating $\{X_{i},Y_{i}\}_{i=0}^{m}$ in Algorithm \ref{DefSimpleHMC},  Hamilton's Equation \eqref{EqHamiltonEquations} are solved $2m$ times. For $ i \in \{0,1,\ldots,m-1\}$, we define $(q_{t}^{(x,i)}, p_{t}^{(x,i)})_{0 \leq t \leq T}$ to be the solution to Equation \eqref{EqHamiltonEquations} with initial conditions $q_{0}^{(x,i)} = X_{i}$ and $p_{0}^{(x,i)} = {p}_i^{(x)}$; we define $(q_{t}^{(y,i)},p_{t}^{(y,i)})$ analogously.

Note that, on the event $\mathcal{E}$, all trajectories $\{ \{ (q_{t}^{(z,i)},p_{t}^{(z,i)})\}_{0 \leq t \leq T} \}_{\, z \in \{x,y\}, \, i \in \{0,1,\ldots,m-1\}}$ that occur while simulating the chains $\{X_{j}, Y_{j}\}_{j =0}^{m}$ have total energy satisfying
\be 
\sup_{0 \leq t \leq T, z \in \{x,y\}, i \in \{0,1,\ldots,m-1\}} H(q_{t}^{(z,i)},p_{t}^{(z,i)}) \leq \sup_{x \, : \, V(x) \leq \frac{4000mb}{a}} U(x) +  d \, \log(200m), \\
\ee 
where the Hamiltonian $H$ is as defined in Equation \eqref{EqHamilDef}. By Inequality \eqref{MixingBulkContainmentCond1}, this implies
\be 
\{(q_{t}^{(z,i)},p_{t}^{(z,i)})\}_{0 \leq t \leq T, \, z \in \{x,y\}, \, i \in \{0,1,\ldots,m-1\}} \subset \mathcal{X}.
\ee 
In particular, on the event $\mathcal{E}$, we can apply both Theorem \ref{ThmContractionConvexMainResult} and Lemma \ref{LemmaMinStrongLog}. Applying  Theorem \ref{ThmContractionConvexMainResult}, 
\be 
\E[ \| X_{m} - Y_{m} \| \, \mathbbm{1}_{\mathcal{E}}] &\leq \left(1 - \frac{1}{64} \left( \frac{m_{2}}{M_{2}} \right)^{2} \right)^{m} \, \|X_{0} - Y_{0} \| \\
&\leq \delta.
\ee 

Applying Lemma \ref{LemmaMinStrongLog}, 

\be 
\P[ \{ X_{m+1} \neq Y_{m+1} \} \cap \mathcal{E}] \leq \frac{1}{20}.
\ee 
So, we have 
\be 
\P[X_{m+1} \neq Y_{m+1}] \leq \frac{1}{20} + \P[\mathcal{E}^{c}]. 
\ee 
Applying Markov's inequality and then the drift condition given in Assumption \ref{AssumptionsDrift}, as well as the standard Gaussian tail bound,
\be \label{IneqConvexInBulk}
\P[X_{m+1} \neq Y_{m+1}] &\leq \frac{1}{20} + \P[\mathcal{E}^{c}] \\
&\leq \frac{1}{20} + \sum_{i=0}^{m-1} (\P[V(X_{i})> \frac{4000mb}{a}] + \P[V(Y_{i}) > \frac{4000mb}{a}]) + \sum_{i=0}^{m-1} \P[ \|{p}_i^{(x)}\| > \sqrt{ 2 \, d \, \log(200m)}] \\
&\leq \frac{1}{20} + \frac{(m+1)(a)}{4000 mb} \max_{0 \leq i \leq m} (\E[V(X_{i})] + \E[V(Y_{i})]) + 2m \, e^{-\frac{2d \log(200m)}{2d}} \\
&\leq \frac{1}{20} + \frac{(m+1)(a)}{4000 mb} (2 \frac{41 b}{a}) + \frac{1}{100} \\
&\leq \frac{1}{20} + \frac{1}{25} + \frac{1}{100} = \frac{1}{10}.
\ee

We now apply Theorem 12 of \cite{rosenthal1995minorization} to the thinned chain $\{X_{(m+1)i}\}_{i \geq 0}$, with function and constants
\be 
V = V , \, b = b , \, \lambda = 1 - a, \, d = \frac{40b}{a}, \,r = \frac{\log(1 + \frac{a}{3})}{\log(\frac{80b}{a^{2}})}.
\ee 
By Inequality \eqref{IneqConvexInBulk} and Remark \ref{RemThm12Rose}, we can take $\epsilon = \frac{1}{10}$.

In the notation of Theorem 12 of \cite{rosenthal1995minorization}, this results in the auxillary constants 
\be 
\alpha^{-1} &= \frac{1 + 2b + \lambda d}{1 + d} = 1 - \frac{38b}{1 + \frac{40b}{a}} \leq 1 - \frac{38}{41} a \\
A &= 1 + 2(\lambda d + b) \leq 1 + \frac{80b}{a}.
\ee 
Applying Theorem 12 of \cite{rosenthal1995minorization} (and Remark \ref{RemThm12Rose}) with these constants gives the bound 
\be 
\| \mathcal{L}(X_{(m+1)t}) - \pi \|_{\mathrm{TV}} &\leq (1 - \epsilon)^{rt} + (\alpha^{-(1-r)} A^{r})^{t} (1 + \frac{b}{a} + V(x)) \\
&\leq (\frac{9}{10})^{\frac{\log(1 + \frac{a}{3})}{\log(\frac{80b}{a^{2}})} \, t} + (1 - \frac{1}{3}a)^{t}  (1 + \frac{b}{a} + V(x)).
\ee 

This completes the proof of Inequality \eqref{MixingLogConcaveBulkMainConc1}. Inequality \eqref{SpectralLogConcaveBulkMainConc1} follows immediately from  Inequality \eqref{MixingLogConcaveBulkMainConc1} and an application of Theorem 2.1 of \cite{roberts1997geometric}.

\end{proof}

\section{Euler Integrators}

In this appendix, we analyze the first-order approximation to an HMC step defined in Algorithm \ref{alg:Approx_integrator} with oracle given by Algorithm \ref{alg:Integrator_first_order} and $k=1$. We use notation from those algorithms throughout this appendix. We also assume that Assumption \ref{AssumptionsConvexity} holds with $\mathcal{X} = \mathbb{R}^{d}$ throughout this appendix.

\begin{lemma} (first-order Euler method error) \label{thm:oracle}

If $\bigstar$ is the first-order Euler method of Algorithm \ref{alg:Integrator_first_order}, i.e.,
$q^\star_{\theta}(\mathbf{q},\mathbf{p}) = \mathbf{q}+\mathbf{p}\theta$ and  $p^\star_{\theta}(\mathbf{q},\mathbf{p}) = \mathbf{p} + \theta U'(\mathbf{q})$,
then
\be \label{LemmaOracle1} 
\|q^\star_{\theta}(\mathbf{q},\mathbf{p}) - q_\theta(\mathbf{q},\mathbf{p}) \| \leq \frac{1}{2}\theta^2 \frac{M_2}{\sqrt{m_2}} \sqrt{U(\mathbf{q}) + \frac{1}{2}\|\mathbf{p}\|^2}
\ee
and
\be \label{LemmaOracle2}
\|p^\star_{\theta}(\mathbf{q},\mathbf{p}) - p_\theta(\mathbf{q},\mathbf{p}) \| \leq \frac{1}{\sqrt{2}}\theta^2 M_2 \sqrt{U(\mathbf{q}) + \frac{1}{2}\|\mathbf{p}\|^2}.
\ee
\end{lemma}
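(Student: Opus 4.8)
The plan is to estimate the local (one-step) error of the first-order Euler scheme by Taylor-expanding the exact solution $q_\theta(\mathbf{q},\mathbf{p})$ and $p_\theta(\mathbf{q},\mathbf{p})$ around $t=0$, comparing with the one-step Euler formulas $q^\star_\theta = \mathbf{q}+\mathbf{p}\theta$ and $p^\star_\theta = \mathbf{p}-\theta U'(\mathbf{q})$, and then controlling the error terms using the strong-convexity bounds of Assumption \ref{AssumptionsConvexity} together with the conservation of energy along Hamiltonian trajectories. First I would record the exact Taylor remainders: from Hamilton's equations \eqref{EqHamiltonEquations}, $\frac{\d}{\d t}q_t = p_t$ and $\frac{\d}{\d t}p_t = -U'(q_t)$, so $q_\theta(\mathbf{q},\mathbf{p}) = \mathbf{q} + \mathbf{p}\theta - \int_0^\theta (\theta-s) U'(q_s)\,\d s$ and $p_\theta(\mathbf{q},\mathbf{p}) = \mathbf{p} - \int_0^\theta U'(q_s)\,\d s$. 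Hence $\|q^\star_\theta - q_\theta\| \le \int_0^\theta (\theta - s)\|U'(q_s)\|\,\d s$ and $\|p^\star_\theta - p_\theta\| = \|\int_0^\theta (U'(q_s)-U'(\mathbf q))\,\d s\| \le \int_0^\theta \|U'(q_s)-U'(\mathbf q)\|\,\d s$.

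The next step is to bound $\|U'(q_s)\|$ and $\|U'(q_s)-U'(\mathbf q)\|$ uniformly on $s\in[0,\theta]$. Since $U$ attains its minimum at $0$ (the standing convention in the paper), $U'(0)=0$, so by Assumption \ref{AssumptionsConvexity} (which gives $\|D_v U'(q)\|\le M_2\|v\|$ for all $q,v$, i.e. the gradient is $M_2$-Lipschitz) we get $\|U'(q)\| \le M_2\|q\|$ for every $q$. Combining this with the strong-convexity lower bound $m_2\|q\|^2 \le \langle U'(q)-U'(0), q\rangle = \langle U'(q),q\rangle$, hence $m_2\|q\|^2 \le U(q) - U(q) + \langle U'(q),q\rangle$; more directly, strong convexity with $U(0)=0$ gives $U(q) \ge \tfrac{m_2}{2}\|q\|^2$, so $\|q\| \le \sqrt{2U(q)/m_2}$ and therefore $\|U'(q)\| \le M_2\sqrt{2U(q)/m_2}$. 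Then I invoke conservation of energy: $U(q_s) \le U(q_s) + \tfrac12\|p_s\|^2 = H(q_s,p_s) = H(\mathbf q,\mathbf p) = U(\mathbf q) + \tfrac12\|\mathbf p\|^2$ for all $s$. Thus $\|U'(q_s)\| \le M_2\sqrt{2/m_2}\sqrt{U(\mathbf q)+\tfrac12\|\mathbf p\|^2}$ uniformly in $s\in[0,\theta]$.

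Finally, plug these bounds back in. For the position error, $\|q^\star_\theta - q_\theta\| \le \int_0^\theta (\theta-s)\,\d s \cdot M_2\sqrt{2/m_2}\sqrt{U(\mathbf q)+\tfrac12\|\mathbf p\|^2} = \tfrac{\theta^2}{2}\cdot M_2\sqrt{2/m_2}\sqrt{U(\mathbf q)+\tfrac12\|\mathbf p\|^2}$; one checks $\sqrt{2/m_2} = \sqrt 2/\sqrt{m_2}$, giving the stated constant $\tfrac12 \theta^2 \tfrac{M_2}{\sqrt{m_2}}\sqrt{\cdot}$ up to the factor $\sqrt 2$ — so I should double-check whether the intended bound carries that $\sqrt2$ or whether a slightly sharper estimate of $\|q_s\|$ is wanted; this bookkeeping of constants is the only delicate point. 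For the momentum error, $\|U'(q_s)-U'(\mathbf q)\| \le M_2\|q_s - \mathbf q\| \le M_2\int_0^s\|p_r\|\,\d r \le M_2 s\,\|\mathbf p\|_{\sup}$, but to get $\theta^2$ overall I instead bound $\|U'(q_s)-U'(\mathbf q)\| \le \|U'(q_s)\| + \|U'(\mathbf q)\| \le 2M_2\sqrt{2/m_2}\sqrt{U(\mathbf q)+\tfrac12\|\mathbf p\|^2}$ — hmm, that gives only $\theta^1$. The cleaner route is $\|U'(q_s)-U'(\mathbf q)\| \le M_2 \|q_s-\mathbf q\| \le M_2\int_0^s\|p_r\|\,\d r$ and $\|p_r\| \le \sqrt{2H(\mathbf q,\mathbf p)} = \sqrt{2}\sqrt{U(\mathbf q)+\tfrac12\|\mathbf p\|^2}$, so $\|U'(q_s)-U'(\mathbf q)\| \le M_2 s\sqrt2\sqrt{U(\mathbf q)+\tfrac12\|\mathbf p\|^2}$, whence $\|p^\star_\theta - p_\theta\| \le \int_0^\theta M_2 s\sqrt2\sqrt{\cdot}\,\d s = \tfrac{\theta^2}{\sqrt2}M_2\sqrt{U(\mathbf q)+\tfrac12\|\mathbf p\|^2}$, exactly \eqref{LemmaOracle2}. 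The main obstacle is therefore not conceptual but getting the position-error constant to match exactly; I expect a marginally tighter estimate of $\|q_s\|$ (or absorbing the $\sqrt2$ into the $M_2/\sqrt{m_2}$ via $\sqrt{m_2}\le\sqrt{M_2}$-type slack, which the paper routinely does) closes the gap.
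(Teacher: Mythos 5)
Your route is exactly the paper's: Duhamel/Taylor integral remainders for the one-step error, then conservation of energy plus the strong-convexity and Lipschitz bounds on $U'$ to estimate $\|U'(q_s)\|$ and $\|U'(q_s)-U'(\mathbf{q})\|$ uniformly on $[0,\theta]$. The momentum bound \eqref{LemmaOracle2} you obtain matches the paper's verbatim.

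The one place you diverge is the position-error constant, and your instinct that something is off is correct — but the discrepancy is on the paper's side, not yours. From the paper's definition of strong convexity ($\langle U'(x)-U'(y),x-y\rangle \ge m_2\|x-y\|^2$) together with $U(0)=0$, $U'(0)=0$, the standard integration gives $U(q)\ge \tfrac{m_2}{2}\|q\|^2$, hence $\|q\|\le\sqrt{2U(q)/m_2}$, exactly as you derived. The paper instead writes $\|q_t\|\le \tfrac{1}{\sqrt{m_2}}\sqrt{H(\mathbf q,\mathbf p)}$, which silently uses $U(q)\ge m_2\|q\|^2$ — a factor $\sqrt{2}$ too optimistic. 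So the careful version of \eqref{LemmaOracle1} is $\|q^\star_\theta - q_\theta\|\le \tfrac{\sqrt 2}{2}\theta^2 \tfrac{M_2}{\sqrt{m_2}}\sqrt{H(\mathbf q,\mathbf p)}$, which is what your calculation produced. There is no tighter estimate of $\|q_s\|$ that recovers the factor $\tfrac12$, and the $\sqrt{m_2}\le\sqrt{M_2}$ slack goes the wrong direction; you should simply state the bound with the extra $\sqrt 2$. This changes nothing downstream — the only consumer is Lemma \ref{thm:Approx_integrator}, which already absorbs generous constants.
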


\begin{proof}
For all $t \geq 0$, we have by conservation of energy that
\be 
U(q_{t}(\mathbf{q},\mathbf{p})) + \frac{1}{2} \| p_{t}(\mathbf{q},\mathbf{p}) \|^{2} = U(\mathbf{q}) + \frac{1}{2}\|\mathbf{p}\|^2
\ee 
and so we also have
\be \label{IneqConsEnergyDistCons}
\|q_t(\mathbf{q},\mathbf{p})\| \leq \frac{1}{\sqrt{m_2}}\sqrt{U(\mathbf{q}) + \frac{1}{2}\|\mathbf{p}\|^2}.
\ee

  This implies 

\be \| U'(q_t(\mathbf{q},\mathbf{p}))\| &\leq \bigg \| \int_0^{\|q_t(\mathbf{q},\mathbf{p})\|} D_{\frac{q_t(\mathbf{q},\mathbf{p})}{\|q_t(\mathbf{q},\mathbf{p})\|}}  U' \big|_{\ell_s(0, q_t(\mathbf{q},\mathbf{p}))} \mathrm{d}s \bigg \|\\
&\leq \int_0^{\|q_t(\mathbf{q},\mathbf{p})\|} M_2 \mathrm{d}s \\
&= M_2 \|q_t(\mathbf{q},\mathbf{p})\| \\
&\stackrel{{\scriptsize \textrm{Eq. }}\eqref{IneqConsEnergyDistCons}}{\leq} \frac{M_2}{\sqrt{m_2}}\sqrt{U(\mathbf{q}) + \frac{1}{2}\|\mathbf{p}\|^2} \quad \quad \forall t\geq 0.  
\ee

Therefore,
\[\|\mathbf{p} - p_t(\mathbf{q},\mathbf{p}) \| = \left \|\int_0^t U'(q_s(\mathbf{q},\mathbf{p})) \mathrm{d}s \right \| \leq \int_0^t \| U'(q_s(\mathbf{q},\mathbf{p}))\| \mathrm{d}s  \leq  t\times \frac{M_2}{\sqrt{m_2}} \sqrt{U(\mathbf{q}) + \frac{1}{2}\|\mathbf{p}\|^2},\]

for all $t\geq0$, and so
\be 
\|\mathbf{q}+\mathbf{p}\theta - q_\theta(\mathbf{q},\mathbf{p}) \| &= \left\|\int_0^\theta (\mathbf{p} - p_t(\mathbf{q},\mathbf{p})) \mathrm{d}t \right\|\\
&\leq \int_0^\theta \|\mathbf{p} - p_t(\mathbf{q},\mathbf{p}) \|\mathrm{d}t\\ 
&\leq \int_0^\theta t\times \frac{M_2}{\sqrt{m_2}} \sqrt{U(\mathbf{q}) + \frac{1}{2}\|\mathbf{p}\|^2} \mathrm{d}t\\ 
&= \frac{1}{2}\theta^2 \frac{M_2}{\sqrt{m_2}} \sqrt{U(\mathbf{q}) + \frac{1}{2}\|\mathbf{p}\|^2}.
\ee

This completes the proof of Inequality \eqref{LemmaOracle1}.

We now prove Inequality \eqref{LemmaOracle2}.  By the conservation of energy bound, $\|p_{t}(\mathbf{q},\mathbf{p})\| \leq \sqrt{2} \sqrt{U(\mathbf{q}) + \frac{1}{2}\|\mathbf{p}\|^2}$ for all $t\geq 0$,  so we have
\[ \|q_{t}(\mathbf{q},\mathbf{p}) - \mathbf{q}\| \leq t \, \sqrt{2} \sqrt{U(\mathbf{q}) + \frac{1}{2}\|\mathbf{p}\|^2}\]
for all $t \geq 0$. Applying this bound gives
\be 
\| U'(q_{t}(\mathbf{q},\mathbf{p})) - U'(\mathbf{q}) \| &= \left\| \int_{0}^{\|{q_t(\mathbf{q},\mathbf{p})- \mathbf{q}\|}} D_{\frac{q_{t}(\mathbf{q},\mathbf{p}) - \mathbf{q}}{\|q_{t}\mathbf{q},\mathbf{p}) - \mathbf{q}\|}} U' \big|_{\ell_s(\mathbf{q}, q_t(\mathbf{q},\mathbf{p}))} \mathrm{d}s \right \|\\
&\leq \int_{0}^{\|{q_t(\mathbf{q},\mathbf{p})- \mathbf{q}\|}} \left\| D_{\frac{q_{t}(\mathbf{q},\mathbf{p}) - \mathbf{q}}{\|q_{t}\mathbf{q},\mathbf{p}) - \mathbf{q}\|}} U' \big|_{\ell_s(\mathbf{q}, q_t(\mathbf{q},\mathbf{p}))} \right \| \mathrm{d}s\\
&\leq  \int_{0}^{\|{q_t(\mathbf{q},\mathbf{p})- \mathbf{q}\|}} M_2 \mathrm{d}s\\
&= M_2 \|q_t(\mathbf{q},\mathbf{p}) - \mathbf{q}\|\\
&\leq M_2 t \, \sqrt{2} \sqrt{U(\mathbf{q}) + \frac{1}{2}\|\mathbf{p}\|^2}.  
\ee 

Applying this bound to the quantity of interest,

\be 
\|\mathbf{p} + \theta U'(\mathbf{q}) - p_\theta(\mathbf{q},\mathbf{p}) \| &= \left \| \int_0^\theta (U'(q_t(\mathbf{q},\mathbf{p})) - U'(\mathbf{q})) \mathrm{d}t \right \|\\
&\leq \int_0^\theta \| U'(q_t(\mathbf{q},\mathbf{p})) - U'(\mathbf{q}) \| \mathrm{d}t\\
&\leq \int_0^\theta M_2 t \,\sqrt{2} \sqrt{U(\mathbf{q}) + \frac{1}{2}\|\mathbf{p}\|^2} \mathrm{d}t\\
&= \frac{1}{\sqrt{2}}\theta^2 M_2 \sqrt{U(\mathbf{q}) + \frac{1}{2}\|\mathbf{p}\|^2}.
\ee

This completes the proof of the lemma.
\end{proof}

\begin{lemma} \label{thm:Approx_integrator}
Let $(q_T^{\dagger \theta},  p_T^{\dagger \theta})$ be the first-order approximation to an HMC step defined in Algorithm \ref{alg:Approx_integrator}, with $k=1$ and oracle given by Algorithm \ref{alg:Integrator_first_order}.  For $T$, $\theta$ satisfying $7 \theta \leq T \leq \frac{1}{2\sqrt{2}} \frac{\sqrt{m_2}}{M_2}$ and $\frac{T}{\theta} \in \mathbb{N}$, the first-order Euler integrator  $(q_T^{\dagger \theta},  p_T^{\dagger \theta})$ gives error in the position of 
\be \label{eq:f20}
\|q_T^{\dagger \theta}(\mathbf{q},\mathbf{p}) - q_T(\mathbf{q},\mathbf{p})\| \leq 6\theta \times T \times \frac{M_2}{\sqrt{m_2}} \sqrt{H(\mathbf{q},\mathbf{p})}
\ee
and a maximum error in energy conservation of
\be \label{eq:f21}
|H(q_T^{\dagger \theta}(\mathbf{q},\mathbf{p}), p_T^{\dagger \theta}(\mathbf{q},\mathbf{p}))-  H(\mathbf{q},\mathbf{p})| \leq 7 \frac{\theta}{T} H(\mathbf{q},\mathbf{p}).
\ee
\end{lemma}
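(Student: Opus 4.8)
The plan is to bound the accumulated error of the $N := T/\theta$-step Euler scheme by summing the per-step errors from Lemma \ref{thm:oracle}, using a discrete Gr\"onwall argument to control the growth of the energy (hence of $\sqrt{H}$) along the numerical trajectory. Write $(q^i_\dagger,p^i_\dagger)$ for the numerical iterates and $H_i := H(q^i_\dagger,p^i_\dagger)$. First I would show that each Euler step changes the energy by at most a multiplicative factor $1 + c\theta^2 M_2$ (for a small universal $c$); this follows by expanding $H_{i+1} - H_i = U(q^i_\dagger + p^i_\dagger\theta) - U(q^i_\dagger) - \langle U'(q^i_\dagger), p^i_\dagger\theta\rangle + \langle U'(q^i_\dagger), p^i_\dagger\theta\rangle - \theta\langle U'(q^i_\dagger),p^i_\dagger\rangle + \tfrac12\|p^i_\dagger - \theta U'(q^i_\dagger)\|^2 - \tfrac12\|p^i_\dagger\|^2$ and using $\|D_v U'\| \le M_2\|v\|$ (from Assumption \ref{AssumptionsConvexity}) together with $U \ge 0$, $U(0)=0$; every term is $O(\theta^2 M_2 H_i)$ because $\|p^i_\dagger\|^2 \le 2H_i$ and $\|U'(q^i_\dagger)\| \le M_2\|q^i_\dagger\| \le \tfrac{M_2}{\sqrt{m_2}}\sqrt{2H_i}$. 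Iterating over $N \le T/\theta$ steps gives $H_i \le H_0 (1 + c\theta^2 M_2)^{N} \le H_0 e^{c\theta T M_2} \le H_0 e^{c/(2\sqrt2)} =: C_0 H_0$, using $\theta T M_2 \le T^2 M_2 \le \tfrac{1}{8}\tfrac{m_2}{M_2} \le \tfrac18$. This bounds $\sqrt{H_i} \le \sqrt{C_0}\sqrt{H}$ uniformly along the numerical trajectory; summing the per-step energy changes then yields $|H_N - H_0| \le N\cdot c\theta^2 M_2 \cdot C_0 H_0 \le c C_0 \theta T M_2 \tfrac{1}{T^2}\cdot T H_0$; after tracking the constants carefully, $\theta T M_2 \le \theta/T \cdot (T^2 M_2) \le \theta/T$, which should give the clean bound $|H_N - H_0| \le 7\tfrac{\theta}{T}H_0$ of Equation \eqref{eq:f21}.

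For the position error Equation \eqref{eq:f20}, I would run a standard one-step-error-plus-stability telescoping: let $e_i := \|q^i_\dagger - q_{i\theta}(\mathbf{q},\mathbf{p})\|$ and $f_i := \|p^i_\dagger - p_{i\theta}(\mathbf{q},\mathbf{p})\|$. Comparing one Euler step to the exact flow over time $\theta$, the \emph{local} truncation errors are controlled by Lemma \ref{thm:oracle} applied at the point $(q^i_\dagger, p^i_\dagger)$: they are at most $\tfrac12\theta^2\tfrac{M_2}{\sqrt{m_2}}\sqrt{H_i}$ and $\tfrac1{\sqrt2}\theta^2 M_2\sqrt{H_i}$ respectively. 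The \emph{propagated} error over the remaining $\theta$-interval is controlled by Lemma \ref{thm:bounds}(1) (with $\hat q_0 = e_i$, $\hat p_0 = f_i$), which gives $e_{i+1} \le (e_i + \tfrac{f_i}{\sqrt{M_2}})e^{\theta\sqrt{M_2}}$ and a similar bound for $f_{i+1}$; equivalently, in the variables $(e,f/\sqrt{M_2})$ the exact-flow contribution is Lipschitz with constant $e^{\theta\sqrt{M_2}} \le 1 + 2\theta\sqrt{M_2}$ (since $\theta\sqrt{M_2} \le T\sqrt{M_2} \le \tfrac{1}{2\sqrt2}\sqrt{m_2/M_2}\cdot\sqrt{M_2}\cdot\tfrac{1}{\sqrt{M_2}}$... more simply $\theta\sqrt{M_2}$ is small). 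A discrete Gr\"onwall estimate on $g_i := e_i + f_i/\sqrt{M_2}$ then gives $g_N \le \sum_{i=0}^{N-1} (\text{local error}_i)\, e^{(N-i)\theta\sqrt{M_2}} \le N \cdot c'\theta^2\tfrac{M_2}{\sqrt{m_2}}\sqrt{C_0 H}\cdot e^{T\sqrt{M_2}}$. Since $N\theta = T$ and $e^{T\sqrt{M_2}} \le e^{1/2}$, this is $\le c''\theta T\tfrac{M_2}{\sqrt{m_2}}\sqrt{H}$; with careful constant bookkeeping, using $T\sqrt{M_2}\le \tfrac1{2\sqrt2}$ and $\sinh^2 t \le 1.2 t^2$ on $[0,\tfrac12]$ as elsewhere in the paper, one lands on the constant $6$ of Equation \eqref{eq:f20}.

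The main obstacle I anticipate is purely the constant-chasing: the per-step multiplicative factors $(1+c\theta^2 M_2)$ must be shown to compound to something bounded by the stated small constants ($6$ and $7$) uniformly in $N$, which requires being a little careful about whether one measures $\theta$ against $T$ or against $1/\sqrt{M_2}$, and about the interplay $\theta T M_2 \le \theta/T$ (valid since $T^2 M_2 \le \tfrac18 \le 1$). One must also handle the last, possibly truncated, Euler step created by the ceiling $\lceil T/\theta^{1/k}-1\rceil$ in Algorithm \ref{alg:Approx_integrator}; the hypothesis $T/\theta \in \mathbb{N}$ (with $k=1$) removes this annoyance, so no extra care is needed there. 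Finally, the step showing $H_i \le C_0 H_0$ is the structural crux: it is what makes the per-step errors summable rather than merely bounded, and it is the place where strong convexity (giving $\|q^i_\dagger\|^2 \le H_i/m_2$ up to constants via conservation-type reasoning) enters, so I would write that argument out in full while treating the rest as routine.
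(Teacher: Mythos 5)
Your overall strategy mirrors the paper's: control the energy along the numerical trajectory by an inductive/compounding argument, then bound the position error by telescoping local one-step errors (Lemma \ref{thm:oracle}) against the stability of the exact flow (part (1) of Lemma \ref{thm:bounds}). Your energy bookkeeping is a small genuine simplification, though: you Taylor-expand $H_{i+1}-H_i$ directly using the algebraic form of the Euler step, so the linear terms cancel and you never need to detour through the exact (energy-conserving) flow over time $\theta$ as the paper does. That is tidier.

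One slip, however, is the claimed per-step multiplicative factor. You write that each Euler step changes the energy by a factor $1+c\theta^2 M_2$. But of the two surviving terms after cancellation, $\tfrac{\theta^2}{2}\|U'(q^i_\dagger)\|^2$ is bounded only by $\tfrac{\theta^2 M_2^2}{m_2}H_i$ (using $\|U'(q)\|\le M_2\|q\|\le \tfrac{M_2}{\sqrt{m_2}}\sqrt{2H_i}$), so the correct per-step factor is $1+c\,\theta^2\tfrac{M_2^2}{m_2}$, which can be much larger when $M_2/m_2$ is large. The argument still closes, because the hypothesis $T\le \tfrac{1}{2\sqrt2}\tfrac{\sqrt{m_2}}{M_2}$ gives $T^2\tfrac{M_2^2}{m_2}\le \tfrac18$, so $N\cdot\theta^2\tfrac{M_2^2}{m_2}=\theta T\tfrac{M_2^2}{m_2}\le \tfrac{\theta}{T}\cdot\tfrac18$, and the compounding $e^{c\theta T M_2^2/m_2}\le e^{c/8}$ stays uniformly bounded. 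You should make the $M_2^2/m_2$ factor explicit before concluding, as the paper does (its inductive per-step bound in \eqref{eq:f16} is $\approx 5.5\,\theta^2\tfrac{M_2^2}{m_2}E$), otherwise the constant-chasing that lands you on $7\tfrac{\theta}{T}$ will not actually go through as written.
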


\begin{figure}[t]
\begin{center}
\includegraphics[trim={0 9cm 1.5cm 7cm}, clip, scale=0.75]{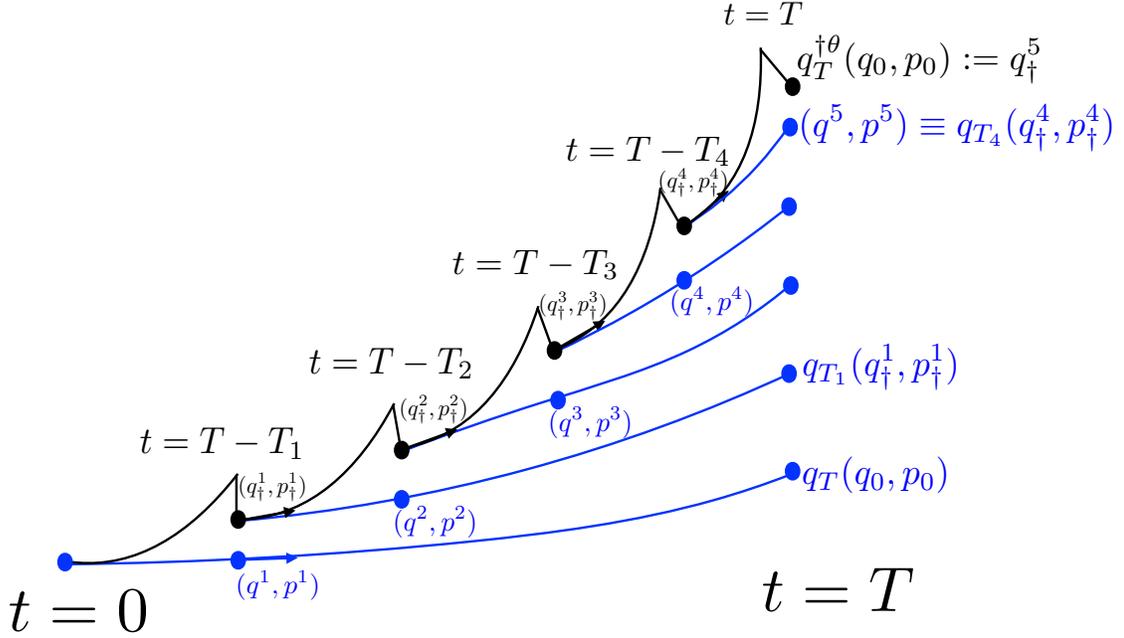}
\end{center}
\caption{This is an illustration of the proof of Lemma \ref{thm:Approx_integrator}.  Steps taken by the oracle $\bigstar$ (which are illustrated here by a saw-toothed path, with each ``tooth" representing a step) are in black.  The true Hamiltonian trajectories are blue curves.  Only the Hamiltonian trajectory $q_t(\mathbf{q},\mathbf{p})$ on the bottom belongs to the idealized HMC Markov chain.  We imagine the other Hamiltonian trajectories to help us bound the error.  The distance between $(q^i,p^i)$ (blue dot, blue arrow) and $(q_\dagger^i,p_\dagger^i)$ (black dot, black arrow) at each time $t=T-T_i$ are bounded because of our assumptions on the accuracy of the oracle $\bigstar$.  The distance between any blue dot at $t=T$ to the blue dot directly below it is bounded using part (1) of Lemma \ref{thm:bounds}.}\label{fig:Approx_integrator}
\end{figure}

\begin{proof}
In this proof we will use the notation of Algorithm \ref{alg:Approx_integrator}.  Define $q^{i+1} := q_{\theta}(q^i_\dagger,p^i_\dagger)$ and $p^{i+1} = p_{\theta}(q^i_\dagger,p^i_\dagger)$ for every $i \in 0,1,\ldots, \frac{T}{\theta}$.  Also set $q^0 = \mathbf{q}$ and $p^0 = \mathbf{p}$.
Set $E := H(\mathbf{q},\mathbf{p}) = U(\mathbf{q}) + \frac{1}{2}\|\mathbf{p}\|^2$.

We will now prove the following claim by induction:  For all $0\leq j \leq  \frac{T}{\theta} $, the inequality
\be \label{eq:inductive_assumption}
|H(q^j_\dagger, p^j_\dagger)-E| \leq j \times 7 \left(\frac{\theta}{T}\right)^2 E
\ee
is satisfied.  The case $i=0$ is obvious, since $H(q^0_\dagger, p^0_\dagger)= H(\mathbf{q},\mathbf{p}) = E$.  We now fix $i$ and assume that Inequality \eqref{eq:inductive_assumption} is satisfied for all $0 \leq j \leq i$; we then show that it is satisfied for $j= i+1$.

\textbf{Inductive assumption:}
Suppose that $|H(q^i_\dagger, p^i_\dagger)-E| \leq i \times 7 \left(\frac{\theta}{T}\right)^2 E$, and that $i \leq \frac{T}{\theta}-1$.\\

Since $i \leq \frac{T}{\theta}-1 \leq \frac{T}{\theta}$ and $\frac{\theta}{T} \leq \frac{1}{7}$, Inequality \eqref{eq:inductive_assumption} implies that
\be \label{eq:f5}
H(q^{i+1}, p^{i+1}) \stackrel{{\scriptsize \textrm{Conservation of Energy }}}{=} H(q^i_\dagger, p^i_\dagger) \leq E+ 7 \frac{\theta}{T} E \leq 2 E.
\ee

Then Lemma \ref{thm:oracle} and Inequality \eqref{eq:f5} together imply that

\be \label{eq:f10}
\|q^{i+1}_{\dagger} - q^{i+1}\| &\leq \theta^2 \frac{M_2}{\sqrt{m_2}} \sqrt{E}
\ee
and
\be \label{eq:f10b}
\|p^{i+1}_{\dagger} - p^{i+1}\| &\leq  \theta^2 M_2 \sqrt{E} .
\ee

But by Equation \ref{eq:f5},
\be \label{eq:f11}
\|p^{i+1}\| \leq 2 \sqrt{E}.
\ee

Therefore, by the triangle inequality, 
\be \label{eq:f15}
\|p^{i+1}_\dagger\|^2 - \|p^{i+1}\|^2 &\leq (\|p^{i+1}_\dagger - p^{i+1}\| + \|p^{i+1}\|)^2 - \|p^{i+1}\|^2\\
& = \|p^{i+1}_\dagger - p^{i+1}\|^2 + 2\|p^{i+1}_\dagger - p^{i+1}\| \times \|p^{i+1}\|\\
& \stackrel{{\scriptsize \textrm{Eq. }}\ref{eq:f11}}{\leq}  \|p^{i+1}_\dagger - p^{i+1}\|^2 + 2\|p^{i+1}_\dagger - p^{i+1}\| \times 2\sqrt{E}\\
& = \|p^{i+1}_\dagger - p^{i+1}\| \times (\|p^{i+1}_\dagger - p^{i+1}\| + 4\sqrt{E}) \\
&\stackrel{{\scriptsize \textrm{Eq. }}\ref{eq:f10b}}{\leq} \theta^2 M_2 \sqrt{E} \times (\theta^2 M_2 \sqrt{E} + 4\sqrt{E})\\
&= E \theta^2 M_2 \times (\theta^2 M_2 + 4).
\ee

Also, Equation \eqref{eq:f5}, together with Assumption \ref{AssumptionsConvexity}, implies that 
\be \label{eq:f12}
\|q^{i+1}\| \leq \frac{1}{\sqrt{m_2}}\sqrt{2E},
\ee
and thus (again by Assumption \ref{AssumptionsConvexity}) that
\be \label{eq:f13}
\|U'(q^{i+1})\| \stackrel{{\scriptsize \textrm{Assumption }}\ref{AssumptionsConvexity}}{\leq}  M_2 \|q^{i+1}\| \stackrel{{\scriptsize \textrm{Eq. }}\ref{eq:f12}}{\leq}  \frac{M_2}{\sqrt{m_2}}\sqrt{2 E}.
\ee

Therefore,
\be \label{eq:f14}
U(q^{i+1}_\dagger) - U(q^{i+1}) &\leq \max_{x \in \textrm{Convex Hull}(\{q^{i+1}, q^{i+1}_\dagger\})} \|U'(x)\| \times \|q^{i+1}_\dagger - q^{i+1}\| \\
& \stackrel{{\scriptsize \textrm{Assumption }}\ref{AssumptionsConvexity}}{\leq} \left(\|U'(q^{i+1})\| + M_2\|q^{i+1}_\dagger - q^{i+1}\|\right) \times \|q^{i+1}_\dagger - q^{i+1}\|\\
&\stackrel{{\scriptsize \textrm{Eq. }}\ref{eq:f13}}{\leq}  \left(\frac{M_2}{\sqrt{m_2}}\sqrt{2 E} + M_2\|q^{i+1}_\dagger - q^{i+1}\|\right) \times \|q^{i+1}_\dagger - q^{i+1}\|\\
&\stackrel{{\scriptsize \textrm{Eq. }}\ref{eq:f10}}{\leq} \left(\frac{M_2}{\sqrt{m_2}}\sqrt{2 E} + M_2\theta^2 \frac{M_2}{\sqrt{m_2}} \sqrt{E}\right) \times \theta^2 \frac{M_2}{\sqrt{m_2}} \sqrt{E}\\
&\leq \left(\sqrt{2} + M_2\theta^2 \right) \times \theta^2 \frac{M_2^2}{m_2} E.
\ee

Hence the total change in energy is bounded by
\be \label{eq:f16}
|H(q^{i+1}_\dagger, p^{i+1}_\dagger) - H(q^i_\dagger, p^i_\dagger)| &\stackrel{{\scriptsize \textrm{Conservation of Energy }}}{=} |H(q^{i+1}_\dagger, p^{i+1}_\dagger) - H(q^{i+1}, p^{i+1})|\\
&\stackrel{{\scriptsize \textrm{Eq. }}\ref{eq:f14}, \, \ref{eq:f15}}{\leq} \left(\sqrt{2} + M_2\theta^2 \right) \times \theta^2 \frac{M_2^2}{m_2} E +  \frac{1}{2}E \theta^2 M_2 \times (\theta^2  M_2 + 4)\\
&\leq  \left(4 + 1.5 M_2\theta^2 \right) \times \theta^2  \frac{M_2^2}{m_2} E\\
&\leq  \left(4 + 1.5 \right) \times \theta^2 \frac{M_2^2}{m_2} E\\
&=  5.5 [\frac{\theta}{T}\times T]^2 \frac{M_2^2}{m_2} E\\
&\leq  5.5 \left(\frac{\theta}{T}\times \frac{1}{2\sqrt{2}} \frac{\sqrt{m_2}}{M_2}\right)^2 \frac{M_2^2}{m_2} E\\
&\leq  7 \left(\frac{\theta}{T}\right)^2 E,
\ee
 where the second inequality is true since $0< \frac{M_2}{m_2}\leq 1$, and the third and fourth inequalities are true since $\theta \leq T \leq \frac{1}{2\sqrt{2}} \frac{\sqrt{m_2}}{M_2} \leq \frac{1}{\sqrt{M_2}}$.

Therefore, 
\be \label{eq:f17}
|H(q^{i+1}_\dagger, p^{i+1}_\dagger) - E| &\stackrel{{\scriptsize \textrm{Eq. }}\ref{eq:f16}}{\leq} |H(q^i_\dagger, p^i_\dagger) - E| + 7 \left(\frac{\theta}{T}\right)^2 E\\
&\stackrel{{\scriptsize \textrm{by inductive assumption}}}{\leq}  i \times 7 \left(\frac{\theta}{T}\right)^2 E + 7 \left(\frac{\theta}{T}\right)^2 E\\
&= (i+1) \times 7 \left(\frac{\theta}{T}\right)^2 E.
\ee
This completes the proof by induction of Inequality \eqref{eq:inductive_assumption} (and in particular this implies Inequality \eqref{eq:f21}).

Therefore, since $\frac{\theta}{T} \leq \frac{1}{7}$,
\begin{equation}\label{eq:f6}
|H(q^i_\dagger, p^i_\dagger)-E| \leq i\times7 \left(\frac{\theta}{T}\right)^2 E  \leq E, \quad \quad \forall \, 0\leq i \leq \frac{T}{\theta}.
\end{equation}

Therefore,  by Lemma \ref{thm:oracle} 
\be \label{eq:f18}
\|q^i_\dagger - q^i \| \leq  \theta^2  \frac{M_2}{\sqrt{m_2}} \sqrt{E} \quad \mathrm{and} \quad \|p^i_\dagger - p^i \| \leq \theta^2  M_2 \sqrt{E} \quad \quad \forall \, 0\leq i \leq \frac{T}{\theta}.
\ee

Define $T_i:= T- \theta \times i$ for all $i$. Therefore, since $T_{i+1} \leq T \leq \frac{1}{2\sqrt{2}} \frac{\sqrt{m_2}}{M_2} \leq \frac{1}{\sqrt{M_2}}$, for all $i\leq\frac{T}{\theta}-1$, we have by Lemma \ref{thm:bounds}

\be \label{eq:f19}
\|q_{T_{i+1}}(q^{i+1}_\dagger, p^{i+1}_\dagger) - q_{T_i}(q^i_\dagger, p^i_\dagger)\| &= \|q_{T_{i+1}}(q^{i+1}_\dagger, p^{i+1}_\dagger) - q_{T_{i+1}}(q^{i+1}, p^{i+1})\|\\
& \stackrel{{\scriptsize \textrm{Eq. }} \ref{thmboundsconc1}{\scriptsize \textrm{ of Lemma }}\ref{thm:bounds}}{\leq} \frac{1}{2}\left(\|q^{i+1}_\dagger - q^{i+1} \| + \frac{\|p^{i+1}_\dagger - p^{i+1} \|}{\sqrt{M_2}}\right)e^{T_{i+1}\sqrt{M_2}}\\
&\qquad \qquad + \frac{1}{2}\left(\|q^{i+1}_\dagger - q^{i+1} \| - \frac{\|p^{i+1}_\dagger - p^{i+1} \|}{\sqrt{M_2}}\right)e^{-T_{i+1}\sqrt{M_2}}\\
& \leq \frac{1}{2}\left(\|q^{i+1}_\dagger - q^{i+1} \| + \frac{\|p^{i+1}_\dagger - p^{i+1} \|}{\sqrt{M_2}}\right)e + \frac{1}{2}\left(\|q^{i+1}_\dagger - q^{i+1} \| - \frac{\|p^{i+1}_\dagger - p^{i+1} \|}{\sqrt{M_2}}\right)e^{0}\\
& \leq \left(\|q^{i+1}_\dagger - q^{i+1} \| + \frac{\|p^{i+1}_\dagger - p^{i+1} \|}{\sqrt{M_2}}\right)e\\
& \stackrel{{\scriptsize \textrm{Eq. }}\ref{eq:f18}}{\leq} \left(\theta^2 \frac{M_2}{\sqrt{m_2}} \sqrt{E} +  \frac{\theta^2 M_2 \sqrt{E}}{\sqrt{M_2}}\right)e \leq 6\theta^2 \frac{M_2}{\sqrt{m_2}} \sqrt{E},
\ee
where the second inequality is true since $0\leq T_{i+1} \leq \frac{1}{\sqrt{M_2}}$ and since the functions $e^t + e^{-t}$ and $e^t - e^{-t}$ are both nondecreasing in $t$ for $t\geq0$;  the fifth inequality is true since $\sqrt{m_2}\leq \sqrt{M_2}$. 

Therefore, since $q^{\frac{T}{\theta}}_\dagger = q_T^{\dagger \theta}(\mathbf{q},\mathbf{p})$,  $T_0=T$, and $(q_\dagger^0,p_\dagger^0) =(\mathbf{q},\mathbf{p})$,  by the triangle inequality (see Figure \ref{fig:Approx_integrator}) we have 
\be
\|q_T^{\dagger \theta}(\mathbf{q},\mathbf{p}) - q_T(\mathbf{q},\mathbf{p})\| &= \|q^{\frac{T}{\theta}}_\dagger - q_{T_0}(q_\dagger^0,p_\dagger^0)\|\\
&\leq \|q^{\frac{T}{\theta}}_\dagger - q_{T_{(\frac{T}{\theta}-1)}}(q^{\frac{T}{\theta}-1}_\dagger, p^{\frac{T}{\theta}-1}_\dagger) \|  + \sum_{i=0}^{\frac{T}{\theta}-2} \|q_{T_{i+1}}(q^{i+1}_\dagger, p^{i+1}_\dagger) - q_{T_i}(q^i_\dagger, p^i_\dagger)\|\\
&= \|q^{\frac{T}{\theta}}_\dagger - q^{\frac{T}{\theta}} \|  + \sum_{i=0}^{\frac{T}{\theta}-2} \|q_{T_{i+1}}(q^{i+1}_\dagger, p^{i+1}_\dagger) - q_{T_i}(q^i_\dagger, p^i_\dagger)\|\\
&\stackrel{{\scriptsize \textrm{Eq. }}\ref{eq:f18}}{\leq} \theta^2  \frac{M_2}{\sqrt{m_2}} \sqrt{E}  + \sum_{i=0}^{\frac{T}{\theta}-2} \|q_{T_{i+1}}(q^{i+1}_\dagger, p^{i+1}_\dagger) - q_{T_i}(q^i_\dagger, p^i_\dagger)\|\\
&\stackrel{{\scriptsize \textrm{Eq. }}\ref{eq:f19}}{\leq}  \theta^2  \frac{M_2}{\sqrt{m_2}} \sqrt{E}  + \sum_{i=0}^{\frac{T}{\theta}-2} 6\theta^2  \frac{M_2}{\sqrt{m_2}} \sqrt{E}\\
&\leq \frac{T}{\theta}\times 6\theta^2  \frac{M_2}{\sqrt{m_2}} \sqrt{E}.\\
\ee
This completes the proof of Inequality \eqref{eq:f20}.
\end{proof}

\section{Higher-Order Integrators} \label{AppHigherOrder}

Throughout this section, fix integers $d, \mathsf{m} \in \mathbb{N}$ with $\frac{d}{\mathsf{m}} \in \mathbb{N}$ and numerical integrator $\sharp$.  In this section we analyze a class of numerical integrators satisfying Condition \ref{condition:kth_order}, and show that the leapfrog integrator given in Algorithms \ref{alg:Integrator_leapfrog} and \ref{alg:Approx_integrator} with $k=2$ is included in this class.

We set some notation. Recall that, for every  $x \in \mathbb{R}^d$ and all $i \in \{1, \ldots \frac{d}{\mathsf{m}}\}$, we define $x^{(i)} := (x[\mathsf{m}(i-1)+1], \ldots, x[\mathsf{m}i])$. We will consider potentials $U \, : \, \mathbb{R}^{d} \mapsto \mathbb{R}$ satisfying the separability assumption \ref{assumption:product_measure_potential}. Recall that, when  Assumption \ref{assumption:product_measure_potential} holds, Hamilton's equations are separable, so we have
\be \label{eq:separabililty_ideal}
q_T(\mathbf{q}, \mathbf{p}) [\mathsf{m}(i-1)+1, \ldots, \mathsf{m}i ] &= q_T^{(i)  }(\mathbf{q}^{(i)}, \mathbf{p}^{(i)})\\
p_T(\mathbf{q}, \mathbf{p})  [\mathsf{m}(i-1)+1, \ldots, \mathsf{m}i ] &= p_T^{(i)  }(\mathbf{q}^{(i)}, \mathbf{p}^{(i)}) \quad \quad \forall i \in \{1, \ldots \frac{d}{\mathsf{m}}\}
\ee
for every $\mathbf{q}, \mathbf{p} \in \mathbb{R}^d$.
Also under Assumption \ref{assumption:product_measure_potential}, by Condition \ref{DefSepCond} we have 
\be \label{eq:separabililty_sharp}
q_T^{\sharp \theta}(\mathbf{q}, \mathbf{p}) [\mathsf{m}(i-1)+1, \ldots, \mathsf{m}i ] &= q_T^{(i) \sharp \theta}(\mathbf{q}^{(i)}, \mathbf{p}^{(i)})\\
p_T^{\sharp \theta}(\mathbf{q}, \mathbf{p})  [\mathsf{m}(i-1)+1, \ldots, \mathsf{m}i ] &= p_T^{(i) \sharp \theta}(\mathbf{q}^{(i)}, \mathbf{p}^{(i)}) \quad \quad \forall i \in \{1, \ldots \frac{d}{\mathsf{m}}\}
\ee

for every $\mathbf{q}, \mathbf{p} \in \mathbb{R}^d$.

We will restrict our attention to numerical integrators satisfying property \ref{condition:kth_order}. As motivation, we note that the second-order leapfrog integrator does satisfy this property:

\begin{lemma} (leapfrog error bounds, Prop. 5.3 and 5.4 of \cite{HMC_optimal_tuning}) \label{thm:leapfrog}
Suppose that $U_i:\mathbb{R}^\mathsf{m} \rightarrow \mathbb{R}^{+}$ satisfies Assumption \ref{assumption:leapfrog} for $\mathsf{a}=4$ and $\mathsf{b} = 3$. Then the leapfrog integrator defined in Algorithms \ref{alg:Integrator_leapfrog} and \ref{alg:Approx_integrator} with $k=2$ satisfies Condition \ref{condition:kth_order} with constant $\mathsf{c} = 2$. 
\end{lemma}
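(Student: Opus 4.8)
The plan is to invoke directly the local error analysis of the Verlet/leapfrog integrator that is already carried out in Propositions 5.3 and 5.4 of \cite{HMC_optimal_tuning}, and then repackage its conclusion into the exact form demanded by Condition \ref{condition:kth_order}. The first step is to recall that, under Assumption \ref{assumption:leapfrog} with $\mathsf{a}=4,\mathsf{b}=3$ (so $U_i$ is $C^4$ with the first three Fréchet derivatives of $U_i'$ bounded by $\mathsf{B}$), one step of leapfrog with stepsize $h=\sqrt{\theta}$ has one-step position error $O(h^3)$ and one-step energy error $O(h^3)$, where the implied constants depend only on $\mathsf{B}$ and polynomially on the energy $H(\mathbf{q},\mathbf{p})$ of the initial data. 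Since the integrator in Algorithm \ref{alg:Approx_integrator} with $k=2$ takes $N = \lceil T/\theta^{1/2}\rceil = \lceil T/h\rceil$ steps, the accumulated position error over the whole trajectory is $N\cdot O(h^3) = O(T h^2) = O(T\theta)$, and likewise the accumulated energy error is $O(T h^2)=O(T\theta)$; folding the $T$-dependence into a single constant $\mathsf{K}=\mathsf{K}_T$ gives exactly the two displayed inequalities of Condition \ref{condition:kth_order} with $\mathsf{c}=2$ (the quadratic and quartic powers of $H$ in the two bounds are what the Grönwall-type propagation of the local error through Hamilton's flow produces; $2\mathsf{c}=4$ matches the $C^4$ requirement).

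Concretely, I would proceed as follows. First, state precisely what Propositions 5.3 and 5.4 of \cite{HMC_optimal_tuning} give: a bound of the form $\|q_h^{\sharp\theta}(\mathbf q,\mathbf p)-q_h(\mathbf q,\mathbf p)\|\le C_1(\mathsf B)\,h^3\,(H(\mathbf q,\mathbf p)^{c_1}+1)$ for a single leapfrog step, and $|H(q_h^{\sharp\theta},p_h^{\sharp\theta})-H(\mathbf q,\mathbf p)|\le C_2(\mathsf B)\,h^3\,(H(\mathbf q,\mathbf p)^{c_2}+1)$. Second, observe that the energy error bound, summed over the $N\le 1+T/h$ steps, controls how much the energy — and hence the relevant polynomial prefactor $H^{c}$ — can grow along the discrete trajectory; for $h$ small enough (absorbed into $\mathsf K_T$) the energy stays within a constant factor of its initial value, so all intermediate prefactors are dominated by $H(\mathbf q,\mathbf p)^{c}+1$ up to a constant. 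Third, sum the per-step position errors, propagating each through the exact Hamiltonian flow on the remaining time interval using part (1) of Lemma \ref{thm:bounds} (exactly as in the proof of Lemma \ref{thm:Approx_integrator} for the Euler case), which costs only an $e^{T_i\sqrt{M_2}}=O(1)$ factor per term; this yields a total of $N\cdot O(h^3)=O(T h^2)=O(T\theta)$. Fourth, collect constants into $\mathsf K=\mathsf K_T$ and read off Condition \ref{condition:kth_order} with $\mathsf c=2$, $2\mathsf c=4$. Finally, note the separability hypothesis Assumption \ref{assumption:product_measure_potential} and Definition \ref{DefSepCond} are only needed so that the per-block bounds of \cite{HMC_optimal_tuning} (stated for potentials on $\mathbb R^{\mathsf m}$) apply; the statement of the lemma is itself about a single block $U_i:\mathbb R^{\mathsf m}\to\mathbb R^+$, so no reassembly across blocks is required here.

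The only mildly delicate point — and the step I expect to be the main obstacle — is the bookkeeping of the polynomial-in-energy prefactors: one must check that the energy does not blow up along the $\sim T/h$ discrete steps before the $O(h^2)$ smallness kicks in, so that the prefactor $H^{c}(\mathbf q,\mathbf p)+1$ genuinely dominates all intermediate contributions and the final constant depends only on $T$ (and on $\mathsf B,\mathsf c$), not on $\mathbf q,\mathbf p$. This is handled by a short induction identical in spirit to the one in the proof of Lemma \ref{thm:Approx_integrator}: the energy-error bound of Proposition 5.4 gives $|H(q^{i}_\sharp,p^i_\sharp)-E|\le i\cdot C h^3(E^{c}+1)$, which for $i\le T/h$ and $h$ small is $\le E$, keeping everything within a constant factor; then the position errors are summed and propagated as above. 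Everything else is a direct citation of \cite{HMC_optimal_tuning} plus the elementary flow estimate of Lemma \ref{thm:bounds}, so I would keep the write-up short.
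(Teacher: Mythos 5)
Your proposal is correct in substance, but it is more elaborate than the paper's own argument, which is none: the paper states this lemma with no proof, treating it as a direct restatement of Propositions~5.3 and~5.4 of \cite{HMC_optimal_tuning}. Those propositions already give whole-trajectory error bounds for the leapfrog integrator over $[0,T]$, with a $T$- and $\mathsf{B}$-dependent constant and a polynomial-in-energy prefactor --- they are not one-step bounds, as your sketch assumes. Consequently, the per-step accumulation, the propagation of local errors through the exact Hamiltonian flow (via part~(1) of Lemma~\ref{thm:bounds}), and the energy-blowup induction that you rightly flag as the delicate point are all already packaged inside the citation rather than needing to be re-derived here.

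The only bookkeeping the paper implicitly performs is the change of variables $h=\sqrt{\theta}$ (the leapfrog step size used in Algorithm~\ref{alg:Integrator_leapfrog}), under which Algorithm~\ref{alg:Approx_integrator} with $k=2$ takes $\lceil T/h\rceil$ steps and the standard global second-order error $O(h^{2})$ becomes the $O(\theta)$ required by Condition~\ref{condition:kth_order}, together with fixing $\mathsf{c}=2$ so that $H^{\mathsf{c}}$ and $H^{2\mathsf{c}}$ dominate the polynomial prefactors appearing in the cited bounds. Your accumulation argument, modelled on the paper's Euler-integrator proof of Lemma~\ref{thm:Approx_integrator}, is a legitimate alternative route and would essentially give a self-contained re-proof of the cited propositions, but it is not what the paper does, and its attribution of one-step bounds to Propositions~5.3 and~5.4 is a misreading of what the citation supplies.
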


We now proceed to bound the error for the integrator $\sharp$. We begin by collecting the main assumptions of this section into an easy-to-use lemma:

\begin{lemma} \label{thm:leapfrog2}
Let $U$ and $U_1,U_2, \ldots, U_{\frac{d}{\mathsf{m}}}$ satisfy Assumptions \ref{assumption:product_measure_potential}, \ref{assumption:leapfrog} and let $\sharp$ satisfy Condition \ref{condition:kth_order}. Then

\[\|q_T^{\sharp \theta }(\mathbf{q},\mathbf{p}) - q_T(\mathbf{q},\mathbf{p})\| \leq \sqrt{\sum_{i=1}^\frac{d}{\mathsf{m}} \left[ \theta \mathsf{K} ( H_i^{\mathsf{c}}(\mathbf{q}^{(i)},\mathbf{p}^{(i)}) +1)\right]^2} \]
and
\[|H(q_T^{\sharp \theta}(\mathbf{q},\mathbf{p}), p_T^{\sharp \theta}(\mathbf{q},\mathbf{p}))-  H(\mathbf{q},\mathbf{p})| \leq \sum_{i=1}^\frac{d}{\mathsf{m}}  \theta \mathsf{K} (H_i^{2\mathsf{c}}(\mathbf{q}^{(i)},\mathbf{p}^{(i)}) +1) .\]
\end{lemma}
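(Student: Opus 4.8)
The statement is a purely bookkeeping consequence of the separability assumptions together with Condition \ref{condition:kth_order} applied coordinate-block by coordinate-block. The plan is to apply Condition \ref{condition:kth_order} separately to each of the $\frac{d}{\mathsf{m}}$ sub-Hamiltonians $H_i$, then reassemble the blockwise bounds into a bound for the full $d$-dimensional system. First I would invoke Assumption \ref{assumption:product_measure_potential} and the separability of Hamilton's equations, Equation \eqref{eq:separabililty_ideal}, together with the separability of the integrator, Equation \eqref{eq:separabililty_sharp} (which follows from the Separability Condition \ref{DefSepCond}), to write
\be
q_T^{\sharp\theta}(\mathbf{q},\mathbf{p})[\mathsf{m}(i-1)+1,\ldots,\mathsf{m}i] - q_T(\mathbf{q},\mathbf{p})[\mathsf{m}(i-1)+1,\ldots,\mathsf{m}i] = q_T^{(i)\sharp\theta}(\mathbf{q}^{(i)},\mathbf{p}^{(i)}) - q_T^{(i)}(\mathbf{q}^{(i)},\mathbf{p}^{(i)})
\ee
for each $i$. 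Since each $U_i$ satisfies Assumption \ref{assumption:leapfrog} and $\sharp$ satisfies Condition \ref{condition:kth_order}, applied to the potential $U_i \, : \, \mathbb{R}^{\mathsf{m}} \mapsto \mathbb{R}^{+}$ with initial data $(\mathbf{q}^{(i)},\mathbf{p}^{(i)})$, we get $\|q_T^{(i)\sharp\theta}(\mathbf{q}^{(i)},\mathbf{p}^{(i)}) - q_T^{(i)}(\mathbf{q}^{(i)},\mathbf{p}^{(i)})\| \leq \theta \mathsf{K}(H_i^{\mathsf{c}}(\mathbf{q}^{(i)},\mathbf{p}^{(i)})+1)$.

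Next I would assemble the position bound: the squared Euclidean norm of $q_T^{\sharp\theta}(\mathbf{q},\mathbf{p}) - q_T(\mathbf{q},\mathbf{p})$ on $\mathbb{R}^d$ is the sum over $i$ of the squared norms of the blocks, so
\be
\|q_T^{\sharp\theta}(\mathbf{q},\mathbf{p}) - q_T(\mathbf{q},\mathbf{p})\|^2 = \sum_{i=1}^{\frac{d}{\mathsf{m}}} \|q_T^{(i)\sharp\theta}(\mathbf{q}^{(i)},\mathbf{p}^{(i)}) - q_T^{(i)}(\mathbf{q}^{(i)},\mathbf{p}^{(i)})\|^2 \leq \sum_{i=1}^{\frac{d}{\mathsf{m}}} \left[\theta\mathsf{K}(H_i^{\mathsf{c}}(\mathbf{q}^{(i)},\mathbf{p}^{(i)})+1)\right]^2,
\ee
and taking square roots gives the first claimed inequality. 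For the energy bound, I would use that $H$ is additive over the blocks: $H(\mathbf{q},\mathbf{p}) = \sum_i H_i(\mathbf{q}^{(i)},\mathbf{p}^{(i)})$ and likewise $H(q_T^{\sharp\theta}(\mathbf{q},\mathbf{p}),p_T^{\sharp\theta}(\mathbf{q},\mathbf{p})) = \sum_i H_i(q_T^{(i)\sharp\theta}(\mathbf{q}^{(i)},\mathbf{p}^{(i)}),p_T^{(i)\sharp\theta}(\mathbf{q}^{(i)},\mathbf{p}^{(i)}))$, using Equations \eqref{eq:separabililty_ideal} and \eqref{eq:separabililty_sharp} again to identify the blocks of the integrator output. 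Then by the triangle inequality the total energy error is at most $\sum_i |H_i(q_T^{(i)\sharp\theta},p_T^{(i)\sharp\theta}) - H_i(\mathbf{q}^{(i)},\mathbf{p}^{(i)})|$, and applying the second bound in Condition \ref{condition:kth_order} to each $U_i$ gives $\sum_i \theta\mathsf{K}(H_i^{2\mathsf{c}}(\mathbf{q}^{(i)},\mathbf{p}^{(i)})+1)$, as desired.

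The only point requiring a little care — and the step I would flag as the main (though still minor) obstacle — is making sure that ``$H$'' in the second displayed inequality of Condition \ref{condition:kth_order} refers to the sub-system Hamiltonian $H_i$ when the condition is applied to $U_i$, and that this indeed equals the $i$-th summand of the full Hamiltonian $H$ evaluated on the corresponding coordinates. This is exactly the content of the definition $H_i(q^{(i)},p^{(i)}) = U_i(q^{(i)}) + \frac{1}{2}\|p^{(i)}\|^2$ in Assumption \ref{assumption:product_measure_potential} together with $U = \sum_i U_i(q^{(i)})$ and $\frac{1}{2}\|p\|^2 = \sum_i \frac{1}{2}\|p^{(i)}\|^2$, so it is immediate, but it is worth stating explicitly. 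Everything else is elementary: the Pythagorean identity for the position error and the triangle inequality for the energy error. No analytic estimates or ODE comparisons are needed here, since all the real work has been pushed into Condition \ref{condition:kth_order} and Lemma \ref{thm:leapfrog}.
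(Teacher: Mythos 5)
Your proof is correct and spells out exactly what the paper's one-line proof (``This is an immediate consequence of Condition \ref{condition:kth_order}, Assumption \ref{assumption:product_measure_potential}, and Equations \eqref{eq:separabililty_ideal} and \eqref{eq:separabililty_sharp}'') is appealing to: separability to reduce to blocks, Condition \ref{condition:kth_order} blockwise, then the Pythagorean identity for the position error and the triangle inequality plus additivity of $H$ for the energy error. No gaps; this is the paper's argument made explicit.
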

\begin{proof}
This is an immediate consequence of Condition \ref{condition:kth_order}, Assumption \ref{assumption:product_measure_potential}, and Equations \ref{eq:separabililty_ideal} and \ref{eq:separabililty_sharp}.
\end{proof}

In order to analyze Metropolis-adjusted HMC (Algorithm \ref{alg:Metropolis}) with the higher-order integrator $\sharp$, we must first analyze unadjusted HMC (Algorithm \ref{alg:Unadjusted}) with a modified ``toy" integrator $\diamondsuit$.  Towards this end, we will define a family of  ``good sets"
\be  \label{eq:good_set}
\mathsf{G}  = \{(q,p) \in \mathbb{R}^d\times \mathbb{R}^d :  \max_{1\leq i \leq \frac{d}{\mathsf{m}}}\|q^{(i)}\|< \mathsf{g}_\infty, \max_{1\leq i \leq \frac{d}{\mathsf{m}}}\|p^{(i)}\| < \mathsf{g}_\infty, ||p|| > \mathsf{g}_2 \},
\ee
depending on two constants $\mathsf{g}_2, \mathsf{g}_\infty$ that are to be fixed later.  Let $\clubsuit$ be the numerical integrator given by Algorithm \ref{alg:Approx_integrator} with oracle given by the Euler integrator of Algorithm \ref{alg:Integrator_first_order} and $k=1$. We now define the modified ``toy" integrator $\diamondsuit$ for which we will analyze Algorithm \ref{alg:Unadjusted}:
 \begin{defn} \label{defn:toy}
Define the toy integrator $\diamondsuit$ associated with numerical integrator $\sharp$ by the equations
\be \label{eq:toy}
(q^{\diamondsuit \theta}_{T}(q,p), p^{\diamondsuit \theta}_{T}(q,p)) := \begin{cases}(q^{\sharp \theta}_{T}(q,p), p^{\sharp \theta}_{T}(q,p)) \quad \quad \quad \quad  \textrm{      if         } (q,p) \in \mathsf{G}\\
(q^{\clubsuit \theta}_{T}(q,p), p^{\clubsuit \theta}_{T}(q,p))  \quad \quad \quad \, \, \, \textrm{   otherwise.}
\end{cases}
\ee
\end{defn}

\begin{lemma} \label{thm:leapfrog3}
Let $U$ and $U_1,U_2, \ldots, U_{\frac{d}{\mathsf{m}}}$ satisfy Assumptions \ref{assumption:product_measure_potential}, \ref{assumption:leapfrog} and let $\sharp$ satisfy Condition \ref{condition:kth_order}.
Fix  $\mathsf{g}_2, \mathsf{g}_\infty$ and let $(\mathbf{q},\mathbf{p}) \in \mathsf{G}$.  Then there exists a constant $\mathsf{K}'$ that depends only on $\mathsf{B}$, $\mathsf{K}$, $T$, $M_2$, $\mathsf{c}$ and $\mathsf{g}_{\infty}$, and is polynomial in $\mathsf{g}_\infty$, such that
\be \label{IneqAppBLemm41} 
\|q_T^{\sharp \theta }(\mathbf{q},\mathbf{p}) - q_T(\mathbf{q},\mathbf{p})\| \leq \sqrt{\frac{d}{\mathsf{m}}} \times \theta \mathsf{K}' 
\ee 
and
\be \label{IneqAppBLemm42}
|H(q_T^{\sharp \theta}(\mathbf{q},\mathbf{p}), p_T^{\sharp \theta}(\mathbf{q},\mathbf{p}))-  H(\mathbf{q},\mathbf{p})| \leq  \frac{d}{\mathsf{m}} \times  \theta \mathsf{K}'.
\ee
\end{lemma}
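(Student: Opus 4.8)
The plan is to reduce Lemma \ref{thm:leapfrog3} to Lemma \ref{thm:leapfrog2} by observing that on the good set $\mathsf{G}$ the toy integrator $\diamondsuit$ agrees with $\sharp$, and then simply bounding each per-block error term in the sums from Lemma \ref{thm:leapfrog2} by a constant. First I would note that, since $(\mathbf{q},\mathbf{p}) \in \mathsf{G}$, the defining inequalities of $\mathsf{G}$ in Equation \eqref{eq:good_set} give $\|\mathbf{q}^{(i)}\| < \mathsf{g}_\infty$ and $\|\mathbf{p}^{(i)}\| < \mathsf{g}_\infty$ for every $i \in \{1,\ldots,\frac{d}{\mathsf{m}}\}$. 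Hence, using the Hamiltonian $H_i(q^{(i)},p^{(i)}) = U_i(q^{(i)}) + \frac{1}{2}\|p^{(i)}\|^2$ from Assumption \ref{assumption:product_measure_potential}, and bounding $U_i$ on the ball of radius $\mathsf{g}_\infty$ using the boundedness of the first $\mathsf{b}$ Fréchet derivatives of $U_i'$ from Assumption \ref{assumption:leapfrog} (together with $U_i \geq 0$ and, say, a second-order Taylor bound controlled by $\mathsf{B}$), we obtain a uniform bound $H_i(\mathbf{q}^{(i)},\mathbf{p}^{(i)}) \leq \mathsf{H}_{\max}$, where $\mathsf{H}_{\max}$ depends only on $\mathsf{B}$, $\mathsf{m}$ (absorbed into constants), and $\mathsf{g}_\infty$, and is polynomial in $\mathsf{g}_\infty$.

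Next I would invoke Lemma \ref{thm:leapfrog2}, which applies since $U, U_1,\ldots,U_{d/\mathsf{m}}$ satisfy Assumptions \ref{assumption:product_measure_potential}, \ref{assumption:leapfrog} and $\sharp$ satisfies Condition \ref{condition:kth_order}. For the position error, each summand is $\left[\theta \mathsf{K}(H_i^{\mathsf{c}}(\mathbf{q}^{(i)},\mathbf{p}^{(i)}) + 1)\right]^2 \leq \left[\theta \mathsf{K}(\mathsf{H}_{\max}^{\mathsf{c}} + 1)\right]^2$, so the sum of $\frac{d}{\mathsf{m}}$ such terms is at most $\frac{d}{\mathsf{m}}\left[\theta \mathsf{K}(\mathsf{H}_{\max}^{\mathsf{c}} + 1)\right]^2$, and taking the square root gives Inequality \eqref{IneqAppBLemm41} with $\mathsf{K}' := \mathsf{K}(\mathsf{H}_{\max}^{\mathsf{c}} + 1)$. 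For the energy error, Lemma \ref{thm:leapfrog2} bounds it by $\sum_{i=1}^{d/\mathsf{m}} \theta \mathsf{K}(H_i^{2\mathsf{c}}(\mathbf{q}^{(i)},\mathbf{p}^{(i)}) + 1) \leq \frac{d}{\mathsf{m}} \theta \mathsf{K}(\mathsf{H}_{\max}^{2\mathsf{c}} + 1)$, giving Inequality \eqref{IneqAppBLemm42} after enlarging $\mathsf{K}'$ to $\mathsf{K}(\mathsf{H}_{\max}^{2\mathsf{c}} + 1)$ if necessary (so $\mathsf{K}'$ depends only on $\mathsf{B}$, $\mathsf{K}$, $T$, $M_2$, $\mathsf{c}$ and $\mathsf{g}_\infty$, and is polynomial in $\mathsf{g}_\infty$, since $\mathsf{H}_{\max}$ is and raising to a fixed power $\mathsf{c}$ or $2\mathsf{c}$ preserves polynomial growth). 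Note that $\mathsf{K}_T$ depends only on $T$, and the role of $M_2$ enters only if one prefers to derive the $U_i$-bound via Assumption \ref{AssumptionsConvexity} rather than \ref{assumption:leapfrog}; either route works.

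\textbf{Main obstacle.} The only genuinely non-automatic step is obtaining the uniform bound $H_i(\mathbf{q}^{(i)},\mathbf{p}^{(i)}) \leq \mathsf{H}_{\max}$ with the claimed polynomial dependence on $\mathsf{g}_\infty$: the kinetic part $\frac{1}{2}\|\mathbf{p}^{(i)}\|^2 \leq \frac{1}{2}\mathsf{g}_\infty^2$ is immediate, but bounding $U_i(\mathbf{q}^{(i)})$ requires care, since Assumption \ref{assumption:leapfrog} bounds derivatives of $U_i'$, not $U_i$ itself. I would handle this by writing $U_i(\mathbf{q}^{(i)}) = U_i(0) + \int_0^1 \langle U_i'(t\mathbf{q}^{(i)}), \mathbf{q}^{(i)}\rangle\, \mathrm{d}t$ and then bounding $\|U_i'(t\mathbf{q}^{(i)})\|$ via the bound on its first Fréchet derivative (the Hessian), i.e. $\|U_i'(x)\| \leq \|U_i'(0)\| + \mathsf{B}\|x\|$, which yields a quadratic-in-$\mathsf{g}_\infty$ bound on $U_i(\mathbf{q}^{(i)})$; one then absorbs $U_i(0)$ and $\|U_i'(0)\|$ into constants (or uses $U_i(0)=0$, $U_i'(0)=0$ if the minimizer is at the origin blockwise, though in general one only needs these quantities to be bounded, which follows from the global assumptions). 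This makes $\mathsf{H}_{\max}$ polynomial in $\mathsf{g}_\infty$, and everything else is bookkeeping.
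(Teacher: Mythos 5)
Your proof is correct and follows essentially the same approach as the paper: reduce to Lemma~\ref{thm:leapfrog2}, bound each block Hamiltonian $H_i(\mathbf{q}^{(i)},\mathbf{p}^{(i)})$ uniformly on $\mathsf{G}$ by a quantity polynomial in $\mathsf{g}_\infty$, plug the constant bound into the $\frac{d}{\mathsf{m}}$-term sums, and take a square root on the position error. The only minor variation is in how you bound the potential term: the paper applies Assumption~\ref{AssumptionsConvexity} to get $H_i \leq M_2\|\mathbf{q}^{(i)}\|^2 + \tfrac{1}{2}\|\mathbf{p}^{(i)}\|^2$ directly (which is also why $M_2$ appears in the constant list), whereas you derive the bound on $U_i$ from the bound $\mathsf{B}$ on the Fr\'echet derivatives of $U_i'$ in Assumption~\ref{assumption:leapfrog} — but you correctly note that both routes work, and indeed $U_i(0)=0$ and $U_i'(0)=0$ follow from the standing normalization $U(0)=0$, $U'(0)=0$ together with $U_i \geq 0$ and blockwise separability, so your worry about absorbing $U_i(0)$ and $\|U_i'(0)\|$ into constants is resolved.
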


\begin{proof}

Applying Lemma \ref{thm:leapfrog2}, and noting that $H_i(\mathbf{q}^{(i)},\mathbf{p}^{(i)}) \leq M_2\|\mathbf{q}^{(i)}\|^2 + \frac{1}{2}\|\mathbf{p}^{(i)}\|^2$ for every $i$ by Assumption \ref{AssumptionsConvexity}, we have 
\be 
\|q_T^{\sharp \theta }(\mathbf{q},\mathbf{p}) - q_T(\mathbf{q},\mathbf{p})\| &\leq \sqrt{\sum_{i=1}^\frac{d}{\mathsf{m}} \left[ \theta \mathsf{K} ( H_i^{\mathsf{c}}(\mathbf{q}^{(i)},\mathbf{p}^{(i)}) +1)\right]^2} \\
&\leq \sqrt{\sum_{i=1}^\frac{d}{\mathsf{m}} \left[ \theta \mathsf{K} (( M_2 \mathsf{g}_{\infty}^2 + \frac{1}{2}\mathsf{g}_{\infty}^2)^{\mathsf{c}} +1)\right]^2} \\
&\leq \sqrt{\frac{d}{\mathsf{m}}} \, \theta \times (\mathsf{K} ( (M_2 \mathsf{g}_{\infty}^2 + \frac{1}{2}\mathsf{g}_{\infty}^2)^{\mathsf{c}} +1)).
\ee 
Thus, Inequality \eqref{IneqAppBLemm41} holds for any $\mathsf{K}' \geq \mathsf{K}_{1} \equiv  (\mathsf{K} ( (M_2 \mathsf{g}_{\infty}^2 + \frac{1}{2}\mathsf{g}_{\infty}^2)^{\mathsf{c}} +1))$, where $\mathsf{K}_{1}$ satisfies the conclusions of the lemma. Applying Lemma \ref{thm:leapfrog2} again, essentially the same calculation shows that Inequality \eqref{IneqAppBLemm42} holds for a similar constant $\mathsf{K}_{2}$. Taking $\mathsf{K}' = \max(\mathsf{K}_{1},\mathsf{K}_{2})$ completes the proof.
\end{proof}

\begin{lemma} \label{thm:leapfrog4}
Let $U$ and $U_1,U_2, \ldots, U_{\frac{d}{\mathsf{m}}}$ satisfy Assumptions \ref{assumption:product_measure_potential}, \ref{assumption:leapfrog}  and let $\sharp$ satisfy Condition \ref{condition:kth_order}.
Fix  $\mathsf{g}_2, \mathsf{g}_\infty$ and let $(\mathbf{q},\mathbf{p}) \in \mathsf{G}$.  Then there exists a constant $\mathsf{K}''$ that depends only on $\mathsf{B}$, $\mathsf{K}$, $T$, $M_2$, $\mathsf{c}$, $\mathsf{m}$ and $\mathsf{g}_{\infty}$, and is polynomial in $\mathsf{g}_\infty$ and $\frac{\sqrt{d}}{\mathsf{g}_2}$,  such that
\[\|q_T^{\sharp \theta }(\mathbf{q},\mathbf{p}) - q_T(\mathbf{q},\mathbf{p})\| \leq \theta \mathsf{K}''  \sqrt{H(\mathbf{q},\mathbf{p})}\]
and
\[|H(q_T^{\sharp \theta}(\mathbf{q},\mathbf{p}), p_T^{\sharp \theta}(\mathbf{q},\mathbf{p}))-  H(\mathbf{q},\mathbf{p})| \leq  \theta \mathsf{K}'' H(\mathbf{q},\mathbf{p}).\]
\end{lemma}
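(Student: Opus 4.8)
The plan is to derive Lemma \ref{thm:leapfrog4} from Lemma \ref{thm:leapfrog3} by converting the ``additive'' bounds $\sqrt{d/\mathsf{m}}\,\theta\mathsf{K}'$ and $(d/\mathsf{m})\,\theta\mathsf{K}'$ into bounds proportional to $\sqrt{H(\mathbf{q},\mathbf{p})}$ and $H(\mathbf{q},\mathbf{p})$ respectively. The key observation is that on the good set $\mathsf{G}$ we have the lower bound $\|\mathbf{p}\| > \mathsf{g}_2$, and since $H(\mathbf{q},\mathbf{p}) = U(\mathbf{q}) + \frac12\|\mathbf{p}\|^2 \geq \frac12\|\mathbf{p}\|^2 > \frac12 \mathsf{g}_2^2$ (using $U \geq 0$ from Assumption \ref{AssumptionsConvexity}), we get $1 \leq \frac{2}{\mathsf{g}_2^2} H(\mathbf{q},\mathbf{p})$ and hence $\sqrt{d/\mathsf{m}} \leq \frac{\sqrt{2d/\mathsf{m}}}{\mathsf{g}_2}\sqrt{H(\mathbf{q},\mathbf{p})}$ and $d/\mathsf{m} \leq \frac{2d/\mathsf{m}}{\mathsf{g}_2^2} H(\mathbf{q},\mathbf{p})$. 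Substituting into the conclusions of Lemma \ref{thm:leapfrog3}:
\be
\|q_T^{\sharp \theta }(\mathbf{q},\mathbf{p}) - q_T(\mathbf{q},\mathbf{p})\| \leq \sqrt{\tfrac{d}{\mathsf{m}}}\,\theta\mathsf{K}' \leq \theta\mathsf{K}' \cdot \tfrac{\sqrt{2}}{\mathsf{g}_2}\sqrt{\tfrac{d}{\mathsf{m}}}\,\sqrt{H(\mathbf{q},\mathbf{p})},
\ee
so we may take the ``position'' part of $\mathsf{K}''$ to be $\mathsf{K}' \cdot \frac{\sqrt{2}}{\mathsf{g}_2}\sqrt{d/\mathsf{m}}$, which is polynomial in $\mathsf{g}_\infty$ (since $\mathsf{K}'$ is) and in $\sqrt{d}/\mathsf{g}_2$; similarly the energy bound gives a constant of the form $\mathsf{K}' \cdot \frac{2}{\mathsf{g}_2^2}\cdot\frac{d}{\mathsf{m}}$, again polynomial in $\mathsf{g}_\infty$ and $\sqrt{d}/\mathsf{g}_2$ (here one absorbs one factor of $1/\mathsf{m}$ into the allowed dependence on $\mathsf{m}$, writing $d/\mathsf{m} = \frac{1}{\mathsf{m}}(\sqrt{d})^2$). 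Taking $\mathsf{K}''$ to be the maximum of these two constants completes the argument.

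The one subtlety to check is the bookkeeping on which parameters $\mathsf{K}''$ is allowed to depend: the statement permits dependence on $\mathsf{B}, \mathsf{K}, T, M_2, \mathsf{c}, \mathsf{m}, \mathsf{g}_\infty$ and polynomial dependence on $\mathsf{g}_\infty$ and $\sqrt{d}/\mathsf{g}_2$. Since $\mathsf{K}'$ from Lemma \ref{thm:leapfrog3} already depends only on $\mathsf{B},\mathsf{K},T,M_2,\mathsf{c},\mathsf{g}_\infty$ and is polynomial in $\mathsf{g}_\infty$, the extra factors introduced above are $\sqrt{d/\mathsf{m}}/\mathsf{g}_2 = \frac{1}{\sqrt{\mathsf{m}}}\cdot\frac{\sqrt{d}}{\mathsf{g}_2}$ and $(d/\mathsf{m})/\mathsf{g}_2^2 = \frac{1}{\mathsf{m}}\cdot\big(\frac{\sqrt{d}}{\mathsf{g}_2}\big)^2$, both of which are polynomial in $\sqrt{d}/\mathsf{g}_2$ with coefficients depending only on $\mathsf{m}$. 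Hence $\mathsf{K}''$ has exactly the claimed dependence. I expect essentially no real obstacle here — the lemma is a routine rescaling of Lemma \ref{thm:leapfrog3} exploiting the momentum lower bound built into the definition of $\mathsf{G}$ — so the only thing requiring care is stating the dependence of the constant precisely and making sure the use of $U \geq 0$ (Assumption \ref{AssumptionsConvexity}) to get $H \geq \frac12\|\mathbf{p}\|^2$ is flagged.
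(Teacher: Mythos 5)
Your proof is correct and matches the paper's proof essentially line for line: both bootstrap the additive bounds of Lemma~\ref{thm:leapfrog3} into $H$-proportional bounds by using $\|\mathbf{p}\| > \mathsf{g}_2$ on $\mathsf{G}$ together with $\frac{1}{2}\|\mathbf{p}\|^2 \leq H(\mathbf{q},\mathbf{p})$, arriving at the same constant $\mathsf{K}''$. One cosmetic note: the nonnegativity of $U$ that you invoke via Assumption~\ref{AssumptionsConvexity} is not among the lemma's stated hypotheses, but it is already built into Assumption~\ref{assumption:product_measure_potential} (which requires $U_i \colon \mathbb{R}^{\mathsf{m}} \to \mathbb{R}^{+}$), so the step is sound.
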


\begin{proof}

Set $\mathsf{K}'' = \max \left( \frac{\sqrt{d}}{\mathsf{g}_2}  \sqrt{\frac{2}{\mathsf{m}}} \mathsf{K}', \, \, \left(\frac{\sqrt{d}}{\mathsf{g}_2}  \sqrt{\frac{2}{\mathsf{m}}}\right)^2 \mathsf{K}'\right)$.  By Lemma \ref{thm:leapfrog3} we have
\be
\|q_T^{\sharp \theta }(\mathbf{q},\mathbf{p}) - q_T(\mathbf{q},\mathbf{p})\| &\stackrel{{\scriptsize \textrm{Lemma }}\ref{thm:leapfrog3}}{\leq} \sqrt{\frac{d}{\mathsf{m}}} \times \theta \mathsf{K}'\\
 &\leq \frac{\|\mathbf{p}\|}{\mathsf{g}_2} \times \sqrt{\frac{d}{\mathsf{m}}} \times \theta \mathsf{K}'\\
 &\leq \theta \times \left[\frac{\sqrt{d}}{\mathsf{g}_2}  \sqrt{\frac{2}{\mathsf{m}}} \mathsf{K}' \right]\times \sqrt{H(\mathbf{q},\mathbf{p})}\\
 &= \theta \times \mathsf{K}'' \times \sqrt{H(\mathbf{q},\mathbf{p})},
\ee
where the second inequality holds since  $(\mathbf{q},\mathbf{p}) \in \mathsf{G}$ implies that $ \frac{\|\mathbf{p}\|}{\mathsf{g}_2}\geq 1$, and the third inequality holds since $\frac{1}{2}\| \mathbf{p}\|^2 \leq H(\mathbf{q},\mathbf{p})$.

By Lemma \ref{thm:leapfrog3} we also have
\be
|H(q_T^{\sharp \theta}(\mathbf{q},\mathbf{p}), p_T^{\sharp \theta}(\mathbf{q},\mathbf{p}))-  H(\mathbf{q},\mathbf{p})| &\stackrel{{\scriptsize \textrm{Lemma }}\ref{thm:leapfrog3}}{\leq}   \frac{d}{\mathsf{m}} \times  \theta \mathsf{K}'\\
 &\leq \frac{\|\mathbf{p}\|^2}{(\mathsf{g}_2)^2} \times \frac{d}{\mathsf{m}} \times \theta \mathsf{K}'\\
 &\leq \theta \times \left[\left(\frac{\sqrt{d}}{\mathsf{g}_2}  \sqrt{\frac{2}{\mathsf{m}}}\right)^2 \mathsf{K}' \right]\times H(\mathbf{q},\mathbf{p})\\
 &= \theta \times \mathsf{K}'' \times H(\mathbf{q},\mathbf{p}),
\ee
where the second inequality holds since  $(\mathbf{q},\mathbf{p}) \in \mathsf{G}$ implies that $ \frac{\|\mathbf{p}\|}{\mathsf{g}_2}\geq 1$, and the third inequality holds since $\frac{1}{2}\| \mathbf{p}\|^2 \leq H(\mathbf{q},\mathbf{p})$.

\end{proof}

Combining Lemmas \ref{thm:leapfrog4} and \ref{thm:Approx_integrator}, we get error bounds for the modified ``toy" integrator  $\diamondsuit$ of Equation \eqref{eq:toy}:

\begin{lemma} (modified ``toy" integrator error bounds) \label{thm:leapfrog5}

Let $U$ and $U_1,U_2, \ldots, U_{\frac{d}{\mathsf{m}}}$ satisfy Assumptions \ref{assumption:product_measure_potential}, \ref{assumption:leapfrog} and let $\sharp$ satisfy Condition \ref{condition:kth_order}.
Fix  $\mathsf{g}_2, \mathsf{g}_\infty$ and let $(\mathbf{q},\mathbf{p}) \in \mathbb{R}^{d}\times \mathbb{R}^{d}$.  Then there exists a constant 
\be \label{DefKTriple}
\mathsf{K}''' = \mathsf{K}'''(\mathsf{B}, \mathsf{K}, T, M_2, m_{2}, \mathsf{m}, \mathsf{c}, \mathsf{g}_{\infty}, \frac{\mathsf{g}_{2}}{\sqrt{d}})
\ee 
that depends polynomially on $\mathsf{g}_\infty$ and $\frac{\sqrt{d}}{\mathsf{g}_2}$, such that
\[\|q_T^{\diamondsuit \theta }(\mathbf{q},\mathbf{p}) - q_T(\mathbf{q},\mathbf{p})\| \leq \theta \times \frac{6T}{\sqrt{m_2}} \times \mathsf{K}'''  \sqrt{H(\mathbf{q},\mathbf{p})}\]
and
\[|H(q_T^{\diamondsuit \theta}(\mathbf{q},\mathbf{p}), p_T^{\diamondsuit \theta}(\mathbf{q},\mathbf{p}))-  H(\mathbf{q},\mathbf{p})| \leq  \theta \times \frac{7}{T M_2}\times \mathsf{K}''' H(\mathbf{q},\mathbf{p}).\]
\end{lemma}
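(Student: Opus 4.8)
The plan is a straightforward case split on membership in the good set $\mathsf{G}$, exploiting that Definition \ref{defn:toy} builds $\diamondsuit$ by gluing the higher-order integrator $\sharp$ (on $\mathsf{G}$) to the first-order Euler integrator $\clubsuit$ (off $\mathsf{G}$), together with the two error estimates already in hand: Lemma \ref{thm:leapfrog4} for $\sharp$ on $\mathsf{G}$, and Lemma \ref{thm:Approx_integrator} for $\clubsuit$.

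First, if $(\mathbf{q},\mathbf{p}) \in \mathsf{G}$, then by \eqref{eq:toy} we have $(q_T^{\diamondsuit \theta}(\mathbf{q},\mathbf{p}), p_T^{\diamondsuit \theta}(\mathbf{q},\mathbf{p})) = (q_T^{\sharp \theta}(\mathbf{q},\mathbf{p}), p_T^{\sharp \theta}(\mathbf{q},\mathbf{p}))$, so Lemma \ref{thm:leapfrog4} applies verbatim and yields $\|q_T^{\diamondsuit \theta}(\mathbf{q},\mathbf{p}) - q_T(\mathbf{q},\mathbf{p})\| \le \theta \mathsf{K}'' \sqrt{H(\mathbf{q},\mathbf{p})}$ and $|H(q_T^{\diamondsuit \theta}(\mathbf{q},\mathbf{p}), p_T^{\diamondsuit \theta}(\mathbf{q},\mathbf{p})) - H(\mathbf{q},\mathbf{p})| \le \theta \mathsf{K}'' H(\mathbf{q},\mathbf{p})$, where $\mathsf{K}''$ is the constant of that lemma, in particular polynomial in $\mathsf{g}_\infty$ and $\sqrt{d}/\mathsf{g}_2$. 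Second, if $(\mathbf{q},\mathbf{p}) \notin \mathsf{G}$, then $(q_T^{\diamondsuit \theta}, p_T^{\diamondsuit \theta}) = (q_T^{\clubsuit \theta}, p_T^{\clubsuit \theta})$, and since $\clubsuit$ is exactly the first-order Euler scheme of Algorithms \ref{alg:Approx_integrator}--\ref{alg:Integrator_first_order} with $k=1$, Lemma \ref{thm:Approx_integrator} (whose hypotheses $7\theta \le T \le \frac{1}{2\sqrt{2}}\frac{\sqrt{m_2}}{M_2}$ and $T/\theta \in \mathbb{N}$ we inherit from the ambient conventions on the HMC implementations under study) gives $\|q_T^{\diamondsuit \theta}(\mathbf{q},\mathbf{p}) - q_T(\mathbf{q},\mathbf{p})\| \le 6\theta T \frac{M_2}{\sqrt{m_2}}\sqrt{H(\mathbf{q},\mathbf{p})}$ and $|H(q_T^{\diamondsuit \theta}, p_T^{\diamondsuit \theta}) - H(\mathbf{q},\mathbf{p})| \le \frac{7\theta}{T} H(\mathbf{q},\mathbf{p})$.

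Finally, I would merge the two cases by setting $\mathsf{K}''' := \max\bigl(\tfrac{\sqrt{m_2}}{6T}\mathsf{K}'',\, \tfrac{T M_2}{7}\mathsf{K}'',\, M_2\bigr)$. With this choice, in the first case $\theta\mathsf{K}''\sqrt{H(\mathbf{q},\mathbf{p})} \le \theta\frac{6T}{\sqrt{m_2}}\mathsf{K}'''\sqrt{H(\mathbf{q},\mathbf{p})}$ and $\theta\mathsf{K}'' H(\mathbf{q},\mathbf{p}) \le \theta\frac{7}{TM_2}\mathsf{K}''' H(\mathbf{q},\mathbf{p})$, while in the second case $6\theta T\frac{M_2}{\sqrt{m_2}}\sqrt{H(\mathbf{q},\mathbf{p})} = \theta\frac{6T}{\sqrt{m_2}}M_2\sqrt{H(\mathbf{q},\mathbf{p})} \le \theta\frac{6T}{\sqrt{m_2}}\mathsf{K}'''\sqrt{H(\mathbf{q},\mathbf{p})}$ and $\frac{7\theta}{T}H(\mathbf{q},\mathbf{p}) = \theta\frac{7}{TM_2}M_2 H(\mathbf{q},\mathbf{p}) \le \theta\frac{7}{TM_2}\mathsf{K}''' H(\mathbf{q},\mathbf{p})$. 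Since $\mathsf{K}''$ depends polynomially on $\mathsf{g}_\infty$ and $\sqrt{d}/\mathsf{g}_2$ and on the remaining parameters listed in Lemma \ref{thm:leapfrog4}, the same is true of $\mathsf{K}'''$, which establishes the stated dependence \eqref{DefKTriple}.

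There is no serious obstacle here: the lemma is pure bookkeeping combining two already-proved error bounds. The only point requiring a moment's care is lining up the normalizations, so that a single constant works in both regimes simultaneously and picks up no hidden dependence on $d$ beyond the allowed ratio $\sqrt{d}/\mathsf{g}_2$. The factors $\frac{6T}{\sqrt{m_2}}$ and $\frac{7}{TM_2}$ appearing in the target bounds are written precisely so that the Euler estimate of Lemma \ref{thm:Approx_integrator} passes through with $\mathsf{K}''' = M_2$, and the rescaling of the $\sharp$-integrator estimate introduces only the harmless prefactors $\frac{\sqrt{m_2}}{6T}$ and $\frac{TM_2}{7}$, since the constant $\mathsf{K}''$ of Lemma \ref{thm:leapfrog4} already has exactly the permitted form.
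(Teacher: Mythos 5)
Your proposal is correct and follows exactly the same case split as the paper's own (one-line) proof, merely working out explicitly the constant $\mathsf{K}''' = \max\bigl(\tfrac{\sqrt{m_2}}{6T}\mathsf{K}'',\, \tfrac{TM_2}{7}\mathsf{K}'',\, M_2\bigr)$ that the paper leaves implicit. You were also right to flag that the hypotheses $7\theta \le T$ and $T/\theta \in \mathbb{N}$ of Lemma \ref{thm:Approx_integrator} are silently inherited from the ambient HMC conventions, a point the paper's proof likewise glosses over.
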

\begin{proof}
For $(p,q) \in \mathsf{G}$, the bounds follow from Lemma  \ref{thm:leapfrog4}. For $(p,q) \notin \mathsf{G}$, the bounds follow from Lemma \ref{thm:Approx_integrator}.
\end{proof}

\section{Mixing of Approximate HMC} \label{SecAppendixMixApprox}

In this section, we prove the main bounds used in the proof of Theorem \ref{ThmMainApprox}.  We first need the generic bounds in the following subsection: 

\subsection{Perturbation Bounds for Markov Chains}
We give some simple bounds on small perturbations of Markov chains. We begin by recalling the definition of the \textit{trace} of a Markov chain on a set:

\begin{defn} [Trace Chain]
Let  $K$ be the transition kernel of an ergodic Markov chain on state space $\Omega$ with stationary measure $\mu$, and let $S \subset \Omega$ be a subset with $\mu(S) > 0$. Let $\{X_{t}\}_{t \geq 0}$ be a Markov chain evolving according to $K$, and iteratively define
\be 
c_{0} &= \inf \{t \geq 0 \, : \, X_{t} \in S \} \\
c_{i+1} &= \inf \{t > c_{i} \, : \, X_{t} \in S \}.
\ee 
Then 
\be \label{EqTraceCoup}
\hat{X}_{t} = X_{c_{t}}, \quad t \geq 0
\ee 
is the \textit{trace of $\{X_{t}\}_{t \geq 0}$ on $S$}. Note that $\{\hat{X}_{t}\}_{t \geq 0}$ is a Markov chain with state space $S$, and so this procedure also defines a transition kernel with state space $S$. We call this kernel the \textit{trace of the kernel $K$ on $S$}.
\end{defn}

 Recall the following bound from \cite{mangoubi2016rapid}:

\begin{lemma} \label{ThmGenericApproxErrorBound}
Let $K$ be a transition kernel on $\mathbb{R}^{d}$ with unique stationary measure $\mu$ and \textit{contraction coefficient} $\kappa > 0$ satisfying

\be \label{SimpAss2}
W_{1}(K(x,\cdot),K(y,\cdot)) \leq (1 - \kappa) \|x-y\|
\ee  
for all $x, y \in \mathbb{R}^{d}$. Let $Q$ be a transition kernel on  $\mathbb{R}^{d}$. Assume that
\be \label{SimpAss1}
\sup_{x \in \mathbb{R}^{d}} W_{1}(K(x,\cdot), Q(x,\cdot)) < \delta
\ee 
for some fixed $\delta \geq 0$. Then $Q$ satisfies 
\be \label{WassContConc1}
W_{1}(\pi_{1}\, Q^\mathcal{I}, \pi_{2} \, K^\mathcal{I}) \leq (1 - \kappa)^\mathcal{I} \, W_1(\pi_{1},\pi_{2}) + \frac{\delta}{\kappa}
\ee 
for all measures $\pi_{1},\pi_{2}$ and $\mathcal{I} \in \mathbb{N}$. Furthermore, if $Q$ is ergodic with stationary measure $\nu$ , then
\be \label{WassContConc2}
W_{1}(\mu, \nu) \leq \frac{\delta}{ \kappa}.
\ee 
\end{lemma}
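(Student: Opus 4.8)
\textbf{Proof proposal for Lemma \ref{ThmGenericApproxErrorBound}.}
The plan is to reduce everything to two elementary ``gluing'' facts about the Wasserstein-$1$ distance and then telescope. First I would promote the pointwise contraction \eqref{SimpAss2} to the level of measures: for any probability measures $\rho_1,\rho_2$ one has $W_{1}(\rho_1 K, \rho_2 K) \leq (1-\kappa)\, W_{1}(\rho_1,\rho_2)$. This follows by taking an optimal coupling $\gamma$ of $(\rho_1,\rho_2)$, selecting (measurably, by a standard measurable-selection argument for optimal couplings) for each $(x,y)$ an optimal coupling $\lambda_{x,y}$ of $(K(x,\cdot),K(y,\cdot))$, and noting that $\int \lambda_{x,y}\,\gamma(dx,dy)$ is a coupling of $(\rho_1 K,\rho_2 K)$ whose cost equals $\int W_{1}(K(x,\cdot),K(y,\cdot))\,\gamma(dx,dy) \leq (1-\kappa)\int\|x-y\|\,\gamma(dx,dy) = (1-\kappa) W_{1}(\rho_1,\rho_2)$. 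Iterating, $W_1(\rho_1 K^{j},\rho_2 K^{j})\leq (1-\kappa)^{j} W_1(\rho_1,\rho_2)$. Second, I would record the one-step perturbation bound: for any probability measure $\rho$, using the diagonal coupling of $\rho$ with itself followed by an optimal coupling of $Q(x,\cdot)$ and $K(x,\cdot)$, $W_{1}(\rho Q,\rho K) \leq \int W_{1}(Q(x,\cdot),K(x,\cdot))\,\rho(dx) \leq \delta$ by \eqref{SimpAss1}.

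For \eqref{WassContConc1} I would first split $W_{1}(\pi_1 Q^{\mathcal I},\pi_2 K^{\mathcal I}) \leq W_{1}(\pi_1 Q^{\mathcal I},\pi_1 K^{\mathcal I}) + W_{1}(\pi_1 K^{\mathcal I},\pi_2 K^{\mathcal I})$, the second term being at most $(1-\kappa)^{\mathcal I}W_1(\pi_1,\pi_2)$ by the iterated measure contraction. For the first term I would telescope along the chain that replaces one application of $Q$ by $K$ at a time: writing $\rho_j := \pi_1 Q^{\mathcal I - j -1}$,
\be
W_{1}(\pi_1 Q^{\mathcal I},\pi_1 K^{\mathcal I}) \leq \sum_{j=0}^{\mathcal I -1} W_{1}\big(\rho_j Q K^{j}, \rho_j K K^{j}\big) \leq \sum_{j=0}^{\mathcal I -1} (1-\kappa)^{j}\, W_{1}(\rho_j Q, \rho_j K) \leq \sum_{j=0}^{\mathcal I -1}(1-\kappa)^{j}\,\delta \leq \frac{\delta}{\kappa},
\ee
using the contraction of $K^{j}$ and then the one-step bound, together with $\sum_{j\geq 0}(1-\kappa)^{j} = \kappa^{-1}$ (valid since the contraction coefficient $1-\kappa$ lies in $[0,1)$). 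Combining the two pieces gives \eqref{WassContConc1}.

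For \eqref{WassContConc2}, apply \eqref{WassContConc1} with $\pi_1 = \nu$ and $\pi_2 = \mu$; since $\nu Q^{\mathcal I} = \nu$ and $\mu K^{\mathcal I} = \mu$ by stationarity, this reads $W_{1}(\nu,\mu) \leq (1-\kappa)^{\mathcal I} W_{1}(\nu,\mu) + \frac{\delta}{\kappa}$ for every $\mathcal I$, and letting $\mathcal I \to \infty$ (so $(1-\kappa)^{\mathcal I}\to 0$) yields $W_{1}(\nu,\mu) \leq \frac{\delta}{\kappa}$. The one genuinely delicate point is that this last step requires $W_{1}(\nu,\mu) < \infty$ before one can absorb the $(1-\kappa)^{\mathcal I}W_1(\nu,\mu)$ term; I would dispatch this by noting that in all our applications $\mu$ is strongly log-concave and hence has finite first moment, while the one-step bound $W_1(\nu,\nu K)\leq\delta$ together with the contraction of $K$ forces $\nu$ to have finite first moment as well, so $W_1(\nu,\mu)<\infty$. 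Everything else is routine; the measurable selection of optimal couplings and this finiteness bookkeeping are the only places needing care.
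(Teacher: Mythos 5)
The paper does not prove this lemma --- it simply cites it from \cite{mangoubi2016rapid} --- so there is no in-paper proof to compare against. Your argument is the standard telescoping one: promote the pointwise contraction of $K$ to measures via measurable selection of optimal couplings, split $W_1(\pi_1 Q^{\mathcal{I}}, \pi_2 K^{\mathcal{I}})$ by the triangle inequality, telescope the ``$Q$-vs-$K$'' piece by swapping one application of $Q$ for one of $K$ at a time, and sum the resulting geometric series. That part is correct and complete.

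The weak point is your handling of the finiteness hypothesis in \eqref{WassContConc2}. You correctly flag that passing $\mathcal{I}\to\infty$ in $W_1(\nu,\mu)\le (1-\kappa)^{\mathcal{I}} W_1(\nu,\mu)+\delta/\kappa$ requires knowing $W_1(\nu,\mu)<\infty$ beforehand, but your justification --- that ``the one-step bound $W_1(\nu,\nu K)\le\delta$ together with the contraction of $K$ forces $\nu$ to have finite first moment'' --- does not hold as stated. Those two facts only give $W_1(\nu,\mu)\le\delta+(1-\kappa)W_1(\nu,\mu)$, which is vacuous when the right-hand side is infinite; they place no direct constraint on $\int\|x\|\,\nu(dx)$. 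A cleaner repair is to apply \eqref{WassContConc1} with $\pi_1=\pi_2=\delta_{x_0}$ for some fixed $x_0$, obtaining the uniform bound $W_1(\delta_{x_0}Q^{\mathcal{I}},\delta_{x_0}K^{\mathcal{I}})\le\delta/\kappa$ with no hypothesis on $\nu$ whatsoever. Combined with $W_1(\delta_{x_0}K^{\mathcal{I}},\mu)\le(1-\kappa)^{\mathcal{I}}W_1(\delta_{x_0},\mu)\to 0$ (which does use that $\mu$ has finite first moment, as you correctly note from strong log-concavity in the intended applications) and the lower semicontinuity of $W_1$ under weak convergence, ergodicity of $Q$ (so that $\delta_{x_0}Q^{\mathcal{I}}\to\nu$ weakly) yields $W_1(\nu,\mu)\le\liminf_{\mathcal{I}}W_1(\delta_{x_0}Q^{\mathcal{I}},\mu)\le\delta/\kappa$, with finiteness of $W_1(\nu,\mu)$ emerging as a byproduct rather than being required as an input.
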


We use this to prove the following bound, which we can use if the approximation of $K$ by $Q$ is not uniformly good:

\begin{lemma}\label{ThmGenericApproxErrorBound2}
Let $K$ be a transition kernel on metric space $\mathbb{R}^{d}$ with unique stationary measure $\mu$ and \textit{contraction coefficient} $\kappa > 0$ satisfying

\be \label{SimpAss2}
W_{1}(K(x,\cdot),K(y,\cdot)) \leq (1 - \kappa) \| x - y \|
\ee  
for all $x, y \in \mathbb{R}^{d}$. Let $Q$ be a transition kernel on  $\mathbb{R}^{d}$. Fix $\lambda>0$ and define $V(x) \equiv e^{\lambda \|x\|}$. 
 Assume that there exists $0 < \alpha < 1$, $0 \leq \beta < \infty$ so that
\be \label{IneqContractionForQ}
Q(x,\cdot)[V] &\leq (1 - \alpha) V(x) + \beta, \\
K(x,\cdot)[V] &\leq (1 - \alpha) V(x) + \beta \quad \quad \forall x \in \mathbb{R}^d.
\ee

Assume that there exists some $\frac{4 \beta}{\alpha} < C < \infty$ and $\delta > 0$ so that

\be \label{SimpAss1}
\sup_{x \, : \, V(x) \leq C} W_{1}(K(x,\cdot), Q(x,\cdot)) < \delta.
\ee 

Then $Q$ satisfies 
\be \label{WassContConc3}
W_{1}(Q^\mathcal{I}(x,\cdot), \mu) \leq \frac{2 \log(C)}{\lambda} \, (1 - \kappa)^{s}  + \frac{\delta}{\kappa} + \frac{\beta (s+1)}{\lambda \alpha C}\left(8+2\log(C)\right)
\ee 
for all  $0 \leq s \leq \mathcal{I} \in \mathbb{N}$ and all $x$ satisfying $V(x) \leq \frac{\beta}{\alpha}$. Furthermore, if $Q$ is ergodic with stationary measure $\nu$,
\be \label{WassContConc4}
W_{1}(\mu, \nu) \leq \inf_{s \in \{0,1,\ldots\}} \frac{2 \log(C)}{\lambda} \, (1 - \kappa)^{s} + \frac{\delta}{\kappa} + \frac{\beta (s+1)}{\lambda \alpha C}\left(8+2\log(C)\right).
\ee 
\end{lemma}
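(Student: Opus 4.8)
\textbf{Proof proposal for Lemma \ref{ThmGenericApproxErrorBound2}.}

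The plan is to reduce to Lemma \ref{ThmGenericApproxErrorBound} by running the chains only on the ``good'' sublevel set $S := \{x : V(x) \le C\}$, via the trace construction recalled just above the statement. First I would record the consequences of the drift inequalities \eqref{IneqContractionForQ}: iterating them gives $K^t(x,\cdot)[V], Q^t(x,\cdot)[V] \le (1-\alpha)^t V(x) + \beta/\alpha$, so that if $V(x) \le \beta/\alpha$ then every iterate has $V$-expectation at most $2\beta/\alpha < C/2$. By Markov's inequality this controls the probability that either chain leaves $S$ at any given step: $\mathbb{P}[X_t \notin S] \le \frac{2\beta}{\alpha C}$, and more usefully $\mathbb{P}[\exists\, t \le s : X_t \notin S] \le \frac{(s+1)\cdot 2\beta}{\alpha C}$ by a union bound. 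The same bound holds for the $Q$-chain since it satisfies the same drift condition. When the chain does leave $S$, the drift condition still keeps $\mathbb{E}[V]$ bounded, and since the $W_1$ cost of moving between two points $x,y$ is at most $\|x-y\| \le \frac{1}{\lambda}(\log V(x) + \log V(y))$ (using $\|x\| = \frac{1}{\lambda}\log V(x)$ and the triangle inequality with the minimizer at $0$), any excursion contributes only $O\big(\frac{1}{\lambda}\log(\text{typical } V)\big)$ to the transport distance; this is where the factors $\log C$ and the $\frac{\beta}{\lambda\alpha C}$ terms in \eqref{WassContConc3} come from.

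Next I would build an explicit coupling. Run a $K$-chain $\{X_t\}$ from $\mu$ and a $Q$-chain $\{Y_t\}$ from $x$. As long as both chains stay in $S$, couple step-by-step using the near-optimal coupling witnessing \eqref{SimpAss1}, so each step adds at most $\delta$ to the expected discrepancy; combined with the contraction \eqref{SimpAss2} of $K$ this yields, by the standard geometric-series argument of Lemma \ref{ThmGenericApproxErrorBound}, a contribution $(1-\kappa)^s W_1(\mu, \delta_x) + \frac{\delta}{\kappa}$ restricted to the event that no excursion has occurred up to time $s$. On the complementary event (probability $\le \frac{2\beta(s+1)}{\alpha C}$ for each chain), bound the $W_1$ contribution crudely by the expected distance to $0$, i.e.\ by $\frac{1}{\lambda}\mathbb{E}[\log V]$, which the drift condition and Jensen control by $\frac{1}{\lambda}(\log C + O(1))$; multiplying probability times cost gives the $\frac{\beta(s+1)}{\lambda\alpha C}(8 + 2\log C)$ term. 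Adding the two regimes and using $W_1(\mu,\delta_x) \le \frac{1}{\lambda}(\log V(x) + \mathbb{E}_\mu[\log V]) \le \frac{2\log C}{\lambda}$ (since $V(x) \le \beta/\alpha \le C/4$ and $\mathbb{E}_\mu[\log V] \le \log(\beta/\alpha) \le \log C$) gives \eqref{WassContConc3}. For \eqref{WassContConc4}, apply \eqref{WassContConc3} with $x$ distributed according to $\nu$ (or run from a point in the small set and pass to the limit using ergodicity of $Q$), and take the infimum over $s$.

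The main obstacle is making the excursion bookkeeping rigorous: one must simultaneously track (i) the probability that an excursion from $S$ has happened by time $s$, (ii) the expected transport cost incurred during and after such an excursion, and (iii) the fact that after re-entering $S$ the coupling can be restarted but the two chains are no longer at stationarity relative to each other. The cleanest way is probably to not restart the coupling but instead bound the total $W_1$ distance on the bad event by the triangle inequality through $0$ at time $s$, using only the uniform-in-$t$ drift bound $\mathbb{E}[V(X_t)] \le 2\beta/\alpha$ on $S$-excursions (valid because the drift holds at \emph{every} state, not just in $S$); this avoids any delicate renewal argument at the cost of the slightly lossy constants $8 + 2\log C$ that appear in the statement. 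A secondary technical point is that the trace-chain / excursion decomposition is only literally available when $Q$ is ergodic, so for the transient statement \eqref{WassContConc3} I would phrase everything directly in terms of the coupling of finite-length trajectories, invoking the trace chain only for \eqref{WassContConc4}.
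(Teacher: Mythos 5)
Your plan tracks the paper's proof closely: iterate the drift condition to obtain the uniform bound $\mathbb{E}[V(X_t)] \leq 2\beta/\alpha$, apply Markov's inequality and a union bound for the excursion probability, convert $V$-moments to transport costs via $\|x\| = \lambda^{-1}\log V(x)$, couple the trace chains on $S = \{V \leq C\}$ using Lemma \ref{ThmGenericApproxErrorBound}, and bound the bad-event contribution crudely through the origin. The paper even makes the exact structural choice you advocate at the end: no renewal argument, just a direct moment bound on the bad event using the fact that the drift holds at \emph{every} state.

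Two imprecisions in your description are worth flagging, both instances of the bookkeeping issues you yourself identify as the "main obstacle." First, the paper's coupling runs the $Q$-chain and the $K$-chain \emph{independently} for the first $\mathcal{I}-s$ steps and couples only the last $s$ steps via the trace chains; your phrasing makes it sound as though the coupling starts at time $0$ and runs for $s$ steps, which would bound $W_1(Q^s(x,\cdot),\mu)$ rather than $W_1(Q^{\mathcal{I}}(x,\cdot),\mu)$. The free parameter $s$ is useful precisely because the bad-event term scales with $s+1$, not $\mathcal{I}+1$; the initial independent phase is what lets the union bound be taken over only the last $s$ steps. Second, the step "multiplying probability times cost" for the bad-event contribution is not valid as stated, since the two factors are not independent; the paper handles this by integrating the tail bound $\mathbb{P}[\sup_h \|X_h\| > r] \leq \frac{2\beta(s+1)}{\alpha}e^{-\lambda r}$ over $r > \lambda^{-1}\log C$, which controls the restricted expectation $\mathbb{E}[\sup_h \|X_h\| \cdot \mathbbm{1}\{\sup_h V(X_h) \geq C\}]$ directly. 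Neither point changes the architecture, and both are resolvable by exactly the kind of care you anticipate needing.
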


\begin{proof}

We make some initial estimates. Let $\{X_\mathcal{I}\}_{\mathcal{I} \geq 0}$ be a Markov chain evolving according to $Q$ and started at a (possibly random) point $X_{0}$ satisfying $\mathbb{E}[V(X_0)] \leq \frac{\beta}{\alpha}$. By Inequality \eqref{IneqContractionForQ},
\be  \label{eq:Markov_inequality1}
\mathbb{E}[V(X_\mathcal{I})] & \leq (1 - \alpha) \mathbb{E}[V(X_{\mathcal{I}-1})] + \beta \\
&\leq \ldots \\
&\leq (1 - \alpha)^\mathcal{I} \mathbb{E}[V(X_{0})] + \frac{\beta}{\alpha} \\
&\leq \frac{2 \beta}{\alpha} \quad \quad \forall \mathcal{I}\in \mathbb{N}.
\ee

By Markov's inequality, then,
\be \label{eq:Markov_inequality2}
\mathbb{P}[\sup_{\mathcal{I}-s \leq h \leq \mathcal{I}} V(X_{h}) > r] \leq \frac{2 \beta (s+1)}{\alpha r}
\ee 
for any fixed integers $0 \leq s \leq \mathcal{I}$ and any $r>0$. Rewriting this,

\be \label{eq:Markov_inequality5a}
\mathbb{P}[\sup_{\mathcal{I}-s \leq h \leq \mathcal{I}} e^{\lambda \|X_{h}\|} > r] \leq \frac{2 \beta (s+1)}{\alpha r}  \quad \quad \forall \, r>0,
\ee
and so
\be  \label{eq:Markov_inequality5}
\mathbb{P}[\sup_{\mathcal{I}-s \leq h \leq \mathcal{I}} \|X_{h}\| > r] \leq \frac{2 \beta (s+1)}{\alpha} e^{-\lambda r}  \quad \quad \forall r>0, \,
\ee 
which gives 
\be \label{eq:Markov_inequality3}
\mathbb{E}[(\sup_{\mathcal{I}-s \leq h \leq \mathcal{I}} &\|X_{h}\|) \times \mathbbm{1}\{ \sup_{\mathcal{I}-s \leq h \leq \mathcal{I}} V(X_{h})\geq C\}] \\
&= \mathbb{E}[(\sup_{\mathcal{I}-s \leq h \leq \mathcal{I}} \|X_{h}\|) \times \mathbbm{1}\{ \sup_{\mathcal{I}-s \leq h \leq \mathcal{I}} \|X_{h}\|\geq \frac{1}{\lambda}\log(C)\}]\\
& \stackrel{{\scriptsize \textrm{Eq. }}\ref{eq:Markov_inequality5}}{\leq} \int_{\frac{1}{\lambda}\log(C)}^\infty  \frac{2 \beta (s+1)}{\alpha} e^{-\lambda r} \d r\\
&= \frac{2 \beta (s+1)}{\lambda \alpha C}.
\ee

Also,  by Jensen's inequality, Equation \eqref{eq:Markov_inequality1} implies that,
\be
e^{\lambda \mathbb{E}[\|X_\mathcal{I}\|]} \leq \mathbb{E}[e^{\lambda \|X_\mathcal{I}\|}] = \mathbb{E}[V(X_\mathcal{I})]  \stackrel{\textrm{Eq. \ref{eq:Markov_inequality1}}}{\leq} \frac{2\beta}{\alpha},
\ee
and so
\be \label{eq:Markov_inequality6}
\mathbb{E}[\|X_\mathcal{I}\|] \leq \lambda^{-1}\log(\frac{2\beta}{\alpha}).
\ee

Now let $\{Y_\mathcal{I}\}_{\mathcal{I} \geq 0}$ be a Markov chain evolving according to $K$ and started at $Y_{0} \sim \mu$.  Then by Inequality \eqref{IneqContractionForQ},
\be
\mathbb{E}[V(Y_0)] = \mathbb{E}[V(Y_1)] \leq (1-\alpha)\mathbb{E}[V(Y_0)] + \beta,
\ee
which implies that $\mathbb{E}[V(Y_0)] \leq \frac{\beta}{\alpha}$.  Therefore, by the same argument we used to show Inequalities \eqref{eq:Markov_inequality5a}, \eqref{eq:Markov_inequality3}, and \eqref{eq:Markov_inequality6} (but replacing $K$ with $Q$ and  $X_{h}$ with $Y_{h}$), we must have, respectively, 
\be \label{eq:Markov_inequality4}
\mathbb{E}[\sup_{\mathcal{I}-s \leq h \leq \mathcal{I}} \|Y_{h}\| \times \mathbbm{1}\{ \sup_{\mathcal{I}-s \leq h \leq \mathcal{I}} V(Y_{h})\geq C\}] \leq  \frac{2 \beta (s+1)}{\lambda \alpha C}
\ee
and
\be \label{eq:Markov_inequality5b}
\mathbb{P}[\sup_{\mathcal{I}-s \leq h \leq \mathcal{I}} e^{\lambda \|Y_{h}\|} > r] \leq \frac{2 \beta (s+1)}{\alpha r}  \quad \quad \forall r>0
\ee
and
\be \label{eq:Markov_inequality6b}
\mathbb{E}[\|Y_\mathcal{I}\|] \leq \lambda^{-1}\log(\frac{2\beta}{\alpha}).
\ee

We now prove Inequality \eqref{WassContConc3} using an explicit coupling. Fix integers $0 \leq s \leq \mathcal{I} < \infty$. Let $\{X_{h}\}_{{h}=0}^{\mathcal{I}-s}$, $\{Y_{h}\}_{{h}=0}^{\mathcal{I}-s}$ be Markov chains with transition kernels $Q$ and $K$ respectively, and starting points $X_{0} = x$ and $Y \sim \mu$. We begin to construct our coupling of these two chains by allowing them to evolve independently over the time interval $\{0,1,\ldots,\mathcal{I}-s\}$.

Next, let $\tilde{Q}$, $\tilde{K}$ be the traces of $Q$, $K$ on the set $\{z \in \mathbb{R}^{d} \, : \, V(z) \leq C\}$. Fix $\gamma > 0$.  By Lemma \ref{ThmGenericApproxErrorBound}, it is possible to couple two Markov chains  $\{ \tilde{X}_i \}_{i \geq 0}$, $\{ \tilde{Y}_i \}_{i \geq 0}$ with transition kernels $\tilde{Q}$, $\tilde{K}$ and starting points  
\be \label{EqStartingPointTracesFin}
\tilde{X}_{0} = \begin{cases}
X_{\mathcal{I}-s}, \qquad V(X_{\mathcal{I}-s}) \leq C \\
0, \, \, \, \,  \qquad V(X_{\mathcal{I}-s}) > C \\
\end{cases}
\ee 

\be 
\tilde{Y}_{0} = \begin{cases}
Y_{\mathcal{I}-s}, \,  \qquad V(Y_{\mathcal{I}-s}) \leq C \\
 0, \, \, \,  \quad \qquad V(Y_{\mathcal{I}-s}) > C \\
\end{cases}
\ee  
so that
\be \label{IntermediateTraceCoup}
\E[\| \tilde{X}_{s} - \tilde{Y}_{s} \|] \leq (1 - \kappa)^{s} \sup_{x,y \, : \, V(x), \, V(y) \leq C} \, \|x-y\| + \frac{\delta}{\kappa} + \gamma.
\ee
Next, couple $\{X_{h}\}_{{h}=\mathcal{I}-s}^\mathcal{I}$ to $\{\tilde{X}_{h}\}_{{h}=0}^{s}$ (respectively, couple $\{Y_{h}\}_{{h}=\mathcal{I}-s}^\mathcal{I}$ to $\{\tilde{Y}_{h}\}_{{h}=0}^{s}$) according to the natural coupling of a Markov chain to its trace on a set (that is, the coupling in Equation \eqref{EqTraceCoup}). Defining $\tau_X = \min \{ {h} \geq \mathcal{I} -s \, : \, V(X_{h}) > C\}$,  $\tau_Y = \min \{ {h} \geq \mathcal{I} -s \, : \, V(Y_{h}) > C\}$, and $\tau = \min \{ \tau_X, \tau_Y \}$, we have:
\be 
\E[\| X_\mathcal{I} - Y_\mathcal{I} \|] &= \E[\| X_\mathcal{I} - Y_\mathcal{I} \| \, \mathbbm{1}_{\tau >\mathcal{I}}] + \E[\| X_\mathcal{I} - Y_\mathcal{I} \| \, \mathbbm{1}_{\tau \leq \mathcal{I}}] \\
&\leq \E[\| X_\mathcal{I} - Y_\mathcal{I} \| \, \mathbbm{1}_{\tau >\mathcal{I}}] + \E[(\| X_\mathcal{I}\| + \|Y_\mathcal{I} \|) \, \mathbbm{1}_{\tau \leq \mathcal{I}}] \\
&\leq \E[\| X_\mathcal{I} - Y_\mathcal{I} \| \, \mathbbm{1}_{\tau >\mathcal{I}}] + \E[(\| X_\mathcal{I}\| + \|Y_\mathcal{I} \|) \, \mathbbm{1}_{\tau_X \leq \mathcal{I}}] + \E[(\| X_\mathcal{I}\| + \|Y_\mathcal{I} \|) \, \mathbbm{1}_{\tau_Y \leq \mathcal{I}}] \\
&\leq \E[\| X_\mathcal{I} - Y_\mathcal{I} \| \, \mathbbm{1}_{\tau >\mathcal{I}}] + \E[(\| X_\mathcal{I}\| + \|Y_\mathcal{I} \|) \, \mathbbm{1}_{\tau_X \leq \mathcal{I}}\times (\mathbbm{1}_{\tau_Y \leq \mathcal{I}} +\mathbbm{1}_{\tau_Y >\mathcal{I}})]\\
&\quad \quad+ \E[(\| X_\mathcal{I}\| + \|Y_\mathcal{I} \|) \, \mathbbm{1}_{\tau_Y \leq \mathcal{I}}\times (\mathbbm{1}_{\tau_X \leq \mathcal{I}} +\mathbbm{1}_{\tau_X >\mathcal{I}})] \\
&\leq \E[\| X_\mathcal{I} - Y_\mathcal{I} \| \, \mathbbm{1}_{\tau >\mathcal{I}}] + \E[\| X_\mathcal{I}\| \, \mathbbm{1}_{\tau_X \leq \mathcal{I}}] + \E[\|Y_\mathcal{I}\| \, \mathbbm{1}_{\tau_Y \leq \mathcal{I}}] + \E[\|Y_\mathcal{I}\| \, \mathbbm{1}_{\tau_X \leq \mathcal{I}} \times \mathbbm{1}_{\tau_Y >\mathcal{I}}] \\
&\qquad \qquad \qquad+ \E[\| Y_\mathcal{I}\| \, \mathbbm{1}_{\tau_Y \leq \mathcal{I}}] + \E[\|X_\mathcal{I}\| \, \mathbbm{1}_{\tau_X \leq \mathcal{I}}] + \E[\|X_\mathcal{I}\| \, \mathbbm{1}_{\tau_Y \leq \mathcal{I}} \times \mathbbm{1}_{\tau_X >\mathcal{I}}] \\
&\leq \E[\| X_\mathcal{I} - Y_\mathcal{I} \| \, \mathbbm{1}_{\tau >\mathcal{I}}] + \E[\| X_\mathcal{I}\| \, \mathbbm{1}_{\tau_X \leq \mathcal{I}}] + \E[\|Y_\mathcal{I}\| \, \mathbbm{1}_{\tau_Y \leq \mathcal{I}}] + \E[\frac{1}{\lambda}\log(C) \, \mathbbm{1}_{\tau_X \leq \mathcal{I}} \times \mathbbm{1}_{\tau_Y >\mathcal{I}}] \\
&\qquad \qquad \qquad+ \E[\| Y_\mathcal{I}\| \, \mathbbm{1}_{\tau_Y \leq \mathcal{I}}] + \E[\|X_\mathcal{I}\| \, \mathbbm{1}_{\tau_X \leq \mathcal{I}}] + \E[\frac{1}{\lambda}\log(C) \, \mathbbm{1}_{\tau_Y \leq \mathcal{I}} \times \mathbbm{1}_{\tau_X >\mathcal{I}}] \\
&\leq  \E[\| \tilde{X}_{s} - \tilde{Y}_{s} \| \mathbbm{1}_{\tau >\mathcal{I}}]  + 2\mathbb{E}[\sup_{\mathcal{I}-s \leq h \leq \mathcal{I}} \|X_{h}\| \times \mathbbm{1}\{ \sup_{\mathcal{I}-s \leq h \leq \mathcal{I}} V(X_{h})\geq C\}]  +  \frac{1}{\lambda}\log(C) \, \mathbb{P}[\sup_{\mathcal{I}-s \leq h \leq \mathcal{I}} V(X_{h})\geq C\}]  \\
&+ 2\mathbb{E}[\sup_{\mathcal{I}-s \leq h \leq \mathcal{I}} \|Y_{h}\| \times \mathbbm{1}\{ \sup_{\mathcal{I}-s \leq h \leq \mathcal{I}} V(Y_{h})\geq C\}] +  \frac{1}{\lambda}\log(C) \times \mathbb{P}[\sup_{\mathcal{I}-s \leq h \leq \mathcal{I}} V(Y_{h})\geq C\}]   \\
&\stackrel{\textrm{Eq. \ref{eq:Markov_inequality5a},\ref{eq:Markov_inequality5b}}}{\leq}  \E[\| \tilde{X}_{s} - \tilde{Y}_{s} \| \mathbbm{1}_{\tau >\mathcal{I}}]  + 2\mathbb{E}[\sup_{\mathcal{I}-s \leq h \leq \mathcal{I}} \|X_{h}\| \times \mathbbm{1}\{ \sup_{\mathcal{I}-s \leq h \leq \mathcal{I}} V(X_{h})\geq C\}] \\
&+ 2\mathbb{E}[\sup_{\mathcal{I}-s \leq h \leq \mathcal{I}} \|Y_{h}\| \times \mathbbm{1}\{ \sup_{\mathcal{I}-s \leq h \leq \mathcal{I}} V(Y_{h})\geq C\}] +  2\lambda^{-1}\log(C) \times \frac{2 \beta (s+1)}{\alpha C}.\\
\ee

Applying Inequalities \eqref{eq:Markov_inequality3}, \eqref{eq:Markov_inequality4}, and \eqref{IntermediateTraceCoup},
\be 
\E[\| X_\mathcal{I} - Y_\mathcal{I} \|] &\leq  (1 - \kappa)^{s} \sup_{x,y \, : \, V(x), \, V(y) \leq C} \, \|x-y\| + \frac{\delta}{\kappa} + \frac{8 \beta (s+1)}{\lambda \alpha C} +  2\lambda^{-1}\log(C) \times \frac{2 \beta (s+1)}{\alpha C} + \gamma\\
&\leq  (1 - \kappa)^{s} \frac{2 \log(C)}{\lambda} + \frac{\delta}{\kappa} + \frac{\beta (s+1)}{\lambda \alpha C}\left(8+2\log(C)\right)  + \gamma,
\ee 
where the second line uses the fact $\sup_{x,y \, : \, V(x), \, V(y) \leq C} \|x-y\| \leq  \frac{2 \log(C)}{\lambda}$.  Since $\gamma > 0$ was arbitrary, this completes the proof of Inequality \eqref{WassContConc3}.

To prove Inequality \eqref{WassContConc4}, we note by the ergodicity of $Q$ and Inequality \eqref{WassContConc3} that, for fixed $x$ satisfying $V(x) \leq \frac{\beta}{\alpha}$,
\be 
W_{1}(\nu, \mu) &= \lim_{\mathcal{I} \rightarrow \infty} W_{1}(Q^\mathcal{I}(x,\cdot), \mu) \\
&\leq \lim_{\mathcal{I} \rightarrow \infty} \inf_{0 \leq s \leq \mathcal{I}}  \left[\frac{2 \log(C)}{\lambda} (1 - \kappa)^{s} + \frac{\delta}{\kappa} + \frac{\beta (s+1)}{\lambda \alpha C}\left(8+2\log(C)\right)\right] \\
&= \inf_{s \in \{0,1,\ldots\}} \left[ \frac{2 \log(C)}{\lambda} (1 - \kappa)^{s} + \frac{\delta}{\kappa} + \frac{\beta (s+1)}{\lambda \alpha C}\left(8+2\log(C)\right)\right], 
\ee 
which completes the proof.
\end{proof}

\subsection{Bounds on unadjusted HMC Algorithm} \label{AppSubAppBoundUnad}

We now prove a mixing bound for the unadjusted HMC algorithm described in Algorithm \ref{alg:Unadjusted}. We set some notation to be used throughout this section. We always fix a target potential $U$ that satisfies Assumption \ref{AssumptionsConvexity} with $\mathcal{X} = \mathbb{R}^{d}$. We also fix integration time $0 \leq T \leq \frac{1}{2\sqrt{2}} \frac{\sqrt{m_2}}{M_2}$ 
 and let $K$ be the transition kernel defined in Algorithm \ref{DefSimpleHMC} with these parameters. We also fix a parameter $0 \leq \theta \leq \frac{T}{5}$, and a numerical integrator $\dagger$ satisfying the following condition:

 \begin{cond} \label{cond:integrator}
 We say that a numerical integrator $\dagger$ satisfies Condition \ref{cond:integrator} with constant $\mathsf{A}$ and parameters as above if it satisfies the inequalities
\be \label{IneqCondIntPost}
\|q_T^{\dagger \theta}(\mathbf{q},\mathbf{p}) - q_T(\mathbf{q},\mathbf{p})\| \leq 6\theta \times T \times \frac{\mathsf{A}}{\sqrt{m_2}} \sqrt{H(\mathbf{q},\mathbf{p})}
\ee 
and 
\be \label{IneqCondIntEnergy}
|H(q_T^{\dagger \theta}(\mathbf{q},\mathbf{p}), p_T^{\dagger \theta}(\mathbf{q},\mathbf{p}))-  H(\mathbf{q},\mathbf{p})| \leq 7 \frac{\theta}{T} H(\mathbf{q},\mathbf{p}) \times \frac{\mathsf{A}}{M_2}
\ee
for all $\mathbf{q}, \mathbf{p} \in \mathbb{R}^d$.
\end{cond}

Let $Q$ be the transition kernel defined by Algorithm \ref{alg:Unadjusted} and these parameters. We use the notation from Algorithms \ref{DefSimpleHMC} and \ref{alg:Unadjusted} in our analyses of these algorithms. We begin with coarse bounds on the behaviour of the HMC chain:

\begin{lemma}\label{ThmGenericApproxErrorBound3}
Set notation as above. Fix $ C'>0$ and $0 < \lambda \leq \frac{1}{32}\sqrt{2m_2}$ and define $V(x) \equiv e^{\lambda \|x\|}$ for all $x\in  \mathbb{R}^d$.  Let $\kappa:= \frac{1}{8} (\sqrt{m}_2T)^2$ and fix $\theta>0$ such that $6\theta \, T \, \frac{\mathsf{A}}{\sqrt{m_2}} \sqrt{M_2} < \kappa$. Then for all $x \in \mathbb{R}^{d}$ we have the drift condition 
\be \label{eq:w6}
K(x,\cdot)[V] &\leq (1 - \alpha) V(x) + \beta, \\
Q(x,\cdot)[V] &\leq (1 - \alpha) V(x) + \beta,
\ee

where
\be 
\alpha &= 1-  (1+e^{-\frac{d}{8}})e^{\lambda\left(-\kappa + 6\theta \, T \, \frac{\mathsf{A}}{\sqrt{m_2}} \sqrt{M_2}\right)\, C' +  \lambda\left(1 + 6\theta \, T \, \mathsf{A}\right) \, \sqrt{\frac{d}{m_{2}}}}\\
\beta &=  e^{\lambda C'} \, (1 - \alpha).
\ee

Moreover,
\be \label{eq:w7}
\|q_T^{\dagger \theta}(x,\mathbf{p})\| &\leq  \left(1-\kappa + 6\theta \, T \, \frac{\mathsf{A}}{\sqrt{m_2}} \sqrt{M_2}\right)\,  \|x\| +  \left(1 + 6\theta \, T \,\mathsf{A} \right) \, \frac{\|\mathbf{p}\|}{\sqrt{2 m_{2}}}
\ee
for any $x,\mathbf{p} \in \mathbb{R}^d$. 
\end{lemma}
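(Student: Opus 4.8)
\textbf{Proof proposal for Lemma \ref{ThmGenericApproxErrorBound3}.}

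The plan is to first establish the deterministic bound \eqref{eq:w7} on the single-step displacement of the numerical integrator, and then integrate against the Gaussian momentum to obtain the drift conditions \eqref{eq:w6}. For \eqref{eq:w7}, I would start from the triangle inequality
\be
\|q_T^{\dagger \theta}(x,\mathbf{p})\| \leq \|q_T(x,\mathbf{p})\| + \|q_T^{\dagger \theta}(x,\mathbf{p}) - q_T(x,\mathbf{p})\|.
\ee
The second term is controlled directly by Condition \ref{cond:integrator}, Inequality \eqref{IneqCondIntPost}, together with the conservation-of-energy bound $\sqrt{H(x,\mathbf{p})} = \sqrt{U(x) + \frac{1}{2}\|\mathbf{p}\|^2} \leq \sqrt{M_2}\|x\| + \frac{1}{\sqrt2}\|\mathbf{p}\|$, which follows from Assumption \ref{AssumptionsConvexity} ($U(0) = 0$, $U(x) \leq \frac{M_2}{2}\|x\|^2$). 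This gives a bound of the form $6\theta T \frac{\mathsf{A}}{\sqrt{m_2}}(\sqrt{M_2}\|x\| + \frac{1}{\sqrt2}\|\mathbf{p}\|)$, which accounts for the $6\theta T \frac{\mathsf{A}}{\sqrt{m_2}}\sqrt{M_2}\|x\|$ and part of the $6\theta T \mathsf{A}\frac{\|\mathbf{p}\|}{\sqrt{2m_2}}$ terms. For the first term, $q_T(x,\mathbf{p})$ is the \emph{exact} Hamiltonian flow applied at $(x,\mathbf{p})$; comparing it to the flow started at $(0,\mathbf{p})$ (which passes through the minimizer $0$ of $U$) and using the contraction estimate of Theorem \ref{ThmContractionConvexMainResult} — applied with the roles of the two trajectories being $q^{(1)}_t$ started at $x$ and $q^{(2)}_t = q^{(1)}_t$ shifted — one gets $\|q_T(x,\mathbf{p}) - \mathcal{Q}_T^{0}(\mathbf{p})\| \leq (1 - \kappa)\|x\|$ with $\kappa = \frac18(\sqrt{m_2}T)^2$, and then $\|\mathcal{Q}_T^0(\mathbf{p})\| = \|q_T(0,\mathbf{p})\| \leq \frac{\|\mathbf{p}\|}{\sqrt{2m_2}}$ again by conservation of energy (since $H(0,\mathbf{p}) = \frac12\|\mathbf{p}\|^2$ and $U(q_T(0,\mathbf{p})) \geq m_2 \|q_T(0,\mathbf{p})\|^2 / 2$... more precisely $\frac{m_2}{2}\|q_t\|^2 \le U(q_t) \le \frac12\|\mathbf p\|^2$). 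Collecting the pieces yields exactly \eqref{eq:w7}.

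Next I would pass to the drift condition. Writing $\mathbf{p} \sim \Phi_1$ (standard Gaussian on $\mathbb{R}^d$), we have $X_1 = q_T^{\dagger\theta}(x,\mathbf{p})$ (or $q_T(x,\mathbf{p})$ for $K$, which satisfies the same bound with $\theta = 0$ \emph{a fortiori} or via the same argument), so \eqref{eq:w7} gives
\be
\|X_1\| \leq \left(1-\kappa + 6\theta T \tfrac{\mathsf{A}}{\sqrt{m_2}}\sqrt{M_2}\right)\|x\| + \left(1 + 6\theta T \mathsf{A}\right)\tfrac{\|\mathbf{p}\|}{\sqrt{2m_2}}.
\ee
Write $a := 1 - \kappa + 6\theta T \frac{\mathsf{A}}{\sqrt{m_2}}\sqrt{M_2}$ and $c := (1 + 6\theta T \mathsf{A})/\sqrt{2m_2}$; the hypothesis $6\theta T \frac{\mathsf A}{\sqrt{m_2}}\sqrt{M_2} < \kappa$ ensures $0 < a < 1$. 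Then $V(X_1) = e^{\lambda\|X_1\|} \leq e^{\lambda a \|x\|} e^{\lambda c \|\mathbf{p}\|}$, and I would split on whether $\|x\| \leq C'$ or $\|x\| > C'$:
\begin{itemize}
\item[\qquad] If $\|x\| > C'$, then $\lambda a \|x\| = \lambda\|x\| - \lambda(1-a)\|x\| \leq \lambda\|x\| - \lambda(1-a)C' = \log V(x) + \lambda(a-1)C'$, so $V(X_1) \leq V(x)\, e^{\lambda(a-1)C'}\, e^{\lambda c\|\mathbf{p}\|}$, and taking expectations $\mathbb{E}[V(X_1)] \leq V(x)\, e^{\lambda(a-1)C'}\,\mathbb{E}[e^{\lambda c\|\mathbf{p}\|}]$.
\item[\qquad] If $\|x\| \leq C'$, then trivially $V(X_1) \leq e^{\lambda C'}e^{\lambda c\|\mathbf{p}\|}$, giving $\mathbb{E}[V(X_1)] \leq e^{\lambda C'}\mathbb{E}[e^{\lambda c\|\mathbf{p}\|}] = \beta$ with the $\beta$ in the statement (up to identifying the Gaussian moment generating factor with the $(1+e^{-d/8})e^{\lambda c \sqrt{d/m_2}}$-type expression).
\end{itemize}
Combining the two cases in the standard way gives $\mathbb{E}[V(X_1)] \leq (1-\alpha)V(x) + \beta$ with $1 - \alpha = e^{\lambda(a-1)C'}\mathbb{E}[e^{\lambda c \|\mathbf p\|}]$ and $\beta = e^{\lambda C'}(1-\alpha)$, matching \eqref{eq:w6}.

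The one genuinely nontrivial estimate is the Gaussian moment bound: I need $\mathbb{E}[e^{\lambda c\|\mathbf{p}\|}] \leq (1 + e^{-d/8})\, e^{\lambda c \sqrt{d/m_2}}$ — i.e., that the tail contribution of $\|\mathbf p\|$ beyond roughly $\sqrt d$ costs only a factor $1 + e^{-d/8}$ — where $\lambda c \leq \frac{\lambda(1+6\theta T\mathsf A)}{\sqrt{2m_2}}$ is forced to be at most something like $\frac{1}{32\sqrt{m_2}}\cdot\frac{1+6\theta T \mathsf A}{\sqrt 2}$ by the hypothesis $\lambda \le \frac{1}{32}\sqrt{2m_2}$. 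This follows by writing $\mathbb{E}[e^{\lambda c\|\mathbf p\|}] = \mathbb{E}[e^{\lambda c\|\mathbf p\|};\|\mathbf p\|\le\sqrt d] + \mathbb{E}[e^{\lambda c\|\mathbf p\|};\|\mathbf p\|>\sqrt d]$, bounding the first term by $e^{\lambda c\sqrt d} \le e^{\lambda c \sqrt{d/m_2}}$ (using $m_2 \le M_2$, hence $m_2 \le$ a universal scale, or more carefully absorbing the constant — this is where one uses $\sqrt{d/m_2} \geq \sqrt d$ when $m_2 \leq 1$; if $m_2 > 1$ one rescales), and the second by a Gaussian concentration inequality ($\mathbb{P}[\|\mathbf p\| > \sqrt d + t] \le e^{-t^2/2}$ plus a layer-cake integral) which, given that $\lambda c$ is small relative to $1$, produces the exponentially small factor $e^{-d/8}$. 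I expect the bookkeeping in matching constants here — precisely which powers of $m_2$, $M_2$ land where, and verifying $\lambda c$ is small enough to make the tail integral converge with the claimed decay rate — to be the main obstacle, though it is entirely routine; the conceptual content is just \eqref{eq:w7} combined with standard Gaussian tail estimates. The bounds for $K$ follow identically (the integrator $q_T$ satisfies \eqref{IneqCondIntPost} with error $0$, so \eqref{eq:w7} holds with the $\theta$-terms dropped, which only improves $\alpha$).
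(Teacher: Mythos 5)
Your proposal is correct and takes essentially the same approach as the paper: use the contraction estimate of Theorem \ref{ThmContractionConvexMainResult} to compare the exact trajectory from $x$ with the exact trajectory from $0$, bound the latter via conservation of energy and strong convexity, add the integrator error from Condition \ref{cond:integrator}, and then pass to the drift condition by case-splitting on $\|x\| \lessgtr C'$ and applying a $\chi^2_d$ concentration bound (the paper uses Hanson--Wright for the key moment estimate $\mathbb{E}[e^{\gamma\|\mathbf p\|/\sqrt{2m_2}}]\leq(1+e^{-d/8})e^{\gamma\sqrt{2d}/\sqrt{2m_2}}$ for $\gamma\leq\tfrac{1}{16}\sqrt{2m_2}$). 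One side note: the $\sqrt{d/m_2}$ in the exponent arises directly from the factor $1/\sqrt{2m_2}$ already carried by the momentum term in \eqref{eq:w7}, so no rescaling or case distinction on $m_2\lessgtr 1$ is needed; and in the $\|x\|\leq C'$ branch you should use $e^{\lambda a\|x\|}\leq e^{\lambda aC'}$ rather than $e^{\lambda C'}$ to land exactly on the stated $\beta=e^{\lambda C'}(1-\alpha)$.
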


\begin{remark} \label{RemGenericApproxErrorBound3}
Note that this result remains true if $U$ satisfies only the condition in Inequality \eqref{IneqCondIntPost} of Assumption \ref{cond:integrator}, but does not satisfy the condition in Inequality \eqref{IneqCondIntEnergy}. This can be seen by simply reading the proof, which does not reference Inequality \eqref{IneqCondIntEnergy}. We use this fact in the proof of Lemma \ref{thm:MH1}.
\end{remark}

\begin{proof}

We begin by coupling two Markov chains evolving according to Algorithm \ref{DefSimpleHMC} to a Markov chain evolving according to Algorithm \ref{alg:Unadjusted}.

Let $\mathbf{p}_0, \mathbf{p}_1, \ldots$ be independent $\mathcal{N}(0,I_d)$ Gaussians. Recalling the forward mapping representation \eqref{EqForwardMappingRep} of Algorithm \ref{DefSimpleHMC}, we set initial conditions $X_0 = x$, $Y_0 = 0$, and inductively define 
\be 
X_{i+1} &= \mathcal{Q}_{T}^{X_{i}}(\mathbf{p}_{i}) \\
Y_{i+1} &= \mathcal{Q}_{T}^{Y_{i}}(\mathbf{p}_{i}) \\
\ee 
for $i \in \mathbb{N}$. This gives a coupling of two copies $\{X_{i}\}_{i \geq 0}$, $\{Y_{i}\}_{i \geq 0}$ of the idealized HMC chain defined in Algorithm \ref{DefSimpleHMC}.  We couple these two chains to a Markov chain $(X_{0}', X_{1}',\ldots)$ generated by Algorithm \ref{alg:Unadjusted} with starting point $X_0'=X_0 = x$ by inductively setting
\be 
X_{i+1}' = q_{T}^{\dagger \theta}(X_{i}', \mathbf{p}_{i})
\ee 
for all $i \geq 0$. Note that the resulting Markov chain evolves according to Algorithm \ref{alg:Unadjusted}.

Define $\mathfrak{p}_0=\mathbf{p}_0$, so $\mathfrak{p}_0 \sim \mathcal{N}(0,I_d)$. By Theorem \ref{ThmContractionConvexMainResult} we have
\be
\label{eq:w1}
\|X_1-Y_1\| \leq (1-\kappa) \, \|X_0-Y_0\| = (1-\kappa) \, \|X_0\|.
\ee

Also, by conservation of energy for Hamilton's equations,
\be
U(Y_1) \leq H(Y_0,\mathbf{p}_0) = 0 + \frac{1}{2}\|\mathfrak{p}_0\|^{2}. 
\ee
But by strong convexity of $U$ (Assumption \ref{AssumptionsConvexity}) $m_2 \|Y_1\|^2 \leq U(Y_1)$, so
\be \label{eq:Y1}
\|Y_1\| \leq \frac{1}{\sqrt{m_2}}\sqrt{U(Y_1)} \leq  \frac{\|\mathfrak{p}_0\|}{\sqrt{2m_2}}.
\ee
Therefore, combining Inequalities \eqref{eq:w1} and \eqref{eq:Y1}, 
\be
\|X_1\| \leq \|X_1-Y_1\| + \|Y_1\| \leq (1-\kappa)\, \|X_0\| + \|Y_1\| \leq (1-\kappa)\, \|X_0\| +  \frac{\|\mathfrak{p}_0\|}{\sqrt{2m_2}}
\ee
and so,
\be \label{eq:w10}
e^{\lambda\|X_1\|} \leq e^{\lambda(1-\kappa)\times \|X_0\|} e^{\lambda\frac{\|\mathfrak{p}_0\|}{\sqrt{2m_2}}}.
\ee

Inequality \eqref{eq:w10} implies that 

\be
e^{\lambda\|X_1\|} \leq  \begin{cases}
(1-\alpha')e^{\lambda\|X_0\|}, \, \,  \quad  \qquad \|X_0\| \geq C'\\
\beta', \, \, \qquad \quad \qquad \quad \qquad \|X_0\| \leq C', 
\end{cases} 
\ee
where $\alpha' = 1-e^{-\lambda\kappa C' + \lambda\frac{\|\mathfrak{p}_0\|}{\sqrt{2m_2}}}$ and $\beta' = e^{\lambda(1-\kappa)C'+\lambda\frac{\|\mathfrak{p}_0\|}{\sqrt{2m_2}}}$ are random variables that depend on $\mathfrak{p}_{0}$. Thus, for any $X_{0}$,
\be \label{eq:c1}
e^{\lambda\|X_1\|} \leq (1-\alpha')e^{\lambda\|X_0\|} + \beta'.
\ee

Now,  $\|\mathfrak{p}_0\|^2 \sim \chi^2_d$, so by the Hanson-Wright concentration inequality (see, for instance, \cite{hanson1971bound, rudelson2013hanson}), we have
\be
\mathbb{P}(\|\mathfrak{p}_0\|^2>s+d)  \leq e^{-\frac{s}{8}} \quad \quad  \textrm{ for } s> d.
\ee
Therefore, for any $\gamma$ such that $0< \gamma<\frac{1}{16}\sqrt{2m_2}$, 
\be
\mathbb{P}\left[e^{\gamma\frac{\|\mathfrak{p}_0\|}{\sqrt{2m_2}}} > e^{\gamma\frac{\sqrt{s+d}}{\sqrt{2m_2}}}\right] \leq e^{-\frac{s}{8}} \quad \quad  \textrm{ for } s> d
\ee
and so
\be \label{eq:c4}
\mathbb{P}\left[e^{\gamma\frac{\|\mathfrak{p}_0\|}{\sqrt{2m_2}}} > r \right] \leq e^{-\frac{1}{8}((\frac{\sqrt{2m_2}}{\gamma}\log(r))^2-d)} \quad \quad  \textrm{ for } r> e^{\gamma\frac{\sqrt{2d}}{\sqrt{2m_2}}}.
\ee

However, for $r > e^{\gamma\frac{\sqrt{2d}}{\sqrt{2m_2}}}$, we have
\be \label{eq:c5}
e^{-\frac{1}{8}((\frac{\sqrt{2m_2}}{\gamma}\log(r))^2-d)}= e^{\frac{1}{8}d} [e^{\log(r)}]^{-\frac{1}{8}(\frac{\sqrt{2m_2}}{\gamma})^2\log(r)} &= e^{\frac{1}{8}d} r^{-\frac{1}{8}(\frac{\sqrt{2m_2}}{\gamma})^2\log(r)}\\
&\leq e^{\frac{1}{8}d} r^{-\frac{1}{8}\frac{\sqrt{2m_2}}{\gamma}\sqrt{2d}}.
\ee
Inequalities \eqref{eq:c4} and \eqref{eq:c5} together give
\be \label{eq:c3}
\mathbb{P}\left[e^{\gamma\frac{\|\mathfrak{p}_0\|}{\sqrt{2m_2}}} > r \right] \leq e^{\frac{1}{8}d} r^{-\frac{1}{8}\frac{\sqrt{2m_2}}{\gamma}\sqrt{2d}} \quad \quad  \textrm{ for } r> e^{\gamma\frac{\sqrt{2d}}{\sqrt{2m_2}}},
\ee
and hence
\be
\mathbb{E}\left[e^{\gamma\frac{\|\mathfrak{p}_0\|}{\sqrt{2m_2}}} \right] &\stackrel{\textrm{Eq. \ref{eq:c3}}}{\leq}  e^{\gamma\frac{\sqrt{2d}}{\sqrt{2m_2}}} + \int_{e^{\gamma\frac{\sqrt{2d}}{\sqrt{2m_2}}}}^\infty e^{\frac{1}{8}d} r^{-\frac{1}{8}\frac{\sqrt{2m_2}}{\gamma}\sqrt{2d}}  \d r\\
&= e^{\gamma\frac{\sqrt{2d}}{\sqrt{2m_2}}} +  \frac{1}{{\frac{1}{8}\frac{\sqrt{2m_2}}{\gamma}\sqrt{2d}-1}}e^{-\frac{1}{8}d+\gamma\frac{\sqrt{2d}}{\sqrt{2m_2}}}\\
&\leq (1+e^{-\frac{d}{8}})\, e^{\gamma\frac{\sqrt{2d}}{\sqrt{2m_2}}}\\
\ee
where the inequality uses the fact that $\gamma \leq \frac{1}{16}\sqrt{2m_2}$.  So we have 
\be \label{eq:c2}
\mathbb{E}\left[e^{\gamma\frac{\|\mathfrak{p}_0\|}{\sqrt{2m_2}}} \right] \leq  (1+e^{-\frac{d}{8}})e^{\gamma\frac{\sqrt{2d}}{\sqrt{2m_2}}} \quad \quad \textrm{ for } 0<\gamma \leq\frac{1}{16}\sqrt{2m_2}.\\
\ee

Therefore,
\be \label{eq:Jensen}
\mathbb{E}[e^{\lambda\|X_1\|}] &\stackrel{\textrm{Eq. \ref{eq:c1}}}{\leq} \mathbb{E}[(1-\alpha')e^{\lambda\|X_0\|}] + \mathbb{E}[\beta'] \\
&= \mathbb{E}[e^{-\lambda\kappa C' + \lambda\frac{\|\mathfrak{p}_0\|}{\sqrt{2m_2}}}e^{\lambda\|X_0\|}]  + \mathbb{E}[e^{\lambda(1-\kappa)C'+\lambda\frac{\|\mathfrak{p}_0\|}{\sqrt{2m_2}}}]\\
&\stackrel{\textrm{Eq. \ref{eq:c2}}}{\leq}   (1+e^{-\frac{d}{8}})e^{-\lambda\kappa C'} e^{\lambda\frac{\sqrt{2d}}{\sqrt{2m_2}}}e^{\lambda\|X_0\|} +  (1+e^{-\frac{d}{8}})e^{\lambda(1-\kappa)C'}e^{\lambda\frac{\sqrt{2d}}{\sqrt{2m_2}}},
\ee
where the assumption of Inequality  \eqref{eq:c2} is satisfied because $0<\lambda \leq \frac{1}{32}\sqrt{2m_2}<\frac{1}{16}\sqrt{2m_2}$.

Next, since $6\theta \, T \, \frac{\mathsf{A}}{\sqrt{m_2}}>0$, we have
\be \label{eq:alpha_max}
1- \alpha &=(1+e^{-\frac{d}{8}})e^{\lambda\left(-\kappa + 6\theta \, T \, \frac{\mathsf{A}}{\sqrt{m_2}} \sqrt{M_2}\right)\, C' +  \lambda\left(1 + 6\theta \, T \, \mathsf{A}\right) \, \sqrt{\frac{d}{m_{2}}}}\\
&\geq (1+e^{-\frac{d}{8}})e^{-\lambda\kappa C'+ \lambda\frac{\sqrt{2d}}{\sqrt{2m_2}}}
\ee
 and  
\be \label{eq:beta_max}
\beta &=  e^{\lambda C'} \, (1 - \alpha) \\
&\geq (1+e^{-\frac{d}{8}})e^{\lambda(1-\kappa)C' + \lambda\frac{\sqrt{2d}}{\sqrt{2m_2}}}.
\ee

Therefore, substituting Inequalities \eqref{eq:alpha_max} and  \eqref{eq:beta_max} into Inequality \eqref{eq:Jensen}, we get
\[\mathbb{E}[V(X_1)] \leq (1-\alpha) V(X_0)  + \beta.\]
and hence
\begin{equation}
K(x,\cdot)[V] \leq (1-\alpha)\times V(x) + \beta.
\end{equation}
 This proves the first line of Ineqality \eqref{eq:w6}. We now prove Inequality \eqref{eq:w7}, before finally proving the second line of Inequality \eqref{eq:w6}. By Condition \ref{cond:integrator}, for every $\mathbf{p} \in \mathbb{R}^d$ we have 
\be \label{eq:w11}
\|q_T^{\dagger \theta}(x,\mathbf{p}) - q_T(x,\mathbf{p})\| &\leq 6\theta \, T \, \frac{\mathsf{A}}{\sqrt{m_2}} \sqrt{H(X_0,\mathbf{p})}\\
&\stackrel{\textrm{Assumption } \ref{AssumptionsConvexity}}{\leq} 6\theta \, T \, \frac{\mathsf{A}}{\sqrt{m_2}} \sqrt{M_2\|X_0\|^2 + \frac{1}{2}\|\mathbf{p}\|^2}\\
&\leq 6\theta \, T \, \frac{\mathsf{A}}{\sqrt{m_2}} \left[\sqrt{M_2\|X_0\|^2} + \sqrt{\frac{1}{2}\|\mathbf{p}\|^2}\right],\\
\ee
Also, by the same calculation that was used to prove Inequality \eqref{eq:w10}, we have
\be \label{eq:w10b}
e^{\lambda\|q_T(x,\mathbf{p})\|} \leq e^{\lambda(1-\kappa)\times \|x\|} e^{\lambda\frac{\|\mathbf{p}\|}{\sqrt{2m_2}}} \quad \quad \forall \mathbf{p}\in \mathbb{R}^d.
\ee

And so for all  $\mathbf{p} \in \mathbb{R}^d$ we have,
\be
\|q_T^{\dagger \theta}(x,\mathbf{p})\| &\leq \| q_T(x,\mathbf{p})\|+  \|q_T^{\dagger \theta}(x,\mathbf{p}) - q_T(x,\mathbf{p})\|\\
&\stackrel{\textrm{Eq. \ref{eq:w10b}, \ref{eq:w11}}}{\leq}  (1-\kappa)\, \|x\| +  \frac{\|\mathbf{p}\|}{\sqrt{2m_2}} + 6\theta \, T \, \frac{\mathsf{A}}{\sqrt{m_2}} \left[\sqrt{M_2}\|x\| + \frac{1}{\sqrt{2}}\|\mathbf{p}\| \right]\\
&\leq  \left(1-\kappa + 6\theta \, T \, \frac{\mathsf{A}}{\sqrt{m_2}} \sqrt{M_2}\right)\, \|x\| +  \left(\frac{1}{\sqrt{2m_2}} + \frac{1}{\sqrt{2}}6\theta \, T \, \frac{\mathsf{A}}{\sqrt{m_2}} \right) \, \|\mathbf{p}\|.
\ee

This proves Inequality \eqref{eq:w7}. Inequality \eqref{eq:w7} in turn implies that  
\be
\|X_1'\| \leq  \left(1-\kappa + 6\theta \, T \, \frac{\mathsf{A}}{\sqrt{m_2}} \sqrt{M_2}\right)\, \|X_0\| +  \left(1 + 6\theta \, T \, \mathsf{A} \right) \, \frac{\|\mathfrak{p}_0\|}{\sqrt {2 m_{2}}}
\ee
and so
\be
e^{\lambda\|X_1'\|} \leq  e^{\lambda\|X_0\|}e^{\lambda\left(-\kappa + 6\theta \, T \, \frac{\mathsf{A}}{\sqrt{m_2}} \sqrt{M_2}\right)\, \|X_0\| +  \lambda\left(1+ 6\theta \, T \, \mathsf{A} \right) \, \frac{\|\mathfrak{p}_0\|}{\sqrt{2 m_{2}}}}.
\ee
Since $6\theta \, T \, \frac{\mathsf{A}}{\sqrt{m_2}} \sqrt{M_2} < \kappa$, this implies 
\be
e^{\lambda\|X'_1\|} \leq \begin{cases}
(1-\alpha'')e^{\lambda\|X_0\|}, \qquad \, \, \|X_0\| \geq C'\\
e^{\lambda\|X'_1\|} \leq \beta'' \qquad \qquad \|X_0\|  \leq C',
\end{cases}
\ee
where 
\be 
\alpha'' &:=1-e^{\lambda\left(-\kappa + 6\theta \, T \, \frac{\mathsf{A}}{\sqrt{m_2}} \sqrt{M_2}\right)\, C' +  \lambda\left(1 + 6\theta \, T \, \mathsf{A} \right) \, \frac{\|\mathfrak{p}_0\|}{\sqrt{2 m_{2}}}} \\
\beta''&:= e^{\lambda\left(1-\kappa + 6\theta \, T \, \frac{\mathsf{A}}{\sqrt{m_2}} \sqrt{M_2}\right)\, C' +   \lambda\left(1 + 6\theta \, T \, \mathsf{A} \right) \, \frac{\|\mathfrak{p}_0\|}{\sqrt{2 m_{2}}}}
\ee
are random variables that depend on $\mathfrak{p}_{0}$. Thus, 
\be
e^{\lambda\|X_1'\|} \leq  (1-\alpha'')e^{\lambda\|X_0\|} + \beta''
\ee
for any $X_{0}$, and so
\be \label{eq:Jensen2}
\mathbb{E}&[e^{\lambda\|X_1'\|}] \leq \mathbb{E}[(1-\alpha'')e^{\lambda\|X_0\|} + \beta''] \\
&= \mathbb{E}[1-\alpha'']\, e^{\lambda\|X_0\|} + \mathbb{E}[\beta'']\\
&=\mathbb{E}\left[e^{\lambda\left(-\kappa + 6\theta \, T \, \frac{\mathsf{A}}{\sqrt{m_2}} \sqrt{M_2}\right)\, C' +   \lambda\left(1 + 6\theta \, T \, \mathsf{A} \right) \, \frac{\|\mathfrak{p}_0\|}{\sqrt{2 m_{2}}}}\right]\, e^{\lambda\|X_0\|}\\&\qquad\qquad+ \mathbb{E}\left[e^{\lambda\left(1-\kappa + 6\theta \, T \, \frac{\mathsf{A}}{\sqrt{m_2}} \sqrt{M_2}\right)\, C' +   \lambda\left(1 + 6\theta \, T \, \mathsf{A} \right) \, \frac{\|\mathfrak{p}_0\|}{\sqrt{2 m_{2}}}}\right]\\
&\stackrel{\textrm{Eq. }\ref{eq:c2}}{\leq} (1+e^{-\frac{d}{8}})e^{\lambda\left(-\kappa + 6\theta \, T \, \frac{\mathsf{A}}{\sqrt{m_2}} \sqrt{M_2}\right)\, C' +  \lambda\left(\frac{1}{\sqrt{2m_2}} + \frac{1}{\sqrt{2}}6\theta \, T \, \frac{\mathsf{A}}{\sqrt{m_2}} \right) \, \sqrt{2d}}\, e^{\lambda\|X_0\|}
\\&\qquad\qquad+  (1+e^{-\frac{d}{8}})e^{\lambda\left(1-\kappa + 6\theta \, T \, \frac{\mathsf{A}}{\sqrt{m_2}} \sqrt{M_2}\right)\, C' +  \lambda\left(\frac{1}{\sqrt{2m_2}} + \frac{1}{\sqrt{2}}6\theta \, T \, \frac{\mathsf{A}}{\sqrt{m_2}} \right) \, \sqrt{2d}},
\ee
where Inequality \eqref{eq:c2} is applied in the fourth line with $\gamma = \lambda\left(1 + 6\theta \, T \, \mathsf{A} \right)$  (note that the condition on $\gamma$ is satisfied, since $6\theta \, T \, \mathsf{A} \leq 6\theta \, T \, \frac{\mathsf{A}}{\sqrt{m_2}} \sqrt{M_2} < \kappa < 1$ implies $\gamma < 2\lambda \leq 2 \times \frac{1}{32} \sqrt{2m_2} = \frac{1}{16}\sqrt{2m_2}$). 
Therefore, since $X_0 = X_0' = x$, 
\be 
\mathbb{E}[V(X_1')] \leq (1-\alpha) V(X'_0)  + \beta,
\ee
and hence 
\begin{equation}
Q(x,\cdot)[V] \leq (1-\alpha)\times V(x) + \beta.
\end{equation}
This proves the second line of Inequality \eqref{eq:w6}, completing the proof of the lemma.

\end{proof}

We show that we can use the approximation bound in Lemma \ref{ThmGenericApproxErrorBound2}: 

\begin{lemma}\label{ThmGenericApproxErrorBound4}
Set notation and parameters as in the statement of Lemma \ref{ThmGenericApproxErrorBound3}.  Then, using the same notation, the assumptions \eqref{SimpAss2}, \eqref{IneqContractionForQ} and \eqref{SimpAss1}  of  Lemma \ref{ThmGenericApproxErrorBound2} hold for any choice of $C > 0$ and the choice 
\be 
\delta = 6\theta \, T \, \frac{\mathsf{A}}{\sqrt{m_2}} \, \left(\sqrt{M_2} \lambda^{-1}\log(C) + \frac{\sqrt{d}}{\sqrt{2}} \right). \\
\ee 
\end{lemma}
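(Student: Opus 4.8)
The plan is to verify the three hypotheses of Lemma \ref{ThmGenericApproxErrorBound2} one at a time, using the results already established in Lemma \ref{ThmGenericApproxErrorBound3} together with Theorem \ref{ThmContractionConvexMainResult} and Condition \ref{cond:integrator}.

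First, the contraction hypothesis \eqref{SimpAss2} for $K$ with $\kappa = \frac{1}{8}(\sqrt{m_2}T)^2$ is immediate: applying Theorem \ref{ThmContractionConvexMainResult} to the coupling of two HMC trajectories with a common momentum increment $\mathbf{p}$ gives $\|\mathcal{Q}_T^{x}(\mathbf{p}) - \mathcal{Q}_T^{y}(\mathbf{p})\| \leq (1-\kappa)\|x-y\|$ pointwise in $\mathbf{p}$, hence $W_1(K(x,\cdot),K(y,\cdot)) \leq (1-\kappa)\|x-y\|$. Second, the pair of drift conditions \eqref{IneqContractionForQ} for $K$ and $Q$ with $V(x)=e^{\lambda\|x\|}$, the constants $\alpha,\beta$ as specified, and $0 < \lambda \leq \frac{1}{32}\sqrt{2m_2}$, is exactly the content of Inequality \eqref{eq:w6} of Lemma \ref{ThmGenericApproxErrorBound3}, so this step requires only citing that result (noting that the hypothesis $6\theta T \frac{\mathsf{A}}{\sqrt{m_2}}\sqrt{M_2} < \kappa$ of Lemma \ref{ThmGenericApproxErrorBound3} is among our standing assumptions). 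I would also remark that the condition $\frac{4\beta}{\alpha} < C$ needed in \eqref{SimpAss1} is harmless since $C>0$ is free and can always be taken large; alternatively one observes $\frac{\beta}{\alpha} = e^{\lambda C'}\cdot\frac{1-\alpha}{\alpha}$ is finite, so any sufficiently large $C$ works.

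The substantive step is the local approximation bound \eqref{SimpAss1}: showing $\sup_{x : V(x) \leq C} W_1(K(x,\cdot),Q(x,\cdot)) < \delta$ for the stated $\delta$. Here I would use the common-momentum coupling: given $x$ with $\|x\| \leq \lambda^{-1}\log C$, couple $X \sim K(x,\cdot)$ and $X' \sim Q(x,\cdot)$ by sampling a single $\mathbf{p}\sim\Phi_1$ and setting $X = \mathcal{Q}_T^x(\mathbf{p}) = q_T(x,\mathbf{p})$, $X' = q_T^{\dagger\theta}(x,\mathbf{p})$. Then $W_1(K(x,\cdot),Q(x,\cdot)) \leq \mathbb{E}\|q_T(x,\mathbf{p}) - q_T^{\dagger\theta}(x,\mathbf{p})\|$, and by Inequality \eqref{IneqCondIntPost} of Condition \ref{cond:integrator} together with Assumption \ref{AssumptionsConvexity} (giving $H(x,\mathbf{p}) \leq M_2\|x\|^2 + \frac12\|\mathbf{p}\|^2$ and $\sqrt{a+b}\leq\sqrt a+\sqrt b$), this is at most
\be
6\theta T \frac{\mathsf{A}}{\sqrt{m_2}}\,\mathbb{E}\Big[\sqrt{M_2}\,\|x\| + \tfrac{1}{\sqrt2}\|\mathbf{p}\|\Big] \leq 6\theta T \frac{\mathsf{A}}{\sqrt{m_2}}\Big(\sqrt{M_2}\,\lambda^{-1}\log C + \tfrac{1}{\sqrt2}\,\mathbb{E}\|\mathbf{p}\|\Big).
\ee
Finally I bound $\mathbb{E}\|\mathbf{p}\| \leq \sqrt{\mathbb{E}\|\mathbf{p}\|^2} = \sqrt d$ by Jensen, which yields the claimed value of $\delta$. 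The main (and essentially only) obstacle is keeping the energy/conservation bookkeeping clean — making sure the conservation-of-energy bound relating $\|q_t\|$, the kinetic term, and $H(x,\mathbf{p})$ is invoked correctly so that the $\sqrt{H(x,\mathbf{p})}$ factor in \eqref{IneqCondIntPost} is controlled uniformly over the sublevel set $\{V(x)\leq C\}$; everything else is a direct citation of earlier results.
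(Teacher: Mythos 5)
Your proof is correct and follows essentially the same route as the paper: common-momentum coupling to reduce $W_1(K(x,\cdot),Q(x,\cdot))$ to $\mathbb{E}\|q_T(x,\mathbf p)-q_T^{\dagger\theta}(x,\mathbf p)\|$, Condition~\ref{cond:integrator} for the per-step error bound, and Assumption~\ref{AssumptionsConvexity} to control $H(x,\mathbf p)$. The only difference is cosmetic in the last step — you bound $\mathbb{E}\sqrt{H}$ by applying $\sqrt{a+b}\le\sqrt a+\sqrt b$ pointwise and then Jensen to get $\mathbb{E}\|\mathbf p\|\le\sqrt d$, whereas the paper first passes via $W_1\le W_2$ and computes $\mathbb{E}\|\mathbf p\|^2=d$ exactly before applying the same elementary inequality; both yield the identical $\delta$.
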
 

\begin{proof}
Inequalities \eqref{SimpAss2} and \eqref{IneqContractionForQ} follow immediately from Theorem \ref{ThmContractionConvexMainResult} and Lemma \ref{ThmGenericApproxErrorBound3}, respectively.  So we need only prove Inequality \eqref{SimpAss1}. Fix $x \in \mathbb{R}^d$ and let $\mathfrak{p}_0 \sim \mathcal{N}(0,I_d)$ be a standard spherical gaussian.  Set $X_1:= q_T(x,\mathfrak{p}_0)$ and $X_1':= q_T^{\dagger \theta}(x,\mathfrak{p}_0)$, so that $X_1\sim K(x,\cdot)$ and $X_1'\sim Q(x,\cdot)$. By Condition \ref{cond:integrator},
\be \label{eq:X1_error}
\|X'_1-X_1\| = \|q_T^{\dagger \theta}(x,\mathfrak{p}_0) - q_T(x,\mathfrak{p}_0)\|\leq 6\theta \, T \, \frac{\mathsf{A}}{\sqrt{m_2}} \sqrt{H(x,\mathfrak{p}_0)}.
\ee

Therefore,  
\be
\sup_{x \, : \, V(x) \leq C} W_1 (K(x,\cdot), Q(x,\cdot)) &\leq \sup_{x \, : \, V(x) \leq C} W_2 (K(x,\cdot), Q(x,\cdot))\\
&\stackrel{\textrm{Eq. } \ref{eq:X1_error}}{\leq} \sup_{x \, : \,e^{\lambda \|x\|} \leq C} \mathbb{E}\left[\left(6\theta \times T \times \frac{\mathsf{A}}{\sqrt{m_2}} \sqrt{H(x,\mathfrak{p}_0)}\right)^2\right]^{\frac{1}{2}}\\
&\stackrel{\textrm{Assumption }\ref{AssumptionsConvexity}}{\leq} \sup_{x \, : \, \|x\| \leq \lambda^{-1}\log(C)} \mathbb{E}\left[\left(6\theta \times T \times \frac{\mathsf{A}}{\sqrt{m_2}} \sqrt{M_2\|x\|^2 + \frac{1}{2}\|\mathfrak{p}_0\|^2}\right)^2\right]^{\frac{1}{2}}\\
&= \mathbb{E}\left[\left(6\theta \times T \times \frac{\mathsf{A}}{\sqrt{m_2}} \sqrt{M_2 (\lambda^{-1}\log(C))^2 + \frac{1}{2}\|\mathfrak{p}_0\|^2}\right)^2\right]^{\frac{1}{2}}\\
&= 6\theta \times T \times \frac{\mathsf{A}}{\sqrt{m_2}} \times \left(M_2 (\lambda^{-1}\log(C))^2 + \frac{1}{2}d \right)^{\frac{1}{2}}\\
&\leq 6\theta \times T \times \frac{\mathsf{A}}{\sqrt{m_2}} \times \left(\sqrt{M_2} \lambda^{-1}\log(C) + \frac{\sqrt{d}}{\sqrt{2}} \right).
\ee
This completes the proof of the lemma.
\end{proof}

We conclude with a bound on the approximation error of $Q$ after many steps:

\begin{lemma}\label{thm:unadjusted_wass}

Set notation and parameters as in Lemma \ref{ThmGenericApproxErrorBound3}, fix $C > \frac{4 \beta}{\alpha}$, and let 

\[\delta =  6\theta \, T \, \frac{\mathsf{A}}{\sqrt{m_2}} \, \left(\sqrt{M_2} \lambda^{-1}\log(C) + \frac{1}{\sqrt{2}}\sqrt{d} \right).\]
Then $Q$ satisfies 
\begin{equation}
W_{1}(Q^\mathcal{I}(x,\cdot), \pi) \leq (1 - \kappa)^{s} \, 2\lambda^{-1}\log(C) + \frac{\delta}{\kappa} + \frac{\beta (s+1)}{\lambda \alpha C}\left(8+2\log(C)\right)
\end{equation} 
for all  $0 \leq s \leq \mathcal{I} \in \mathbb{N}$ and all $x$ satisfying $V(x) \leq \frac{\beta}{\alpha}$. Furthermore, if $Q$ is ergodic with stationary measure $\nu$, 
\begin{equation}
W_{1}(\pi, \nu) \leq \inf_{s \in \{0,1,\ldots\}} \left[ (1 - \kappa)^{s}\times 2\lambda^{-1}\log(C) + \frac{\delta}{\kappa} + \frac{\beta (s+1)}{\lambda \alpha C}\left(8+2\log(C)\right)\right].
\end{equation} 
\end{lemma}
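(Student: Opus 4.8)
The plan is to derive this statement as an immediate corollary of the three preceding lemmas: the drift estimate of Lemma \ref{ThmGenericApproxErrorBound3}, the approximation estimate of Lemma \ref{ThmGenericApproxErrorBound4}, and the abstract perturbation bound of Lemma \ref{ThmGenericApproxErrorBound2}. First I would record the basic structural facts about the ideal kernel $K$: by Theorem \ref{ThmContractionConvexMainResult}, since $0 \leq T \leq \frac{1}{2\sqrt 2}\frac{\sqrt{m_2}}{M_2}$, the kernel $K$ is a $W_1$-contraction with coefficient $\kappa = \frac{1}{8}(\sqrt{m_2}T)^2$, hence $K$ has a unique stationary measure, which is the target $\pi$ (the ideal HMC chain is $\pi$-reversible). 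This identifies the measure ``$\mu$'' appearing in Lemma \ref{ThmGenericApproxErrorBound2} with $\pi$.

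Next I would invoke Lemma \ref{ThmGenericApproxErrorBound4}, whose hypotheses are exactly ``notation and parameters as in Lemma \ref{ThmGenericApproxErrorBound3}'' -- in particular $0 < \lambda \leq \frac{1}{32}\sqrt{2m_2}$ and $6\theta\,T\,\frac{\mathsf{A}}{\sqrt{m_2}}\sqrt{M_2} < \kappa$ -- to conclude that the contraction hypothesis \eqref{SimpAss2}, the two drift hypotheses \eqref{IneqContractionForQ}, and the local approximation hypothesis \eqref{SimpAss1} of Lemma \ref{ThmGenericApproxErrorBound2} all hold, for any $C > 0$, with $\alpha,\beta$ as in Lemma \ref{ThmGenericApproxErrorBound3} and with
\[ \delta = 6\theta\, T\, \frac{\mathsf{A}}{\sqrt{m_2}}\left(\sqrt{M_2}\,\lambda^{-1}\log(C) + \frac{1}{\sqrt 2}\sqrt d\right). \]
I would then specialize to $C > \frac{4\beta}{\alpha}$, which is the one extra restriction that Lemma \ref{ThmGenericApproxErrorBound2} demands of $C$.

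Finally I would apply Lemma \ref{ThmGenericApproxErrorBound2} directly with $\mu = \pi$. Its conclusion \eqref{WassContConc3}, together with the elementary estimate $\sup_{x,y\,:\,V(x),V(y)\leq C}\|x-y\| \leq 2\lambda^{-1}\log(C)$ already built into that statement, yields
\[ W_1(Q^{\mathcal{I}}(x,\cdot),\pi) \leq (1-\kappa)^{s}\, 2\lambda^{-1}\log(C) + \frac{\delta}{\kappa} + \frac{\beta(s+1)}{\lambda\alpha C}\big(8 + 2\log(C)\big) \]
for all $0 \leq s \leq \mathcal{I} \in \mathbb{N}$ and all $x$ with $V(x) \leq \frac{\beta}{\alpha}$; conclusion \eqref{WassContConc4} gives the corresponding $\inf_s$ bound on $W_1(\pi,\nu)$ whenever $Q$ is ergodic with stationary law $\nu$. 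I do not expect a genuine obstacle here: the only points that need care are confirming that the parameter constraints threaded through Lemmas \ref{ThmGenericApproxErrorBound3} and \ref{ThmGenericApproxErrorBound4} (the admissible ranges of $\lambda$, $\theta$, $T$, and the condition $C > 4\beta/\alpha$) are mutually consistent, and that the stationary measure ``$\mu$'' of Lemma \ref{ThmGenericApproxErrorBound2} is correctly identified with $\pi$; once that bookkeeping is done, the two displayed inequalities are verbatim the conclusions of Lemma \ref{ThmGenericApproxErrorBound2}.
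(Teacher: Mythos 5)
Your proposal is correct and follows essentially the same route as the paper's proof, which is simply to invoke Lemma \ref{ThmGenericApproxErrorBound4} to verify the hypotheses of Lemma \ref{ThmGenericApproxErrorBound2} and then read off the two conclusions with $\mu = \pi$. The only thing you make explicit that the paper leaves implicit is the identification of the stationary measure of the ideal kernel $K$ with $\pi$, but that is a standard fact about HMC and not a gap in either argument.
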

\begin{proof}
Set $V(x)\equiv e^{\lambda \|x\|}$ for all $x \in \mathbb{R}^d$.  The proof now follows by applying Lemmas \ref{ThmGenericApproxErrorBound2} and \ref{ThmGenericApproxErrorBound4}, with constants given in the statement of Lemma \ref{ThmGenericApproxErrorBound4}.
\end{proof}

Define the function
\be \label{EqDefGammaFunc}
\Gamma(a,b) := \left[ 2e^{\frac{1}{32}\left(4-2a \right)\, \frac{16}{a}\log(1+e^{-\frac{b}{8}}) -\frac{7}{8}}\right]
\ee  for $a, b > 0$. We restate Lemma \ref{thm:unadjusted_wass} in a form that will be easier to refer to in subsequent appendices:

\begin{lemma}\label{thm:unadjusted_wass2}
Choose $\epsilon >0$. Fix notation as in Lemma \ref{ThmGenericApproxErrorBound3}, with the additional constraints $0 \leq T \leq \frac{1}{2\sqrt{2}} \frac{\sqrt{m_2}}{M_2}$ and  $\lambda = \frac{\kappa}{64} \sqrt{\frac{m_{2}}{d}}$.
Set
\be 
s &= \frac{1}{\kappa}\log\left(\frac{24\log(15000 \, \Gamma(\kappa,d) \, \lambda^{-1}\epsilon^{-1}\kappa^{-2}) +24\log(\kappa^{-1})}{\lambda \epsilon}\right) \\
C &= (15000 \, \Gamma(\kappa,d) \, \lambda^{-1}\epsilon^{-1}(s+1) \kappa^{-2} +4)^2\\
\theta &\leq \frac{\kappa \epsilon \sqrt{m_2}}{18T \mathsf{A}(\sqrt{M_2} \lambda^{-1}\log(C) + \frac{1}{\sqrt{2}}\sqrt{d})} \\
C' &= (1+\frac{16}{\kappa}\log (1+e^{-\frac{d}{8}})) \, \frac{8}{\kappa} \sqrt{\frac{d}{m_{2}}}.
\ee  
Then
\begin{equation} \label{WassContConc5}
W_{1}(Q^\mathcal{I}(x,\cdot), \pi) \leq \epsilon
\end{equation} 
for all $\mathcal{I} \geq s$ and all $x$ satisfying $\|x\|  \leq \frac{\sqrt{d}}{\sqrt{m_2}}$. Furthermore, if $Q$ is ergodic with stationary measure $\nu$, 
\begin{equation} \label{WassContConc6}
W_{1}(\pi, \nu) \leq \epsilon.
\end{equation} 
\end{lemma}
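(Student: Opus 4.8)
\textbf{Proof proposal for Lemma \ref{thm:unadjusted_wass2}.}

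The plan is to specialize Lemma \ref{thm:unadjusted_wass} to the particular choices of $s$, $C$, $\theta$, $C'$ in the statement and to verify that, with these choices, the three error terms appearing in the conclusion of Lemma \ref{thm:unadjusted_wass} are each bounded by $\epsilon/3$ (or some similar split). First I would record the basic quantities: with $\lambda = \frac{\kappa}{64}\sqrt{m_2/d}$ and $\kappa = \frac18(\sqrt{m_2}T)^2 \le \frac{1}{64}(m_2/M_2)^2$, one checks that $0 < \lambda \le \frac{1}{32}\sqrt{2m_2}$, so the hypotheses of Lemma \ref{ThmGenericApproxErrorBound3} (and hence of Lemma \ref{thm:unadjusted_wass}) are satisfied; one also checks $6\theta T\frac{\mathsf A}{\sqrt{m_2}}\sqrt{M_2} < \kappa$ from the stated upper bound on $\theta$, which is the remaining hypothesis needed. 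Next I would compute $\alpha$ and $\beta$ from Lemma \ref{ThmGenericApproxErrorBound3} with the stated $C'$: the point of the definition of $C'$ is exactly that the exponent $\lambda(-\kappa + 6\theta T\frac{\mathsf A}{\sqrt{m_2}}\sqrt{M_2})C' + \lambda(1 + 6\theta T\mathsf A)\sqrt{d/m_2}$ is negative and bounded away from $0$ by a definite amount, so that $1-\alpha \le \Gamma(\kappa,d)$ for the function $\Gamma$ defined in \eqref{EqDefGammaFunc}, and $\beta/\alpha$ is controlled (in particular $\beta/\alpha \le e^{\lambda C'}\cdot \frac{1-\alpha}{\alpha}$, which one bounds using $\alpha \ge \frac12$ or a similar crude estimate). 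This gives $\frac{\beta}{\alpha} = O(\Gamma(\kappa,d)e^{\lambda C'})$, and since $\|x\|\le \sqrt{d/m_2}$ implies $V(x) = e^{\lambda\|x\|} \le e^{\lambda\sqrt{d/m_2}} \le \beta/\alpha$, the starting-point hypothesis of Lemma \ref{thm:unadjusted_wass} holds.

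With $\alpha,\beta$ in hand, I would substitute $C = (15000\,\Gamma(\kappa,d)\lambda^{-1}\epsilon^{-1}(s+1)\kappa^{-2}+4)^2$ and verify $C > 4\beta/\alpha$ (this holds because $C$ is chosen polynomially large in all the relevant quantities and $4\beta/\alpha$ is only linear in $\Gamma$). Then I bound the three terms of Lemma \ref{thm:unadjusted_wass} separately:
(i) the term $(1-\kappa)^s \cdot 2\lambda^{-1}\log C$: since $s = \frac1\kappa\log\big(\tfrac{24\log(\cdots)+24\log(\kappa^{-1})}{\lambda\epsilon}\big)$ and $\log C = 2\log(15000\,\Gamma\lambda^{-1}\epsilon^{-1}(s+1)\kappa^{-2}+4)$, the choice of $s$ is engineered so that $(1-\kappa)^s \le e^{-\kappa s}$ kills $2\lambda^{-1}\log C$ down to $\le \epsilon/3$; this is the place where the nested logarithms in the definitions of $s$ and $C$ have to be unwound carefully, using $\log(s+1) = o(\text{stuff})$ type estimates.
(ii) the term $\delta/\kappa$ with $\delta = 6\theta T\frac{\mathsf A}{\sqrt{m_2}}(\sqrt{M_2}\lambda^{-1}\log C + \frac{1}{\sqrt2}\sqrt d)$: the upper bound $\theta \le \frac{\kappa\epsilon\sqrt{m_2}}{18 T\mathsf A(\sqrt{M_2}\lambda^{-1}\log C + \frac{1}{\sqrt2}\sqrt d)}$ is exactly what makes $\delta/\kappa \le \epsilon/3$.
(iii) the term $\frac{\beta(s+1)}{\lambda\alpha C}(8+2\log C)$: using $\beta/\alpha = O(\Gamma e^{\lambda C'})$ and $C$ polynomially large (with the factor $(s+1)$ and $\epsilon^{-1}$ built in), one checks $\frac{\beta(s+1)(8+2\log C)}{\lambda\alpha C} \le \epsilon/3$; here the dominant balancing is between the $C$ in the denominator and the $\log C$ in the numerator, so $C$ being a square (of a quantity already containing $\epsilon^{-1}, \Gamma, \lambda^{-1}, s+1$) gives the needed slack.
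Adding up, $W_1(Q^{\mathcal I}(x,\cdot),\pi) \le \epsilon$ for $\mathcal I \ge s$, which is \eqref{WassContConc5}. For \eqref{WassContConc6}, I would take the infimum over $s$ in the second display of Lemma \ref{thm:unadjusted_wass} and note that the same computation with the stated $s$ gives $W_1(\pi,\nu)\le\epsilon$.

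The main obstacle I anticipate is purely bookkeeping: verifying that the specific constants $15000$, $\frac{1}{18}$, $\frac{1}{32}$, the square in the definition of $C$, and the form of $\Gamma(\kappa,d)$ in \eqref{EqDefGammaFunc} all fit together so that each of the three terms is genuinely $\le \epsilon/3$ (and that the self-referential appearance of $s$ inside $C$ and of $C$ inside the bound on $\theta$ does not cause a circularity — it does not, because $s$ depends only on $\Gamma,\lambda,\epsilon,\kappa$ and then $C$ is determined, then $\theta$). No single estimate is deep; the difficulty is organizing the chain of substitutions and making sure every inequality used (e.g. $\alpha \ge \tfrac12$, $\log(s+1)$ small relative to the other logs, $e^{\lambda C'} = O(\Gamma)$ or at worst absorbed) is actually valid with the given numerical constants. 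I would structure the write-up as: (1) check hypotheses; (2) estimate $\alpha,\beta$; (3) bound term (i); (4) bound term (ii); (5) bound term (iii); (6) combine and pass to the ergodic statement.
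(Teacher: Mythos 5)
Your high-level plan matches the paper's proof: specialize Lemma \ref{thm:unadjusted_wass} to the stated $s$, $C$, $\theta$, $C'$, verify $C > 4\beta/\alpha$ and $\lambda \le \frac{1}{32}\sqrt{2m_2}$, bound each of the three error terms in the conclusion of Lemma \ref{thm:unadjusted_wass} by $\epsilon/3$, and check the starting-point hypothesis $V(x) \le \beta/\alpha$. The structure of (1)--(6) in your last paragraph is exactly what the paper does.

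However, your intermediate estimate of $\alpha$ is wrong, and it is not a harmless simplification. You propose to use ``$\alpha \ge \frac{1}{2}$ or a similar crude estimate'' to control $\beta/\alpha$, and you also write ``$1-\alpha \le \Gamma(\kappa,d)$''. Neither holds. Plugging the stated $\lambda$ and $C'$ into the formula for $\alpha$ from Lemma \ref{ThmGenericApproxErrorBound3} and using $6\theta T \frac{\mathsf A}{\sqrt{m_2}}\sqrt{M_2} \le \tfrac12\kappa$ (which does follow from the $\theta$-bound), one gets exactly $1-\alpha \le e^{-\kappa/32}$: the $(1+\frac{16}{\kappa}\log(1+e^{-d/8}))$ factor in $C'$ is tuned to cancel the $(1+e^{-d/8})$ prefactor, and with $\lambda = \frac{\kappa}{64}\sqrt{m_2/d}$ the net exponent is $-\kappa/32$. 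Since $\kappa \le 1/64$ under the stated constraint on $T$, this means $\alpha \approx \kappa/32$, which is tiny, not bounded below by $1/2$. The quantity $\Gamma(\kappa,d)$ controls $\beta$, not $1-\alpha$; the correct chain is $\frac{\beta}{\alpha} \le \Gamma(\kappa,d)\cdot\frac{e^{1-\kappa/32}}{1-e^{-\kappa/32}} \le \Gamma(\kappa,d)\cdot(\kappa/32)^{-2}$, using $0 < \kappa/32 < 1/3$. That $\kappa^{-2}$ is precisely why $C$ contains the factor $\kappa^{-2}$: if you ran your term-(iii) check with $\alpha \ge 1/2$ you would drastically underestimate $\beta/\alpha$ and the accounting would be incorrect (it happens to produce the stated conclusion only because $C$ was already chosen with the $\kappa^{-2}$ slack, which your bound does not justify). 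Fix: replace the crude estimate with the exact derivation $1-\alpha \le e^{-\kappa/32}$ and $\beta/\alpha \le \Gamma(\kappa,d)\kappa^{-2}\cdot 32^2$, and then items (i)--(iii) go through as you sketched. The final lower bound $\beta/\alpha \ge \beta \ge e^{\lambda\sqrt{d/m_2}}$ (to verify the starting-point set) also needs to be spelled out rather than asserted, but that step is direct from the formula for $\beta$.
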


\begin{proof}
By our assumption about the value of $\theta$, and recalling that $0<\kappa <1$ and $0<m_2\leq M_2$, we have 
\be\label{eq:w12}
6\theta \, T \, \frac{\mathsf{A}}{\sqrt{m_2}} \sqrt{M_2} \leq \frac{1}{2}\kappa
\ee
and
\be\label{eq:w12b}
6\theta \, T \, \frac{\mathsf{A}}{\sqrt{m_2}} \leq \frac{1}{2\sqrt{M_2}}\kappa \leq \frac{1}{\sqrt{m_2}}.
\ee

Hence by Inequalities \eqref{eq:w12} and \eqref{eq:w12b}, we have after some algebraic manipulation

\be 
1- \alpha &= (1+e^{-\frac{d}{8}})e^{\lambda\left(-\kappa + 6\theta \, T \, \frac{\mathsf{A}}{\sqrt{m_2}} \sqrt{M_2}\right)\, C' +  \lambda\left(1 + 6\theta \, T \, \mathsf{A}\right) \, \sqrt{\frac{d}{m_{2}}}} \\
&\leq (1+e^{-\frac{d}{8}})e^{-\frac{1}{2}\lambda\kappa C' +  \lambda\left(\frac{1}{\sqrt{2m_2}} + \frac{1}{\sqrt{2m_2}}\right) \times \sqrt{2d}}\\
&=  (1+e^{-\frac{d}{8}})e^{-\frac{1}{2}\lambda\kappa \times(1+\frac{16}{\kappa} \log (1+e^{-\frac{d}{8}})) \times \frac{4}{\kappa} \times \left(\frac{1}{\sqrt{2m_2}} + \frac{1}{\sqrt{2m_2}}\right) \times \sqrt{2d} +  \lambda\left(\frac{1}{\sqrt{2m_2}} + \frac{1}{\sqrt{2m_2}}\right) \times \sqrt{2d}}\\
&\leq e^{-\frac{1}{32}\kappa}.
\ee
Also by Inequalities \eqref{eq:w12} and \eqref{eq:w12b},
\be 
\beta &=  e^{\lambda C'} (1 - \alpha) \\
&\leq  (1+e^{-\frac{d}{8}})e^{\lambda\left(1-\frac{1}{2}\kappa \right)\times C' +2 \lambda \sqrt{\frac{d}{m_{2}}}}\\
&=  (1+e^{-\frac{d}{8}})e^{\lambda\left(1-\frac{1}{2}\kappa \right)\times (1+\frac{16}{\kappa}\log (1+e^{-\frac{d}{8}}))\times \frac{4}{\kappa} \times \left(\frac{1}{\sqrt{2m_2}} + \frac{1}{\sqrt{2m_2}}\right) \times \sqrt{2d} +\lambda\left(\frac{1}{\sqrt{2m_2}} + \frac{1}{\sqrt{2m_2}}\right) \times \sqrt{2d}}\\
&=  (1+e^{-\frac{d}{8}})e^{\frac{1}{32}\left(4-2\kappa \right)\times(1+\frac{16}{\kappa}\log (1+e^{-\frac{d}{8}}))+\frac{1}{32}\kappa}\\
&=  (1+e^{-\frac{d}{8}})e^{\frac{1}{32}\left(4-2\kappa \right)\times\frac{16}{\kappa}\log (1+e^{-\frac{d}{8}})} \times e^{\frac{1}{32}(4-\kappa)}\\
&\leq 2e^{\frac{1}{32}\left(4-2\kappa \right)\times\frac{16}{\kappa}\log (1+e^{-\frac{d}{8}})} \times e^{\frac{1}{32}(4-\kappa)}.
\ee

Combining these two calculations, 

\be
\frac{\beta}{\alpha} &\leq \frac{ 2e^{\frac{1}{32}\left(4-2\kappa \right)\times\frac{16}{\kappa}\log(1+e^{-\frac{d}{8}})} \times e^{\frac{1}{32}(4-\kappa)}}{1-e^{-\frac{1}{32}\kappa}}\\
&= \left[2 e^{\frac{1}{32}\left(4-2\kappa \right)\times\frac{16}{\kappa}\log(1+e^{-\frac{d}{8}}) -\frac{7}{8}}\right] \times \frac{e^{1-\frac{1}{32}\kappa}}{1-e^{-\frac{1}{32}\kappa}}\\
&= \Gamma(\kappa,d)\times \frac{e^{1-\frac{1}{32}\kappa}}{1-e^{-\frac{1}{32}\kappa}}.
\ee
Since $T \leq \frac{1}{2\sqrt{2}} \frac{\sqrt{m_2}}{M_2}$, we have $0 < \frac{1}{32}\kappa \leq \frac{1}{32}\times \frac{1}{16} < \frac{1}{3}$ and so this implies
\be \label{eq:w13}
\frac{\beta}{\alpha} &\leq \Gamma(\kappa,d) \times \frac{e^{1-\frac{1}{32}\kappa}}{1-e^{-\frac{1}{32}\kappa}} \leq \Gamma(\kappa,d) \times \frac{1}{(\frac{1}{32}\kappa)^{2}}.
\ee
Note that $C > 4$, so we have the easy inequality
 \be 
C &= (15000 \, \Gamma(\kappa,d) \, \lambda^{-1}\epsilon^{-1}(s+1) \kappa^{-2} +4)^2 \\
&\geq 15000 \, \Gamma(\kappa,d) \, \lambda^{-1}\epsilon^{-1}(s+1) \kappa^{-2} \, \left(8+ \log\left(C\right)\right)
\ee 
Applying Inequality \eqref{eq:w13}, this implies
\be \label{eq:w13'}
\frac{\beta (s+1)}{\lambda \alpha C}\left(8+2\log(C)\right) \leq \frac{1}{3}\epsilon.
\ee

Our assumption on $\theta$ also implies that
\be\label{eq:w14}
\delta \leq \frac{1}{3} \kappa\epsilon.
\ee
Finally, our assumption about $s$ implies that $s \geq \frac{1}{\kappa}\log\left(\frac{6 \log(C)}{\lambda \epsilon}\right)$ and hence that 
\be\label{eq:w15}
(1 - \kappa)^{s} \times 2\lambda^{-1}\log(C) \leq \frac{1}{3} \epsilon.
\ee

Therefore, Inequalities \eqref{eq:w13'}, \eqref{eq:w14}, and \eqref{eq:w15} together imply that 
\be\label{eq:w16}
(1 - \kappa)^{s} \times 2\lambda^{-1}\log(C) + \frac{\delta}{\kappa} + \frac{\beta (s+1)}{\lambda \alpha C}\left(8+2\log(C)\right) \leq \frac{1}{3}\epsilon +  \frac{1}{3}\epsilon + \frac{1}{3}\epsilon = \epsilon.
\ee

Inequality \eqref{eq:w13} also implies that $C > 4 \frac{\beta}{\alpha}$.  Moreover, since $0<\kappa \leq \frac{1}{16}$, our choice of $\lambda$ satisfies $0< \lambda \leq \frac{1}{32}\sqrt{2m_2}$. Therefore, by Lemma \ref{thm:unadjusted_wass} and Inequality \eqref{eq:w16}, for all $x$ satisfying $V(x) \leq \frac{\beta}{\alpha}$, we have
\be
W_{1}(Q^\mathcal{I}(x,\cdot), \pi) \leq \epsilon
\ee 
and
\be
W_{1}(\pi, \nu) \leq \epsilon.
\ee

This completes the proof of the desired mixing bounds for starting point $x$ satisfying $V(x) \leq \frac{\beta}{\alpha}$. To complete the proof of the theorem, then, it is enough to show that $V(x) \leq \frac{\beta}{\alpha}$ for all $x$ satisfying $\|x\|  \leq \sqrt{\frac{d}{m_{2}}}$.  To do so, we must find a lower bound for $\frac{\beta}{\alpha}$.

By Inequality \eqref{eq:beta_max} and the trivial bounds $\alpha \leq 1$ and $\lambda, (1- \kappa), C' \geq 0$, we have 

\be 
\frac{\beta}{\alpha} &\geq \beta \\
&\geq (1+e^{-\frac{d}{8}})e^{\lambda\frac{\sqrt{2d}}{\sqrt{2m_2}}}e^{\lambda(1-\kappa)C'} \\
&\geq e^{\lambda\frac{\sqrt{d}}{\sqrt{m_2}}}.
\ee 

Hence, $V(x) \leq \frac{\beta}{\alpha}$ if
\be
e^{\lambda \|x\|} \leq e^{\lambda\frac{\sqrt{d}}{\sqrt{m_2}}}.
\ee
That is, 
\be \label{IneqStartCondSuffCond}
\{x \, : \, \|x\| \leq \sqrt{\frac{d}{m_{2}}} \} \subset \{x \, : \, V(x) \leq \frac{\beta}{\alpha} \}.
\ee
This completes the proof.
\end{proof}

\section{Unadjusted HMC and Metropolis-Adjusted HMC} \label{SecMH}

We first set notation to be used throughout this appendix. Fix constants $\mathsf{m}, d \in \mathbb{N}$ and a potential function $U$ that satisfies Assumptions \ref{assumption:product_measure_potential}, \ref{assumption:leapfrog} and \ref{AssumptionsConvexity} with $\mathcal{X} = \mathbb{R}^{d}$. Also fix an integrator $\sharp$ that satisfies condition \ref{condition:kth_order}.

In order to define the remaining objects used in this section, we need to fix a collection of constants that satisfy the following equalities and inequalities; for ease of reference we collect them here.  First, we allow $0 < \epsilon, \, \epsilon''' < e^{-1}$. Recalling the definition of $\Gamma$ from Equation \eqref{EqDefGammaFunc}, set: 
\be
T&\leq \frac{1}{2 \sqrt{2}} \frac{\sqrt{m_{2}}}{M_{2}} \\
\kappa &=  \frac{1}{8} (\sqrt{m}_2T)^2\\
\lambda &= \frac{\kappa}{64} \sqrt{\frac{m_{2}}{\mathsf{m}}}\\
s &= \frac{1}{\kappa}\log\left(\frac{24\log(15000  \, \lambda^{-1}\epsilon^{-1}\kappa^{-2} \, \Gamma(\kappa,\mathsf{m})) +24\log(\kappa^{-1})}{\lambda \epsilon}\right) \\
\mathcal{I} &\geq s \\
C &= (15000 \, \lambda^{-1}\epsilon^{-1}(s+1) \kappa^{-2} \, \Gamma(\kappa,\mathsf{m}) +4)^2\\
\mathsf{g}_\infty &= \max \left[ \lambda^{-1} \log\left(\frac{2048\, \mathcal{I}\, \Gamma(\kappa,\mathsf{m})}{\kappa^{2}}, \,\frac{d \log(\frac{1}{\epsilon'''})}{\mathsf{m} \epsilon'''}    \right), \, \, \, \sqrt{2\mathsf{m} - 8\log \left(\frac{\mathsf{m} \epsilon'''}{\mathcal{I} d \, \log(\frac{1}{\epsilon'''})} \right)} \right]\\
\mathsf{g}_2 &=  \sqrt{d-2\sqrt{d}\log^{\frac{1}{2}} \left(\frac{\mathcal{I}\log(\frac{1}{\epsilon'''})}{\epsilon'''} \right)}. \\
\theta &\leq \min \left( \frac{\kappa \epsilon \sqrt{m_2}}{18T \mathsf{A}(\sqrt{M_2} \lambda^{-1}\log(C) + \frac{1}{\sqrt{2}}\sqrt{\mathsf{m}})}, \, \,  \frac{\mathsf{m} \, M_{2} T}{7 d \, \mathsf{g}_\infty^2 \, \mathsf{A} (M_2 + \frac{1}{2})} \,   \log\left([1-\frac{\epsilon'''}{(\mathcal{I}+1) \log(\frac{1}{\epsilon'''})}]^{-1}\right)  \right) \\
\mathsf{A} &= \max(\mathsf{K}''', M_2)\\ 
C' &= (1+\frac{16}{\kappa}\log (1+e^{-\frac{\mathsf{m}}{8}}))\, \frac{8 \sqrt{\mathsf{m}}}{\kappa \sqrt{m_{2}}} \\
\alpha &= 1-  (1+e^{-\frac{\mathsf{m}}{8}})e^{\lambda\left(-\kappa + 6\theta \, T \, \frac{\mathsf{A}}{\sqrt{m_2}} \sqrt{M_2}\right)\, C' +  \lambda\left(1 + 6\theta \, T \, \mathsf{A} \right) \, \sqrt{\frac{\mathsf{m}}{m_{2}}} }\\
\beta &= e^{\lambda C'} (1 - \alpha),
\ee
where $\mathsf{K}'''$ is as in Equation \eqref{DefKTriple}. Also define the function $V(x):=e^{\lambda \|x\|}$. Note that this notation agrees with Lemma \ref{thm:unadjusted_wass2} of Appendix \ref{SecAppendixMixApprox}, but we replace every instance of the dimension $d$ with $\mathsf{m}$. Finally, let  $\mathsf{G}$ be the ``good set" defined in Equation \eqref{eq:good_set}, and let $\diamondsuit$ be the modified ``toy" integrator given in Definition  \ref{defn:toy} with these parameters. We note that, by Lemma \ref{thm:leapfrog5}, $\diamondsuit$ satisfies Condition \ref{cond:integrator} for this choice of $\mathsf{A}$.

We now define a coupling of two Markov chains $\{X_{i}',X_{i}''\}_{ i \geq 0}$, where marginally $\{X_{i}'\}_{i \geq 0}$ evolves according to Algorithm \ref{alg:Unadjusted} with parameters as above and numerical integrator $\dagger = \diamondsuit$, and  $\{X_{i}''\}_{i \geq 0}$ evolves according to Algorithm \ref{alg:Metropolis} with parameters as above and numerical integrator $\dagger = \sharp$. As mentioned immediately following Algorithm \ref{DefSimpleHMC}, these algorithms define a \textit{random mapping representation} of the associated HMC schemes. Thus, in order to define a coupling of these chains, it is enough to specify the following information:

\begin{enumerate}
\item the distribution of the starting points $X_{0}', X_{0}''$, \textit{and}
\item a coupling of the sequence of random momentum variables $\{\textbf{p}_{i}\}_{i \geq 0}$ chosen in Step 2 of each of these algorithms, \textit{and} 
\item a coupling of the sequence of random variables $\{ b_{i} \}_{i \geq 0}$ chosen in Step 4 of Algorithm \ref{alg:Metropolis}.
\end{enumerate}  

We make the following simple choice of coupling:
\begin{enumerate}
\item We fix a single $x \in \mathbb{R}^{d}$ satisfying $\|x \| \leq \sqrt{\frac{d}{m_{2}}}$ and set $X_{0}'= X_{0}'' = x.$
\item We draw a single i.i.d. sequence $\{\textbf{p}_{i}\}_{i \geq 0}$ of standard Gaussian random variables, and use this single update sequence for both Markov chains.
\item We sample the i.i.d. sequence $\{b_{i} \}_{i \geq 0}$ independently of the sequence $\{\textbf{p}_{i}\}_{i \geq 0}$ .
\end{enumerate}

We observe that since the potential is of the form given in Assumption \ref{assumption:product_measure_potential}, The unadjusted chain can be written in the form

\[X'_i = (X'^{(1)}_i, X'^{(2)}_i, \ldots, X'^{(\frac{d}{\mathsf{m}})}_i)^\top \]

where $X'^{(1)}_i, X'^{(2)}_i, \ldots, X'^{(\frac{d}{\mathsf{m}})}_i$ are independent Markov chains with state space $\mathbb{R}^{\mathsf{m}}$.

Finally, let $\mathfrak{t}_G = \inf \{ h \, : \, (X'_h, \textbf{p}_{h}) \notin \mathsf{G}\}$ be the exit time of the unadjusted chain from $\mathsf{G}$, and let $\mathfrak{t}_{\mathrm{reject}} = \inf \{ h \geq 0 \, : \, X_{h+1}'' = X_{h}''\}$ be the first time that Algorithm \ref{alg:Metropolis} rejects a proposal. Notice that, from our couplings, we have $X'_i = X''_i$ for all $i \leq \min(\mathfrak{t}_G, \mathfrak{t}_{\mathrm{reject}})$.

Keeping this notation for the remainder of the appendix, we begin to state the main results.

\begin{lemma} \label{thm:MH1}
Fix $j \in \{1, \ldots, \frac{d}{\mathsf{m}}\}$, $\epsilon'' > 0$ and 
\be
R = \lambda^{-1} \log\left( 2048 (\epsilon'')^{-1} \, \mathcal{I} \, \kappa^{-2} \, \Gamma(\kappa,\mathsf{m}) \right).
\ee
Then
\be
\mathbb{P}[\sup_{0 \leq h \leq \mathcal{I}-1} \|X'^{(j)}_{h}\| \geq R] \leq \epsilon''.
\ee
\end{lemma}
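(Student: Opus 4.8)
\textbf{Proof plan for Lemma \ref{thm:MH1}.}

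The plan is to control the one-step growth of a single block $X'^{(j)}_h$ of the unadjusted chain, derive a drift (Lyapunov) inequality for the exponential moment $V^{(j)}(x) := e^{\lambda \|x\|}$ of that block alone, and then turn this drift estimate into a uniform-in-time tail bound via Markov's inequality. The key observation is that, since the potential $U$ satisfies the separability Assumption \ref{assumption:product_measure_potential}, each block $X'^{(j)}_h$ is itself an autonomous Markov chain on $\mathbb{R}^{\mathsf{m}}$ evolving according to Algorithm \ref{alg:Unadjusted} with the $\mathsf{m}$-dimensional potential $U_j$ and numerical integrator $\diamondsuit$ (restricted to the $j$th block). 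Therefore the block-level analogue of Lemma \ref{ThmGenericApproxErrorBound3} applies with the dimension $d$ replaced by $\mathsf{m}$ — this is exactly the reason the parameters in the surrounding appendix are set up with $\mathsf{m}$ substituted for $d$ in Lemma \ref{thm:unadjusted_wass2}. Concretely, I would invoke the second line of Inequality \eqref{eq:w6} (the drift for the unadjusted kernel $Q$) applied to the $\mathsf{m}$-dimensional block chain, noting that $\diamondsuit$ satisfies Condition \ref{cond:integrator} with constant $\mathsf{A}$ by Lemma \ref{thm:leapfrog5}, and that the hypotheses $0<\lambda \leq \frac{1}{32}\sqrt{2m_2}$ and $6\theta\,T\,\frac{\mathsf{A}}{\sqrt{m_2}}\sqrt{M_2} < \kappa$ hold for our choice of parameters (as verified in the proof of Lemma \ref{thm:unadjusted_wass2}). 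This yields
\be
\E[V^{(j)}(X'^{(j)}_{h+1}) \mid X'^{(j)}_h] \leq (1-\alpha) V^{(j)}(X'^{(j)}_h) + \beta
\ee
for all $h$, with $\alpha, \beta$ the block-level constants (i.e. those defined in the appendix with $\mathsf{m}$ in place of $d$).

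From this drift inequality I would run the standard iteration argument: since $X'^{(j)}_0 = x^{(j)}$ with $\|x^{(j)}\| \leq \|x\| \leq \sqrt{d/m_2}$, a short computation (as in Inequality \eqref{eq:Markov_inequality1}, with $d$ replaced by $\mathsf{m}$ and the starting bound replaced by the observation $V^{(j)}(x^{(j)}) \leq \beta/\alpha$, which follows from the lower-bound argument for $\beta/\alpha$ at the end of the proof of Lemma \ref{thm:unadjusted_wass2} applied to the block) gives $\E[V^{(j)}(X'^{(j)}_h)] \leq 2\beta/\alpha$ for all $h$. A union bound over $0 \leq h \leq \mathcal{I}-1$ combined with Markov's inequality then gives
\be
\mathbb{P}\Big[\sup_{0 \leq h \leq \mathcal{I}-1} \|X'^{(j)}_h\| \geq R\Big] = \mathbb{P}\Big[\sup_{0 \leq h \leq \mathcal{I}-1} V^{(j)}(X'^{(j)}_h) \geq e^{\lambda R}\Big] \leq \frac{2\beta \mathcal{I}}{\alpha\, e^{\lambda R}}.
\ee
It then remains to check that with $R = \lambda^{-1}\log(2048(\epsilon'')^{-1}\mathcal{I}\kappa^{-2}\Gamma(\kappa,\mathsf{m}))$ the right-hand side is at most $\epsilon''$; that is, one needs $2\beta/\alpha \leq 2048\,\kappa^{-2}\,\Gamma(\kappa,\mathsf{m})$. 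This is precisely Inequality \eqref{eq:w13} (with $\mathsf{m}$ in place of $d$): there it is shown that $\beta/\alpha \leq \Gamma(\kappa,\mathsf{m})\cdot(\frac{1}{32}\kappa)^{-2} = 1024\,\kappa^{-2}\,\Gamma(\kappa,\mathsf{m})$, so $2\beta/\alpha \leq 2048\,\kappa^{-2}\,\Gamma(\kappa,\mathsf{m})$ and the bound closes exactly.

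The main obstacle — really the only substantive point — is making rigorous the reduction to the block chain: one must verify that the $j$th block of the unadjusted chain driven by $\diamondsuit$ genuinely evolves as an $\mathsf{m}$-dimensional unadjusted HMC chain to which Lemma \ref{ThmGenericApproxErrorBound3} (with $d \rightsquigarrow \mathsf{m}$) applies. This requires the separability of Hamilton's equations (Equation \eqref{eq:separabililty_ideal}), the separability of $\sharp$ (Equation \eqref{eq:separabililty_sharp}, from Condition \ref{DefSepCond}), and the fact that the definition of $\diamondsuit$ in Definition \ref{defn:toy} respects the block structure whenever the ``good set'' condition is tested coordinatewise — which it is, since $\mathsf{G}$ is defined through $\max_{i}\|q^{(i)}\|$ and $\max_i \|p^{(i)}\|$, though the condition $\|p\| > \mathsf{g}_2$ couples the blocks. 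In fact one does not even need the full toy integrator to be separable for this argument: Lemma \ref{ThmGenericApproxErrorBound3} only uses Inequality \eqref{IneqCondIntPost} of Condition \ref{cond:integrator} (see Remark \ref{RemGenericApproxErrorBound3}), and the position-error bound of Lemma \ref{thm:leapfrog5} holds for $\diamondsuit$ regardless. I would therefore phrase the reduction carefully: the per-block position map $q^{(j)\diamondsuit\theta}_T$ depends only on $(\mathbf{q}^{(j)},\mathbf{p}^{(j)})$ in the $\mathsf{G}$-regime (by separability of $\sharp$) and on $(\mathbf{q}^{(j)},\mathbf{p}^{(j)})$ in the complement (by separability of the Euler integrator $\clubsuit$), hence the $j$th block is autonomous, and the block satisfies Inequality \eqref{IneqCondIntPost} with the same $\mathsf{A}$; then apply the argument above verbatim with $d$ replaced by $\mathsf{m}$. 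Once this reduction is in place, every remaining estimate is a direct quotation of results already proved in the appendix, so the proof is short.
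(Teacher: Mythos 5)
Your proof follows essentially the same path as the paper's: apply the block-level drift inequality of Lemma \ref{ThmGenericApproxErrorBound3} (via Remark \ref{RemGenericApproxErrorBound3}), iterate using the starting-point bound \eqref{IneqStartCondSuffCond} to get $\mathbb{E}[V(X'^{(j)}_h)] \leq 2\beta/\alpha$, combine Markov's inequality with a union bound over $0 \leq h \leq \mathcal{I}-1$, and close with Inequality \eqref{eq:w13} (with $\mathsf{m}$ in place of $d$) so the constant $2048\,\kappa^{-2}\,\Gamma(\kappa,\mathsf{m})$ matches. Your additional remarks on the block reduction are more explicit than the paper (which just asserts at the start of the appendix that the blocks are independent Markov chains), and you correctly flag that the $\|p\| > \mathsf{g}_2$ clause in the definition of $\mathsf{G}$ couples the blocks; just be careful that this means the $j$th block is not literally autonomous, so the cleanest phrasing is that one has a uniform per-step drift for $V^{(j)}$ conditional on the full state, holding in both branches of $\diamondsuit$, which is all that is needed for the iteration.
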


\begin{proof}

By Inequality \eqref{eq:w6} of Lemma \ref{ThmGenericApproxErrorBound3} (and Remark \ref{RemGenericApproxErrorBound3}), we have for ${h} \in \mathbb{N}$
\be
\mathbb{E}[V(X'^{(j)}_{{h}})] & \leq (1 - \alpha) \mathbb{E}[V(X'^{(j)}_{{h}-1})] + \beta \\
&\leq \ldots \\
&\leq (1 - \alpha)^{{h}} V(X'^{(j)}_{0}) + \frac{\beta}{\alpha} \\
&\leq \frac{2 \beta}{\alpha},
\ee 
where in the last line we use Inequality \eqref{IneqStartCondSuffCond}. By Markov's inequality, then,
\be 
\mathbb{P}[\sup_{0 \leq h \leq \mathcal{I}-1} V(X'^{(j)}_{h}) \geq r] \leq \frac{2 \beta \mathcal{I}}{\alpha  r}.
\ee 
Applying this bound with $r=R$, we have by Inequality  \eqref{eq:w13} that
\be
\mathbb{P}[\sup_{0 \leq h \leq \mathcal{I}-1} \|X'^{(j)}_{h}\| \geq R] &\leq \frac{2 \beta \mathcal{I}}{\alpha} e^{-\lambda R} \\
&= \epsilon'' \, \times \frac{\beta}{\alpha} \times (\frac{1}{32} \kappa)^{2} \times \Gamma(\kappa, \mathsf{m})^{-1} \\
&\stackrel{{\scriptsize \textrm{Eq. }} \ref{eq:w13}}{\leq} \epsilon''. 
\ee 

\end{proof}

Recall the definition of the ``good set"
\be
\mathsf{G}  = \{(q,p) \in \mathbb{R}^d\times \mathbb{R}^d :  \max_{1\leq i \leq \frac{d}{\mathsf{m}}}\|q^{(i)}\|< \mathsf{g}_\infty, \max_{1\leq i \leq \frac{d}{\mathsf{m}}}\|p^{(i)}\| < \mathsf{g}_\infty, ||p|| > \mathsf{g}_2 \}.
\ee

The next three Lemmas show that, with high probability, a chain evolving according Algorithm \ref{alg:Unadjusted} using the ``toy" integrator \eqref{eq:toy} will stay in $\mathsf{G}$ for many steps:

\begin{lemma} \label{thm:MH2'}
With notation as above,
\be
\mathbb{P}[\inf_{0 \leq h \leq \mathcal{I}-1} \|\mathbf{p}_{h}\| \leq \mathsf{g}_2] \leq \frac{\epsilon'''}{\log(\frac{1}{\epsilon'''})}.
\ee
\end{lemma}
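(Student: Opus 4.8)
The plan is to exploit that, under the coupling fixed just before the lemma, the momentum variables $\{\mathbf{p}_{h}\}_{h \geq 0}$ appearing in Algorithm \ref{alg:Unadjusted} are i.i.d.\ standard Gaussians on $\mathbb{R}^{d}$, so each $\|\mathbf{p}_{h}\|^{2}$ is a $\chi^{2}_{d}$ random variable; the bound then follows by combining a lower-tail concentration inequality for $\chi^{2}_{d}$ with a union bound over the $\mathcal{I}$ steps.

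First I would recall the standard lower-tail inequality for $Z \sim \chi^{2}_{d}$ (the Laurent--Massart bound, in the same spirit as the upper-tail Hanson--Wright bound already used in the proof of Lemma \ref{ThmGenericApproxErrorBound3}): for every $x > 0$,
\be
\mathbb{P}\big[ Z \leq d - 2\sqrt{d x}\, \big] \leq e^{-x}.
\ee
Next I would set $x = \log\!\big( \tfrac{\mathcal{I}\log(1/\epsilon''')}{\epsilon'''}\big)$, which is positive because $0 < \epsilon''' < e^{-1}$ and $\mathcal{I} \geq 1$ force $\tfrac{\mathcal{I}\log(1/\epsilon''')}{\epsilon'''} > e$. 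With this choice, the definition of $\mathsf{g}_{2}$ gives
\be
d - 2\sqrt{d x} = d - 2\sqrt{d}\,\log^{\frac{1}{2}}\!\Big( \tfrac{\mathcal{I}\log(1/\epsilon''')}{\epsilon'''}\Big) = \mathsf{g}_{2}^{2},
\ee
so that, for each fixed $h$,
\be
\mathbb{P}\big[\, \|\mathbf{p}_{h}\| \leq \mathsf{g}_{2}\, \big] = \mathbb{P}\big[\, \|\mathbf{p}_{h}\|^{2} \leq \mathsf{g}_{2}^{2}\, \big] \leq e^{-x} = \frac{\epsilon'''}{\mathcal{I}\log(1/\epsilon''')}.
\ee

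Finally I would apply the union bound over $h \in \{0,1,\ldots,\mathcal{I}-1\}$:
\be
\mathbb{P}\Big[ \inf_{0 \leq h \leq \mathcal{I}-1} \|\mathbf{p}_{h}\| \leq \mathsf{g}_{2} \Big] \leq \sum_{h=0}^{\mathcal{I}-1} \mathbb{P}\big[\, \|\mathbf{p}_{h}\| \leq \mathsf{g}_{2}\, \big] \leq \mathcal{I} \cdot \frac{\epsilon'''}{\mathcal{I}\log(1/\epsilon''')} = \frac{\epsilon'''}{\log(1/\epsilon''')},
\ee
which is exactly the claimed bound. There is no genuine obstacle here: the only points needing care are verifying that the argument of the logarithm exceeds $1$ so the chi-squared tail bound is applied with a valid nonnegative deviation parameter, and matching that deviation level precisely to the definition of $\mathsf{g}_{2}$ — both immediate from the standing assumptions $0 < \epsilon''' < e^{-1}$ and $\mathcal{I} \in \mathbb{N}$.
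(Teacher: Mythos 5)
Your proof is correct and is essentially the same as the paper's: both apply the Laurent--Massart (Hanson--Wright) lower-tail bound for $\chi^2_d$ to each $\|\mathbf{p}_h\|^2$, plug in the deviation level matching the definition of $\mathsf{g}_2$, and finish with a union bound over the $\mathcal{I}$ steps. The only difference is organizational — you substitute the specific $x$ immediately, while the paper records a general bound in $r$ and then substitutes $r = \mathsf{g}_2$ — and your observation about the positivity of the log-argument is a sensible sanity check.
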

\begin{proof}
$\|\mathbf{p}_{h}\| \sim \chi_d$, the Chi-square distribution with $d$ degrees of freedom. Thus, by the Hanson-Wright inequality (see Lemma 1 of \cite{laurent2000adaptive}),
\be
\mathbb{P}[d- \|\mathbf{p}_{h}\|^2 \geq 2\sqrt{d}\sqrt{r}] \leq e^{-r} \quad \quad \forall r>0.
\ee
Rearranging and reparameterizing, this gives 
\be \label{eq:Hanson1}
\mathbb{P}[\|\mathbf{p}_{h}\| \leq r] \leq e^{-\frac{1}{4d}(d-r^2)^2} \quad \quad \forall \, 0<r<\sqrt{d},
\ee
so that
\be \label{eq:Hanson2}
\mathbb{P}[\inf_{0 \leq h \leq \mathcal{I}-1} \|\mathbf{p}_{h}\| \leq r] &\leq \sum_{{h}=0}^{\mathcal{I}-1} \mathbb{P}(\|\mathbf{p}_{h}\| \leq r)\\
& \stackrel{{\scriptsize \textrm{Eq. }}\ref{eq:Hanson1}}{\leq} \sum_{{h}=0}^{\mathcal{I}-1} e^{-\frac{1}{4d}(d-r^2)^2}\\
& = \mathcal{I}e^{-\frac{1}{4d}(d-r^2)^2} \quad \quad \forall 0<r<\sqrt{d}.\\
\ee
Inequality \eqref{eq:Hanson2} immediately implies the desired bound
\be
\mathbb{P}[\inf_{0 \leq h \leq \mathcal{I}-1} \|\mathbf{p}_{h}\| \leq \mathsf{g}_2] \leq \frac{\epsilon'''}{\log(\frac{1}{\epsilon'''})}.
\ee
\end{proof}

\begin{lemma} \label{thm:MH2}
With notation as above,
\be
\mathbb{P}[\sup_{1\leq i\leq \frac{d}{\mathsf{m}}} \sup_{0 \leq h \leq \mathcal{I}-1} \|X'^{(i)}_{h}\| \geq \mathsf{g}_\infty] \leq \frac{\epsilon'''}{\log(\frac{1}{\epsilon'''})}.
\ee

\end{lemma}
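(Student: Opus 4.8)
The plan is to bound the probability that some block coordinate $X'^{(i)}_h$ of the unadjusted ``toy'' chain becomes large, by combining a union bound over the $\frac{d}{\mathsf m}$ independent block chains and over the $\mathcal I$ time steps with the exponential tail estimates already available from the drift condition. The key observation is that, by Assumption \ref{assumption:product_measure_potential}, each block chain $\{X'^{(i)}_h\}_{h\geq 0}$ is itself an (unadjusted toy) HMC Markov chain on $\mathbb R^{\mathsf m}$, so Lemma \ref{thm:MH1} applies to each of them with the \emph{same} radius $R$ and the same failure probability $\epsilon''$, once we pick $\epsilon''$ appropriately.

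The steps I would carry out, in order: (1) Observe that $\mathsf g_\infty$ dominates the quantity $R$ appearing in Lemma \ref{thm:MH1} for the choice $\epsilon'' := \frac{\mathsf m \epsilon'''}{d\,\log(1/\epsilon''')}$; indeed, the first entry in the $\max$ defining $\mathsf g_\infty$ in the block of displayed constants is exactly $\lambda^{-1}\log\!\big(\frac{2048\,\mathcal I\,\Gamma(\kappa,\mathsf m)}{\kappa^2},\, \frac{d\log(1/\epsilon''')}{\mathsf m\epsilon'''}\big)$, and one checks $R = \lambda^{-1}\log\big(2048(\epsilon'')^{-1}\mathcal I\kappa^{-2}\Gamma(\kappa,\mathsf m)\big) \le \mathsf g_\infty$ after substituting this $\epsilon''$. (2) Apply Lemma \ref{thm:MH1} to each block index $j\in\{1,\dots,\frac d{\mathsf m}\}$, obtaining $\mathbb P[\sup_{0\le h\le\mathcal I-1}\|X'^{(j)}_h\|\ge \mathsf g_\infty]\le \mathbb P[\sup_{0\le h\le\mathcal I-1}\|X'^{(j)}_h\|\ge R]\le \epsilon''$. (3) Take a union bound over the $\frac d{\mathsf m}$ values of $j$:
\be
\mathbb P\Big[\sup_{1\le i\le \frac d{\mathsf m}}\sup_{0\le h\le\mathcal I-1}\|X'^{(i)}_h\|\ge \mathsf g_\infty\Big]\le \frac d{\mathsf m}\,\epsilon'' = \frac{d}{\mathsf m}\cdot\frac{\mathsf m\epsilon'''}{d\,\log(1/\epsilon''')} = \frac{\epsilon'''}{\log(1/\epsilon''')},
\ee
which is exactly the claimed bound.

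The only real work is step (1): verifying that the somewhat baroque definition of $\mathsf g_\infty$ is large enough to absorb $R$ for the chosen $\epsilon''$. This is a monotonicity/algebra check rather than a genuine obstacle — one just needs $\lambda^{-1}\log(2048(\epsilon'')^{-1}\mathcal I\kappa^{-2}\Gamma(\kappa,\mathsf m))$ to be no larger than the first $\max$-term, which follows since $(\epsilon'')^{-1} = \frac{d\log(1/\epsilon''')}{\mathsf m\epsilon'''}$ and the logarithm of a product is the sum of logarithms, so the argument of the log in $R$ factors into the two quantities over which $\mathsf g_\infty$ takes a maximum (up to absorbing constants). I would also note in passing that the independence of the block chains is not even needed for the union bound — only for the later lemmas — so the argument is robust. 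I expect no conceptual difficulty; the proof is essentially two lines of probability plus the bookkeeping of constants.
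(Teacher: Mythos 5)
Your proposal is correct and follows the paper's proof step for step: the same choice $\epsilon'' = \frac{\mathsf m}{d}\cdot\frac{\epsilon'''}{\log(1/\epsilon''')}$, application of Lemma \ref{thm:MH1} to each block chain with radius $R\le\mathsf g_\infty$, and a union bound over the $\frac d{\mathsf m}$ blocks. One small clarification on your step (1): the comma in the paper's display $\lambda^{-1}\log\bigl(\tfrac{2048\,\mathcal I\,\Gamma(\kappa,\mathsf m)}{\kappa^2},\,\tfrac{d\log(1/\epsilon''')}{\mathsf m\,\epsilon'''}\bigr)$ denotes the product of the two arguments inside a single $\log$, so after substituting $(\epsilon'')^{-1}=\tfrac{d\log(1/\epsilon''')}{\mathsf m\,\epsilon'''}$ the quantity $R$ \emph{equals} the first entry of the outer $\max$ exactly (no ``absorbing constants'' needed), and the outer $\max$ in the definition of $\mathsf g_\infty$ is between this $\log$-term and the square-root term, not between the two factors inside the $\log$.
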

\begin{proof}
Let $\epsilon'' = \frac{\mathsf{m}}{d} \times \frac{\epsilon'''}{\log(\frac{1}{\epsilon'''})}$, and let $R =\lambda^{-1} \log\left(\frac{2048\, \mathcal{I}\, \Gamma(\kappa,\mathsf{m})}{\kappa^{2}}, \,\frac{d \log(\frac{1}{\epsilon'''})}{\mathsf{m} \epsilon'''}    \right) \leq \mathsf{g}_\infty$. Then

\be
\mathbb{P}[\sup_{1\leq i\leq \frac{d}{\mathsf{m}}} \sup_{0 \leq h \leq \mathcal{I}-1} \|X'^{(i)}_{h}\| \geq \mathsf{g}_\infty] &=\mathbb{P}\left[\bigcup_{1\leq i\leq \frac{d}{\mathsf{m}}} \left\{\sup_{0 \leq h \leq \mathcal{I}-1} \|X'^{(i)}_{h}\| \geq \mathsf{g}_\infty\right\}\right] \\
&\leq \sum_{i=1}^{\frac{d}{\mathsf{m}}} \mathbb{P}[\sup_{0 \leq h \leq \mathcal{I}-1} \|X'^{(i)}_{h}\| \geq \mathsf{g}_\infty]\\
&\leq \sum_{i=1}^{\frac{d}{\mathsf{m}}} \mathbb{P}[\sup_{0 \leq h \leq \mathcal{I}-1} \|X'^{(i)}_{h}\| \geq R]\\
&\stackrel{{\scriptsize \textrm{Lemma }}\ref{thm:MH1}}{\leq} \sum_{i=1}^{\frac{d}{\mathsf{m}}} \frac{\mathsf{m}}{d} \times \frac{\epsilon'''}{\log(\frac{1}{\epsilon'''})}\\
&=\frac{\epsilon'''}{\log(\frac{1}{\epsilon'''})}.
\ee
\end{proof}

\begin{lemma} \label{LemmaMH5}
With notation as above,
\be
\mathbb{P}[\sup_{1\leq i\leq \frac{d}{\mathsf{m}}} \sup_{0 \leq h \leq \mathcal{I}-1} \|\mathbf{p}^{(i)}_{h}\| \geq \mathsf{g}_\infty] \leq \frac{\epsilon'''}{\log(\frac{1}{\epsilon'''})}.
\ee
\end{lemma}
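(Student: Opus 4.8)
The plan is to reduce the statement to a single $\chi^2_{\mathsf m}$ upper-tail estimate via a union bound over the $\frac{d}{\mathsf m}\times\mathcal I$ index pairs $(i,h)$, and then to check that the second term in the definition of $\mathsf g_\infty$ is exactly calibrated so that this estimate is small enough. This is the momentum-variable analogue of Lemma \ref{thm:MH2}, with a Laurent--Massart chi-square bound playing the role that Lemma \ref{thm:MH1} played there.

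First I would note that each $\mathbf p_h$ is a standard Gaussian on $\mathbb R^d$, so for every $i$ and $h$ the block $\mathbf p^{(i)}_h = (\mathbf p_h[\mathsf m(i-1)+1],\dots,\mathbf p_h[\mathsf m i])$ is a standard Gaussian on $\mathbb R^{\mathsf m}$ and $\|\mathbf p^{(i)}_h\|^2$ has the $\chi^2_{\mathsf m}$ distribution (no independence is needed for the union bound). By Lemma 1 of \cite{laurent2000adaptive} --- the same Hanson--Wright-type inequality used in Lemma \ref{thm:MH2'}, now in its upper-tail form --- we have $\mathbb P[\|\mathbf p^{(i)}_h\|^2 \ge \mathsf m + 2\sqrt{\mathsf m t} + 2t] \le e^{-t}$ for all $t>0$. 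Using $2\sqrt{\mathsf m t}\le \mathsf m + t$ this gives the clean bound $\mathbb P[\|\mathbf p^{(i)}_h\| \ge r] \le e^{-(r^2-2\mathsf m)/4}$ whenever $r^2 \ge 2\mathsf m$.

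Next I would substitute $r = \mathsf g_\infty$. Since $\mathsf g_\infty \ge \sqrt{2\mathsf m - 8\log\!\big(\tfrac{\mathsf m \epsilon'''}{\mathcal I d\,\log(1/\epsilon''')}\big)} = \sqrt{2\mathsf m + 8\log\!\big(\tfrac{\mathcal I d\,\log(1/\epsilon''')}{\mathsf m\epsilon'''}\big)}$, one has $\mathsf g_\infty^2 \ge 2\mathsf m$ (so the previous bound applies) and $\mathsf g_\infty^2 - 2\mathsf m \ge 8\log\!\big(\tfrac{\mathcal I d\,\log(1/\epsilon''')}{\mathsf m\epsilon'''}\big)$, whence
\[
e^{-(\mathsf g_\infty^2 - 2\mathsf m)/4} \;\le\; \frac{\mathsf m\epsilon'''}{\mathcal I d\,\log(1/\epsilon''')},
\]
where the factor $8$ rather than $4$ gives room to spare, and one checks the logarithm is nonnegative from $\mathcal I \ge s \ge 1$, $\mathsf m \le d$, $\epsilon''' < e^{-1}$. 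A union bound over the $\frac{d}{\mathsf m}\cdot\mathcal I$ pairs $(i,h)$ then yields
\[
\mathbb P\Big[\sup_{1\le i\le \frac{d}{\mathsf m}}\ \sup_{0\le h\le \mathcal I-1}\|\mathbf p^{(i)}_h\| \ge \mathsf g_\infty\Big]
\;\le\; \frac{d}{\mathsf m}\cdot\mathcal I\cdot\frac{\mathsf m\epsilon'''}{\mathcal I d\,\log(1/\epsilon''')}
\;=\; \frac{\epsilon'''}{\log(1/\epsilon''')},
\]
which is the claim.

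There is no genuinely hard step here: the argument is bookkeeping, matching the $\chi^2_{\mathsf m}$ tail against the second coordinate of the $\max$ in the definition of $\mathsf g_\infty$. The only points requiring a moment's care are verifying $\mathsf g_\infty^2 \ge 2\mathsf m$ so the simplified Laurent--Massart bound is available, and verifying $\mathcal I d\,\log(1/\epsilon''') \ge \mathsf m\epsilon'''$ so the logarithm in the exponent is nonnegative; both are immediate from the standing assumptions on $\mathcal I$, $\mathsf m$, $d$ and $\epsilon'''$.
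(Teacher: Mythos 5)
Your proof is correct and follows essentially the same route as the paper: a per-block chi-square upper-tail estimate combined with a union bound over the $\frac{d}{\mathsf m}\cdot\mathcal I$ pairs, calibrated against the second term in the definition of $\mathsf g_\infty$. The paper states its tail bound directly in the form $\mathbb P[\|\mathbf p^{(i)}_h\|^2\ge \mathsf m + x]\le e^{-x/8}$ for $x>\mathsf m$ (citing Hanson--Wright), which meshes exactly with the factor $8$ in $\mathsf g_\infty$, whereas you derive a slightly weaker $e^{-(r^2-2\mathsf m)/4}$ from Laurent--Massart and absorb the slack by squaring; both are fine.
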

\begin{proof}
Noting  $\|\mathbf{p}^{(i)}_h\|^2 \sim \chi^2_{\mathsf{m}}$, we have by the Hanson-Wright concentration inequality (see \textit{e.g.} \cite{hanson1971bound, rudelson2013hanson}):
\be
\mathbb{P}[\|\mathbf{p}^{(i)}_h\|^2 \geq x+\mathsf{m}]  \leq e^{-\frac{x}{8}} \quad \quad  \textrm{ for } x> \mathsf{m},
\ee
so
\be
\mathbb{P}[\|\mathbf{p}^{(i)}_h\| \geq r]  \leq e^{-\frac{r^2-\mathsf{m}}{8}} \quad \quad  \textrm{ for } r> \sqrt{2\mathsf{m}},
\ee
and so
\be \label{eq:Hanson3}
\mathbb{P}[\|\mathbf{p}^{(i)}_h\| \geq \mathsf{g}_\infty]  \leq \frac{\mathsf{m}}{\mathcal{I} d} \times \frac{\epsilon'''}{\log(\frac{1}{\epsilon'''})},
\ee
for all $h,i$. Therefore,
\be
\mathbb{P}[\sup_{1\leq i\leq \frac{d}{\mathsf{m}}} \sup_{0 \leq h \leq \mathcal{I}-1} \|\mathbf{p}^{(i)}_{h}\| \geq \mathsf{g}_\infty] &\leq \sum_{{h}=0}^{\mathcal{I}-1} \sum_{i=1}^{\frac{d}{\mathsf{m}}}  \mathbb{P}[\|\mathbf{p}^{(i)}_h\| \geq \mathsf{g}_\infty]\\
&\stackrel{\textrm{Eq. } \ref{eq:Hanson3}}{\leq} \sum_{{h}=0}^{\mathcal{I}-1} \sum_{i=1}^{\frac{d}{\mathsf{m}}} \frac{\mathsf{m}}{\mathcal{I} d} \times \frac{\epsilon'''}{\log(\frac{1}{\epsilon'''})}\\
&=  \frac{\epsilon'''}{\log(\frac{1}{\epsilon'''})}.
\ee

\end{proof}

The following Lemma shows that the rejection probability is small inside the set $\mathsf{G}$: 

\begin{lemma} \label{thm:rejection}
With notation as above, 
\be
\mathbb{P}[\{ \mathfrak{t}_{\mathrm{reject}} \leq \mathcal{I}\} \cap \{\mathfrak{t}_{G} > \mathcal{I} \}] < \frac{\epsilon'''}{\log(\frac{1}{\epsilon'''})}.
\ee
\end{lemma}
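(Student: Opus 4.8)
The plan is to control the total Metropolis rejection probability over $\mathcal{I}$ steps by conditioning on the event that the toy-integrator chain stays inside the good set $\mathsf{G}$, on which the energy error of the integrator $\sharp$ is uniformly small. First I would observe that, by the coupling constructed just before this lemma, on the event $\{\mathfrak{t}_G > \mathcal{I}\}$ the two chains agree, $X'_h = X''_h$, for all $h \le \min(\mathfrak{t}_G,\mathfrak{t}_{\mathrm{reject}})$, and every proposal $(q_{h+1},p_{h+1}) = (q_T^{\sharp\theta}(X'_h,\mathbf{p}_h), p_T^{\sharp\theta}(X'_h,\mathbf{p}_h))$ made up to time $\min(\mathfrak{t}_G,\mathcal{I})$ is generated from a phase-space point $(X'_h,\mathbf{p}_h) \in \mathsf{G}$. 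Hence I can apply Lemma \ref{thm:leapfrog4} (the energy-conservation bound for $\sharp$ on $\mathsf{G}$): for such $h$, $|H(q_{h+1},p_{h+1}) - H(X'_h,\mathbf{p}_h)| \le \theta \mathsf{K}'' H(X'_h,\mathbf{p}_h)$, where $\mathsf{K}'' = \mathsf{K}''(\mathsf{B},\mathsf{K},T,M_2,\mathsf{c},\mathsf{m},\mathsf{g}_\infty,\tfrac{\sqrt d}{\mathsf{g}_2})$ is polynomial in $\mathsf{g}_\infty$ and $\tfrac{\sqrt d}{\mathsf{g}_2}$.

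Next I would bound $H(X'_h,\mathbf{p}_h)$ on $\mathsf{G}$: since $(X'_h,\mathbf{p}_h)\in\mathsf{G}$ gives $\|X'^{(i)}_h\|, \|\mathbf{p}^{(i)}_h\| < \mathsf{g}_\infty$ for all $i$, Assumption \ref{AssumptionsConvexity} yields $H(X'_h,\mathbf{p}_h) = U(X'_h) + \tfrac12\|\mathbf{p}_h\|^2 \le \sum_{i=1}^{d/\mathsf{m}}\big(M_2\|X'^{(i)}_h\|^2 + \tfrac12\|\mathbf{p}^{(i)}_h\|^2\big) \le \tfrac{d}{\mathsf{m}}\mathsf{g}_\infty^2(M_2 + \tfrac12)$. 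The Metropolis acceptance probability at step $h$ is $\min\{1, e^{H(q_h,p_h) - H(q_{h+1},p_{h+1})}\}$; using $H(q_h,p_h) = H(X'_h,\mathbf{p}_h)$ (the momentum was just refreshed, so the pre-proposal state has the Hamiltonian of $(X'_h,\mathbf{p}_h)$) the rejection probability at step $h$, conditional on being in $\mathsf{G}$, is at most $1 - e^{-|H(q_{h+1},p_{h+1}) - H(q_h,p_h)|} \le |H(q_{h+1},p_{h+1}) - H(q_h,p_h)| \le \theta \mathsf{K}'' \cdot \tfrac{d}{\mathsf{m}}\mathsf{g}_\infty^2(M_2+\tfrac12)$. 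Summing over $h = 0,\dots,\mathcal{I}-1$ and using a union bound over the $\mathcal{I}$ steps together with the choice of $\theta$ (the second term in the $\min$ defining $\theta$, namely $\theta \le \tfrac{\mathsf{m} M_2 T}{7 d\,\mathsf{g}_\infty^2\,\mathsf{A}(M_2+\tfrac12)}\log\big([1-\tfrac{\epsilon'''}{(\mathcal{I}+1)\log(1/\epsilon''')}]^{-1}\big)$, with $\mathsf{A} = \max(\mathsf{K}''',M_2) \ge \mathsf{K}''$ up to constants absorbed into $\mathsf{K}'''$ via Lemma \ref{thm:leapfrog5}), one gets that on $\{\mathfrak{t}_G > \mathcal{I}\}$ the probability that any of the first $\mathcal{I}$ proposals is rejected is at most $\mathcal{I}\cdot \tfrac{\epsilon'''}{(\mathcal{I}+1)\log(1/\epsilon''')} < \tfrac{\epsilon'''}{\log(1/\epsilon''')}$; more precisely, since $1-e^{-x} \le x$, the per-step conditional rejection probability is at most $\tfrac{\epsilon'''}{(\mathcal{I}+1)\log(1/\epsilon''')}$, and summing and intersecting with $\{\mathfrak{t}_G > \mathcal{I}\}$ gives the claimed bound.

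The one subtlety I would be careful about is that $\mathfrak{t}_{\mathrm{reject}}$ is defined by the \emph{adjusted} chain $X''$, while the uniform energy-error bound is available along the \emph{toy-integrator unadjusted} chain $X'$; this is exactly why the coupling is set up so that $X'_h = X''_h$ for $h \le \min(\mathfrak{t}_G,\mathfrak{t}_{\mathrm{reject}})$, so that on $\{\mathfrak{t}_G > \mathcal{I}\}$ the proposals of the adjusted chain up to the first rejection coincide with $q_T^{\sharp\theta}$ applied at good phase-space points of $X'$, and I can run the argument forward along $X'$ until a rejection occurs. Concretely, I would define the events $A_h = \{X'_h = X''_h\} \cap \{(X'_h,\mathbf{p}_h) \in \mathsf{G}\} \cap \{\text{no rejection before step }h\}$ and show $\mathbb{P}[\text{rejection at step }h \mid A_h] \le \theta \mathsf{K}'' \tfrac{d}{\mathsf{m}}\mathsf{g}_\infty^2(M_2+\tfrac12)$, then telescope. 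The main obstacle is bookkeeping: correctly matching the definition of $\mathsf{K}''$ (Lemma \ref{thm:leapfrog4}) and $\mathsf{A}$ against the precise constant in the $\theta$-bound, and verifying that the $\log([1-x]^{-1}) \ge x$ slack in the definition of $\theta$ is enough to absorb the $1-e^{-x} \le x$ step; the probabilistic content is light, as the tail estimates on leaving $\mathsf{G}$ have already been handled in Lemmas \ref{thm:MH2'}, \ref{thm:MH2} and \ref{LemmaMH5}.
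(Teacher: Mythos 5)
Your proof is essentially the same as the paper's: both condition on $(X''_i,\mathbf{p}_i)\in\mathsf{G}$, bound $H(X''_i,\mathbf{p}_i)\le(M_2+\tfrac12)\tfrac{d}{\mathsf{m}}\mathsf{g}_\infty^2$ using Assumption~\ref{AssumptionsConvexity}, apply the integrator's energy-error bound on $\mathsf{G}$, and sum over steps. The only structural difference is that the paper routes the energy bound through Lemma~\ref{thm:leapfrog5} and Condition~\ref{cond:integrator} (giving the error $7\tfrac{\theta}{T}H\tfrac{\mathsf{A}}{M_2}$, which cancels exactly against the $\theta$-bound) rather than invoking Lemma~\ref{thm:leapfrog4} directly; this is the cleaner route because it avoids the slightly hand-wavy ``$\mathsf{A}\ge\mathsf{K}''$ up to constants'' step — more precisely, what one actually needs is $\mathsf{A}\ge\tfrac{TM_2}{7}\mathsf{K}''$, which holds because $\mathsf{K}'''$ is defined in the proof of Lemma~\ref{thm:leapfrog5} to satisfy $\tfrac{7}{TM_2}\mathsf{K}'''\ge\mathsf{K}''$.

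One point you flagged as an ``obstacle'' deserves a clarification. You cannot simply use $1-e^{-t}\le t$ to close the argument: the $\theta$-bound guarantees $\Delta\hat E\le\log\big([1-x]^{-1}\big)$ with $x=\tfrac{\epsilon'''}{(\mathcal{I}+1)\log(1/\epsilon''')}$, and since $\log\big([1-x]^{-1}\big)\ge x$ the chain $1-e^{-\Delta\hat E}\le\Delta\hat E\le\log([1-x]^{-1})$ lands above $x$, not below it. The correct step is the exact computation: from $\Delta\hat E\le\log\big([1-x]^{-1}\big)$ one gets $e^{-\Delta\hat E}\ge 1-x$, hence $1-\min(1,e^{-\Delta\hat E})\le x$. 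This is precisely why the $\theta$-bound is written with the $\log$ term — it is tuned so that this identity produces exactly the per-step rejection budget $x$ — and it is what the paper's proof does in the last displayed inequality. With that correction your argument is complete.
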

\begin{proof}
Whenever $(X_i'',\mathbf{p}_i) \in \mathsf{G}$, we have $\|X_i'' \|, \|\mathbf{p}_i\| \leq \sqrt{\frac{d}{\mathsf{m}}}\mathsf{g}_\infty$, and so
\be \label{eq:rejection1}
H(X_i'',\mathbf{p}_i) &\stackrel{\textrm{Assumption } \ref{AssumptionsConvexity}}{\leq} M_2 \|X_i'' \|^2 + \frac{1}{2} \|\mathbf{p}_i\|^2\\
& \leq (M_2 + \frac{1}{2})\frac{d}{\mathsf{m}}\mathsf{g}_\infty^2
\ee

Define the change in energy $\Delta \hat{E}(X_i'', \mathbf{p}_i)  := H(q_T^{\sharp \theta}(X_{i}'', \mathbf{p}_i), p_T^{\sharp \theta}(X_{i}'',\mathbf{p}_i))-  H(X_{i}'',\mathbf{p}_i)$. Recall that $\diamondsuit(q,p) = \sharp(q,p)$ for $(q,p) \in \mathsf{G}$, and furthermore that $\diamondsuit$ satisfies Condition \ref{cond:integrator} for our choice of $\mathsf{A}$ by Lemma \ref{thm:leapfrog5}. Thus, for $ (X_{i}'',\mathbf{p}_i) \in \mathsf{G}$, we can apply  Condition \ref{cond:integrator} and Inequality \eqref{eq:rejection1} to find:
\be \label{eq:rejection2}
\Delta \hat{E}(X_{i}'', \mathbf{p}_i) &\leq 7 \frac{\theta}{T} H(X_{i}'',\mathbf{p}_i) \times \frac{\mathsf{\mathsf{A}}}{M_2}\\
&\stackrel{\textrm{Eq. } \ref{eq:rejection1}}{\leq} 7 \frac{\theta}{T} (M_2 + \frac{1}{2})\frac{d}{\mathsf{m}}\mathsf{g}_\infty^2 \times \frac{\mathsf{\mathsf{A}}}{M_2}
\\ &\leq \log\left([1-\frac{\epsilon'''}{(\mathcal{I}+1)\log(\frac{1}{\epsilon'''})}]^{-1}\right).
\ee
Defining 
\be \label{EqRejTime}
\mathcal{T}_{\mathrm{reject}} = \{i \geq 0 \,: \, X_{i+1}'' = X_{i}''\}
\ee to be the collection of times at which the proposed step is rejected. We then have:
\be
\mathbb{P}[\{ \mathfrak{t}_{\mathrm{reject}} \leq \mathcal{I}\} \cap \{\mathfrak{t}_{G} > \mathcal{I} \}]  
&\leq \sum_{i=0}^{\mathcal{I}} \mathbb{P}[\{ i \in \mathcal{T}_{\mathrm{reject}}\} \cap\{(X_{i}'',\mathbf{p}_i) \in \mathsf{G}\}]\\
&\leq \sum_{i=0}^{\mathcal{I}} \mathbb{P}[i \in \mathcal{T}_{\mathrm{reject}} | (X_{i}'',\mathbf{p}_i) \in \mathsf{G}]\\
&= \sum_{i=0}^{\mathcal{I}} ( 1-\mathbb{P}[i \notin \mathcal{T}_{\mathrm{reject}}| (X_{i}'',\mathbf{p}_i) \in \mathsf{G}] )\\
&=\sum_{i=0}^{\mathcal{I}} \E[ 1-\min(e^{-\hat{E}(X_{i}'', \mathbf{p}_i)}, \, \, 1) | (X_{i}'',\mathbf{p}_i) \in \mathsf{G}]\\
&\stackrel{\textrm{Eq. } \ref{eq:rejection2}}{\leq} \sum_{i=0}^{\mathcal{I}} \frac{\epsilon'''}{(\mathcal{I}+1)\log(\frac{1}{\epsilon'''})}\\
&= \frac{\epsilon'''}{\log(\frac{1}{\epsilon'''})}.
\ee

\end{proof}

In the rest of this section, let $E_i = H(X''_i,\mathbf{p}_i)$ be the energy at the beginning of step $i$ for every $i \geq 0$. We now derive inequalities that will allow us to bound the Wasserstein mixing profile of the Metropolis adjusted chain $X''_0, X''_1, \ldots$, beginning with a weak bound on the total energy:

\begin{lemma}\label{thm:MH3}
\be
\mathbb{P}[E_i> y]  \leq  2(i+1)e^{-\frac{1}{8}(\frac{1}{2(i+1)}y-d)} \quad \quad  \textrm{ for } y> \max\left(4(i+1)d, M_2 \|X''_0\|^2\right).
\ee
\end{lemma}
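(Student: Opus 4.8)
The plan is to bound the energy $E_i = H(X_i'', \mathbf{p}_i)$ by tracking two separate contributions: the accumulated change in energy due to the numerical integrator $\sharp$ across the first $i$ steps, and the ``fresh'' kinetic energy $\frac12 \|\mathbf{p}_i\|^2$ injected by the momentum resampling at step $i$. The key structural fact is that the Metropolis-adjusted chain either accepts a proposal (in which case the position-plus-momentum energy $H(q_{i+1}'', p_{i+1}'')$ differs from $H(X_i'', \mathbf{p}_i)$ by at most the integrator's energy error) or rejects it (in which case the position is unchanged, so the potential energy $U(X_{i+1}'')$ equals $U(X_i'')$, and only the momentum is refreshed). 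In either case, writing $\hat E_i := U(X_i'')$ for the potential energy at the start of step $i$, we get a recursion of the form $\hat E_{i+1} \leq \hat E_i + |\Delta \hat E(X_i'', \mathbf{p}_i)|$ where $\Delta \hat E$ is the integrator's energy-conservation error, since the kinetic part is discarded on the next resampling. Thus $\hat E_i \leq \hat E_0 + \sum_{j=0}^{i-1} |\Delta \hat E(X_j'', \mathbf{p}_j)|$ with $\hat E_0 = U(X_0'') \leq M_2 \|X_0''\|^2$ by Assumption \ref{AssumptionsConvexity}, and then $E_i = \hat E_i + \frac12\|\mathbf{p}_i\|^2$.

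The next step is to control $|\Delta \hat E(X_j'', \mathbf{p}_j)|$. Here I would NOT invoke the good-set machinery (that is only needed for the \emph{toy} integrator and for bounding rejections); instead I use the general energy bound from Condition \ref{condition:kth_order} / Lemma \ref{thm:leapfrog2}, which gives $|\Delta \hat E(\mathbf{q}, \mathbf{p})| \leq \sum_{k=1}^{d/\mathsf{m}} \theta \mathsf{K}(H_k^{2\mathsf{c}}(\mathbf{q}^{(k)}, \mathbf{p}^{(k)}) + 1)$, which is bounded by a constant times $\theta$ times a polynomial in $H(\mathbf{q}, \mathbf{p})$ — in particular, since $\theta$ is chosen extremely small (of order $\mathcal{I}^{-1}$ up to constants, and the relevant bound in the constraints on $\theta$ says $7\frac{\theta}{T}(M_2 + \tfrac12)\frac{d}{\mathsf{m}}\mathsf{g}_\infty^2 \tfrac{\mathsf{A}}{M_2} \leq \log([1 - \tfrac{\epsilon'''}{(\mathcal{I}+1)\log(1/\epsilon''')}]^{-1})$), we can certainly arrange $|\Delta \hat E(X_j'', \mathbf{p}_j)| \leq \tfrac{1}{2(i+1)} E_j$ whenever the energy $E_j$ is below the threshold of interest. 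Combining with the recursion, on the event that all the $E_j$ for $j < i$ stay below $y$ we get $\hat E_i \leq M_2\|X_0''\|^2 + \sum_{j=0}^{i-1} \tfrac{1}{2(i+1)} E_j \leq M_2 \|X_0''\|^2 + \tfrac{i}{2(i+1)} \max_{j<i} E_j$. Actually the cleanest route is a direct induction/union-bound argument: I would prove by induction on $i$ that $\mathbb{P}[E_i > y] \leq 2(i+1) e^{-\frac{1}{8}(\frac{1}{2(i+1)}y - d)}$, using at each step that $E_{i+1} \leq E_i + |\Delta \hat E| + \frac12\|\mathbf{p}_{i+1}\|^2 - \frac12\|\mathbf{p}_i\|^2 \cdot \mathbbm{1}_{\text{accept}}$, and the Hanson--Wright tail bound $\mathbb{P}[\|\mathbf{p}_{i+1}\|^2 > s + d] \leq e^{-s/8}$ for the fresh Gaussian momentum.

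Concretely, the argument runs: condition on $\mathcal{F}_i$; on the complement of $\{E_i > y/2\}$ (which by induction has probability at most $2i \cdot e^{-\frac{1}{8}(\frac{1}{2(i+1)} \cdot \frac{y}{2} - d)} \leq 2i \cdot e^{-\frac18(\frac{y}{2(i+1)}-d)}$ after a routine comparison), the integrator error is small (absorbed since $\theta$ is tiny) so $\hat E_{i+1} \leq y/2 + (\text{negligible}) \leq \frac{i+1}{i+2}\cdot\frac{y}{2}\cdot(\text{something})$; then $E_{i+1} = \hat E_{i+1} + \frac12\|\mathbf{p}_{i+1}\|^2$ exceeds $y$ only if $\frac12\|\mathbf{p}_{i+1}\|^2 > y - \hat E_{i+1} \geq \frac{y}{2(i+2)}$ roughly (using $y > 4(i+1)d$ to keep the arithmetic of the fractions honest), which by Hanson--Wright has probability at most $e^{-\frac18(\frac{y}{(i+2)} - d)} \leq e^{-\frac18(\frac{y}{2(i+2)}-d)}$. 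Summing the two contributions gives $\mathbb{P}[E_{i+1} > y] \leq 2i \cdot e^{-\frac18(\frac{y}{2(i+2)}-d)} + e^{-\frac18(\frac{y}{2(i+2)}-d)} \leq 2(i+2) e^{-\frac18(\frac{y}{2(i+2)}-d)}$, completing the induction. The base case $i=0$ is immediate: $E_0 = U(X_0'') + \frac12\|\mathbf{p}_0\|^2 \leq M_2\|X_0''\|^2 + \frac12\|\mathbf{p}_0\|^2$, and for $y > 2M_2\|X_0''\|^2$ (which follows from $y > M_2\|X_0''\|^2$ plus the other hypotheses after checking constants) we have $\mathbb{P}[E_0 > y] \leq \mathbb{P}[\frac12\|\mathbf{p}_0\|^2 > y/2] \leq e^{-\frac18(y/2 - d)} \leq 2 e^{-\frac18(\frac{y}{2}-d)}$.

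The main obstacle I anticipate is not conceptual but bookkeeping: matching the exact constants $4(i+1)d$ and $\frac{1}{2(i+1)}$ in the statement requires being careful about how the denominator $i+1$ versus $i+2$ shifts through the induction, and about exactly how much ``slack'' the tiny $\theta$ buys so that the integrator-error term is genuinely absorbed into the fraction rather than merely bounded crudely. One has to verify that the constraint on $\theta$ (in particular $7\frac{\theta}{T}(M_2+\frac12)\frac{d}{\mathsf{m}}\mathsf{g}_\infty^2\frac{\mathsf{A}}{M_2}$ is small) combined with the polynomial-in-$H$ form of the energy error from Lemma \ref{thm:leapfrog2} really does give $|\Delta \hat E| \leq c \cdot E_j/(i+1)$ for $E_j \leq y$ in the regime $y > 4(i+1)d$; this may require invoking Lemma \ref{thm:leapfrog5}'s bound $|\Delta \hat E| \leq \theta \frac{7}{TM_2}\mathsf{K}''' H$ rather than the raw $2\mathsf{c}$-degree polynomial, so that the error is genuinely \emph{linear} in the current energy and the absorption is clean. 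I would present the proof with the linear-in-$H$ energy bound as the key input and flag the constant-chasing as routine.
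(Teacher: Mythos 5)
Your proposed recursion for the potential energy alone, $\hat E_{i+1} \leq \hat E_i + |\Delta \hat E(X_i'',\mathbf p_i)|$, is false, and the proof built on it cannot be repaired as written. On an accepted step the full Hamiltonian is (approximately) conserved along the numerical trajectory, so the kinetic energy $\frac12\|\mathbf p_i\|^2$ injected at the start of step $i$ can be entirely converted into potential energy by the time the trajectory ends: if $p_{i+1}\approx 0$ then $U(X_{i+1}'') = U(q_{i+1}) \approx U(X_i'') + \frac12\|\mathbf p_i\|^2 + \Delta\hat E_i$. The fresh momentum is discarded \emph{after} it has already done work, so it is not true that ``the kinetic part is discarded on the next resampling'' in a way that insulates $\hat E_{i+1}$ from $\|\mathbf p_i\|^2$. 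The correct per-step inequality, which the paper uses, is at the level of the total energy $E_i$: if the proposal is accepted then $\Delta E_i \leq \Delta\hat E_i + \frac12\|\mathbf p_{i+1}\|^2$ (dropping the term $-\frac12\|p_{i+1}\|^2$), and if rejected then $\Delta E_i \leq \frac12\|\mathbf p_{i+1}\|^2$; combined, $\Delta E_i \leq \frac12\|\mathbf p_{i+1}\|^2 + \Delta\hat E_i\,\mathbbm{1}\{i\notin\mathcal{T}_{\mathrm{reject}}\}$.

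The second and deeper gap is that your plan to control $\Delta\hat E_i$ via the integrator error bounds (Condition \ref{condition:kth_order} / Lemma \ref{thm:leapfrog2} / Lemma \ref{thm:leapfrog5}) and the smallness of $\theta$ is both unnecessary and, without the good-set machinery you explicitly disavow, unavailable: for the Metropolis chain $\{X_i''\}$ the integrator is $\sharp$, not the toy integrator $\diamondsuit$, so Lemma \ref{thm:leapfrog5}'s linear-in-$H$ bound does not apply, and the raw Condition \ref{condition:kth_order} bound grows like $H^{2\mathsf c}$, which you cannot absorb without first controlling $H$. The idea you are missing is that the Metropolis acceptance rule itself supplies the exponential tail, with no reference to the integrator at all: conditional on $\Delta\hat E_i > y$, a proposed move is accepted with probability $\min(1,e^{-\Delta\hat E_i}) \leq e^{-y}$, so $\mathbb{P}[\Delta\hat E_i\,\mathbbm{1}\{i\notin\mathcal{T}_{\mathrm{reject}}\} > y] \leq e^{-y}$ for every $y\geq 0$. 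This one observation replaces all of the $\theta$-bookkeeping. From there the paper's proof proceeds not by induction but by a direct union bound: split $\Delta E_i > y/(i+1)$ into the momentum-refresh event and the accepted-energy-jump event, bound the former by Hanson--Wright $\bigl(\mathbb{P}[\frac12\|\mathbf p\|^2 > y/2] \leq e^{-\frac18(y-d)}$ for $y>2d\bigr)$ and the latter by $e^{-y/2}$, sum over $j=0,\dots,i-1$, and handle $E_0$ separately via $U(X_0'')\leq M_2\|X_0''\|^2$ and a second application of Hanson--Wright. The union bound over the $i$ increments is what produces the denominator $2(i+1)$ in the exponent and the prefactor $2(i+1)$, without any induction.
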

\begin{proof}
Define $\Delta E_i := E_{i+1}-E_{i}$ and recall the set $\mathcal{T}_{\mathrm{reject}}$ of times at which a proposal is rejected, as defined in Equation \eqref{EqRejTime}. If proposal $i$ is accepted (\textit{i.e.} $i \notin \mathcal{T}_{\mathrm{reject}}$), then $\Delta E_i$ is the sum of the change in energy when refreshing the momentum and the numerical integrator error in conservation of energy.  If proposal $i$ is rejected (\textit{i.e.} $i \notin \mathcal{T}_{\mathrm{reject}}$), then $\Delta E_i$ is just the change in energy when refreshing the momentum.  Therefore,

\be \label{eq:d1}
\Delta E_i &\leq  \frac{1}{2}\|\mathbf{p}_{i+1}\|^2 + \Delta \hat{E}_i \times \mathbbm{1}\{i \notin \mathcal{T}_{\mathrm{reject}}\}
\ee
where $\Delta \hat{E}_i = H(q_T^{\sharp \theta}(X''_i,\mathbf{p}_i), p_T^{\sharp \theta}(X''_i,\mathbf{p}_i)) - H(X''_i,\mathbf{p}_i)$.

Noting that $\mathbb{P}[i \notin \mathcal{T}_{\mathrm{reject}}| \Delta \hat{E}_{i}]  = \min(1, e^{-\Delta \hat{E}_i})$,
\be\label{eq:d2}
\mathbb{P}[\Delta \hat{E}_i \times \mathbbm{1}\{i \notin \mathcal{T}_{\mathrm{reject}}\} > y] &= \mathbb{P}[i \notin \mathcal{T}_{\mathrm{reject}} | \Delta \hat{E}_i>y]\, \mathbb{P}[\Delta \hat{E}_i>y]\\
&\leq \mathbb{P}[i \notin \mathcal{T}_{\mathrm{reject}}| \Delta \hat{E}_i>y]\\
&\leq e^{-y} \quad \quad \forall y \geq 0.
\ee
This gives 

\be \label{eq:MH1}
\mathbb{P}[\Delta E_i > y] &\stackrel{\textrm{Eq. } \ref{eq:d1}}{\leq} \mathbb{P}\left[\frac{1}{2}\|\mathbf{p}_i\|^2 + \Delta \hat{E}_i\times \mathbbm{1}\{i \notin \mathcal{T}_{\mathrm{reject}}\} > y\right]\\
&\leq \mathbb{P}\left[\left \{\frac{1}{2}\|\mathbf{p}_i\|^2>\frac{1}{2}y\right\}\cup\bigg\{\Delta \hat{E}_i\times \mathbbm{1}\{i \notin \mathcal{T}_{\mathrm{reject}}\} > \frac{1}{2}y\bigg\}\right]\\
&\stackrel{\textrm{Eq. } \ref{eq:d2}}{\leq} \mathbb{P}[\frac{1}{2}\|\mathbf{p}_i\|^2>\frac{1}{2}y] + e^{-\frac{1}{2}y} \quad \quad \forall y \geq 0.\\
\ee

Noting  $\|\mathbf{p}_i\|^2 \sim \chi^2_d$, we have by the Hanson-Wright concentration inequality (see, \textit{e.g.}, \cite{hanson1971bound, rudelson2013hanson}):
\be
\mathbb{P}[\|\mathbf{p}_i\|^2>x+d]  \leq e^{-\frac{x}{8}} \quad \quad  \textrm{ for } x> d,
\ee
so 
\be\label{eq:d3}
\mathbb{P}[\frac{1}{2}\|\mathbf{p}_i\|^2>\frac{1}{2}y]  \leq e^{-\frac{1}{8}(y-d)}  \quad \quad  \textrm{ for } y> 2d.
\ee
So by Inequalities \eqref{eq:MH1} and \eqref{eq:d3},
\be \label{eq:d4}
\mathbb{P}[\Delta E_i > y] \leq e^{-\frac{1}{8}(y-d)} + e^{-\frac{1}{2}y} \quad \quad  \textrm{ for } y> 2d.
\ee

Therefore, 
\be \label{eq:d5}
\mathbb{P}[E_i-E_0>\frac{i}{i+1}y] &=\mathbb{P}[\sum_{j=0}^{i-1} \Delta E_j > \frac{i}{i+1}y]\\
&\leq \sum_{j=0}^{i-1}\mathbb{P}[\Delta E_j > \frac{1}{i+1}y]\\
&\stackrel{{\scriptsize \textrm{Eq.  }}\ref{eq:d4}}{\leq} i\times(e^{-\frac{1}{8}(\frac{1}{i+1}y-d)} + e^{-\frac{1}{2(i+1)}y}) \quad \quad  \textrm{ for } y> 2(i+1)d.
\ee

By Assumption \ref{AssumptionsConvexity}, 
\be \label{eq:d_X0}
U(X''_0) \stackrel{\textrm{Assumption } \ref{AssumptionsConvexity}}{\leq} M_2 \|X''_0\|^2 \leq M_2(\frac{\sqrt{d}}{\sqrt{m_2}})^2 = M_2 \frac{d}{m_2}.
\ee
But $E_0 = U(X''_0) + \frac{1}{2}\|\mathbf{p}_0\|^2$, so by Inequality \eqref{eq:d3} and Assumption \ref{AssumptionsConvexity},
\be \label{eq:d6}
\mathbb{P}(E_0> M_2 \|X''_0\|^2 + \frac{1}{i+1}y)  \leq e^{-\frac{1}{8}(\frac{2}{i+1}y-d)}  \quad \quad  \textrm{ for } y> (i+1)d.
\ee

Therefore, 
\be
\mathbb{P}[E_i> M_2 \|X''_0\|^2 + y]  &\leq \mathbb{P}[E_0> M_2 \|X''_0\|^2 + \frac{1}{i+1}y] + \mathbb{P}[E_i-E_0>\frac{i}{i+1}y]\\
&\stackrel{{\scriptsize \textrm{Eq.  }}\ref{eq:d5}, \ref{eq:d6}}{\leq} e^{-\frac{1}{8}(\frac{2}{i+1}y-d)}  +i\times(e^{-\frac{1}{8}(\frac{1}{i+1}y-d)} + e^{-\frac{1}{2(i+1)}y}) \quad \quad  \textrm{ for } y> 2(i+1)d.
\ee

Hence, for $y> \max\left(2(i+1)d, M_2 \|X''_0\|^2\right)$, we have
\be
\mathbb{P}[E_i> 2y]  &\leq  e^{-\frac{1}{8}(\frac{2}{i+1}y-d)}  +i\times(e^{-\frac{1}{8}(\frac{1}{i+1}y-d)} + e^{-\frac{1}{2(i+1)}y})\\
&\leq  (i+1)\times(e^{-\frac{1}{8}(\frac{1}{i+1}y-d)} + e^{-\frac{1}{2(i+1)}y})\\
&\leq  2(i+1)e^{-\frac{1}{8}(\frac{1}{i+1}y-d)}.
\ee

This completes the proof.
\end{proof}

\begin{lemma}\label{thm:MH4}
With notation as above,
\be
\mathbb{P}(\|X''_\mathcal{I}\|>z) \leq 2(\mathcal{I}+1) \times e^{-\frac{1}{8}(\frac{m_2}{2(\mathcal{I}+1)} z^2-d)} \quad \quad  \textrm{ for } \quad  z> \max\left(\sqrt{\frac{4}{m_2}(\mathcal{I}+1)d}, \, \, \sqrt{\frac{M_2}{m_2}\|X''_0\|^2}\right).
\ee
\end{lemma}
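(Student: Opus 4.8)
The plan is to deduce this bound as an immediate corollary of the energy estimate in Lemma~\ref{thm:MH3}, using only the strong convexity of $U$ to convert a bound on the total energy into a bound on the position. First I would observe that at step $\mathcal{I}$ the total energy dominates the potential energy, which in turn controls $\|X''_\mathcal{I}\|$: since
\[
E_\mathcal{I} = H(X''_\mathcal{I},\mathbf{p}_\mathcal{I}) = U(X''_\mathcal{I}) + \tfrac12\|\mathbf{p}_\mathcal{I}\|^2 \geq U(X''_\mathcal{I}),
\]
and by Assumption~\ref{AssumptionsConvexity} (applied exactly as in \eqref{eq:Y1} and in the proof of Lemma~\ref{thm:oracle}) we have $U(X''_\mathcal{I}) \geq m_2\|X''_\mathcal{I}\|^2$, it follows that $m_2\|X''_\mathcal{I}\|^2 \leq E_\mathcal{I}$. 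Hence for any $z>0$,
\[
\{\,\|X''_\mathcal{I}\| > z\,\} \subseteq \{\,E_\mathcal{I} > m_2 z^2\,\},
\]
so $\mathbb{P}(\|X''_\mathcal{I}\| > z) \leq \mathbb{P}(E_\mathcal{I} > m_2 z^2)$.

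Next I would apply Lemma~\ref{thm:MH3} with $i=\mathcal{I}$ and $y = m_2 z^2$. The admissibility hypothesis $y > \max\!\big(4(\mathcal{I}+1)d,\, M_2\|X''_0\|^2\big)$ of that lemma becomes, after the substitution $y = m_2 z^2$, precisely the condition $z > \max\!\big(\sqrt{\tfrac{4}{m_2}(\mathcal{I}+1)d},\, \sqrt{\tfrac{M_2}{m_2}\|X''_0\|^2}\big)$ assumed in the statement, and in that range Lemma~\ref{thm:MH3} gives
\[
\mathbb{P}(E_\mathcal{I} > m_2 z^2) \leq 2(\mathcal{I}+1)\, e^{-\frac{1}{8}\left(\frac{m_2 z^2}{2(\mathcal{I}+1)} - d\right)}.
\]
Chaining this with the previous display yields the claimed inequality.

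Since this is essentially a one-line consequence of Lemma~\ref{thm:MH3}, there is no genuine obstacle. The only points needing a moment's care are (i) invoking the paper's convention $U(x) \geq m_2\|x\|^2$ coming from Assumption~\ref{AssumptionsConvexity}, and (ii) verifying that, under the change of variables $y = m_2 z^2$, the range of valid $z$ in the statement coincides with the range of valid $y$ in Lemma~\ref{thm:MH3}; both are routine.
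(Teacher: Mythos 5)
Your proof is correct and is essentially identical to the paper's: both invoke strong convexity to get $m_2\|X''_\mathcal{I}\|^2 \le U(X''_\mathcal{I}) \le E_\mathcal{I}$, reduce the position tail to the energy tail $\mathbb{P}(E_\mathcal{I} > m_2 z^2)$, and then apply Lemma~\ref{thm:MH3} with $i=\mathcal{I}$, $y = m_2 z^2$. The substitution into the admissibility range is handled exactly as in the paper.
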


\begin{proof}
By Assumption \ref{AssumptionsConvexity},
\be
m_2\|X''_\mathcal{I}\|^2 \leq U(X''_\mathcal{I}) \leq E_\mathcal{I} .
\ee
Therefore,
\be
\mathbb{P}(\|X''_\mathcal{I}\|>z)  &= \mathbb{P}(m_2\|X''_\mathcal{I}\|^2>m_2 z^2)\\ 
&\leq \mathbb{P}(E_\mathcal{I}>m_2 z^2)\\
&\stackrel{\textrm{Lemma } \ref{thm:MH3}}{\leq} 2(\mathcal{I}+1) \times e^{-\frac{1}{8}(\frac{1}{2(\mathcal{I}+1)}m_2 z^2-d)} \quad \quad  \textrm{ for } m_2 z^2> \max\left(4(\mathcal{I}+1)d, M_2 \|X''_0\|^2\right).\\
\ee

\end{proof}

\begin{lemma} \label{thm:badset}
With notation as above,
\be \label{eq:badset3}
\mathbb{P}[\min( \mathfrak{t}_{\mathrm{reject}}, \mathfrak{t}_{G}) \leq \mathcal{I}]  \leq  4 \times \frac{\epsilon'''}{\log(\frac{1}{\epsilon'''})}
\ee
and
\be \label{eq:badset4}
\mathbb{E}[\|X''_\mathcal{I}\| \times \mathbbm{1}_{\min( \mathfrak{t}_{\mathrm{reject}}, \mathfrak{t}_{G}) \leq \mathcal{I}}] &\leq \sqrt{32}\frac{(\mathcal{I}+1)^{1.5}}{\sqrt{d}\sqrt{m_2}}(\epsilon''')^{d}\\
&\quad \quad+ 3\epsilon''' \times \max\left(\sqrt{\frac{32}{m_2}(\mathcal{I}+1)d}, \, \, \sqrt{\frac{M_2}{m_2}}\|X''_0\|\right)\\
\ee
\end{lemma}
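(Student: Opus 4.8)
The plan is to prove Lemma \ref{thm:badset} by combining the four ``good set'' bounds already established (Lemmas \ref{thm:MH2'}, \ref{thm:MH2}, \ref{LemmaMH5}, \ref{thm:rejection}) for Inequality \eqref{eq:badset3}, and then to upgrade this probability bound to the expectation bound \eqref{eq:badset4} using the tail bound from Lemma \ref{thm:MH4} together with Cauchy--Schwarz.

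For Inequality \eqref{eq:badset3}, first observe that the event $\{\min(\mathfrak{t}_{\mathrm{reject}}, \mathfrak{t}_G) \leq \mathcal{I}\}$ is contained in the union $\{\mathfrak{t}_G \leq \mathcal{I}\} \cup (\{\mathfrak{t}_{\mathrm{reject}} \leq \mathcal{I}\} \cap \{\mathfrak{t}_G > \mathcal{I}\})$. The second of these has probability at most $\frac{\epsilon'''}{\log(1/\epsilon''')}$ by Lemma \ref{thm:rejection}. For the first, recall the definition of $\mathsf{G}$; the event $\{\mathfrak{t}_G \leq \mathcal{I}\}$ means that at some step $h \leq \mathcal{I}-1$ the pair $(X'_h, \mathbf{p}_h) \notin \mathsf{G}$, which (since $X'_i = X''_i$ for $i \le \min(\mathfrak t_G, \mathfrak t_{\mathrm{reject}})$, so we may phrase everything in terms of the unadjusted chain $X'$) forces one of: $\sup_{1\le i \le d/\mathsf{m}}\sup_{0\le h \le \mathcal{I}-1}\|X'^{(i)}_h\| \ge \mathsf{g}_\infty$, or $\sup_{1\le i \le d/\mathsf{m}}\sup_{0\le h \le \mathcal{I}-1}\|\mathbf{p}^{(i)}_h\| \ge \mathsf{g}_\infty$, or $\inf_{0\le h \le \mathcal{I}-1}\|\mathbf{p}_h\| \le \mathsf{g}_2$. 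A union bound over these three events, applying Lemmas \ref{thm:MH2}, \ref{LemmaMH5} and \ref{thm:MH2'} respectively, gives $\mathbb{P}[\mathfrak{t}_G \leq \mathcal{I}] \leq 3\frac{\epsilon'''}{\log(1/\epsilon''')}$. Summing yields the factor $4$ in \eqref{eq:badset3}.

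For Inequality \eqref{eq:badset4}, I would write $\mathbb{E}[\|X''_\mathcal{I}\| \mathbbm{1}_{\min(\mathfrak{t}_{\mathrm{reject}},\mathfrak{t}_G)\le\mathcal{I}}]$ and split the integral of the tail at the threshold $z_0 := \max\left(\sqrt{\frac{32}{m_2}(\mathcal{I}+1)d}, \sqrt{\frac{M_2}{m_2}}\|X''_0\|\right)$. Below $z_0$, bound $\|X''_\mathcal{I}\|$ by $z_0$ and the indicator's expectation by \eqref{eq:badset3}, contributing at most $4\frac{\epsilon'''}{\log(1/\epsilon''')} z_0 \le 3\epsilon''' z_0$ (using $\log(1/\epsilon''') \ge 1$ since $\epsilon''' < e^{-1}$), which matches the second line of \eqref{eq:badset4}. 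Above $z_0$, use $\mathbb{E}[\|X''_\mathcal{I}\|\mathbbm{1}_{\|X''_\mathcal{I}\|>z_0}] = z_0\,\mathbb{P}[\|X''_\mathcal{I}\|>z_0] + \int_{z_0}^\infty \mathbb{P}[\|X''_\mathcal{I}\|>z]\,\mathrm{d}z$ and feed in the Gaussian-type tail from Lemma \ref{thm:MH4}: $\mathbb{P}(\|X''_\mathcal{I}\|>z) \le 2(\mathcal{I}+1)e^{-\frac{1}{8}(\frac{m_2}{2(\mathcal{I}+1)}z^2 - d)}$. Since $z_0^2 \ge \frac{32}{m_2}(\mathcal{I}+1)d$, at $z = z_0$ the exponent satisfies $\frac{m_2}{16(\mathcal{I}+1)}z_0^2 - \frac{d}{8} \ge 2d - \frac{d}{8} \ge d$, and a routine Gaussian-integral estimate (completing the square / bounding $\int e^{-cz^2}\mathrm{d}z$) shows the whole contribution is at most $\sqrt{32}\,\frac{(\mathcal{I}+1)^{1.5}}{\sqrt{d}\sqrt{m_2}}(\epsilon''')^d$ — one checks the constants so that the prefactor $2(\mathcal{I}+1)$ and the $\sqrt{\mathcal{I}+1}$ from the Gaussian width combine to $(\mathcal{I}+1)^{1.5}$, the $1/\sqrt{d m_2}$ comes from the width, and $e^{-d}$ together with a further factor absorbs into $(\epsilon''')^d$ using that $e^{-1} > \epsilon'''$ forces any fixed exponential decay in $d$ to be dominated by $(\epsilon''')^d$ for the relevant regime. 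Adding the two regions gives \eqref{eq:badset4}.

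The main obstacle is the bookkeeping in the ``above $z_0$'' term: one must verify that the definitions of $\mathsf{g}_\infty$, $\mathsf{g}_2$ and $z_0$ make the various exponential tails decay fast enough to absorb the polynomial-in-$\mathcal{I}$ and inverse-polynomial-in-$d$ prefactors into the stated bounds, and in particular to get the clean $(\epsilon''')^d$ factor rather than merely $\epsilon'''$. This is purely a matter of tracking constants through the Gaussian concentration inequality in Lemma \ref{thm:MH4}, but it requires care to match the exact form of the right-hand side of \eqref{eq:badset4}; everything else is a direct union bound or a standard tail-integral computation.
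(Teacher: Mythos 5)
Your proof of Inequality \eqref{eq:badset3} is exactly the paper's argument: a union bound decomposing $\{\mathfrak{t}_G \le \mathcal{I}\}$ into the three sub-events controlled by Lemmas \ref{thm:MH2}, \ref{LemmaMH5}, \ref{thm:MH2'}, plus the event $\{\mathfrak{t}_{\mathrm{reject}} \le \mathcal{I}\} \cap \{\mathfrak{t}_G > \mathcal{I}\}$ controlled by Lemma \ref{thm:rejection}. (Also, using $X'_i = X''_i$ for $i \le \min(\mathfrak{t}_G,\mathfrak{t}_{\mathrm{reject}})$ to pass between the two chains is the right move.) That part is fine.

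For Inequality \eqref{eq:badset4} there is a genuine gap, and it is in exactly the place you flagged as "bookkeeping." Your split threshold is
\[
z_0 = \max\Bigl(\sqrt{\tfrac{32}{m_2}(\mathcal{I}+1)d},\ \sqrt{\tfrac{M_2}{m_2}}\,\|X''_0\|\Bigr),
\]
but the paper's threshold is $\Lambda = z_0\sqrt{\log(1/\epsilon''')}$. The extra $\sqrt{\log(1/\epsilon''')}$ factor is not cosmetic: it is precisely what produces the $(\epsilon''')^d$ factor in the final bound. With the paper's $\Lambda$ one has $\frac{m_2}{16(\mathcal{I}+1)}\Lambda^2 \ge 2d\log(1/\epsilon''')$, so the Gaussian tail from Lemma \ref{thm:MH4} evaluated at $\Lambda$ gives $e^{-2d\log(1/\epsilon''')}e^{d/8} \le (\epsilon''')^d$ (using $\log(1/\epsilon''') \ge 1 \ge 1/8$). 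With your $z_0$ one only gets the exponent $\ge 2d - d/8$, so the tail decays like $e^{-cd}$ for a universal constant $c$, \emph{not} like $(\epsilon''')^d$.

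Your sentence "$e^{-d}$ together with a further factor absorbs into $(\epsilon''')^d$ using that $e^{-1} > \epsilon'''$" has the inequality backwards. Since $\epsilon''' < e^{-1}$, we have $(\epsilon''')^d < e^{-d}$, so $e^{-d}$ is the \emph{larger} quantity; you cannot bound it above by $(\epsilon''')^d$. If anything, the statement you need is the reverse, and no "further factor" is available to rescue it because $\epsilon'''$ can be arbitrarily small. To recover the claimed bound you must take the split point $\Lambda = z_0\sqrt{\log(1/\epsilon''')}$; then the "below $\Lambda$" contribution is $\Lambda \cdot \tfrac{4\epsilon'''}{\log(1/\epsilon''')} = \tfrac{4\epsilon'''}{\sqrt{\log(1/\epsilon''')}}\,z_0 \le 4\epsilon'''\,z_0$, still of the right form, while the "above $\Lambda$" Gaussian integral (bounded via $\int_\Lambda^\infty \frac{z}{\Lambda}\,(\cdots)\,dz$, which is exactly computable) yields the $(\epsilon''')^d$ prefactor. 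As a minor side point, your extra boundary term $z_0\,\mathbb{P}[\|X''_\mathcal{I}\|>z_0]$ is avoidable by using the layer-cake decomposition $\mathbb{E}[Y\mathbbm{1}_A] = \int_0^\infty \mathbb{P}(Y\mathbbm{1}_A > y)\,dy \le \Lambda\,\mathbb{P}(A) + \int_\Lambda^\infty\mathbb{P}(Y>y)\,dy$ directly, as the paper does; but that is a convenience, not a correctness issue. The threshold issue is the real problem.
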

\begin{proof}

Recalling that $X_{i}' = X_{i}''$ for all $i \leq \min( \mathfrak{t}_{\mathrm{reject}}, \mathfrak{t}_{G})$, we write

\be \label{eq:badset2}
\P[\min( &\mathfrak{t}_{\mathrm{reject}}, \mathfrak{t}_{G}) \leq \mathcal{I}] \\
&= \P[\{  \max_{1 \leq i \leq \frac{d}{\mathsf{m}}} \sup_{0 \leq h \leq \mathcal{I}-1} \|X_{h}'^{(i)}\| \geq \mathsf{g}_\infty \} \cup \{ \inf_{0 \leq h \leq \mathcal{I}-1} \| \mathbf{p}_{h} \| \leq \mathsf{g}_{2} \} \cup \{  \sup_{1\leq i\leq \frac{d}{\mathsf{m}}} \sup_{0 \leq h \leq \mathcal{I}-1} \|\mathbf{p}^{(i)}_{h}\| \geq \mathsf{g}_\infty \} \cup \{\mathfrak{t}_{\mathrm{reject}} \leq \mathcal{I}\}] \\
&\leq  \P[  \max_{1 \leq i \leq \frac{d}{\mathsf{m}}} \sup_{0 \leq h \leq \mathcal{I}-1} \|X_{h}'^{(i)}\| \geq \mathsf{g}_\infty ] +\P[ \inf_{0 \leq h \leq \mathcal{I}-1} \| \mathbf{p}_{h} \| \leq \mathsf{g}_{2}  ] + \P[\sup_{1\leq i\leq \frac{d}{\mathsf{m}}} \sup_{0 \leq h \leq \mathcal{I}-1} \|\mathbf{p}^{(i)}_{h}\| \geq \mathsf{g}_\infty ] \\
& \qquad + \P[\{\mathfrak{t}_{\mathrm{reject}} \leq \mathcal{I}\} \cap \{ \mathfrak{t}_{G} > \mathcal{I} \}] \\
&\stackrel{\textrm{Lemma } \ref{thm:rejection}}{\leq} \P[  \max_{1 \leq i \leq \frac{d}{\mathsf{m}}} \sup_{0 \leq h \leq \mathcal{I}-1} \|X_{h}'^{(i)}\| \geq \mathsf{g}_\infty ] +\P[ \inf_{0 \leq h \leq \mathcal{I}-1} \| \mathbf{p}_{h} \| \leq \mathsf{g}_{2}  ] + \P[\sup_{1\leq i\leq \frac{d}{\mathsf{m}}} \sup_{0 \leq h \leq \mathcal{I}-1} \|\mathbf{p}^{(i)}_{h}\| \geq \mathsf{g}_\infty] \\
& \qquad + \frac{\epsilon'''}{\log(\frac{1}{\epsilon'''})} \\
&\stackrel{\textrm{Lemma} \ref{thm:MH2}}{\leq}  \frac{\epsilon'''}{\log(\frac{1}{\epsilon'''})} +\P[ \inf_{0 \leq h \leq \mathcal{I}-1} \| \mathbf{p}_{h} \| \leq \mathsf{g}_{2}  ] + \P[\sup_{1\leq i\leq \frac{d}{\mathsf{m}}} \sup_{0 \leq h \leq \mathcal{I}-1} \|\mathbf{p}^{(i)}_{h}\| \geq \mathsf{g}_\infty] + \frac{\epsilon'''}{\log(\frac{1}{\epsilon'''})} \\
&\stackrel{\textrm{Lemma } \ref{thm:MH2'}}{\leq}  \frac{\epsilon'''}{\log(\frac{1}{\epsilon'''})} + \frac{\epsilon'''}{\log(\frac{1}{\epsilon'''})} + \P[\sup_{1\leq i\leq \frac{d}{\mathsf{m}}} \sup_{0 \leq h \leq \mathcal{I}-1} \|\mathbf{p}^{(i)}_{h}\| \geq \mathsf{g}_\infty] + \frac{\epsilon'''}{\log(\frac{1}{\epsilon'''})} \\
&\stackrel{\textrm{Lemma } \ref{LemmaMH5}}{\leq}  \frac{\epsilon'''}{\log(\frac{1}{\epsilon'''})} + \frac{\epsilon'''}{\log(\frac{1}{\epsilon'''})} + \frac{\epsilon'''}{\log(\frac{1}{\epsilon'''})} + \frac{\epsilon'''}{\log(\frac{1}{\epsilon'''})}. \\
\ee

This completes the proof of Inequality \eqref{eq:badset3}.

Next, set $\Lambda := \max\left(\sqrt{\frac{32}{m_2}(\mathcal{I}+1)d}, \, \, \sqrt{\frac{M_2}{m_2}}\|X''_0\|\right)  \sqrt{\log(\frac{1}{\epsilon'''})}$ and  $\Psi := \frac{4\epsilon'''}{\log(\frac{1}{\epsilon'''})}\times \Lambda$. We have

\be
 \mathbb{E}[\|X''_\mathcal{I}\| \times &\mathbbm{1}_{\min( \mathfrak{t}_{\mathrm{reject}}, \mathfrak{t}_{G}) \leq \mathcal{I}}] \stackrel{{\scriptsize \textrm{Eq. }}\ref{eq:badset3}}{\leq}  \int_\Lambda^\infty \mathbb{P}(\|X''_\mathcal{I}\|>z) \mathrm{d}z + \Psi\\
&\stackrel{{\scriptsize \textrm{Lemma }}\ref{thm:MH4}}{\leq}   \int_{\Lambda}^\infty 2(\mathcal{I}+1) e^{-\frac{1}{8}(\frac{m_2}{2(\mathcal{I}+1)} z^2-d)}\mathrm{d}z + \Psi\\
&\leq  \int_{\Lambda}^\infty \frac{z}{\Lambda} 2(\mathcal{I}+1) e^{-\frac{1}{8}(\frac{m_2}{2(\mathcal{I}+1)} z^2-d)}\mathrm{d}z + \Psi\\
&=   -\frac{16}{\Lambda}(\mathcal{I}+1)\frac{\mathcal{I}+1}{m_2}e^{-\frac{m_2}{16(\mathcal{I}+1)} z^2}\, e^{\frac{d}{8}} \bigg|_\Lambda^\infty + \Psi\\
&\leq   \frac{32}{\Lambda}\frac{(\mathcal{I}+1)^2}{m_2}e^{-2d\log(\frac{1}{\epsilon'''})}\, e^{\frac{d}{8}}+ \Psi\\
&\leq   \frac{32}{\Lambda}\frac{(\mathcal{I}+1)^2}{m_2}e^{-d\log(\frac{1}{\epsilon'''})}+ \Psi\\
&=   \frac{32}{\Lambda}\frac{(\mathcal{I}+1)^2}{m_2}(\epsilon''')^{d}+ \Psi\\
&\leq   \sqrt{32}\frac{(\mathcal{I}+1)^{1.5}}{\sqrt{d}\sqrt{m_2}}(\epsilon''')^{d}+ \Psi\\
&\leq  \sqrt{32}\frac{(\mathcal{I}+1)^{1.5}}{\sqrt{d}\sqrt{m_2}}(\epsilon''')^{d}+ 4\epsilon''' \times \max\left(\sqrt{\frac{32}{m_2}(\mathcal{I}+1)d}, \, \, \sqrt{\frac{M_2}{m_2}}\|X''_0\|\right)  \\
\ee
where the third inequality holds since $\frac{z}{\Lambda} \geq 1$ inside the bounds of integration, and the fifth and seventh inequalities hold since $\log(\frac{1}{\epsilon'''})>1$.  This completes the proof of Inequality \eqref{eq:badset4}.
\end{proof}

\section{Proof of Theorem \ref{ThmDriftHMC} } \label{AppDriftCond}

In order to prove Theorem \ref{ThmDriftHMC}, we need the following very rough bound on the distance that can be travelled by solutions to Hamilton's equations:

\begin{lemma} \label{LemmaSmallDisplacement}
Assume that there exists some $0 < C < \infty$ so that 
\be \label{LemmaSmallDisplacementAssump}
\| U'(q) \| \leq C \| q \|
\ee  for all $q \in \mathbb{R}^{d}$. Let $(q_{t},p_{t})$ be solutions to Hamilton's equations \eqref{EqHamiltonEquations} with initial conditions $(q_{0},p_{0}) = (\mathbf{q},\mathbf{p}) \in \mathbb{R}^{2d}$. Then for all $t \geq 0$,
\be 
\| q_t - \mathbf{q} \| \leq  \frac{1}{2C} \left( e^{-\sqrt{C}t} (e^{\sqrt{C}t} - 1)(\sqrt{C} \|\mathbf{p}\| (e^{\sqrt{C} t} + 1) + C \|\mathbf{q}\|(e^{\sqrt{C}t} - 1)) \right).
\ee  
\end{lemma}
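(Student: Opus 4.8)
The plan is to compare the norms $\hat q_t := \|q_t - \mathbf q\|$ and a companion quantity to the solution of an explicit linear ODE system, exactly in the spirit of Lemma \ref{thm:bounds}. First I would introduce $r_t := \|q_t\|$ and $s_t := \|p_t\|$, and observe from Hamilton's equations \eqref{EqHamiltonEquations} that $\frac{\d}{\d t} r_t \le s_t$ and, using the hypothesis \eqref{LemmaSmallDisplacementAssump}, that $\frac{\d}{\d t} s_t \le \|U'(q_t)\| \le C r_t$. Thus $(r_t, s_t)$ is dominated (coordinatewise, and both $f_1(r,s)=s$, $f_2(r,s)=Cr$ are nondecreasing in each argument, so Lemma \ref{LemmaOdeComp} applies) by the solution $(r^\star_t, s^\star_t)$ of $\dot r^\star = s^\star$, $\dot s^\star = C r^\star$ with $r^\star_0 = \|\mathbf q\|$, $s^\star_0 = \|\mathbf p\|$. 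This second-order system $\ddot r^\star = C r^\star$ has the closed-form solution $r^\star_t = k_1 e^{\sqrt C t} + k_2 e^{-\sqrt C t}$ with $k_1 = \tfrac12(\|\mathbf q\| + \|\mathbf p\|/\sqrt C)$, $k_2 = \tfrac12(\|\mathbf q\| - \|\mathbf p\|/\sqrt C)$, just as in part (1) of Lemma \ref{thm:bounds}.

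Next I would bound the displacement $\|q_t - \mathbf q\| \le \int_0^t \|p_u\|\,\d u \le \int_0^t s^\star_u \,\d u$. Since $\dot r^\star_u = s^\star_u$, this integral is simply $r^\star_t - r^\star_0 = k_1(e^{\sqrt C t} - 1) + k_2(e^{-\sqrt C t} - 1)$. The remaining work is purely algebraic: substitute the values of $k_1, k_2$, put everything over the common denominator, factor out $e^{-\sqrt C t}(e^{\sqrt C t} - 1)$, and check that the result matches the stated expression
\[
\frac{1}{2C}\left( e^{-\sqrt{C}t}(e^{\sqrt{C}t} - 1)\bigl(\sqrt{C}\|\mathbf p\|(e^{\sqrt C t}+1) + C\|\mathbf q\|(e^{\sqrt C t}-1)\bigr)\right).
\]
Indeed $k_1(e^{\sqrt C t}-1) + k_2(e^{-\sqrt C t}-1) = \tfrac12\|\mathbf q\|(e^{\sqrt C t} + e^{-\sqrt C t} - 2) + \tfrac{1}{2\sqrt C}\|\mathbf p\|(e^{\sqrt C t} - e^{-\sqrt C t})$, and multiplying top and bottom by $e^{\sqrt C t}$ inside reveals the factor $e^{-\sqrt C t}(e^{\sqrt C t}-1)$ in both terms, giving the claimed formula after collecting.

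There is essentially no hard obstacle here; the only mild subtlety is making the ODE-comparison step fully rigorous. Specifically I should note that $\frac{\d}{\d t}\|q_t\| \le \|\dot q_t\| = \|p_t\|$ and $\frac{\d}{\d t}\|p_t\| \le \|\dot p_t\| = \|U'(q_t)\|$ hold in the weak/one-sided sense wherever the norms are differentiable (and the norms are Lipschitz in $t$, so this is enough), and that the comparison vector field has monotone components so that condition (2) of Lemma \ref{LemmaOdeComp} is satisfied — exactly the same bookkeeping already carried out in the proof of Lemma \ref{thm:bounds}. After that, it is just the algebraic simplification above, which I would present in two or three lines.
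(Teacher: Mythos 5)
Your proof is correct, and it reaches the stated bound by a genuinely different route from the paper's. The paper compares $\|q_t-\mathbf q\|$ directly against the solution of the inhomogeneous second-order ODE $f''=Cf+C\|\mathbf q\|$ with $f(0)=0$, $f'(0)=\|\mathbf p\|$, using the chain
\be
\frac{d^2}{dt^2}\|q_t-q_0\|\le \|U'(q_t)\|\le C\|q_t\|\le C\|q_t-q_0\|+C\|q_0\|.
\ee
You instead compare the pair $(\|q_t\|,\|p_t\|)$ against the homogeneous linear system $\dot r^\star=s^\star$, $\dot s^\star=Cr^\star$ (initial data $\|\mathbf q\|$, $\|\mathbf p\|$), so $\ddot r^\star=Cr^\star$, and then bound the displacement by $\|q_t-\mathbf q\|\le\int_0^t\|p_u\|\,\d u\le\int_0^t s^\star_u\,\d u=r^\star_t-r^\star_0$. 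Writing the paper's $f$ as $A e^{\sqrt C t}+Be^{-\sqrt C t}-\|\mathbf q\|$ shows $f(t)=r^\star_t-\|\mathbf q\|$, so the two bounds coincide; your algebra confirming this is right. One thing your route buys you: it only needs the elementary one-sided estimates $\tfrac{\d}{\d t}\|q_t\|\le\|p_t\|$ and $\tfrac{\d}{\d t}\|p_t\|\le\|U'(q_t)\|$, together with integrating the first-order bound on $\|p_t\|$, whereas the paper's step $\tfrac{d^2}{dt^2}\|q_t-q_0\|\le\|U'(q_t)\|$ is a second-derivative estimate on a norm that is not as immediate to justify (the straightforward computation gives an extra nonnegative curvature term $\bigl(\|\dot q_t\|^2\|q_t-q_0\|^2-\langle\dot q_t,q_t-q_0\rangle^2\bigr)/\|q_t-q_0\|^3$ that must be handled). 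So your version is if anything slightly cleaner to make fully rigorous, though both yield the identical closed form.
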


\begin{proof}

Note that
\be 
f(t) \equiv \frac{1}{2C} \left( e^{-\sqrt{C}t} (e^{\sqrt{C}t} - 1)(\sqrt{C} \|\mathbf{p}\| (e^{\sqrt{C} t} + 1) + C \|\mathbf{q}\|(e^{\sqrt{C}t} - 1)) \right)
\ee 
is a solution to the system of equations 
\be 
f(0) &= 0 \\
f'(0) &= \|\mathbf{p}\| \\
f''(t) &= C f(t) + C \|\mathbf{q}\|, \qquad t \geq 0.
\ee

By Hamilton's equations \eqref{EqHamiltonEquations} and Inequality \eqref{LemmaSmallDisplacementAssump},
\be 
\frac{d^{2}}{dt^{2}} \| q_t - q_0 \| &\leq \| U'(q_t) \| \\
&\leq C \| q_t \| \\
&\leq C \| q_t - q_0 \| + C \| q_0 \|.
\ee 
We also have 
\be 
\frac{d}{dt} \bigg|_{t=0} \| q_t - q_0 \| &\leq \| p_0 \|.
\ee 
By Lemma \ref{LemmaOdeComp}, this implies 
\be 
\| q_t - q_0\| &\leq f(t) \\
&=  \frac{1}{2C} \left( e^{-\sqrt{C}t} (e^{\sqrt{C}t} - 1)(\sqrt{C} \|\mathbf{p}\| (e^{\sqrt{C} t} + 1) + C \|\mathbf{q}\|(e^{\sqrt{C}t} - 1)) \right)
\ee 
for all $t \geq 0$. This completes the proof.
\end{proof}

The following lemma gives us the main bounds in the proof of  Theorem \ref{ThmDriftHMC}:

\begin{lemma} [Lyapunov Function for Hamiltonian Dynamics: Gaussian-Like Tails] \label{LemmaLyapunovHamDynBasic}
Let $U$ satisfy Assumption \ref{AssumptionsConvexity} with $\mathcal{X} = \mathbb{R}^{d}$. Fix an initial position $(\mathbf{q},\mathbf{p}) \in \mathbb{R}^{2d}$ that satisfies
\be \label{IneqLyapMainLemmDistAssumption}
\frac{1}{\sqrt{2 m_{2}}} \|\mathbf{p}\| -  \frac{m_{2}^{2}}{512 \, M_{2}^{2}} \| \mathbf{q} \| \leq -1
\ee  
and let $(q_t,p_t)_{t \geq 0}$ be a solution to Equation \eqref{EqHamiltonEquations} with initial conditions $q_0 = \mathbf{q}$, $p_0 = \mathbf{p}$. Then for $T = \frac{1}{2\sqrt{2}} \frac{\sqrt{m_{2}}}{M_{2}}$,
\be \label{IneqLyapPreLemmaConc1}
e^{ \| q_T \|} \leq e^{-1} \, e^{\| \mathbf{q} \|}
\ee 
and also
\be \label{IneqLyapPreLemmaConc2}
\inf_{0 \leq s \leq t} \| q_s \| \geq  \frac{\|\mathbf{q}\|}{2} - \frac{\|\mathbf{p}\|}{2 \sqrt{M_{2}}}.
\ee 
\end{lemma}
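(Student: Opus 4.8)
I want to prove Lemma \ref{LemmaLyapunovHamDynBasic}, which says: under Assumption \ref{AssumptionsConvexity} (with $\mathcal{X}=\mathbb{R}^d$), if the initial point $(\mathbf{q},\mathbf{p})$ satisfies $\frac{1}{\sqrt{2m_2}}\|\mathbf{p}\| - \frac{m_2^2}{512 M_2^2}\|\mathbf{q}\| \le -1$, then for $T = \frac{1}{2\sqrt 2}\frac{\sqrt{m_2}}{M_2}$ we have $e^{\|q_T\|} \le e^{-1}e^{\|\mathbf{q}\|}$ and $\inf_{0\le s\le t}\|q_s\| \ge \frac{\|\mathbf{q}\|}{2} - \frac{\|\mathbf{p}\|}{2\sqrt{M_2}}$.

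The strategy is to compare the HMC trajectory started at $(\mathbf{q},\mathbf{p})$ with the trajectory started at $(\mathbf{q},0)$ — i.e. same position, zero momentum — and exploit the contraction results already proved. Write $(q^{(1)}_t,p^{(1)}_t)$ for the trajectory with initial data $(\mathbf{q},0)$ and $(q^{(2)}_t,p^{(2)}_t) = (q_t,p_t)$ for the trajectory with initial data $(\mathbf{q},\mathbf{p})$. Then $\tilde q_0 = 0$ and $\tilde p_0 = \mathbf{p}$, so by part (2) of Lemma \ref{thm:bounds} (or the upper bound in part (1)) the discrepancy $\hat q_t = \|q_t - q^{(1)}_t\|$ stays of order $\|\mathbf{p}\|T \lesssim \|\mathbf{p}\|/\sqrt{M_2}$ throughout $[0,T]$; quantitatively, $\hat q_t \le \frac{\|\mathbf{p}\|}{\sqrt{M_2}}\sinh(T\sqrt{M_2}) \le \frac{\|\mathbf{p}\|}{\sqrt{M_2}}$ using $T\sqrt{M_2}\le \frac{1}{2\sqrt 2}$ and $\sinh(x)\le 2x$ on that range. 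Meanwhile, for the zero-momentum trajectory, Theorem \ref{ThmContractionConvexMainResult} applied with the two trajectories $(\mathbf{q},0)$ and $(0,0)$ (the latter being the fixed point at the minimizer, since $U$ has minimizer at $0$) gives $\|q^{(1)}_T\| \le (1 - \frac{1}{64}(m_2/M_2)^2)\|\mathbf{q}\|$; more precisely I need the intermediate bound that $\|q^{(1)}_s\|$ is squeezed between $\|\mathbf{q}\|\psi(s)$ and $\|\mathbf{q}\|\Psi_T(s)$ via Lemma \ref{lemma:contraction}. Combining via the triangle inequality: $\|q_T\| \le \|q^{(1)}_T\| + \hat q_T \le (1-\frac{1}{64}(m_2/M_2)^2)\|\mathbf{q}\| + \frac{\|\mathbf{p}\|}{\sqrt{M_2}}$, and $\|q_s\| \ge \|q^{(1)}_s\| - \hat q_s \ge \frac{3}{4}\|\mathbf{q}\| - \frac{\|\mathbf{p}\|}{\sqrt{M_2}}$ for $s\in[0,T]$ (using $\psi(s)\ge 3/4$ from Inequality \eqref{eq:b2'}).

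For \eqref{IneqLyapPreLemmaConc2}, the bound $\inf_{0\le s\le T}\|q_s\| \ge \frac34\|\mathbf q\| - \frac{\|\mathbf p\|}{\sqrt{M_2}} \ge \frac{\|\mathbf q\|}{2} - \frac{\|\mathbf p\|}{2\sqrt{M_2}}$ is immediate once I note $\frac34 \ge \frac12$ and $\frac{1}{\sqrt{M_2}} \le$ a multiple of $\frac{1}{\sqrt{M_2}}$ with the right constant — indeed $\frac34\|\mathbf q\| - \frac{\|\mathbf p\|}{\sqrt{M_2}} \ge \frac12\|\mathbf q\| - \frac{1}{2\sqrt{M_2}}\|\mathbf p\|$ holds since $\frac14\|\mathbf q\| \ge \frac{1}{2\sqrt{M_2}}\|\mathbf p\|$ would be needed — so I should be a little careful: actually $\frac34\|\mathbf q\| - \frac{\|\mathbf p\|}{\sqrt{M_2}} \ge \frac12\|\mathbf q\| - \frac{1}{2\sqrt{M_2}}\|\mathbf p\|$ is equivalent to $\frac14\|\mathbf q\| \ge \frac{1}{2\sqrt{M_2}}\|\mathbf p\|$, which is NOT automatic, so instead I simply weaken to the cleaner statement that $\inf_{0\le s\le t}\|q_s\|$ over $[0,t]$ — but wait, the claimed bound is over $[0,t]$ for arbitrary $t$, whereas my squeeze only works on $[0,T]$. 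I suspect the intended reading is $\inf_{0\le s\le T}$ or that $t\le T$ in context; I would state and prove it for $s\in[0,T]$, which is what is used downstream, and note that $\frac34\|\mathbf q\| - \frac{\|\mathbf p\|}{\sqrt{M_2}} \ge \frac{\|\mathbf q\|}{2} - \frac{\|\mathbf p\|}{2\sqrt{M_2}}$ follows from $\frac14\|\mathbf q\| \ge -\frac{1}{2\sqrt{M_2}}\|\mathbf p\|$, which is trivially true since the left side is nonnegative and right side nonpositive. That resolves it cleanly.

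For \eqref{IneqLyapPreLemmaConc1}: from $\|q_T\| \le (1-\frac{1}{64}(m_2/M_2)^2)\|\mathbf q\| + \frac{\|\mathbf p\|}{\sqrt{M_2}}$ I want $\|q_T\| \le \|\mathbf q\| - 1$, i.e. $\frac{\|\mathbf p\|}{\sqrt{M_2}} \le \frac{1}{64}(m_2/M_2)^2\|\mathbf q\| - 1$. Rewriting $\frac{\|\mathbf p\|}{\sqrt{M_2}} = \sqrt{\frac{2m_2}{M_2}}\cdot\frac{\|\mathbf p\|}{\sqrt{2m_2}} \le \frac{\|\mathbf p\|}{\sqrt{2m_2}}$ (since $M_2\ge 2m_2$? — no, $M_2\ge m_2$ so $\sqrt{2m_2/M_2}$ need not be $\le 1$; I must track this constant carefully). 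The hypothesis \eqref{IneqLyapMainLemmDistAssumption} is exactly $\frac{\|\mathbf p\|}{\sqrt{2m_2}} \le \frac{m_2^2}{512 M_2^2}\|\mathbf q\| - 1$, so I need to convert $\frac{\|\mathbf p\|}{\sqrt{M_2}}$ to $\frac{\|\mathbf p\|}{\sqrt{2m_2}}$; since $m_2\le M_2$, $\frac{1}{\sqrt{M_2}} \le \frac{1}{\sqrt{m_2}} = \sqrt 2\cdot\frac{1}{\sqrt{2m_2}}$, giving $\frac{\|\mathbf p\|}{\sqrt{M_2}} \le \sqrt 2\,(\frac{m_2^2}{512 M_2^2}\|\mathbf q\| - 1) \le \frac{m_2^2}{256M_2^2}\|\mathbf q\| - 1$, which is $\le \frac{1}{64}(m_2/M_2)^2\|\mathbf q\| - 1$ since $\frac{1}{256}\le\frac{1}{64}$. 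Then $\|q_T\| \le \|\mathbf q\| - 1$, hence $e^{\|q_T\|} \le e^{-1}e^{\|\mathbf q\|}$. The main obstacle is bookkeeping the constants — in particular making sure the $\sinh$ estimate for $\hat q_T$ and the conversion between the $\frac{1}{\sqrt{2m_2}}$ and $\frac{1}{\sqrt{M_2}}$ normalizations leave enough room under the $512$ in the hypothesis; I expect there is comfortable slack but it must be checked line by line. I would also double check that Theorem \ref{ThmContractionConvexMainResult} / Lemma \ref{lemma:contraction} genuinely applies to the pair $(\mathbf q,0)$, $(0,0)$ — it does, since $0$ is a fixed point of Hamilton's equations (as $U'(0)=0$ at the minimizer) and the two initial momenta are both $0$, so $\tilde p_0 = 0$ as required by the hypothesis of Lemma \ref{lemma:contraction}.
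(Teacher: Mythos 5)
Your approach is genuinely different from the paper's. You compare the trajectory started at $(\mathbf{q},\mathbf{p})$ with the one started at $(\mathbf{q},0)$ (same position, zero momentum), and use (a) the contraction Lemma \ref{lemma:contraction} for the zero-momentum trajectory against the fixed point $(0,0)$, plus (b) the perturbation bound in part (1) of Lemma \ref{thm:bounds} to control the discrepancy. The paper instead compares with the trajectory started at $(0,\mathbf{p})$ (origin, same momentum): conservation of energy then bounds $\|\alpha(T)\|\le\frac{1}{\sqrt{2m_2}}\|\mathbf{p}\|$, and Theorem \ref{ThmContractionConvexMainResult} (same momenta, $\tilde p_0=0$) bounds the distance to $q_T$. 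Both decompositions are valid for \eqref{IneqLyapPreLemmaConc1}, and your constant bookkeeping there checks out; the slack under the $512$ is ample. You are also right that the $\inf_{0\le s\le t}$ in \eqref{IneqLyapPreLemmaConc2} should be read with $t=T$ (the paper uses the infimum only over $[0,T]$).

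However, your proof of \eqref{IneqLyapPreLemmaConc2} has a genuine gap, created by a sign error in your final step. You correctly observe that $\frac34\|\mathbf q\| - \frac{\|\mathbf p\|}{\sqrt{M_2}} \ge \frac12\|\mathbf q\| - \frac{1}{2\sqrt{M_2}}\|\mathbf p\|$ is equivalent to $\frac14\|\mathbf q\| \ge \frac{1}{2\sqrt{M_2}}\|\mathbf p\|$, and correctly note that this is not automatic. But then you assert that the needed inequality is $\frac14\|\mathbf q\| \ge -\frac{1}{2\sqrt{M_2}}\|\mathbf p\|$ and call it trivially true — that is a sign flip: the correct RHS is $+\frac{1}{2\sqrt{M_2}}\|\mathbf p\|$, which is nonnegative, so the sign argument fails. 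The gap is repairable: under hypothesis \eqref{IneqLyapMainLemmDistAssumption} one has $\|\mathbf p\|\le\sqrt{2m_2}\cdot\frac{m_2^2}{512 M_2^2}\|\mathbf q\|$, whence $\frac{1}{2\sqrt{M_2}}\|\mathbf p\|\le\frac{1}{512}\|\mathbf q\|\le\frac14\|\mathbf q\|$, closing the step. Note though that this makes your proof of \eqref{IneqLyapPreLemmaConc2} depend on \eqref{IneqLyapMainLemmDistAssumption}, whereas the paper proves \eqref{IneqLyapPreLemmaConc2} unconditionally and more directly via Lemma \ref{LemmaSmallDisplacement} (a crude displacement bound on Hamiltonian trajectories), without invoking the hypothesis or the contraction machinery at all. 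For that part of the lemma, the paper's route is cleaner and gives the claimed constants exactly; your route works but only after injecting the hypothesis to bridge from $\frac34\|\mathbf q\|-\frac{\|\mathbf p\|}{\sqrt{M_2}}$ to the stated form.
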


\begin{proof}
Let $(\alpha(t), \beta(t))_{t \geq 0}$ be a solution to Equation \eqref{EqHamiltonEquations} with initial conditions $\alpha(0) = 0$, $\beta(0) = \mathbf{p}$. By Theorem \ref{ThmContractionConvexMainResult}, 

\be \label{IneqDriftInitialContraction}
\| \alpha(T) - q_T \| &\leq \left[1- \frac{1}{64} (\sqrt{m}_{2} T)^2 \right] \times \| \alpha(0) - \mathbf{q} \| \\
&= (1 - \frac{1}{64} m_{2} T^{2}) \|  \mathbf{q} \|.
\ee 

By conservation of energy for Hamilton's equations,
\be 
\frac{1}{2} \| \beta(T) \|^{2} + U(\alpha(T)) =\frac{1}{2} \| \beta(0) \|^{2} + U(\alpha(0)) = \frac{1}{2} \| \beta(0) \|^{2},
\ee 
so
\be \label{IneqDriftTrivialEnergy}
 U(\alpha(T)) \leq \frac{1}{2} \| \beta(0) \|^{2}.
\ee 
By our assumptions, for $q \in \mathbb{R}^{d}$ we have 
\be 
U(q) \geq m_{2} \|q \|^{2}.
\ee 
Combining this with Inequality \eqref{IneqDriftTrivialEnergy}, we have 
\be 
m_{2} \| \alpha(T) \|^{2} \leq \frac{1}{2} \| \beta(0) \|^{2},
\ee  
so that
\be \label{IneqSimpleEnergyBound}
\| \alpha(T) \| \leq \frac{1}{\sqrt{2 m_{2}}} \| \beta(0) \|.
\ee 
Combining this with Inequality \eqref{IneqDriftInitialContraction} and then the assumption \eqref{IneqLyapMainLemmDistAssumption}, 
\be 
\| q_T \| &\leq \| \alpha(T) \| + \| \alpha(T) - q_T \| \\
&\leq \frac{1}{\sqrt{2 m_{2}}} \| \mathbf{p} \| + (1 - \frac{1}{64} m_{2} T^{2}) \| \mathbf{q} \| \\
&= \frac{1}{\sqrt{2 m_{2}}} \| \mathbf{p} \| + (1 - \frac{m_{2}^{2}}{512\, M_{2}^{2}}) \| \mathbf{q} \| \\
&\leq \| \mathbf{q} \| - 1.
\ee 
We conclude that 
\be 
e^{ \| q_T \|} \leq e^{-1} e^{ \|q_0\|},
\ee 
completing our proof of Inequality \eqref{IneqLyapPreLemmaConc1}.

We prove Inequality \eqref{IneqLyapPreLemmaConc2} by an application of Lemma \ref{LemmaSmallDisplacement}, which gives 
\be 
\| q_s \| &\geq \| q_0 \| - \| q_0 - q_s \| \\
&\geq \| \mathbf{q} \| -  \frac{1}{2M_{2}} \left( e^{-\sqrt{M_{2}}s} (e^{\sqrt{M_{2}}s} - 1)(\sqrt{M_{2}} \|\mathbf{p}\| (e^{\sqrt{M_{2}} s} + 1) + M_{2} \|\mathbf{q}\|(e^{\sqrt{M_{2}}s} - 1)) \right) \\
&\geq \|\mathbf{q}\|(1 - \frac{1}{2}( e^{\sqrt{M_{2}}s} - 1)) - \frac{\|\mathbf{p}\|}{2 \sqrt{M_{2}}} (e^{\sqrt{M_{2}}s}- e^{-\sqrt{M_{2}}s}) 
\ee 
for all $s \geq 0$. We note that the RHS is monotone increasing in $s$. Thus, setting $s = T$ and using the bounds $0 \leq \frac{m_{2}}{M_{2}} \leq 1$ and $\sqrt{M_{2}} T \leq \frac{1}{2 \sqrt{2}}$,

\be 
\| q_T \| &\geq \|\mathbf{q}\|(1 - \frac{1}{2}( e^{\sqrt{M_{2}}T} - 1)) - \frac{\|\mathbf{p}\|}{2 \sqrt{M_{2}}} (e^{\sqrt{M_{2}}T}- e^{-\sqrt{M_{2}}T})  \\
&\geq \|\mathbf{q}\|(1 - \frac{1}{2}( e^{\frac{1}{2 \sqrt{2}}} - 1)) - \frac{\|\mathbf{p}\|}{2 \sqrt{M_{2}}} (e^{\frac{1}{2 \sqrt{2}}}- e^{-\frac{1}{2 \sqrt{2}}}) \\
&\geq \frac{\|\mathbf{q}\|}{2} - \frac{\|\mathbf{p}\|}{2 \sqrt{M_{2}}},
\ee

completing the proof.

\end{proof}

We apply this lemma to prove Theorem \ref{ThmDriftHMC}:

\begin{proof} [Proof of Theorem \ref{ThmDriftHMC}]

For $\mathbf{q} \in \mathbb{R}^{d}$, define the associated ``good" set to be 
\be 
\mathcal{G}(\mathbf{q}) = \{ \mathbf{p} \in \mathbb{R}^{d} \, : \, \| \mathbf{p} \| \leq \min ( c_{1} \| \mathbf{q}\| - c_{3} , c_{2} \|\mathbf{q}\| - c_{4} ) \},
\ee 
where
\be 
c_{1} = \frac{\sqrt{2} m_{2}^{2.5}}{256 M_{2}^{2}}, \, c_{2} =  2 \max(1,\sqrt{M_{2}}) , \,c_{3} = \sqrt{2m_{2}}, c_{4}= 2 C \sqrt{M_{2}}.
\ee 
Roughly speaking, the condition $\mathbf{q} \in \mathcal{G}(\mathbf{q})$ guarantees that the solution to Hamilton's equations $(q_{t},p_{t})_{t=0}^{T}$ with initial conditions $(\mathbf{q},\mathbf{p})$ 
\begin{itemize}
\item will stay outside of the set $\{x \, : \, V(x) \leq C \}$ (by Inequality \eqref{IneqLyapPreLemmaConc2} of Lemma \ref{LemmaLyapunovHamDynBasic}), \textit{and}
\item will drift towards the origin (by Inequality \eqref{IneqLyapPreLemmaConc1} of Lemma \ref{LemmaLyapunovHamDynBasic}).
\end{itemize}

We now make this precise. Fix $X_{0} = x \in \mathbb{R}^{d}$ as in the statement of the theorem, let $p \sim \Phi_{1}$, and set $X_{1} = \mathcal{Q}_{T}^{(X_{0})}(p)$; note that $X_{1}$ has the distribution required by the statement of the theorem.  By Lemma \ref{LemmaLyapunovHamDynBasic}, we have
\be \label{IneqContractionRecall1}
e^{\|X_{1}\|} \, \mathbbm{1}_{p \in \mathcal{G}(X_{0})} \leq e^{-1} e^{\|X_{0}\|}.
\ee 

Mimicking the calculation leading to Inequality \eqref{IneqSimpleEnergyBound}, we also have for \textit{any} initial position and velocity the deterministic inequality
\be 
m_{2} \| X_{1} \|^{2} \leq M_{2} \|X_{0}\|^{2} + \frac{1}{2} \| p \|^{2},
\ee 
so that 
\be \label{IneqContractionRecall2}
\|X_{1}\| \leq \sqrt{\frac{M_{2}}{m_{2}}} \|X_{0}\| + \frac{1}{\sqrt{2 m_{2}}} \|p \|.
\ee 

Combining Inequalities \eqref{IneqContractionRecall1} and \eqref{IneqContractionRecall2},

\be 
\E[e^{\|X_{1}\|} | X_{0}] &= \E[e^{\|X_{1}\|} \, \mathbbm{1}_{p \in \mathcal{G}(X_{0})}| X_{0}] + \E[e^{\|X_{1}\|} \, \mathbbm{1}_{p \notin \mathcal{G}(X_{0})}| X_{0}] \\
&\leq e^{-1} e^{\|X_{0}\|} + \E[e^{\|X_{1}\|} \, \mathbbm{1}_{p \notin \mathcal{G}(X_{0})}| X_{0}] \\
&\leq  e^{-1} e^{\|X_{0}\|} + e^{\frac{\sqrt{M_{2}}}{m_{2}} \| X_{0}\| } \int_{p \notin \mathcal{G}(X_{0})} e^{\frac{1}{\sqrt{2 m_{2}}} \| p\|} d \Phi_{1}(p). \\
\ee 

Defining $A = \sup_{x \in \mathbb{R}^{d}} e^{\frac{\sqrt{M_{2}}}{m_{2}} \|x\|} \int_{p \notin \mathcal{G}(x)} e^{\frac{1}{\sqrt{2 m_{2}}} \| p\|} d \Phi_{1}(p) < \infty$, this implies 

\be 
\E[e^{\|X_{1}\|} | X_{0}] \leq  e^{-1} e^{\|X_{0}\|} + A.
\ee 

This completes the proof of Inequality \eqref{IneqDriftConcMain} with no bound on $A$. We will now bound $A$.

For $R \geq 2 \ell$, let $X \sim \Phi_{1}$. We then have the bound
\be \label{SillyConstBound}
\int_{\|p \| > R} e^{\ell \|p \|} d \Phi_{1}(p) &= \frac{1}{\sqrt{2 \pi}} \int_{\|x \| > R} e^{\ell \|x \|} e^{-\frac{\|x\|^{2}}{2}} dx \\
&\leq  \frac{1}{\sqrt{2 \pi}} \int_{\|x \| > R} e^{\ell \|x \|} e^{-\frac{(\|x - 2 \ell\| + \ell)^{2}}{2}} dx \\
&= \frac{1}{\sqrt{2 \pi}} \int_{\|x \| > R} e^{2 \ell^{2}} e^{-\frac{\|x - 2 \ell \|^{2}}{2}} dx \\
&= e^{2 \ell^{2}} \P[X > R - 2 \ell] \\
&\leq 2 e^{2 \ell^{2}} e^{-\frac{(R-2 \ell)^{2}}{2d}}.
\ee

For all $\ell, R$ (and in particular for $R < 2 \ell$), we have the trivial bound 
\be \label{SillyConstBound2}
\int_{\|p \| > R} e^{\ell \|p \|} d \Phi_{1}(p) &\leq \int e^{\ell \|p \|} d \Phi_{1}(p) \\
&= e^{d \,\frac{\ell^{2}}{2}}.
\ee

Thus, defining 

\be 
\log(B_{1}(x)) &= \log(2) + \frac{\sqrt{M_{2}}}{m_{2}}x +  \frac{1 }{m_{2}} - \frac{1}{2d} \left( c_{1}x - c_{3} - \sqrt{\frac{2}{m_{2}}} \right)^{2} \\
\log(B_{2}(x)) &= \log(2) + \frac{\sqrt{M_{2}}}{m_{2}}  x + \frac{1 }{m_{2}} -\frac{1}{2d} \left( c_{2} x - c_{4} - \sqrt{\frac{2}{m_{2}}} \right)^{2} \\
\log(B_{3}) &= \frac{\sqrt{M_{2}}}{m_{2} c_{1}} \left( c_{3} + \sqrt{\frac{2}{m_{2}}} \right)  + \frac{d}{4 m_{2}} \\
\log(B_{4}) &= \frac{\sqrt{M_{2}}}{m_{2} c_{2}} \left( c_{4} + \sqrt{\frac{2}{m_{2}}} \right) + \frac{d}{4 m_{2}} \\
\ee 
for $x \in \mathbb{R}^{+}$, we have by Inequalities \eqref{SillyConstBound} and \eqref{SillyConstBound2}
\be \label{IneqImmA}
A &=  \sup_{x \in \mathbb{R}^{d}} e^{\frac{\sqrt{M_{2}}}{m_{2}} \| x\| } \int_{p \notin \mathcal{G}(x)} e^{\frac{1}{\sqrt{2 m_{2}}} \| p\|} d \Phi_{V}(p) \\
&\leq \max(\sup_{x \in \mathbb{R}^{+}} \max( B_{1}(x),B_{2}(x)), B_{3}, B_{4}).
\ee 
Optimizing by hand,  

\be 
\sup_{x \in \mathbb{R}^{+}}  \log(B_{1}(x)) \leq \log(2) + \frac{1}{m_{2}} - \frac{1}{2d}(c_{3} + \sqrt{\frac{2}{m_{2}}})^{2} - \frac{ (\frac{\sqrt{M_{2}}}{m_{2}} + \frac{1}{d}c_{1}(c_{3} - d \sqrt{\frac{2}{m_{2}}}))^{2}}{ c_{1}^{2}} 
\ee 
and 
\be 
\sup_{x \in \mathbb{R}^{+}}  \log(B_{2}(x)) \leq \log(2) + \frac{1}{m_{2}} - \frac{1}{2d}(c_{4} + \sqrt{\frac{2}{m_{2}}})^{2} - \frac{ (\frac{\sqrt{M_{2}}}{m_{2}} + \frac{1}{d}c_{2}(c_{4} - d \sqrt{\frac{2}{m_{2}}}))^{2}}{ c_{2}^{2}}.
\ee 
Combining this with Inequality \eqref{IneqImmA} and expanding the definition of these constants (and using the relationships $0 < m_{2} \leq M_{2} < \infty$ and $d \geq 1$ to remove some terms that cannot possibly be the largest) completes the proof of Inequality \eqref{IneqDriftConcConst}.
\end{proof}

\section{Proofs of Main Results for Approximate HMC Dynamics} \label{AppProofsMainApproxRes}

In this section, we prove the remainder of the main results: Theorems \ref{ThmMainApprox}, \ref{ThmMainApprox_leapfrog_unadjusted}, and \ref{ThmMainApprox_leapfrog2}. The proofs in this section are short, and all of them essentially use the same strategy: with appropriately chosen parameters, numerical implementations of HMC inherit some of the good mixing properties of the ``ideal" HMC algorithm. Although the proofs are short, they rely crucially on the definitions and estimates obtained in Appendices \ref{AppHigherOrder}, \ref{SecAppendixMixApprox} and \ref{SecMH}. We suggest that all three of those appendices should be read before this, but give a quick guide to the most relevant material from those appendices:

\begin{itemize}
\item Appendix  \ref{AppHigherOrder} is very short. The critical content is the definition of the ``toy" integrator (Definition \ref{defn:toy}) and the bound in Lemma \ref{thm:leapfrog5} on the error of this integrator.
\item Appendix \ref{SecAppendixMixApprox} is quite lengthy and contains many generic MCMC bounds. These bounds are the core of the proofs of our main results; this appendix largely explains how to ``glue them together." Having said that, most of this appendix can be skipped on a first reading. We suggest that the reader review the introduction of Appendix \ref{AppSubAppBoundUnad} and the statements of Lemmas \ref{ThmGenericApproxErrorBound3} (which gives a drift condition for generic approximate HMC algorithms) and Lemma \ref{thm:unadjusted_wass2} (which gives a Wasserstein mixing bound for generic approximate HMC algorithms). Read the notation guide at the start of Appendix \ref{AppSubAppBoundUnad} before reading the lemmas.
\item Appendix \ref{SecMH} is of medium length and contains many specific bounds related to Algorithms \ref{alg:Unadjusted} and \ref{alg:Metropolis}; these bounds will allow us to apply the results of Appendix \ref{SecAppendixMixApprox}. We suggest that the reader review the introduction of Appendix \ref{SecMH} and the statements of Lemmas \ref{thm:MH2'}, \ref{LemmaMH5} and \ref{thm:badset} of Appendix \ref{SecMH} before continuing. Read the notation guide at the start of Appendix \ref{SecMH} before reading the lemmas.
\end{itemize}

We set notation that will be used throughout this appendix. Set $\Gamma = \Gamma(\cdot, \cdot)$ as in Equation \eqref{EqDefGammaFunc} and $\mathsf{K}'''$ as in Equation \eqref{DefKTriple}. Fix constants satisfying:

\be 
T &\leq \frac{1}{2 \sqrt{2}} \frac{\sqrt{m_{2}}}{M_{2}} \\
\kappa &=  \frac{1}{8} (\sqrt{m}_2T)^2\\
\mathsf{A} &= \max(\mathsf{K}', \mathsf{K}''', M_2).\\
\ee 

Also define the following functions:

\be 
\lambda_{n} & =  \frac{\kappa}{64} \sqrt{\frac{m_{2}}{n}}  \\
s_{n,\epsilon'} &= \frac{1}{\kappa}\log\left(\frac{24\log(15000 \, \Gamma(\kappa, n) \,  \kappa^{-2}) +24(\lambda_{n} \epsilon') \,\log(\kappa^{-1}) }{(\lambda_{n} \epsilon')^{2}} \right)\\
C_{n,\epsilon'} &= ( \frac{15000 \,(s_{n,\epsilon'}+1) \, \Gamma(\kappa, n) }{  \lambda_{n} \, \epsilon'  \, \kappa^{2}} +4)^2\\
\mathsf{g}_\infty^{(1)}(n) &=  \lambda_{\mathsf{m}}^{-1} \log\left(2048 \frac{n}{\mathsf{m}}  \frac{\log(\frac{1}{\epsilon'''})}{\epsilon'''}  \frac{\mathcal{I} \, \Gamma(\kappa,\mathsf{m})}{  \kappa^{2}}\right) \\
\mathsf{g}_\infty^{(2)}(n) &= \sqrt{2\mathsf{m}+ 8\log \left(\frac{\mathcal{I} n}{\mathsf{m}}  \frac{\log(\frac{1}{\epsilon'''})}{\epsilon'''} \right)} \\
\mathsf{g}_\infty^{(3)} & =  \frac{1}{\lambda_{\mathsf{m}}} (\frac{2 - \kappa}{\kappa}\log(2) -\frac{7}{8}) + \frac{1}{\lambda_{\mathsf{m}}}\log(\frac{6144}{\kappa^{2}} \frac{\mathcal{I} d}{\mathsf{m}\epsilon})\\
\mathsf{g}_2(n) &=  \sqrt{n-2\sqrt{n}\log^{\frac{1}{2}} \left(\frac{\mathcal{I}\log(\frac{1}{\epsilon'''})}{\epsilon'''} \right)},\\
\ee 

where the domains of these functions are: 

\be 
\epsilon''' >0, \quad n \in \{d, \mathsf{m}\}, \quad \epsilon' > 0.
\ee

\begin{proof} [Proof of Theorem \ref{ThmMainApprox}]

Set notation as in Lemma \ref{thm:unadjusted_wass2} and let
\be 
\theta_{0} = \frac{\kappa \epsilon \sqrt{m_2}}{18T M_2(\sqrt{M_2} \lambda_{d}^{-1}\log(C) + \frac{1}{\sqrt{2}}\sqrt{d})}.
\ee 
This theorem is an immediate consequence of Lemma \ref{thm:unadjusted_wass2}, if we set the value of the variable ``$\mathsf{A}$" in the statement of Lemma \ref{thm:unadjusted_wass2} to be equal to $M_2$.

\end{proof}

\begin{proof} [Proof of Theorem \ref{ThmMainApprox_leapfrog_unadjusted}]

Fix $\mathcal{I}\in \mathbb{N}$. We fix the constants

\be
\epsilon''' &= \frac{\mathsf{m}}{3d}\epsilon, \quad \epsilon' = \epsilon^{2} \sqrt{\frac{\mathsf{m}}{d}}, \quad s = s_{\mathsf{m},\epsilon'}, \quad C = C_{\mathsf{m},\epsilon'}, \\
\mathsf{g}_{\infty} &= \max(\mathsf{g}_{\infty}^{(1)}(\mathsf{m}),\mathsf{g}_{\infty}^{(2)}(\mathsf{m}), \mathsf{g}_{\infty}^{(3)}), \quad \mathsf{g}_{2} = \mathsf{g}_{2}(\mathsf{m}) \\
\theta_{0} &= \min( \frac{\kappa \sqrt{\frac{\mathsf{m}}{d}} \epsilon^2 \sqrt{m_2}}{18T \mathsf{A}(\sqrt{M_2} \lambda_{\mathsf{m}}^{-1}\log(C) + \frac{1}{\sqrt{2}}\sqrt{\mathsf{m}})} \, , \, \frac{1}{2}\epsilon^2 (\mathcal{I} \sqrt{\frac{d}{\mathsf{m}}} \mathsf{A})^{-1} ),\\
\ee

and also fix $0 < \theta < \theta_{0}$.

Let $\diamondsuit$ be the ``toy" numerical integrator defined in Equation \eqref{eq:toy} with these parameters and base numerical integrator $\sharp$, where we recall that $\sharp$ is the numerical integrator in the statement of Theorem \ref{ThmMainApprox_leapfrog_unadjusted}.  Let $\Lambda$ be the transition kernel defined by Algorithm \ref{alg:Unadjusted} with numerical integrator  $\diamondsuit$ and all other parameters as in the statement of Theorem \ref{ThmMainApprox_leapfrog_unadjusted}. Let $K$ be the transition kernel associated with Algorithm \ref{DefSimpleHMC} with parameters as in the statement of Theorem \ref{ThmMainApprox_leapfrog_unadjusted}.

We now define a coupling of four Markov chains  $\{X_{i}'\}_{i \geq 0} \sim \Lambda$, $\{\hat{X}_{i}'\}_{i \geq 0} \sim Q_{\theta}$ and $\{X_{i}\}_{i \geq 0}, \, \{Y_{i}\}_{i \geq 0} \sim K$. As mentioned immediately following Algorithm \ref{DefSimpleHMC}, all of the transition kernels given in this paper are stated in terms of a \textit{random mapping representation} of the associated HMC schemes. Thus, in order to define a coupling of these chains, it is enough to specify the following information:

\begin{enumerate}
\item the distribution of the starting points $X_{0}', \hat{X}_{0}', X_{0}, Y_{0}$, \textit{and}
\item a coupling of the sequence of random momentum variables $\{\mathbf{p}_{i}\}_{i \geq 0}$ chosen in Step 2 of each of these algorithms.
\end{enumerate}  

We make the following simple choice of coupling: 
\begin{enumerate}
\item We fix a single $x \in \mathbb{R}^{d}$ satisfying $\|x \| \leq \sqrt{\frac{d}{m_{2}}}$ and set $X_0 = X_{0}'= \hat{X}_{0}' = x.$ Then sample $Y_{0} \sim \pi$.
\item We draw a single i.i.d. sequence $\{\mathbf{p}_{i}\}_{i \geq 0}$ of standard Gaussian random variables, and use this single update sequence for all four Markov chains.
\end{enumerate}

For $1 \leq j \leq \frac{d}{m}$, let  $\mathfrak{t}_G^{(j)} = \inf \{ h \, : \, (X'^{(j)}_h, \mathbf{p}^{(j)}_{h}) \notin \mathsf{G}\}$ be the exit time of the ``toy" chain from the following ``good set" $\mathsf{G}$:
 \be
\mathsf{G}  = \{(q,p) \in \mathbb{R}^{\mathsf{m}}\times \mathbb{R}^{\mathsf{m}} :  \|q\|< \mathsf{g}_\infty, \|p\| < \mathsf{g}_\infty, ||p|| > \mathsf{g}_2 \}.
\ee
 
Notice that, if $\mathfrak{t}_{G}^{(j)} > \mathcal{I}$, then $X'^{(j)}_h  = \hat{X}'^{(j)}_h$ for all $h \leq \mathcal{I}$. Thus, if $\min_{1 \leq j \leq \frac{d}{m}}( \mathfrak{t}_{G}^{(j)}) > \mathcal{I}$, we have $X'_h = \hat{X}'_h$ for all $h \leq \mathcal{I}$. 
 
Fix $C' = (1+\frac{16}{\kappa}\log (1+e^{-\frac{\mathsf{m}}{8}})) \, \frac{8}{\kappa} \sqrt{\frac{\mathsf{m}}{m_{2}}}$, define $V(x) \equiv e^{\lambda_{\mathsf{m}} \|x\|}$, and let 

 \be 
\alpha &= 1-  (1+e^{-\frac{\mathsf{m}}{8}})e^{\lambda_{\mathsf{m}}\left(-\kappa + 6\theta \, T \, \frac{\mathsf{A}}{\sqrt{m_2}} \sqrt{M_2}\right)\, C' +  \lambda_{\mathsf{m}} \left(1 + 6\theta \, T \, \mathsf{A}\right) \, \sqrt{\frac{\mathsf{m}}{m_{2}}}}\\
\beta &=  e^{\lambda_{\mathsf{m}} C'} \, (1 - \alpha).
\ee

By Inequality \eqref{eq:w13}, 
\be \label{eq:prokhorov5}
\frac{\beta}{\alpha}  \leq \frac{1024}{\kappa^{2}} \, \Gamma(\kappa,d).
\ee

For every $j \in \{1, \ldots, \frac{d}{\mathsf{m}}\}$, let $\Lambda^{(j)}$ and $Q^{(j)}_{\theta}$ be the transition kernels of $\{X_{i}'^{(j)}\}_{i \geq 0}$ and $\{\hat{X}_{i}'^{(j)}\}_{i \geq 0}$ respectively. Let $\mu^{(j)}$ and $\hat{\mu}^{(j)}$ be their stationary distributions. By Inequality \eqref{eq:w6}, 
\be \label{IneqQuickLyap}
\Lambda^{(j)}(x,\cdot)[V] \leq (1 - \alpha) V(x) + \beta,
\ee
and by Inequality \eqref{IneqStartCondSuffCond},
\be \label{IneqQuickStart} 
V(X_{0}'^{(j)}) \leq V(X_{0}') \leq \frac{\beta}{\alpha}.
\ee

Inequalities \eqref{IneqQuickLyap} and \eqref{IneqQuickStart} allow us to apply the generic Inequality \eqref{eq:Markov_inequality2} for Lyapunov functions, giving 
\be
\mathbb{P}[\sup_{0 \leq h \leq \mathcal{I}-1} V(X'^{(j)}_{h}) \geq r] \leq \frac{2 \beta \mathcal{I}}{\alpha r} \quad \quad \forall r>0.
\ee 
Reparameterizing, we have
\be \label{eq:prokhorov1}
\mathbb{P}[\sup_{0 \leq h \leq \mathcal{I}-1} \|X'^{(j)}_{h}\| \geq r] \leq \frac{2 \beta \mathcal{I}}{\alpha e^{\lambda_{\mathsf{m}} r}} \quad \quad \forall r>0.
\ee

Applying this with $r = \mathsf{g}_{\infty}$, we obtain after some basic algebra:
 \be \label{eq:prokhorov2}
\mathbb{P}[\sup_{0 \leq h \leq \mathcal{I}-1} \|X'^{(j)}_{h}\| \geq \mathsf{g}_\infty] \stackrel{{\scriptsize \textrm{Eq. }}\ref{eq:prokhorov5}}{\leq} \epsilon \,\frac{\mathsf{m}}{3d}.
\ee 

By Lemmas \ref{thm:MH2'} and \ref{LemmaMH5}, we also have 
\be \label{eq:prokhorov3}
\mathbb{P}[\inf_{0 \leq {h} \leq \mathcal{I}-1} \|\mathbf{p}^{(j)}_{h}\| \leq \mathsf{g}_2] &\leq \frac{\epsilon'''}{\log(\frac{1}{\epsilon'''})} \leq \epsilon \, \frac{\mathsf{m}}{3d}\\
\mathbb{P}[\sup_{0 \leq {h} \leq \mathcal{I}-1} \|\mathbf{p}^{(j)}_{h}\| \geq \mathsf{g}_\infty] &\leq \frac{\epsilon'''}{\log(\frac{1}{\epsilon'''})} \leq  \epsilon \, \frac{\mathsf{m}}{3d}.
\ee
Hence, Inequalities \eqref{eq:prokhorov2} and \eqref{eq:prokhorov3} together imply that 
\be \label{eq:prokhorov4}
\mathbb{P}[\mathfrak{t}_{G}^{(j)} \leq \mathcal{I}] &\leq \mathbb{P}[\sup_{0 \leq h \leq \mathcal{I}-1} \|X'^{(j)}_{h}\| \geq \mathsf{g}_\infty] + \mathbb{P}[\inf_{0 \leq {h} \leq \mathcal{I}-1} \|\mathbf{p}^{(j)}_{h}\| \leq \mathsf{g}_2]\\
& \quad \quad + \mathbb{P}[\sup_{0 \leq {h} \leq \mathcal{I}-1} \|\mathbf{p}^{(j)}_{h}\| \geq \mathsf{g}_\infty]\\
&\leq \epsilon \, \frac{\mathsf{m}}{d}.
\ee
 

Recall that $\{Y_{i}\}_{i \geq 0} \sim K$ is a Markov chain started at $Y_{0} \sim \pi$, its stationary measure. Repeatedly applying Lemma \ref{thm:leapfrog3} and  Theorem \ref{ThmContractionConvexMainResult} (in a manner somewhat similar to the argument described in Figure \ref{fig:Approx_integrator}) gives
\be \label{eq:noC1}
\mathbb{E}\left[\|\hat{X}'_\mathcal{I} - Y_\mathcal{I}\| \times \mathbbm{1}\{\mathcal{I} < \min_{1 \leq j \leq \frac{d}{\mathsf{m}}} (\mathfrak{t}_{G}^{(j)})\}\right] &\leq \mathbb{E}\bigg[\left( \|\hat{X}'_\mathcal{I} - X_\mathcal{I}\| + \|X_\mathcal{I} - Y_\mathcal{I}\| \right)\times \mathbbm{1}\{\mathcal{I} < \min_{1 \leq j \leq \frac{d}{\mathsf{m}}} (\mathfrak{t}_{G}^{(j)})\}\bigg]\\
&\stackrel{{\scriptsize \textrm{ Th. }}\ref{ThmContractionConvexMainResult}}{\leq}  \mathbb{E}\left[\left( \|\hat{X}'_\mathcal{I} - X_\mathcal{I}\| + (1-\kappa)^{\mathcal{I}} \times \|X_0 - Y_0\| \right)\times \mathbbm{1}\{\mathcal{I} < \min_{1 \leq j \leq \frac{d}{\mathsf{m}}} (\mathfrak{t}_{G}^{(j)})\}\right]\\
&\stackrel{{\scriptsize \textrm{Lemma }}\ref{thm:leapfrog3}, \textrm{ Th. }\ref{ThmContractionConvexMainResult}}{\leq} \mathbb{E}\bigg[\left(\sqrt{\frac{d}{\mathsf{m}}} \times \theta \mathsf{A} +  \|\hat{X}'_{\mathcal{I}-1} - X_{\mathcal{I}-1}\|+ (1-\kappa)^\mathcal{I} \times \|X_0 - Y_0\| \right)\\
& \qquad \qquad \qquad \qquad \times \mathbbm{1}\{\mathcal{I} < \min_{1 \leq j \leq \frac{d}{\mathsf{m}}} (\mathfrak{t}_{G}^{(j)})\}\bigg]\\
&\stackrel{{\scriptsize \textrm{Lemma }}\ref{thm:leapfrog3}, \textrm{ Th. }\ref{ThmContractionConvexMainResult}}{\leq} \mathbb{E}\bigg[\left(2\times \sqrt{\frac{d}{\mathsf{m}}}
 \times \theta \mathsf{A} +  \|\hat{X}'_{\mathcal{I}-2} - X_{\mathcal{I}-2}\|+ (1-\kappa)^{\mathcal{I}} \times \|X_0 - Y_0\| \right)\\
 &\qquad \qquad \qquad \qquad \times \mathbbm{1}\{\mathcal{I} < \min_{1 \leq j \leq \frac{d}{\mathsf{m}}} (\mathfrak{t}_{G}^{(j)})\}\bigg]\\
&\leq \ldots \\
&\stackrel{{\scriptsize \textrm{Lemma }}\ref{thm:leapfrog3}, \textrm{ Th. }\ref{ThmContractionConvexMainResult}}{\leq} \mathbb{E}\bigg[\left(\mathcal{I} \times \sqrt{\frac{d}{\mathsf{m}}} \times \theta \mathsf{A}+ (1-\kappa)^{\mathcal{I}} \times \|X_0 - Y_0\| \right)\\
&\qquad \qquad \qquad \qquad \times \mathbbm{1}\{\mathcal{I} < \min_{1 \leq j \leq \frac{d}{\mathsf{m}}} (\mathfrak{t}_{G}^{(j)})\}\bigg]\\
&\leq \mathcal{I} \times \sqrt{\frac{d}{\mathsf{m}}} \times \theta \mathsf{A}+ (1-\kappa)^{\mathcal{I}} \times \mathbb{E}[\|X_0 - Y_0\|]\\
&\leq \mathcal{I} \times \sqrt{\frac{d}{\mathsf{m}}} \times \theta \mathsf{A}+ (1-\kappa)^{\mathcal{I}} \times (\|X_0\|+ \mathbb{E}[\|Y_0\|])\\
&\leq \mathcal{I} \times \sqrt{\frac{d}{\mathsf{m}}} \times \theta \mathsf{A}+ (1-\kappa)^{\mathcal{I}} \times (\frac{\sqrt{d}}{\sqrt{m_2}}+ \frac{\sqrt{d}}{\sqrt{m_2}})\\
&\leq \frac{1}{2} \epsilon^2 +  \frac{1}{2} \epsilon^2\\
&= \epsilon^{2}.
\ee

Therefore, recalling that the Prokhorov distance is bounded above by the square-root of the Wasserstein distance (see Theorem 2 of \cite{gibbs2002choosing}), we have
\be
\mathsf{Prokh}(Q_{\theta}^{\mathcal{I}}(x,\cdot), \pi) &\leq \sqrt{ \mathbb{E}\left[\|\hat{X}'_{\mathcal{I}} - Y_{\mathcal{I}}\| \times \mathbbm{1}\{\mathcal{I}< \min_{1 \leq j \leq \frac{d}{\mathsf{m}}} (\mathfrak{t}_{G}^{(j)})\}\right] } + \mathbb{P}[ \min_{1 \leq j \leq \frac{d}{\mathsf{m}}} (\mathfrak{t}_{G}^{(j)}) \leq \mathcal{I}] \\
&\stackrel{{\scriptsize \textrm{Eq. }}\ref{eq:noC1}}{\leq} \sqrt{\epsilon^2} + \mathbb{P}[ \min_{1 \leq j \leq \frac{d}{\mathsf{m}}} (\mathfrak{t}_{G}^{(j)}) \leq \mathcal{I}]\\
& \leq  \epsilon +  \sum_{j=1}^{\frac{d}{\mathsf{m}}} \mathbb{P}[\mathfrak{t}_{G}^{(j)} \leq \mathcal{I}]\\
&\stackrel{{\scriptsize \textrm{Eq. }}\ref{eq:prokhorov4}}{\leq} \epsilon  +  \epsilon \, \sum_{j=1}^{\frac{d}{\mathsf{m}}} \frac{\mathsf{m}}{d} \leq 2 \epsilon.
\ee
This completes the proof of the theorem.

\end{proof}

\begin{proof} [Proof of Theorem \ref{ThmMainApprox_leapfrog2}]

We will compare $Q_{\theta}$ to a closely related chain defined in Appendix \ref{SecMH}.  We begin by fixing some constants that we will need to apply the results in earlier appendices: 

\be 
\epsilon''' &=  \frac{1}{4}\epsilon, \quad \epsilon' = \epsilon^{2}, \quad s = s_{d,\epsilon'}, \quad C = C_{d,\epsilon'}, \quad \mathsf{g}_{\infty} = \max(\mathsf{g}_{\infty}^{(1)}(d),\mathsf{g}_{\infty}^{(2)}(d)), \quad \mathsf{g}_2 = \mathsf{g}_2(d) \\
\theta_{0} &= \min \left( \frac{\kappa \epsilon^2 \sqrt{m_2}}{18T \mathsf{A}(\sqrt{M_2} \lambda_d^{-1}\log(C) + \frac{1}{\sqrt{2}}\sqrt{d})}, \, \, \, \,   \log\left([1-\frac{\epsilon'''}{\mathcal{I}\log(\frac{1}{\epsilon'''})}]^{-1}\right)\times  \left[\frac{7}{T} (M_2 + \frac{1}{2})\frac{d}{\mathsf{m}}\mathsf{g}_\infty^2 \times \frac{\mathsf{\mathsf{A}}}{M_2}\right]^{-1}\right).
\ee

Also fix $0 < \theta < \theta_{0}$.  Let  $\diamondsuit$ be the ``toy" numerical integrator defined in Equation \eqref{eq:toy} with  constants $0< \mathsf{g}_{2},\mathsf{g}_{\infty}<\infty$ as above and numerical integrator $\dagger$, the numerical integrator given in the statement of Theorem \ref{ThmMainApprox_leapfrog2}. Let $L$ be the transition kernel defined by Algorithm \ref{alg:Metropolis} with numerical integrator  $\diamondsuit$ and all other parameters as in the statement of the theorem.

Fix $\theta$ to satisfy $0 < \theta < \theta_{0}$. We now define a coupling of the two Markov chains  $\{X_{i}'\}_{i \geq 0} \sim L$ and $\{X_{i}''\}_{i \geq 0} \sim Q_{\theta}$. As mentioned immediately following Algorithm \ref{DefSimpleHMC}, these algorithms define a \textit{random mapping representation} of the associated HMC schemes. Thus, in order to define a coupling of these chains, it is enough to specify the following information:

\begin{enumerate}
\item the distribution of the starting points $X_{0}', X_{0}''$, \textit{and}
\item a coupling of the sequence of random momentum variables $\{\mathbf{p}_{i}\}_{i \geq 0}$ chosen in Step 2 of Algorithm \ref{alg:Metropolis}, \textit{and}
\item a coupling of the sequence of random variables $\{b_{i}\}_{i \geq 0}$ chosen in Step 4 of Algorithm \ref{alg:Metropolis}.
\end{enumerate}  

We make the following simple choice of coupling:
\begin{enumerate}
\item We fix a single $x \in \mathbb{R}^{d}$ satisfying $\|x \| \leq \sqrt{\frac{d}{m_{2}}}$ and set $X_{0}'= X_{0}'' = x.$
\item We draw a single i.i.d. sequence $\{\mathbf{p}_{i}\}_{i \geq 0}$ of standard Gaussian random variables, and use this single update sequence for both Markov chains.
\item We draw a single i.i.d. sequence $\{b_{i}\}_{i \geq 0}$  of uniform random variables, independently of $\{\mathbf{p}_{i}\}_{i \geq 0}$,  and use this single update sequence for both Markov chains.
\end{enumerate}

Let 
\be 
\mathfrak{t}_G = \inf \{ i \, : \, (X'_i, \mathbf{p}_{i}) \notin \mathsf{G}\}
\ee 
be the exit time of the unadjusted chain from the ``good set" $\mathsf{G}$ defined in Equation \eqref{eq:good_set}, and let 
\be 
\mathfrak{t}_{\mathrm{reject}} = \inf\{ h \geq 0 \, : \, X_{h+1}'' = X_{h}''\}
\ee be the first time that Algorithm \ref{alg:Metropolis} rejects a proposal. Notice that, from the definitions in Equation \eqref{eq:toy}, Algorithm \ref{alg:Unadjusted}, and Algorithm \ref{alg:Metropolis}, we have $X'_i = X''_i$ for all $i < \min(\mathfrak{t}_G, \mathfrak{t}_{\mathrm{reject}})$.

Recalling that the Prokhorov distance is bounded above by the square-root of the Wasserstein distance (see Theorem 2 of \cite{gibbs2002choosing}), and also that $X_{h}' = X_{h}''$ for all $0 \leq h < \min(\mathfrak{t}_{G}, \mathfrak{t}_{\mathrm{reject}})$, we have for all $\mathcal{I} > s$

\be
\mathsf{Prokh}(Q_{\theta}^{\mathcal{I}}(x,\cdot), \pi) &\leq \sqrt{W_{1}(L^{\mathcal{I}}(x,\cdot), \pi)} + \mathbb{P}[ \min(\mathfrak{t}_{G}, \mathfrak{t}_{\mathrm{reject}}) \leq \mathcal{I}]
\\
& \stackrel{{\scriptsize \textrm{Lemma }}\ref{thm:badset}}{\leq} \sqrt{W_{1}(L^{\mathcal{I}}(x,\cdot), \pi)} +  4 \, \frac{\epsilon'''}{\log(\frac{1}{\epsilon'''})}\\
& \stackrel{{\scriptsize \textrm{Lemma }}\ref{thm:unadjusted_wass2}}{\leq} \sqrt{\epsilon^2} +  \epsilon.\\\\
&= 2 \epsilon.
\ee

This completes the proof of Inequality \eqref{ProkhContConcMH}, and the theorem.

\end{proof}

\end{document}